\newcommand{\Rmnum}[1]
{\expandafter\@slowromancap\romannumeral #1@}
\newtheorem{theorem}{Theorem}[section]
\newtheorem{lemma}{Lemma}[section]
\newtheorem{corollary}{Corollary}[section]
\newtheorem{proposition}{Proposition}[section]
\newtheorem{remark}{Remark}[section]
\newcommand{\cala}{\Lambda}
\theoremstyle{definition}
\numberwithin{equation}{section}
\begin{document}


\author[G.-Y. Hong]{Guangyi Hong}\address{Guangyi Hong\newline\indent School of Mathematics\newline\indent South China University of Technology\newline\indent Guangzhou 510641, P. R. China}
\email{magyhong@scut.edu.cn}

\author[Z.-A. Wang]{Zhi-an Wang\textsuperscript{$ \ast $}}\thanks{$^{\ast}$Corresponding author.}\address{Zhi-an Wang\newline\indent Department of Applied Mathematics\newline\indent Hong Kong Polytechnic University\newline\indent Hung Hom,
Kowloon, Hong Kong,  P. R. China}\email{mawza@polyu.edu.hk}

\title[Convergence of boundary layers]{\bf Convergence of boundary layers of chemotaxis models with physical boundary conditions~II: Non-degenerate\\
initial data}
\begin{abstract}
This paper establishes the convergence of boundary-layer solutions of the consumption type Keller-Segel model with non-degenerate initial data subject to physical boundary conditions, which is a sequel of \cite{Corrillo-Hong-Wang-vanishing} on the case of degenerate initial data.
Specifically, we justify that the solution with positive chemical diffusion rate $\varepsilon>0 $ converges to the solution with zero diffusion $\varepsilon=0 $ (outer-layer solution) plus the boundary-layer profiles (inner-layer solution) for any time $t>0$ as $ \varepsilon \rightarrow 0 $.  Compared to \cite{Corrillo-Hong-Wang-vanishing}, the main difficulty in the analysis is the lack of regularity of the outer- and boundary-layer profiles since only the zero-order compatibility conditions for the leading-order boundary-layer profiles can be fulfilled with non-degenerate initial data. Our new strategy is to regularize the boundary-layer profiles with carefully designed corner-corrector functions and approximate the low-regularity leading-order boundary-layer profiles by higher-regularity profiles with regularized boundary conditions. By using delicate weight functions involving boundary-layer profiles to cancel the multi-scaled linear terms in the perturbed equations, we manage to obtain the requisite uniform-in-$ \varepsilon $ estimates for the convergence analysis. This cancellation technique enables us to prove the convergence to boundary-layer solutions for any time $ t >0 $, which is different from the convergence result in \cite{Corrillo-Hong-Wang-vanishing} which  holds true only for some finite time depending on the Dirichlet boundary value.


\vspace*{4mm}
\noindent{\sc 2020 Mathematics Subject Classification. }35B40, 35K61, 35Q92, 76D10  

\vspace*{1mm}
 \noindent{\sc Keywords. }Boundary layers; chemotaxis; linear sensitivity; corner-corrector functions.
\end{abstract}
\maketitle

 \section{Introduction} 
 \label{sec:introduction}
 This paper is concerned with the following Keller-Segel chemotaxis system
\begin{gather}\label{multi-D-model}
\displaystyle \begin{cases}
\displaystyle	u _{t}=\Delta u- \nabla \cdot (u \nabla\phi(v))& \mbox{in }\Omega,\\
	\displaystyle v _{t}=\varepsilon \Delta v-u v& \mbox{in }\Omega,
\end{cases}
\end{gather}
where $ u(x,t) $ and $ v(x,t) $ denote the cell density and  chemical (signal) concentration, respectively. $ \varepsilon>0 $  represent the chemical diffusion. The function $ \phi(v) $ is referred to as the chemotactic sensitivity function accounting for the signal transduction mechanism with two prevailing prototypes: $ \phi(v)=v $ (linear sensitivity) and $ \phi(v)=\log v $ (logarithmic sensitivity).
The chemotaxis model \eqref{multi-D-model} with logarithmic sensitivity was first proposed \cite{keller1971traveling} to explain the propagation of traveling bands driven by the bacterial chemotaxis observed in the celebrated experiment of Adler \cite{Adler66}. Since then, a large number of mathematical works have been developed based on this model, such as existence and stability of traveling wave solutions \cite{KellerOdell75ms, Schw1, wang2013mathematics,LW091,LLW,peng2018nonlinear, Kyudong-vasseur-2020-M3as, ChoiK1,ChoiK2}, global well-posedness \cite{Li-pan-zhao-2012-Siam-applied, wang2021cauchy, martinez2018asymptotic}, boundary-layer solutions \cite{Carrillo-li-wang-PLms, zhaokun-2015-JDE, Hou-wang-2016-JDE,hou-liu-wang-wang-2018-SIAM, hou-wang-2019-JMPA}, just to mention a few. In contrast, the results on \eqref{multi-D-model} with linear sensitivity are much less. The early results are mainly restricted to global well-posedness of solutions (existence and stability) in bounded domains with Neumann boundary conditions (cf. \cite{fan2017global,tao2011boundedness,tao2012eventual}).

Among other things, this paper is concerned with \eqref{multi-D-model} subject to the following physical boundary conditions
 \begin{gather} \label{bd-conditions}
\displaystyle  \partial_\nu u-u \partial_\nu v =0,\ \ v =v _{\ast}\ \ \mbox{on}\ \ \partial \Omega,
\end{gather}
where $\nu$ denotes the unit outward normal vector of the boundary $\partial \Omega$, and $v_*>0$ is a constant. That is, the zero-flux boundary condition is imposed to the bacterial density while Dirichlet boundary condition to the chemical concentration. The boundary conditions \eqref{bd-conditions} first appeared in \cite{tuval2005bacterial} for a chemotaxis-fluid system describing the boundary accumulation layer formed on the drop edge (air-water interface) in a sessile drop mixed with aerobic bacterial {\it Bacillus ubtilis} undertaking chemotactic movement.
When $\phi(v)=v$, the existence of unique non-constant stationary boundary-layer solution to \eqref{multi-D-model}-\eqref{bd-conditions} was proved in \cite{lee2019boundary} in all dimensions, followed by the stability results in one dimension \cite{Hong-Wang-large-time} and in multi-dimensions \cite{li2023stability}. The existence of classical solutions in two-dimensional radially symmetric domain and weak solutions or small-data classical solutions in higher dimensions were proved in \cite{lankeit2021radial, wang2024smooth}. We also mention another result in \cite{Marcel-Lankeit} on the existence of stationary solutions for \eqref{multi-D-model} with linear sensitivity and a Robin type condition for $v$  based on Henry's law modeling the dissolution of gas in water. When $\phi(v)=\log v$, the existence and stability of the non-constant stationary solution \eqref{multi-D-model}-\eqref{bd-conditions} was obtained in \cite{Carrillo-li-wang-PLms} followed by a work \cite{song2023convergence} further giving the stabilization rate. Recently the existence of unique stationary solution of \eqref{multi-D-model}-\eqref{bd-conditions} in all dimensions for any $\varepsilon>0$ and asymptotic profile of boundary-layer solutions as $\varepsilon \to 0$ were established in \cite{carrillo2024boundary}.

The unique stationary solution of \eqref{multi-D-model}-\eqref{bd-conditions} with $\phi(v)=v$ was shown to be a boundary-layer profile as $ \varepsilon \rightarrow 0 $ with thickness of order $\sqrt{\varepsilon}$ in \cite{lee2019boundary}, which seemed to be the first rigorous result justifying the existence of boundary-layer solutions for chemotaxis models  and supported the experimental findings reported in \cite{tuval2005bacterial} in the absence of fluid dynamics apart from the numerical simulations in \cite{tuval2005bacterial,chertock2012sinking,lee2015numerical}.
However, except the results in \cite{lee2019boundary,Hong-Wang-large-time, li2023stability} on the stationary problem, the analytical results on the boundary-layer solutions of the time-dependent problem \eqref{multi-D-model}-\eqref{bd-conditions} are largely open. In particular, whether the solution of \eqref{multi-D-model}-\eqref{bd-conditions} converges as $\varepsilon \to 0$ remains poorly understood.   Recently,  the model \eqref{multi-D-model}-\eqref{bd-conditions} with linear sensitivity in a bounded interval $\mathcal{I}:=(0,1)$ was considered in \cite{Corrillo-Hong-Wang-vanishing}:
 \begin{align}\label{eq-orignal}
\displaystyle \begin{cases}
\displaystyle  u _{t}=u _{xx}- \left( u v _{x} \right)_{x}, &\ x\in \mathcal{I},\ t>0,\\
  \displaystyle v _{t}=\varepsilon v _{xx}- uv, &\ x\in \mathcal{I},\ t>0,  \\
  (u,v)(x,0)=(u _{0}, v _{0})(x), & \ x\in \overline{\mathcal{I}},
\end{cases}
\end{align}
with boundary conditions
\begin{align}\label{intial-bdary}
\begin{cases}
(u _{x}-u v _{x})\vert _{\partial \mathcal{I}}=0,\ \ v \vert_{ \partial \mathcal{I}}=v _{\ast}, & \text{if}\ \varepsilon>0,\\
(u _{x}-u v _{x})\vert _{\partial \mathcal{I}}=0, & \text{if}\ \varepsilon=0,
\end{cases}
\end{align}
where $\overline{\mathcal{I}}=[0,1]$ and $\partial \mathcal{I}=\{0,1\}$. One may intuitively suspect that the boundary-layer profile may arise from the problem \eqref{eq-orignal}-\eqref{intial-bdary} as $ \varepsilon \rightarrow 0 $. Indeed, when $ \varepsilon =0 $, the boundary value of $v$  is solely determined by directly solving the ODE $v _{t}=- uv$ of the second equation of \eqref{eq-orignal}:
\begin{equation}\label{bv}
v \vert_{\partial \mathcal{I}}=v_0 \vert_{\partial \mathcal{I}} \ {\mathop{\mathrm{e}}}^{-\int_0^t u \vert_{\partial \mathcal{I}}\mathrm{d}\tau}.
 \end{equation}
 This boundary value of $v$ may mismatch the one prescribed in \eqref{intial-bdary} for $\varepsilon>0$. If this occurs, boundary layers will usually develop and consequently $(u^\varepsilon,v^\varepsilon) \nrightarrow (u^0,v^0)$ in $L^\infty(\mathcal{I})$ as $\varepsilon\to 0$, where $(u^\varepsilon, v^\varepsilon)$ and $(u^0,v^0)$ denote the solution of \eqref{eq-orignal}-\eqref{intial-bdary} with $\varepsilon>0$ and $\varepsilon=0$, respectively. Then an immediate question is under what conditions the boundary layer will be present and  what is the limit of $(u^\varepsilon,v^\varepsilon)$ as $\varepsilon\to 0$ then.
In \cite{Corrillo-Hong-Wang-vanishing}, we find proper compatibility conditions warranting the development of boundary-layer solutions as $\varepsilon \to 0$ and further show that the solution of \eqref{eq-orignal}-\eqref{intial-bdary} with $\varepsilon >0$ converges to $(u^0,v^0)$ plus the boundary-layer solutions as $ \varepsilon \rightarrow 0$. However, the results of \cite{Corrillo-Hong-Wang-vanishing} have two essential confinements. One is that the analyses in \cite{Corrillo-Hong-Wang-vanishing} heavily rely on the assumption $u _{0}|_{\partial \mathcal{I}}=0 $, namely the initial value $u_0$ is degenerate at the boundary.  The other is the results of \cite{Corrillo-Hong-Wang-vanishing} hold only for $t\in (0, T_0(v_*))$ for a finite time $T_0(v_*)$ depending on $v _{\ast}>0$. This paper is a sequel to \cite{Corrillo-Hong-Wang-vanishing} by studying the case of non-degenerate initial data (i.e. $\inf_{x\in \overline{\mathcal{I}}} u _{0}>0$). In this case we show that the solution of \eqref{eq-orignal}-\eqref{intial-bdary} with $\varepsilon >0$ converges to $(u^0,v^0)$ plus the boundary-layer solutions as $\varepsilon \to 0$ for any $t>0$ irrespective of $v_*>0$.

The main challenges in studying the problem \eqref{eq-orignal}-\eqref{intial-bdary} come from the cross-diffusion (chemotaxis) term $ (uv _{x}) _{x}$ and the complex physical boundary conditions. To deal with the cross-diffusion, the previous works \cite{fan2017global,tao2011boundedness} concerning the global well-posedness of solutions with homogeneous Neumann boundary conditions rely on the construction of a Lyapunov functional. With the physical boundary conditions in \eqref{intial-bdary}, the work \cite{Hong-Wang-large-time} cleverly identified a cancellation structure by eliminating the cross-diffusion with the technique of taking anti-derivatives in one dimension and established the time-asymptotic stability of the stationary boundary-layer profile.  For the convergence of boundary-layer profiles as $\varepsilon \to 0$,  the analyses of  \cite{Corrillo-Hong-Wang-vanishing} need the degeneracy of $u_0$ to meet the compatibility conditions. However, this degeneracy incapacitates cancellation technique of \cite{Hong-Wang-large-time} for the singular linear term. Alternatively, the Hardy inequality was utilized in \cite{Corrillo-Hong-Wang-vanishing} to derive the requisite estimates for convergence analyses as $\varepsilon \to 0$, but this convergence result based on the estimates derived by the Hardy inequality holds up to a finite time depending on $v_*>0$ as mentioned above.  For the non-degenerate initial data considered in this paper, the cancellation technique can be used but the compatibility conditions can only be fulfilled at the leading-order boundary-layer profiles. This confines boundary-layer profiles to have low temporal regularity only which is inadequate for the convergence analysis. To enhance the regularity, we introduce corner-correction functions into the boundary conditions for the leading boundary-layer profile in this paper to meet the full compatibility conditions, see \eqref{cal-A-1} and \eqref{cal-A-2-T}. But introducing these functions raises new challenges to derive the uniform-in-$\varepsilon$ estimates of solutions needed for the convergence analysis, since the time derivative of the corner-correction function is singular in $\varepsilon$.  To overcome this adverse effect caused by the corner-correction functions, we employ various time-weighted functions to derive the requisite uniform-in-$\varepsilon$ estimates by requiring all the higher-order estimates of solutions  be away from the initial time to avoid the possible initial layers (see estimates in Sec. 3).  The advantage of non-degenerate initial data is that we can use cancellation technique instead of the Hardy inequality to deal with the singular linear terms and manage to establish necessary uniform-in-$\varepsilon$ estimates (see the proof of Lemma \ref{lem-L2-pertur}) so that convergence result holds for all time $t>0$, different from the convergence result in \cite{Corrillo-Hong-Wang-vanishing} which holds for $t \in [0, T_0(v_*)]$ for some finite time $T_0(v_*)$ depending on $v_*$. This is a major difference from the degenerate initial data considered in \cite{Corrillo-Hong-Wang-vanishing}. But our current convergence result can not cover the time $t=0$, which is a price paid to the case of non-degenerate initial data.


The rest of the paper is organized as follows: In Sec. \ref{sec:main_result}, we first introduce the equations for the regularized outer- and boundary-layer profiles with corner-correction functions, and then state our main results on the convergence of solutions of \eqref{eq-orignal}-\eqref{intial-bdary} as $\varepsilon \to 0$. In Sec. \ref{sec:study_on_the_inner_outer_layers}, we study the regularity and the dependence on $ \varepsilon $ of the regularized outer- and boundary-layer profiles. Finally, in Sec. \ref{sec:stability_of_boundary_layers}, we show the convergence of boundary-layer solutions and prove our main results.

\section{Statement of main results} 
\label{sec:main_result}
In this section, we first introduce the equations for the regularized outer- and boundary-layer profiles, and then state our main results on the convergence of boundary-layer solutions as $ \varepsilon \rightarrow 0 $. For clarity, we highlight some notations used throughout the paper.\\

\vspace{2mm}
\noindent {\bf Notation.}
\begin{itemize}
	\item Denote $\bar{\mathcal{I}}=[0,1] $, $ \mathbb{R}_{+}:=(0,\infty) $ and $ \mathbb{R}_{-}:=(- \infty,0) $. $ \mathbb{N} $ represents the set of non-negative integers. Let $ L ^{p} $ with $ 1 \leq p \leq \infty $ represent the Lebesgue space $ L ^{p}(\mathcal{I}) $ in which functions are defined with respect to (w.r.t.) the variable $ x \in (0,1) $. $ L _{z}^{p} $ denotes the space $ L ^{p}((0, \infty)) $ for functions defined w.r.t. $z \in (0,\infty)$ and $ L _{\xi}^{p} $ denotes $ L ^{p}((0, \infty)) $ for functions defined w.r.t. $ \xi \in (- \infty,0) $. Similarly, we denote by $  H ^{k} $, $ H _{z}^{k} $ and $ H _{\xi}^{k} $ the standard Sobolev spaces $ W ^{k,2} $ for functions defined w.r.t. $ x \in \mathcal{I} $, $ z \in (0,\infty) $ and $ \xi \in(-\infty,0) $, respectively. We also write $ L _{T}^{p} Y:=L ^{p}(0,T;Y) $~(e.g., $ L _{T}^{\infty}L _{z}^{\infty}:=L ^{\infty}(0,T;L _{z}^{\infty}) $) without confusion caused.

	\item Denote $ \langle z \rangle=\sqrt{1+z ^{2}} $ for $ z \in [0,\infty) $, and $ \langle \xi \rangle=\sqrt{1+\xi ^{2}} $ for $ \xi \in(-\infty,0] $.

	\item Denote by $ C $ a generic positive constant which may depend on the time variable but independent of $ \varepsilon $, and may vary in the context.

  \item We often use $ (\ast)_{i} $ to denote the $ i $-th equation of the system $ (\ast) $ for brevity.
\end{itemize}

\vspace{2mm}
\subsection{Construction of outer-/boundary-layer profiles} 
\label{sub:boundary_layer_profiel}
In this subsection, we shall first reformulate the problem \eqref{eq-orignal} and then present the equations for the (regularized) outer- and boundary-layer profiles. Notice that the zero-flux boundary condition for $ u $ immediately yields the conservation of mass. That is,
\begin{align*}
\displaystyle \int _{\mathcal{I}}u (x,t)\mathrm{d}x = \int _{\mathcal{I}}u _{0}(x)\mathrm{d}x.
\end{align*}
Denote $ M:=\int _{\mathcal{I}}u _{0}\mathrm{d}x $, and take
\begin{gather}\label{anti-derivatives-transf}
\displaystyle \varphi= \int _{0}^{x}(u-M)\mathrm{d}y \ \ \mbox{with}\ \ \varphi(x,0)= \int _{0}^{x}(u _{0}-M)\mathrm{d}y=:\varphi _{0}(x).
\end{gather}
Then the problem \eqref{eq-orignal} can be reformulated as
\begin{subequations}\label{refor-eq}
\begin{numcases}
   \displaystyle\makebox[-3pt]{~}   \varphi _{t}=\varphi _{xx}-(\varphi _{x}+M)v _{x},\\
   \displaystyle\makebox[-3pt]{~}   v _{t}= \varepsilon v _{xx}-(\varphi _{x}+M)v,\\
         \displaystyle\makebox[-3pt]{~}   \varphi(0,t)=\varphi(1,t)=0, \ \ v(0,t)=v(1,t)=v _{\ast},\label{BD-POSITIVE-VE}\\
          \displaystyle\makebox[-3pt]{~}   (\varphi,v)(x,0)=(\varphi _{0},v _{0}).
 \end{numcases}
\end{subequations}
With the transformation \eqref{anti-derivatives-transf} motivated by the mass conservation of $u$, we define $\varphi$ as the primitive function of $u-M$ and convert the original cross-diffusion problem \eqref{eq-orignal} into the problem \eqref{refor-eq} without cross-diffusion for which the Dirichlet boundary conditions can be fully used. As mentioned before, when $ \varepsilon=0 $,  $v$ is determined by \eqref{bv}, one can not impose boundary conditions for $v$ to avoid overdetermination. Therefore the boundary condition for the case $ \varepsilon=0 $ reads:
 \begin{gather*}
  \displaystyle \varphi(0,t)=\varphi(1,t)=0.
  \end{gather*}
We proceed to present the equations for the outer- and boundary-layer profiles of the problem \eqref{refor-eq} with small $ \varepsilon>0 $ which are precisely derived in our previous work \cite{Corrillo-Hong-Wang-vanishing} based on the  perturbation method~(cf. \cite{Grenier-1998-JDE,Holems-1995-book,Rousset-2005-JDE}). We assume that the solutions of the transformed problem \eqref{refor-eq} with $ \varepsilon>0 $ have expansions in the form:
\begin{subequations}\label{formal-expan}
\begin{align}
\displaystyle   \varphi ^{\varepsilon}(x,t)&= \sum _{j=0}^{\infty}\varepsilon ^{\frac{j}{2}}\left(  \varphi ^{I,j}(x,t)+ \varphi ^{B,j}\left(z,t \right)+ \varphi ^{b,j}\left(\xi,t \right) \right),\\
\displaystyle    v ^{\varepsilon}(x,t)&= \sum _{j=0}^{\infty}\varepsilon ^{\frac{j}{2}}\left( v ^{I,j}(x,t)+v ^{B,j}\left(z,t \right)+v ^{b,j}\left(\xi,t \right) \right)
\end{align}
\end{subequations}
for $ j \in \mathbb{N} $, where $ z $ and $ \xi $ are called the boundary-layer coordinates defined by
\begin{gather}\label{bd-layer-variable}
\displaystyle z= \frac{x}{\sqrt{\varepsilon}},\ \ \ \xi= \frac{x-1}{\sqrt{\varepsilon}},\ \ x \in [0,1],
\end{gather}
with $ \sqrt{\varepsilon} $ referred to as the boundary-layer thickness which can be formally determined by the balancing argument (cf. \cite{Holems-1995-book,Hou-wang-2016-JDE}). Furthermore, the boundary-layer profiles $ (\varphi ^{B,j}, v ^{B,j}) $ and $ (\varphi ^{b,j}, v ^{b,j}) $ are assumed to be smooth and possess the following property for $ j \geq 0 $:
\begin{gather}\label{profile-decay}
\begin{cases}
   \mbox{$ \varphi ^{B,j} $ and $ v ^{ B,j} $ decay to zero rapidly as $ z \rightarrow \infty $,}\\
  \mbox{while $ \varphi ^{b,j} $ and $ v ^{b,j} $ decay to zero rapidly as $ \xi \rightarrow - \infty$.} \end{cases}
\end{gather}
To match the boundary conditions, we assume that
\begin{gather*}
\displaystyle  \begin{cases}
   \displaystyle \varphi ^{I,j}(0,t)+\varphi ^{B,j}(0,t)=0,\ \  \varphi ^{I,j}(1,t)+\varphi ^{b,j}(0,t)=0,\ \ \ j \geq 0,\\
   \displaystyle v ^{I,0}(0,t)+v ^{B,0}(0,t)=v _{\ast},\ \  v ^{I,0}(1,t)+v ^{b,0}(0,t)=v _{\ast},\\
  \displaystyle v ^{I,j}(0,t)+v ^{B,j}(0,t)=0,\ \  v ^{I,j}(1,t)+v ^{b,j}(0,t)=0,\ \ j \geq 1,
\end{cases}
\end{gather*}
where we have neglected $(\varphi ^{b,j}(-\frac{1}{\varepsilon ^{1/2}},t),v ^{b,j}(- \frac{1}{\varepsilon ^{1/2}},t)) $ at $ x=0 $ and $(\varphi ^{B,j}(\frac{1}{\varepsilon ^{1/2}},t),v ^{B,j}(\frac{1}{\varepsilon ^{1/2}},t)) $  at $ x=1 $ based on the decay properties in \eqref{profile-decay} since $ \varepsilon $ is small. For the initial data, we assume that
\begin{gather*}
\displaystyle \begin{cases}
	\displaystyle  \varphi ^{I,0}(x,0)=\varphi _{0}(x),\ \ \varphi ^{B,0}(z,0)=\varphi ^{b,0}(\xi,0)=0,\\[1mm]
\displaystyle  v ^{I,0}(x,0)=v _{0}(x),\ \ v ^{B,0}(z,0)=v ^{b,0}(\xi,0)=0,
\end{cases}
\end{gather*}
and that
\begin{gather*}
\begin{cases}
	\displaystyle \varphi ^{I,j}(x,0) =\varphi ^{B,j}(z,0)=\varphi ^{b,j}(\xi,0)=0,\ \ j \geq 1,\\
\displaystyle v ^{I,j}(x,0) =v ^{B,j}(z,0)=v ^{b,j}(\xi,0)=0, \ \ j \geq 1.
\end{cases}
\end{gather*}
Substituting the ansatz in \eqref{formal-expan} into the equations in \eqref{refor-eq}, one can get the equations for the outer-/boundary-layer profiles for $ j \geq 0 $. For details, please see \cite{Corrillo-Hong-Wang-vanishing}. We now quote some equations governing the outer-/boundary-layer profiles derived in \cite{Corrillo-Hong-Wang-vanishing} for later use. The leading-order outer-layer profile $ (\varphi ^{I,0}, v ^{I,0}) $ satisfies the problem
\begin{align}\label{eq-outer-0}
\displaystyle \begin{cases}
	\displaystyle\varphi _{t}^{I,0}=\varphi _{xx}^{I,0}-(\varphi _{x}^{I,0}+M)v _{x}^{I,0},& x \in \mathcal{I},\ t >0,\\[1mm]
	\displaystyle v _{t}^{I,0}=-(\varphi _{x}^{I,0}+M)v ^{I,0},& x \in \mathcal{I},\ t>0,\\[1mm]
	\varphi ^{I,0}(0,t)=\varphi ^{I,0}(1,t)=0,\\[1mm]
	\displaystyle (\varphi ^{I,0}, v ^{I,0})(x,0)=(\varphi _{0}, v _{0}),& x \in \bar{\mathcal{I}},
	\end{cases}
\end{align}
which is the zero-diffusion problem of \eqref{refor-eq}. As shown in \cite{Corrillo-Hong-Wang-vanishing}, this problem admits a unique global classical solution if the initial value is compatible with boundary conditions and smooth enough. Now let us turn to the equations describing the leading-order boundary-layer profile. The boundary-layer profile $ \varphi ^{B,0} $ near the left boundary satisfies
\begin{gather*}
\begin{cases}	
\displaystyle  \varphi _{zz}^{B,0}-\varphi _{z}^{B,0}v _{z}^{B,0}=0,\ \ z \in \mathbb{R}_{+}, \\[1mm]
\varphi ^{B,0}(0,t)=0,\ \ \varphi ^{B,0}(+\infty,t)=0,\\[1mm]
\displaystyle \varphi ^{B,0}(x,0)=0,
\end{cases}
\end{gather*}
which along with \eqref{profile-decay} gives $ \varphi ^{B,0}\equiv 0 $. The boundary-layer profile $ v ^{B,0} $ near the left boundary solves
\begin{gather}\label{first-bd-layer-pro}
\displaystyle \begin{cases}
	\displaystyle v _{t}^{B,0}= v ^{B,0}_{zz}- (\varphi _{x} ^{I,0}(0,t)+M)v ^{I,0}(0,t)    ({\mathop{\mathrm{e}}}^{v ^{B,0}}-1)-(\varphi _{x} ^{I,0}(0,t)+M)    {\mathop{\mathrm{e}}}^{v ^{B,0}}v ^{B,0},\ \ z \in \mathbb{R}_{+},\\[1mm]
	\displaystyle v ^{B,0}(0,t)= v _{\ast}-v ^{I,0}(0,t),\ \ v ^{B,0}(+\infty,t)=0,\\[1mm]
		\displaystyle v ^{B,0}(z,0)=0,
\end{cases}
\end{gather}
and the first-order left boundary-layer profile $ \varphi ^{B,1} $ is determined by $ v ^{B,0} $ through
\begin{align}\label{vfi-bd-1ord-lt}
   \displaystyle \varphi ^{B,1}= -\int _{z}^{\infty}(\varphi _{x}^{I,0}(0,t)+M)\left( 		{\mathop{\mathrm{e}}}^{v ^{B,0}(y,t)}-1 \right) \mathrm{d}y .
   \end{align}
The right boundary-layer profile $ v ^{b,0} $ satisfies
\begin{gather}\label{first-bd-pro-rt}
\displaystyle \begin{cases}
	\displaystyle v _{t}^{b,0}= v ^{b,0}_{\xi \xi}- (\varphi _{x} ^{I,0}(1,t)+M)v ^{I,0}(1,t)    ({\mathop{\mathrm{e}}}^{v ^{b,0}}-1)-(\varphi _{x} ^{I,0}(1,t)+M)    {\mathop{\mathrm{e}}}^{v ^{b,0}}v ^{b,0},\ \ \xi \in \mathbb{R}_{-},\\[2mm]
	\displaystyle v ^{b,0}(0,t)= v _{\ast}-v ^{I,0}(1,t),\ \ v ^{b,0}(-\infty,t)=0,\\[2mm]
		\displaystyle v ^{b,0}(\xi,0)=0.
\end{cases}
\end{gather}
Furthermore, we have $ \varphi ^{b,0}\equiv 0 $, and $ \varphi ^{b,1} $ is given by
\begin{gather}\label{firs-bd-1-rt}
\displaystyle  \varphi ^{b,1}=\int _{- \infty}^{\xi}(\varphi _{x}^{I,0}(1,t)+M)\left( 		{\mathop{\mathrm{e}}}^{v ^{b,0}(y,t)}-1 \right)\mathrm{d}y  .
\end{gather}
To obtain the existence and regularity of solutions to  problems \eqref{first-bd-layer-pro} and \eqref{first-bd-pro-rt}, it is natural to assume $ v _{0}|_{\partial \mathcal{I}}=v|_{\partial \mathcal{I}}=v _{\ast} $. Under the scenario $\displaystyle \min _{x \in \bar{\mathcal{I}}}(\varphi _{0x}+M)>0\,(\mbox{i.e.}, \displaystyle\min _{x \in \bar{\mathcal{I}}} u _{0}>0 )$  considered in this paper, one can check that the initial data of \eqref{first-bd-layer-pro} are compatible with boundary conditions at the zero-order only
(i.e. $v ^{B,0}\vert _{t=0}(0)= v ^{B,0}(0,t)\vert _{t=0}=0$), but not at the higher order terms since
$$[\partial _{t}^{k}v ^{B,0}\vert _{t=0}](0)\ne \partial _{t}^{k}v ^{B,0}(0,t)\vert _{t=0} \ \text{for}\ k\geq 1.$$
This applies to the right boundary-layer profile $v ^{b,0}$ of \eqref{first-bd-pro-rt} as well. Consequently  the solutions to the problem \eqref{first-bd-layer-pro} (resp. \eqref{first-bd-pro-rt}) would not be smooth enough at the corner $ (0,0) $ in the $ z $-$ t $ (resp. $ \xi $-$ t $) plane, which may give rise to weak initial layers. To avoid excessive analysis on initial layers and to circumvent the difficulty caused by the lack of regularity of the boundary-layer profiles at the corner, we use a function $ v ^{B,\varepsilon} $ to approximate $ v ^{ B,0} $ by modifying the boundary conditions for $ v ^{B,0} $ with a time-dependent corner-corrector function. Specifically, $ v ^{B,\varepsilon} $ solves
\begin{gather}\label{first-bd-layer-pro-appro}
\displaystyle \begin{cases}
	\displaystyle v _{t}^{B,\varepsilon}= v ^{B,\varepsilon }_{zz}- (\varphi _{x} ^{I,0}(0,t)+M)v ^{I,0}(0,t)    ({\mathop{\mathrm{e}}}^{v ^{B,\varepsilon }}-1)-(\varphi _{x} ^{I,0}(0,t)+M)    {\mathop{\mathrm{e}}}^{v ^{B,\varepsilon }}v ^{B,\varepsilon},\ \ z \in \mathbb{R}_{+},\\[1mm]
	\displaystyle v ^{B,\varepsilon}(0,t)= v _{\ast}-v ^{I,0}(0,t)+\cala_{1}^{\varepsilon}(t),\ \ v ^{B,\varepsilon}(+\infty,t)=0,\\[1mm]
		\displaystyle v ^{B,\varepsilon}(z,0)=0,
\end{cases}
\end{gather}
where $ \cala_{1}^{\varepsilon}(t) $ is a corner-corrector function defined by
\begin{align}\label{cal-A-1}
\displaystyle  \cala _{1}^{\varepsilon}(t)&= - 2\int _{0}^{t} (\varphi _{x}^{I,0}(0,s)+M)v ^{I,0}(0,s)\int _{s}^{\infty}\frac{		{\mathop{\mathrm{e}}}^{- \frac{\sigma ^{2}}{\varepsilon ^{2 \alpha}}}}{\varepsilon ^{\alpha}\sqrt{\pi}} \mathrm{d}\sigma \mathrm{d}s
 \nonumber \\[2mm]
 &~\displaystyle \quad - 2(\varphi _{0x}(0)+M)v _{\ast} \int _{0}^{t}\chi(\frac{\tau}{\varepsilon ^{\alpha}}) \int _{0}^{\tau}\frac{   {\mathop{\mathrm{e}}}^{- \frac{\sigma ^{2}}{\varepsilon ^{2 \alpha}}}}{\varepsilon ^{\alpha}\sqrt{ \pi }} \mathrm{d}\sigma  \mathrm{d}\tau ,
\end{align}
with $ \alpha>0 $ being a constant such that
\begin{align}\label{al-constriant}
\displaystyle 1< \alpha< \frac{5}{4},
\end{align}
and $ \chi(s) $ being a smooth function defined on $ [0,\infty) $ such that $ \chi(0)=1 $ and $ \chi(s)=0 $ for $ s \geq 1 $. For some properties on the corner-corrector $ \cala_{1}^{\varepsilon}(t)$, please see Lemma \ref{LEM-correc-function}. Clearly, if we assume $ v _{0}\vert _{\partial \mathcal{I}}=v _{\ast} $, the initial data for the approximate problem \eqref{first-bd-layer-pro-appro} are compatible with boundary conditions up to order two. That is, $ v ^{B,\varepsilon}(0,t)\vert _{t=0}=0 $, and $ \partial _{t}^{k}v ^{B,\varepsilon}\vert _{t=0}\,(k=1,2) $ defined by the initial data through $ \eqref{first-bd-layer-pro-appro}_{1} $ satisfy  $[\partial _{t}^{k}v ^{B,\varepsilon}\vert _{t=0}](0)= \partial _{t}^{k}v ^{B,\varepsilon}(0,t)\vert _{t=0}$. Furthermore, $ v ^{B,\varepsilon} $ is a good approximation of $ v ^{B,0} $ in the sense that
\begin{align}\label{aprr-esti-v-B}
\displaystyle  \Big\|(v ^{B,\varepsilon}-v ^{B,0})\Big(\frac{x}{\varepsilon ^{1/2}},t\Big)\Big\|_{L _{T}^{\infty}L ^{\infty}} \leq \varepsilon ^{\frac{\alpha}{2}},
\end{align}
for details, please see Appendix \ref{appen-approxi}. With $ v ^{B,\varepsilon} $, we can define the approximation of $ \varphi ^{B,1} $ as follows
\begin{align}\label{vfi-bd-1ord-lt-approxi}
   \displaystyle \varphi ^{B,\varepsilon}= -\int _{z}^{\infty}(\varphi _{x}^{I,0}(0,t)+M)\left( 		{\mathop{\mathrm{e}}}^{v ^{B,\varepsilon}(y,t)}-1 \right) \mathrm{d}y .
   \end{align}
   By \eqref{vfi-bd-1ord-lt}, \eqref{aprr-esti-v-B}, \eqref{vfi-bd-1ord-lt-approxi} and \eqref{l-INFTY-VFI-I-0}, one can deduce that
   \begin{gather}\label{apr-esti-vfi-B}
   \displaystyle  \Big\|\partial _{x}^{k} (\varphi ^{B,\varepsilon}-\varphi  ^{B,1})\Big(\frac{x}{\varepsilon ^{1/2}},t\Big)\Big\|_{L _{T}^{\infty}L ^{\infty}} \leq C\varepsilon ^{\frac{\alpha}{2}},\ \ k=0,1.
   \end{gather}
   Similarly, we approximate $( v ^{b,0},\varphi ^{b,0})  $ by $( v ^{b,\varepsilon},\varphi ^{b,\varepsilon})  $ which satisfies
\begin{gather}\label{first-bd-pro-rt-approxi}
\displaystyle \begin{cases}
	\displaystyle v _{t}^{b,\varepsilon}= v ^{b,\varepsilon}_{\xi \xi}- (\varphi _{x} ^{I,0}(1,t)+M)v ^{I,0}(1,t)    ({\mathop{\mathrm{e}}}^{v ^{b,\varepsilon}}-1)-(\varphi _{x} ^{I,0}(1,t)+M)    {\mathop{\mathrm{e}}}^{v ^{b,\varepsilon}}v ^{b,\varepsilon},\\[2mm]
	\displaystyle\varphi ^{b,\varepsilon}=\int _{- \infty}^{\xi}(\varphi _{x}^{I,0}(1,t)+M)\left( 		{\mathop{\mathrm{e}}}^{ v ^{b,\varepsilon}(y,t)}-1 \right)\mathrm{d}y  ,\\[2mm]
	\displaystyle v ^{b,\varepsilon}(0,t)= v _{\ast}-v ^{I,0}(1,t)+\cala_{2}^{\varepsilon}(t),\ \ v ^{b,\varepsilon}(-\infty,t)=0,\\[2mm]
		\displaystyle v ^{b,\varepsilon}(\xi,0)=0,
\end{cases}
\end{gather}
where $ \cala_{2}^{\varepsilon}(t) $ is given by
\begin{align}\label{cal-A-2-T}
\displaystyle  \cala_{2}^{\varepsilon}(t)&= - 2\int _{0}^{t} (\varphi _{x}^{I,0}(1,s)+M)v ^{I,0}(1,s)\int _{s}^{\infty}\frac{		{\mathop{\mathrm{e}}}^{- \frac{\sigma ^{2}}{\varepsilon ^{2 \alpha}}}}{\varepsilon ^{\alpha}\sqrt{\pi}} \mathrm{d}\sigma \mathrm{d}s
   \nonumber \\[1mm]
& \displaystyle \quad- 2(\varphi _{0x}(1)+M)v _{\ast} \int _{0}^{t}\chi(\frac{\tau}{\varepsilon ^{\alpha}}) \int _{0}^{\tau}\frac{   {\mathop{\mathrm{e}}}^{- \frac{\sigma ^{2}}{\varepsilon ^{2 \alpha}}}}{ \varepsilon ^{\alpha}\sqrt{ \pi}} \mathrm{d}\sigma  \mathrm{d}\tau ,
\end{align}
with $ \chi $ and $ \alpha $ being as in \eqref{cal-A-1}. Similar to \eqref{aprr-esti-v-B} and \eqref{apr-esti-vfi-B}, we have
\begin{gather}\label{appro-v-b-property}
\displaystyle  \Big\|(v ^{b,\varepsilon}-v ^{b,0})\Big(\frac{x}{\varepsilon ^{1/2}},t \Big)\Big\|_{L _{T}^{\infty}L ^{\infty}}+\Big\|\partial _{x}^{k} (\varphi ^{b,\varepsilon}-\varphi  ^{b,1})\Big(\frac{x}{\varepsilon ^{1/2}},t\Big)\Big\|_{L _{T}^{\infty}L ^{\infty}} \leq C\varepsilon ^{\frac{\alpha}{2}}
\end{gather}
for $ k=0,1 $. We refer the reader to Appendix B for some properties of $ \cala_{2}^{\varepsilon}(t) $ and Appendix \ref{appen-approxi} for the proof of \eqref{apr-esti-vfi-B} and \eqref{appro-v-b-property}.

Although we focus only on the convergence result for the leading-order profiles, some regularities and estimates of higher-order outer-/boundary-layer profiles are also needed in our analysis. These higher-order profiles will be constructed based on the approximate leading-order profile $ (v ^{B,\varepsilon}, \varphi ^{B,\varepsilon }) $ and $ (v ^{b,\varepsilon}, \varphi
 ^{b,\varepsilon}) $. Precisely, the first-order outer-layer profile $ (\varphi^{I,1},v ^{I,1}) $ satisfies the following problem:
\begin{align}\label{first-outer-problem}
\displaystyle \begin{cases}
	\displaystyle\varphi _{t}^{I,1}= \varphi _{xx}^{I,1}-(\varphi _{x}^{I,0}+M)v _{x}^{I,1}-\varphi _{x}^{I,1}v _{x}^{I,0},\\[1mm]
	\displaystyle v _{t}^{I,1}=-(\varphi _{x}^{I,0}+M)v ^{I,1}-\varphi _{x}^{I,1}v ^{I,0},\\[1mm]
	\displaystyle \varphi ^{I,1}(0,t) =- \varphi ^{B,\varepsilon}(0,t), \ \ \varphi ^{I,1}(1,t)=-\varphi ^{b,\varepsilon}(0,t),\\[1mm]
	\displaystyle (\varphi ^{I,1}, v ^{I,1})(x,0)=(0,0).
\end{cases}
\end{align}
With $ (\varphi ^{I,0}, v ^{I,0}) $, $ (\varphi ^{I,1}, v ^{I,1}) $ and the leading-order boundary-layer profiles at hand, we continue to construct the boundary-layer profiles $ \varphi ^{B,2} $ and $ v ^{B,1} $ as follows
\begin{align}\label{second-bd-eq}
\displaystyle \begin{cases}
\displaystyle	 -\varphi _{zz}^{B,2}+v _{z}^{B ,\varepsilon}(\varphi _{xx} ^{I,0}(0,t)z+ \varphi _{x}^{I,1}(0,t)+\varphi _{z}^{B,2})
    \\[2mm]
    \displaystyle \quad + v _{z}^{B,1}(\varphi _{x}^{I,0}(0,t)+M+\varphi _{z}^{B,\varepsilon})+\varphi _{z}^{B,\varepsilon }v _{x}^{I,0}(0,t)=0,\\[2mm]
    \displaystyle  v _{t}^{B,1}-v _{zz}^{B,1}+ (\varphi _{x}^{I,0}(0,t)+M)v ^{B,1}
+\varphi _{z}^{B,\varepsilon }(v _{x} ^{I,0}(0,t)z+v ^{I,1}(0,t)+v ^{B,1})
      \\[2mm]
     \qquad+(\varphi _{xx}^{I,0}(0,t) z+\varphi _{x} ^{I,1}(0,t))v ^{B,\varepsilon}+ \varphi _{z}^{B,2}(v ^{I,0}(0,t)+v ^{B,\varepsilon}) =0 ,\\[2mm]
     \displaystyle v ^{B,1}(0,t)=-v ^{I,1}(0,t),\ \  \varphi ^{B,2}(+\infty, t)=v ^{B,1}(+\infty,t)=0,\\[2mm]
     \displaystyle (\varphi ^{B,2},v ^{B,1})(z,0)=(0,0).
\end{cases}
\end{align}
Similarly,  $ (\varphi ^{b,2},v ^{b,1}) $ satisfies
\begin{align}\label{sec-bd-eq-rt}
\displaystyle \begin{cases}
\displaystyle	 -\varphi _{\xi \xi}^{b,2}+v _{\xi}^{ b,\varepsilon}(\varphi _{xx} ^{I,0}(1,t)\xi+ \varphi _{x}^{I,1}(1,t)+\varphi _{\xi}^{b,2})
     \\[2mm]
    \displaystyle \quad + v _{\xi}^{b,1}(\varphi _{x} ^{I,0}(1,t)+M+\varphi _{\xi}^{ b,\varepsilon})+\varphi _{\xi}^{  b,\varepsilon}v _{x} ^{I,0}(1,t)=0,\\[2mm]
    \displaystyle  v _{t}^{b,1}-v _{\xi \xi}^{b,1}+ (\varphi _{x}^{I,0}(1,t)+M)v ^{b,1}
+\varphi _{\xi}^{ b,\varepsilon}(v _{x}^{I,0}(1,t)\xi+v ^{I,1}(1,t)+v ^{b,1})
    \\[2mm]
     \qquad+(\varphi _{xx} ^{I,0}(1,t) \xi+\varphi _{x}^{I,1}(1,t))v ^{b,\varepsilon}+ \varphi _{\xi}^{b,2} (v ^{I,0}(1,t)+v ^{b,\varepsilon}) =0 ,\\[2mm]
     \displaystyle v ^{b,1}(0,t)=-v ^{I,1}(1,t),\ \  \varphi ^{b,2}(-\infty, t)=v ^{b,1}(-\infty,t)=0,\\[2mm]
     \displaystyle (\varphi ^{b,2},v ^{b,1})(\xi,0)=(0,0).
\end{cases}
\end{align}

Finally, we remark that the regularities of higher-order profiles always depend on the regularities of lower-order profiles. For example, $ \varphi ^{B,1} $ is determined by $ v ^{B,0} $ and the leading-order outer-layer profiles.  We can obtain the existence and regularity of solutions to the problem \eqref{first-outer-problem} only if $ \varphi ^{B,\varepsilon}  $ and $ \varphi ^{b,\varepsilon} $ are regular enough and satisfy certain compatibility conditions. We should emphasize that since we have approximated $ v ^{B,0} $ and $ v ^{b,0} $ by the $ \varepsilon $-dependent corner-corrector functions $ \cala_{i}^{\varepsilon}(t)\,(i=1,2) $ and constructed the higher-order profiles based on $ v ^{B,\varepsilon} $ and $ v ^{b,\varepsilon} $ determined by \eqref{first-bd-layer-pro-appro} and \eqref{first-bd-pro-rt-approxi}, respectively, the dependence on $ \varepsilon $ of the norms of these profiles will play a crucial role in proving the convergence of the leading-order boundary-layer profiles. This needs very delicate and careful analysis. We shall show the details of proving the regularities of these profiles in Sec. \ref{sec:study_on_the_inner_outer_layers}.


 \subsection{Statement of main results} 
 \label{sec:main_results}

 To prove the convergence of the boundary-layer profiles constructed in the last subsection, we require that the initial data satisfy some compatibility conditions at the boundary. Specifically we assume that $ (\varphi _{0},v _{0}) $ satisfies
 \begin{subequations}\label{compatibility-simple}
\begin{align}
&\partial _{t}^{i}\varphi ^{I,0}\vert _{t=0}=0 \ (i=1,2,3) & \text{on}\ \partial \mathcal{I}, \label{compatibility-simple-1}\\
&v _{0}=v _{\ast}  & \text{on}\ \partial \mathcal{I},\label{compatibility-simple-2}
 \end{align}
\end{subequations}
where $\partial _{t}^{i}\varphi ^{I,0}\vert _{t=0}$ can be inductively derived from the equations in \eqref{eq-outer-0} as
\begin{gather}\label{compatibility-vfi0}
\begin{cases}
\partial _{t}\varphi ^{I,0}\vert _{t=0}:=\varphi _{0xx}-(\varphi _{0x}+M)v _{0x}, \\[2mm]
\partial _{t}^2\varphi ^{I,0}\vert _{t=0}:=(\partial _{t}\varphi^{I,0}\vert _{t=0})_{xx}+ (\varphi _{0x}+M)((\varphi _{0x}+M)v _{0}) _{x}-(\partial _{t}\varphi^{I,0}\vert _{t=0})_{x}v _{0x}, \\[2mm]
\partial _{t}^3\varphi ^{I,0}\vert _{t=0}:=(\partial _{t}^{2}\varphi ^{I,0}\vert _{t=0})_{xx}-(\partial _{t}^{2}\varphi ^{I,0} \vert _{t=0}) _{x}v _{0x}+2(\partial _{t}\varphi ^{I,0}\vert _{t=0}) _{x}((\varphi _{0x}+M )v _{0})_{x}\\[2mm]
\qquad \qquad \qquad +(\varphi _{0x}+M)((\partial _{t}\varphi ^{I,0}\vert _{t=0})_{x}v _{0}) _{x}-(\varphi _{0x}+M)((\varphi _{0x}+M)^{2}v _{0}) _{x}.
\end{cases}
 \end{gather}
We say that the initial value $\varphi ^{I,0}|_{t=0}$ of the problem \eqref{eq-outer-0} is compatible with boundary conditions up to order three if it fulfills \eqref{compatibility-simple-1}, while the initial values of problem \eqref{first-bd-layer-pro} and \eqref{first-bd-pro-rt} are compatible with boundary conditions at the zero-th order if \eqref{compatibility-simple-2} holds (i.e. the initial values of leading-order boundary-layer profiles and boundary conditions are equal at the corner $(0,0)$).

To proceed, let $ \nu \in (0,\frac{1}{4}) $ be a constant such that
\begin{align}\label{nu-constriant}
\displaystyle  1+\nu>\alpha,
\end{align}
and denote
\begin{align}\label{iota0-defi}
\displaystyle \iota _{0}= \min\left\{ \frac{3}{4}-\nu, \frac{\alpha}{2}, 1+\frac{\nu - \alpha}{2}, 1-\frac{2}{3}\nu, \frac{5}{4}-\frac{\alpha}{2} \right\},
\end{align}
where the constant $ \alpha $ is given in \eqref{al-constriant}. Our main results on the convergence of boundary layer solutions for the reformulated problem \eqref{refor-eq} are given below.
\begin{theorem}\label{thm-stabi-refor}
Assume that $ (\varphi  _{0},v _{0})\in H ^{7} \times  H ^{7} $ and $ (\sqrt{v _{0}})_{x}\in L ^{2} $ with $ \varphi _{0x}+M>0 $ and $ v _{0}\geq 0 $ satisfying \eqref{compatibility-simple}. Then for any $ T>0 $, there exists a constant $ \varepsilon _{T}>0 $ with $ \displaystyle \lim _{T \rightarrow \infty}\varepsilon _{T}=0 $ such that if $ \varepsilon \leq \varepsilon _{T} $, the problem \eqref{refor-eq} admits a unique solution $ (\varphi ^{\varepsilon},v ^{\varepsilon}) \in L  ^{\infty}(0,T;H ^{2}\times H ^{2})$ satisfying:
\begin{align}
&\|\varphi ^{\varepsilon}(\cdot,t)-\varphi ^{I,0}(\cdot,t)\|_{L^\infty} \leq C \varepsilon ^{\frac{3 \iota _{0} }{2}- \frac{3}{8}},\label{vfi-VI-0}\\[2mm]
&t ^{\frac{5}{4}}\|\varphi _{x}^{\varepsilon}(\cdot,t)-\varphi _{x}^{I,0}(\cdot,t)-\big(\varphi _{z}^{B,1}(\cdot,t)+\varphi _{\xi}^{b,1}(\cdot,t) \big)\|_{L^\infty}\leq C \varepsilon ^{2 \iota _{0}-1},\label{vfi-VI-01}  \\[2mm]
&\|v ^{\varepsilon}(\cdot,t)-v ^{I,0}(\cdot,t)-\big(v ^{B,0}(\cdot,t) +v ^{ b,0}(\cdot,t)\big)\|_{L^\infty}\leq C \varepsilon ^{\iota _{0}- \frac{1}{4}},
      \label{vfi-VI-02}
\end{align}
with $z:=\frac{x}{\varepsilon ^{1/2}},\ \ \xi:= \frac{x-1}{\varepsilon ^{1/2}} $,
where $ (u ^{I,0}, v ^{I,0}) $, $ v ^{B,0} $ and $ v ^{b,0} $ are solutions to the problems \eqref{eq-outer-0}, \eqref{first-bd-layer-pro} and \eqref{first-bd-pro-rt}, respectively; $\varphi ^{I,1}  $ is determined by \eqref{first-outer-problem}; $ \varphi ^{B,1} $ and $ \varphi ^{b,1} $ are given by \eqref{vfi-bd-1ord-lt} and \eqref{firs-bd-1-rt}, respectively; $C>0$ is a constant depending on $T$ but independent of $\varepsilon$.
\end{theorem}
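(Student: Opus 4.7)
The idea is to build an $O(\sqrt{\varepsilon})$-accurate approximate solution from the profiles constructed in Section~\ref{sub:boundary_layer_profiel},
\begin{align*}
\varphi^a &= \varphi^{I,0} + \sqrt{\varepsilon}\,(\varphi^{I,1} + \varphi^{B,\varepsilon} + \varphi^{b,\varepsilon}) + \varepsilon\,(\varphi^{B,2} + \varphi^{b,2}),\\
v^a &= v^{I,0} + v^{B,\varepsilon} + v^{b,\varepsilon} + \sqrt{\varepsilon}\,(v^{I,1} + v^{B,1} + v^{b,1}),
\end{align*}
and to control the perturbation $(\psi,w):=(\varphi^{\varepsilon}-\varphi^a,\,v^{\varepsilon}-v^a)$. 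Plugging this ansatz into \eqref{refor-eq} and subtracting the defining equations \eqref{eq-outer-0}, \eqref{first-outer-problem}, \eqref{first-bd-layer-pro-appro}, \eqref{first-bd-pro-rt-approxi}, \eqref{second-bd-eq}, \eqref{sec-bd-eq-rt}, one obtains a parabolic system for $(\psi,w)$ with homogeneous Dirichlet data, zero initial data, and residual sources $R_1^\varepsilon,R_2^\varepsilon$ coming from unmatched higher-order expansion pieces, time derivatives of $\mathcal A_i^\varepsilon$, and nonlinear remainders quadratic in $(\psi,w)$. The approximation estimates \eqref{aprr-esti-v-B}, \eqref{apr-esti-vfi-B}, \eqref{appro-v-b-property} together with the properties of $\mathcal A_i^\varepsilon$ (Lemma~\ref{LEM-correc-function}) give smallness of $R_i^\varepsilon$ in suitable weighted norms, at the price of an initial-layer singularity of order $\varepsilon^{-\alpha}$ in $\partial_t \mathcal A_i^\varepsilon$ near $t=0$.

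The core of the argument is a uniform-in-$\varepsilon$ weighted energy estimate for $(\psi,w)$. The worst term in the $w$-equation is
\[
-(\varphi^a_x + M)\,w \supset -\varepsilon^{-1/2}\bigl(\varphi^{B,\varepsilon}_z + \varphi^{b,\varepsilon}_\xi\bigr)\,w,
\]
which is singular as $\varepsilon\to 0$, so a direct Gr\"onwall argument fails. Following the cancellation strategy emphasised in the introduction, I would introduce a weight function $\rho^{\varepsilon}(x,t)$ built from $e^{\pm v^{B,\varepsilon}}$ and $e^{\pm v^{b,\varepsilon}}$, test the $w$-equation against $\rho^\varepsilon w$ (and against $\rho^\varepsilon$ times derivatives of $w$), and arrange the integration by parts so that the $\varepsilon^{-1/2}$-singular linear piece is matched exactly by the transport-type term generated by $\rho^\varepsilon_x/\rho^\varepsilon$, leaving only regular contributions absorbed by the diffusion $\varepsilon w_{xx}$. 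The $\psi$-equation, which carries no cross-diffusion after the anti-derivative transformation \eqref{anti-derivatives-transf}, is easier: testing with $\psi$, $-\psi_{xx}$ and (for the top-order bound) time-weighted $\psi_t$ closes the $\psi$-norms using the $w$-norms already controlled. Because $\partial_t\mathcal A_i^\varepsilon\sim\varepsilon^{-\alpha}$ near $t=0$, each derivative beyond the zero-order compatibility \eqref{compatibility-simple-2} must be accompanied by a time-weight $t^k$; the constraints $1<\alpha<5/4$ and $1+\nu>\alpha$ in \eqref{al-constriant}--\eqref{nu-constriant} are precisely what make these weighted norms of $\partial_t\mathcal A_i^\varepsilon$ integrable in $t$ and balance with the smallness produced by the Gaussian kernel.

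Once weighted energies of order $\varepsilon^{2\iota_0}$ are closed, the $L^\infty$ bounds \eqref{vfi-VI-0}--\eqref{vfi-VI-02} follow from the one-dimensional interpolation $\|f\|_{L^\infty}\lesssim \|f\|_{L^2}^{1/2}\|f_x\|_{L^2}^{1/2}$ combined with the $\sqrt{\varepsilon}$-gaps between consecutive terms in the ansatz and the replacement errors $v^{B,0}\leftrightarrow v^{B,\varepsilon}$, $v^{b,0}\leftrightarrow v^{b,\varepsilon}$ of size $O(\varepsilon^{\alpha/2})$ from \eqref{aprr-esti-v-B} and \eqref{appro-v-b-property}; these combinations precisely yield the fractional exponents $\tfrac{3\iota_0}{2}-\tfrac38$, $2\iota_0-1$ and $\iota_0-\tfrac14$. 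The factor $t^{5/4}$ in \eqref{vfi-VI-01} is the time-weight that compensates for the failure of the $H^2$-level estimate of $\psi$ at $t=0$ caused by the initial layer in $\mathcal A_i^\varepsilon$. Local existence together with a continuation argument upgrades the a priori bounds to the global solution on $[0,T]$ for all $\varepsilon\le\varepsilon_T$, with $\varepsilon_T\to 0$ as $T\to\infty$ since the constants in the Gr\"onwall estimates grow exponentially in $T$. The principal obstacle throughout is the tension between the two roles of $\mathcal A_i^\varepsilon$: they must be large enough to enforce second-order compatibility of $v^{B,\varepsilon},v^{b,\varepsilon}$ at the corner (so that $\varphi^{B,2},v^{B,1}$ and their right-boundary analogues exist as smooth profiles with the regularity needed by the cancellation scheme), yet small enough in the weighted norms driving the energy estimate that $\partial_t\mathcal A_i^\varepsilon$ does not overwhelm the smallness of $R_i^\varepsilon$---reconciling these via the precise Gaussian construction in \eqref{cal-A-1}, \eqref{cal-A-2-T} is what fixes the value of $\iota_0$ and forces the convergence to hold away from, but not uniformly down to, $t=0$.
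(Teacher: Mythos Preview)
Your overall architecture---build the approximate solution from profiles, derive perturbation equations with residuals, close weighted energy estimates with time-weights absorbing the $\partial_t\cala_i^\varepsilon$ singularity, then interpolate---matches the paper's. The gap is that you have misidentified which coefficient is singular and hence which equation needs the weighted test. The term $-(\varphi^a_x+M)w$ is \emph{not} $O(\varepsilon^{-1/2})$: since $\sqrt\varepsilon\,\partial_x\varphi^{B,\varepsilon}=\varphi^{B,\varepsilon}_z$ is $O(1)$, the coefficient $\varphi^a_x+M$ is uniformly bounded above and below by positive constants (this is \eqref{postive-appr}), and $-(\varphi^a_x+M)|w|^2$ is a good damping term. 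The genuinely $\varepsilon^{-1/2}$-singular coefficient is $V^a_x=\partial_x v^{B,\varepsilon}+\cdots=\varepsilon^{-1/2}v^{B,\varepsilon}_z+\cdots$, and it sits in the $\psi$-equation through $-\psi_x V^a_x$. So your assertion that ``the $\psi$-equation \ldots\ is easier'' is inverted, and weighting the $w$-equation by $\rho^\varepsilon$ does not address the actual obstruction.

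The paper instead tests the $\psi$-equation against $\psi/(\hat\Phi+M)$ with $\hat\Phi:=\varphi^{I,0}_x+\varphi^{B,\varepsilon}_z+\varphi^{b,\varepsilon}_\xi$; near the left boundary $\hat\Phi+M\approx(\varphi^{I,0}_x(0,t)+M)e^{v^{B,\varepsilon}}$, so your exponential-weight intuition is right but aimed at the wrong equation. After integration by parts, the diffusion contributes $\int\psi_x\psi\,\hat\Phi_x/(\hat\Phi+M)^2$, and the dangerous piece combines with it into $\int\psi_x\psi\,[\hat\Phi_x-(\hat\Phi+M)V^a_x]/(\hat\Phi+M)^2$; the bracket is uniformly bounded in $L^2_TL^\infty$ (see \eqref{hat-fida-esti-con}) because the singular parts cancel via the structural identity $\varphi^{B,\varepsilon}_{zz}-\varphi^{B,\varepsilon}_z v^{B,\varepsilon}_z=(\varphi^{I,0}_x(0,t)+M)v^{B,\varepsilon}_z$ coming from \eqref{vfi-bd-1ord-lt-approxi}. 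The $w$-equation is then tested against plain $w$. A secondary omission: your ansatz lacks the boundary correctors $b^\varepsilon_\varphi,b^\varepsilon_v$ of \eqref{b-vfi-ve}--\eqref{b-v-ve}; without them $(\psi,w)$ does not carry exact homogeneous Dirichlet data (for instance $v^a(0,t)=v_*+\cala_1^\varepsilon(t)\neq v_*$), and the $\partial_t\cala^\varepsilon_i$ contributions you describe actually enter the residuals through $\partial_t b^\varepsilon_v$ rather than directly.
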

\begin{remark}
From \eqref{al-constriant} and \eqref{iota0-defi}, it follows that
\begin{align}\label{range-iota0}
\displaystyle \frac{1}{2}< \iota _{0} <\frac{7}{12}.
\end{align}
Indeed by $ 0< \nu<1/4 $ and  $1< \alpha<1+\nu $, we can easily get $ 1/2< \iota _{0} $. If $ \iota _{0} \geq \frac{7}{12} $, the fact $ 3/4- \nu \geq \iota _{0} $ gives $ \nu \leq 1/6 $ while the fact  $ \frac{1+\nu}{2}>\frac{\alpha}{2} \geq\iota _{0} $ yields $ \nu>1/6 $. Then a contradiction arises  and thus $\iota _{0} \leq \frac{7}{12} $. In particular, if we take $ \alpha=\frac{11}{10}  $, $ \nu=\frac{1}{5} $, then $ \iota _{0}= \frac{11}{20}$.

\end{remark}
\vspace{2mm}

Based on the transformation \eqref{anti-derivatives-transf}, we can transfer the results of Theorem \ref{thm-stabi-refor} to the original problem \eqref{eq-orignal}. Precisely, let
\begin{gather}
\displaystyle u ^{\varepsilon} = \varphi _{x}^{\varepsilon}+M,\ \ u ^{I,0}=\varphi _{x}^{I, 0}+M,
\end{gather}
with $ \varphi ^{\varepsilon} $ and $ \varphi ^{I,0} $ being the solutions to the problem \eqref{refor-eq} and the problem \eqref{eq-outer-0}, respectively. Then $ (u ^{\varepsilon}, v ^{\varepsilon}) $ and $ (u ^{I, 0}, v ^{I, 0}) $ solve the problem \eqref{eq-orignal} for $ \varepsilon>0 $ and $ \varepsilon=0 $, respectively. Moreover, with \eqref{vfi-bd-1ord-lt} and \eqref{firs-bd-1-rt}, we have
\begin{equation}\label{bb}
\begin{aligned}
u ^{B,0}\left(z,t \right):=&\varphi _{z}^{B,1}(z,t)=(\varphi _{x}^{I,0}(0,t)+M)(		{\mathop{\mathrm{e}}}^{v ^{B,0}(z,t)}-1) ,\\
u ^{b,0}\left(\xi,t \right):=&\varphi _{\xi}^{b,1}(\xi,t)=
(\varphi _{x}^{I,0}(1,t)+M)({\mathop{\mathrm{e}}}^{v ^{b,0}(\xi,t)}-1).
\end{aligned}
\end{equation}
Then the results on the original problem are stated in the following.
\begin{theorem}\label{thm-original}
Assume that $ (u _{0},v _{0})\in H ^{6} \times  H ^{7} $ and $ (\sqrt{v _{0}})_{x}\in L ^{2} $ satisfying $\min _{x \in \bar{\mathcal{I}}} u _{0}>0 $ and \eqref{compatibility-simple} with $ \varphi _{0}= \int _{0}^{x}(u _{0}-M)\mathrm{d}y$ and $ M= \int _{\mathcal{I}}u _{0}\mathrm{d}x $. Then for any $ T>0 $, there exists a constant $ \varepsilon _{T}>0 $  with $ \displaystyle\lim _{T \rightarrow \infty}\varepsilon _{T}=0 $ such that if $ \varepsilon \leq \varepsilon _{T} $, the problem \eqref{eq-orignal}-\eqref{intial-bdary} admits a unique solution $ (u ^{\varepsilon},v ^{\varepsilon}) \in L  ^{\infty}(0,T;H ^{1}\times H ^{2})$ on $ [0,T] $, which satisfies
\begin{align}
&t ^{\frac{5}{4}}\|u ^{\varepsilon}(\cdot,t)-u ^{I,0}(\cdot,t)-\big(v ^{B,0}(\cdot,t) +u ^{ b,0}(\cdot,t)\big)\|_{L^\infty}\leq C \varepsilon ^{2\iota _{0}- 1}, \label{u-converg} \\[2mm]
&\|v ^{\varepsilon}(\cdot,t)-v ^{I,0}(\cdot,t)-\big(v ^{B,0}(\cdot,t) +v ^{ b,0}(\cdot,t)\big)\|_{L^\infty}\leq C \varepsilon ^{\iota _{0}- \frac{1}{4}}, \nonumber
\end{align}
where $z:=\frac{x}{\varepsilon ^{1/2}},\ \ \xi:= \frac{x-1}{\varepsilon ^{1/2}} $, and $C>0$ is a constant depending on $T$ but independent of $\varepsilon$. Here $ (u ^{I,0}, v ^{I,0})(x,t) $, $ v ^{B,0}(z,t) $ and $ v ^{b,0}(\xi,t) $ are solutions to the problems \eqref{eq-outer-0}, \eqref{first-bd-layer-pro} and \eqref{first-bd-pro-rt}, respectively; $ u ^{B,0}(z,t) $ and $ u ^{b,0}(\xi,t) $ are given in \eqref{bb}.

\end{theorem}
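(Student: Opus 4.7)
The plan is to derive Theorem \ref{thm-original} as a direct corollary of Theorem \ref{thm-stabi-refor} by means of the anti-derivative transformation \eqref{anti-derivatives-transf}. First I would verify that the hypotheses on $(u_{0},v_{0})$ translate into the hypotheses of Theorem \ref{thm-stabi-refor} on $(\varphi_{0},v_{0})$. Setting $\varphi_{0}(x)=\int_{0}^{x}(u_{0}(y)-M)\mathrm{d}y$ with $M=\int_{\mathcal{I}}u_{0}\mathrm{d}x$, mass conservation gives $\varphi_{0}(0)=\varphi_{0}(1)=0$; the regularity $u_{0}\in H^{6}$ yields $\varphi_{0}\in H^{7}$ since $\varphi_{0x}=u_{0}-M$; and the positivity $\min_{\bar{\mathcal{I}}}u_{0}>0$ is equivalent to $\varphi_{0x}+M>0$. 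The compatibility conditions \eqref{compatibility-simple} and the assumptions $v_{0}\in H^{7}$, $(\sqrt{v_{0}})_{x}\in L^{2}$ are imposed directly in the statement, so Theorem \ref{thm-stabi-refor} applies.

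Next I would invoke Theorem \ref{thm-stabi-refor} to obtain, for $\varepsilon\leq\varepsilon_{T}$, a unique solution $(\varphi^{\varepsilon},v^{\varepsilon})\in L^{\infty}(0,T;H^{2}\times H^{2})$ of the reformulated problem \eqref{refor-eq} satisfying \eqref{vfi-VI-0}--\eqref{vfi-VI-02}. Defining $u^{\varepsilon}=\varphi_{x}^{\varepsilon}+M$ and $u^{I,0}=\varphi_{x}^{I,0}+M$, the regularity $\varphi^{\varepsilon}\in H^{2}$ automatically yields $u^{\varepsilon}\in H^{1}$, and a straightforward back-substitution shows that $(u^{\varepsilon},v^{\varepsilon})$ solves \eqref{eq-orignal}--\eqref{intial-bdary}: differentiating $\eqref{refor-eq}_{1}$ in $x$ recovers the cell equation, the Dirichlet condition $\varphi^{\varepsilon}(0,t)=\varphi^{\varepsilon}(1,t)=0$ combined with $\eqref{refor-eq}_{1}$ at the boundary yields the zero-flux condition $u_{x}^{\varepsilon}-u^{\varepsilon}v_{x}^{\varepsilon}=0$, and the initial data transform as $u^{\varepsilon}(x,0)=\varphi_{0x}(x)+M=u_{0}(x)$. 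Uniqueness for \eqref{eq-orignal}--\eqref{intial-bdary} transfers to uniqueness for \eqref{refor-eq} (and vice versa) because taking anti-derivatives is a bijection between solution classes with the same mass $M$.

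Finally, for the convergence estimates, the bound on $\|v^{\varepsilon}-v^{I,0}-v^{B,0}-v^{b,0}\|_{L^{\infty}}$ is literally \eqref{vfi-VI-02}. For the $u$-bound we differentiate: $u^{\varepsilon}-u^{I,0}=\varphi_{x}^{\varepsilon}-\varphi_{x}^{I,0}$, while \eqref{bb} together with \eqref{vfi-bd-1ord-lt} and \eqref{firs-bd-1-rt} gives $u^{B,0}(z,t)=\varphi_{z}^{B,1}(z,t)$ and $u^{b,0}(\xi,t)=\varphi_{\xi}^{b,1}(\xi,t)$. Therefore
\begin{equation*}
u^{\varepsilon}-u^{I,0}-\big(u^{B,0}+u^{b,0}\big)=\varphi_{x}^{\varepsilon}-\varphi_{x}^{I,0}-\big(\varphi_{z}^{B,1}+\varphi_{\xi}^{b,1}\big),
\end{equation*}
and \eqref{u-converg} is precisely \eqref{vfi-VI-01} after this identification.

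There is essentially no hard step: all analytical effort is carried out in the proof of Theorem \ref{thm-stabi-refor}, and the present statement is a cosmetic reformulation. The only mild subtlety is bookkeeping — namely checking that the identities in \eqref{bb} match the boundary-layer corrector $\varphi_{z}^{B,1}$ appearing inside \eqref{vfi-VI-01}, and that the required regularity drops by exactly one derivative ($H^{7}\rightarrow H^{6}$ for $u_{0}$, $H^{2}\rightarrow H^{1}$ for $u^{\varepsilon}$) under the anti-derivative transformation; both are immediate.
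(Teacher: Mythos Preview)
Your proposal is correct and follows exactly the route the paper takes: the paper states explicitly that Theorem \ref{thm-original} ``is a consequence of Theorem \ref{thm-stabi-refor} by the change of variables as briefed in Section 1,'' and the identifications $u^{\varepsilon}=\varphi_{x}^{\varepsilon}+M$, $u^{I,0}=\varphi_{x}^{I,0}+M$, $u^{B,0}=\varphi_{z}^{B,1}$, $u^{b,0}=\varphi_{\xi}^{b,1}$ via \eqref{bb} are spelled out in the paragraph preceding the theorem. Your bookkeeping of the regularity shift and the translation of hypotheses and boundary conditions is accurate, so there is nothing to add.
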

\begin{remark}
From \eqref{u-converg}, for any $v_*>0$, we get  that
\begin{align*}
\|u ^{\varepsilon}(x,t)-u ^{I,0}(x,t)-   (u^{B,0}(z,t)+ u ^{b,0}(\xi,t))\|_{L^\infty}\leq C \varepsilon ^{2 \iota _{0}-1}, \ \forall \ 0< t \leq T
\end{align*}
where $T>0$ is independent of $v_*$ and can be arbitrarily large. That is, the convergence of the boundary-layer profile $u^\varepsilon$ holds for any $t>0$ while $v^\varepsilon$ for any $t\geq0$. However, the convergence result in \cite{Corrillo-Hong-Wang-vanishing} with degenerate initial data $u_0$ satisfying $u_0|_{\partial \mathcal{I}}=0$ holds only for some finite time $T_0(v_*)$ depending on $v_*$. This is the major difference between degenerate and non-degenerate initial data $u_0$.
\end{remark}

\section{Regularity of the outer-/boundary-layer profiles} 
\label{sec:study_on_the_inner_outer_layers}

In this section, we shall derive the requisite regularities of solutions to  problems \eqref{eq-outer-0}, \eqref{first-bd-layer-pro-appro}, \eqref{first-bd-pro-rt-approxi}, \eqref{first-outer-problem}, \eqref{second-bd-eq}, \eqref{sec-bd-eq-rt}, respectively. The proofs are based on the energy method along with some basic mathematical tools introduced in Appendix \ref{Appen-Analysis}.
We start by showing the following estimates on the corner-corrector functions $\cala_{i} ^{\varepsilon}(t), i=1,2$.
\begin{lemma}\label{LEM-correc-function}
It holds for $ i=1,2 $ and any $ \lambda \geq 0 $ that
\begin{gather}
\displaystyle  \displaystyle \varepsilon ^{-\alpha}\|\partial _{t}^{j}\cala_{i} ^{\varepsilon}(t)\|_{L ^{2}((0,T))}^{2}+ \|\partial _{t}^{2}\cala_{i} ^{\varepsilon}(t)\|_{L ^{1}((0,T))}+\|(t ^{\frac{2k-3}{2}}+\varepsilon ^{\frac{\alpha(2k-3)}{2}})\partial _{t}^{k}\cala_{i} ^{\varepsilon}(t)\|_{L ^{2}((0,T))}^{2} \leq C, \label{Corre-property}\\
\displaystyle  \|t ^{\frac{2k-3+\lambda}{2}}\partial _{t}^{k}\cala_{i} ^{\varepsilon}(t)\|_{L ^{2}((0,T))}^{2} +\|t ^{\lambda} \partial _{t}\cala _{i}^{\varepsilon}\|_{L ^{\infty}((0,T))}\leq C \varepsilon ^{\alpha \lambda},\label{cal-A-INFTY-T-SMALL}
\end{gather}
where $ j=0,1 $, $ k=2,3 $, $ C>0 $ is a constant independent of $ \varepsilon $.

\end{lemma}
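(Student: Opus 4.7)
Since $\cala_1^\varepsilon$ and $\cala_2^\varepsilon$ have identical structure (with different values of $\varphi_x^{I,0}(0,\cdot),v^{I,0}(0,\cdot)$ replaced by their counterparts at $x=1$), it suffices to treat $\cala_1^\varepsilon$. My plan is to reduce everything to the elementary rescaled kernels
\begin{equation*}
G^{\varepsilon}(t):=\int _{t}^{\infty}\frac{e^{-\sigma^{2}/\varepsilon^{2\alpha}}}{\varepsilon^{\alpha}\sqrt{\pi}}\,\mathrm{d}\sigma,\qquad F^{\varepsilon}(t):=\int _{0}^{t}\frac{e^{-\sigma^{2}/\varepsilon^{2\alpha}}}{\varepsilon^{\alpha}\sqrt{\pi}}\,\mathrm{d}\sigma=\tfrac{1}{2}-G^{\varepsilon}(t),
\end{equation*}
so that, with $f(s):=(\varphi_x^{I,0}(0,s)+M)v^{I,0}(0,s)\in C^{\infty}([0,T])$ and $C_{0}:=(\varphi_{0x}(0)+M)v_{\ast}$,
\begin{equation*}
\partial_{t}\cala_{1}^{\varepsilon}(t)=-2f(t)G^{\varepsilon}(t)-2C_{0}\chi(t/\varepsilon^{\alpha})F^{\varepsilon}(t).
\end{equation*}
The engine of the proof is the change of variable $\sigma=\varepsilon^{\alpha}\eta$ (equivalently $t=\varepsilon^{\alpha}s$), which converts every $\varepsilon$-dependent integral into an $\varepsilon$-independent one multiplied by an explicit power of $\varepsilon^{\alpha}$; the Gaussian $e^{-\eta^{2}}$ and the compact support of $\chi',\chi''$ then make all resulting integrals finite.

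First I would dispatch the low-order bounds in \eqref{Corre-property}. Since $|G^{\varepsilon}|\leq\frac{1}{2}$ and $|F^{\varepsilon}|\leq\frac{1}{2}$, the rescaling gives $\|G^{\varepsilon}\|_{L^{2}(0,T)}^{2}\leq C\varepsilon^{\alpha}$ (the outer integrand decays rapidly in $s$) and $\|\chi(\cdot/\varepsilon^{\alpha})F^{\varepsilon}\|_{L^{2}(0,T)}^{2}\leq C\varepsilon^{\alpha}$ (support is in $[0,\varepsilon^{\alpha}]$). Combined with $\|f\|_{L^{\infty}}\leq C$, this gives $\varepsilon^{-\alpha}\|\partial_{t}\cala_{1}^{\varepsilon}\|_{L^{2}}^{2}\leq C$; the bound on $\|\cala_{1}^{\varepsilon}\|_{L^{2}}^{2}$ then follows from $|\cala_{1}^{\varepsilon}(t)|\leq t^{1/2}\|\partial_{t}\cala_{1}^{\varepsilon}\|_{L^{2}}$. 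For $\|\partial_{t}^{2}\cala_{1}^{\varepsilon}\|_{L^{1}}$, I would differentiate once more, noting $\partial_{t}G^{\varepsilon}=-e^{-t^{2}/\varepsilon^{2\alpha}}/(\varepsilon^{\alpha}\sqrt{\pi})$; the resulting terms $f'G^{\varepsilon}$, $f\,\partial_{t}G^{\varepsilon}$, $\varepsilon^{-\alpha}\chi'(t/\varepsilon^{\alpha})F^{\varepsilon}$ and $\chi(t/\varepsilon^{\alpha})\partial_{t}G^{\varepsilon}$ each have $L^{1}(0,T)$ norm bounded by $C$ after rescaling.

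Next I would carry out the weighted $L^{2}$-estimates for $\partial_{t}^{k}\cala_{1}^{\varepsilon}$, $k=2,3$. Differentiating explicitly,
\begin{equation*}
\partial_{t}^{k}\cala_{1}^{\varepsilon}(t)\ \text{is a finite sum of terms of the form}\ \frac{P(t/\varepsilon^{\alpha})}{\varepsilon^{\alpha m}}\,e^{-t^{2}/\varepsilon^{2\alpha}}\cdot h(t)\ \text{or}\ \varepsilon^{-\alpha m}\chi^{(\ell)}(t/\varepsilon^{\alpha})F^{\varepsilon}(t)\cdot h(t),
\end{equation*}
with $P$ polynomial, $m\leq k-1$, $\ell\leq k-1$, and $h$ smooth. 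For the generic weighted integral
\begin{equation*}
\int_{0}^{T}\bigl(t^{2k-3}+\varepsilon^{\alpha(2k-3)}\bigr)\bigl|\partial_{t}^{k}\cala_{1}^{\varepsilon}\bigr|^{2}\,\mathrm{d}t,
\end{equation*}
I would substitute $t=\varepsilon^{\alpha}s$ in each summand; the factor $t^{2k-3}+\varepsilon^{\alpha(2k-3)}$ contributes $\varepsilon^{\alpha(2k-3)}(s^{2k-3}+1)$, $\mathrm{d}t$ contributes $\varepsilon^{\alpha}$, while the squared derivative contributes $\varepsilon^{-2\alpha m}$ with $m\leq k-1$. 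The total $\varepsilon$-power is $\varepsilon^{\alpha(2k-3)+\alpha-2\alpha(k-1)}=\varepsilon^{0}$, and the remaining $s$-integral
\begin{equation*}
\int_{0}^{T/\varepsilon^{\alpha}}(s^{2k-3}+1)\bigl(P(s)^{2}e^{-2s^{2}}+|\chi^{(\ell)}(s)|^{2}\bigr)\,\mathrm{d}s
\end{equation*}
is uniformly bounded. This proves \eqref{Corre-property}. For \eqref{cal-A-INFTY-T-SMALL} with $k=2,3$, only the weight changes from $t^{2k-3}$ to $t^{2k-3+\lambda}$; repeating the rescaling now produces an extra $\varepsilon^{\alpha\lambda}$, giving exactly the claimed bound.

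Finally, for the $L^{\infty}$ bound $\|t^{\lambda}\partial_{t}\cala_{1}^{\varepsilon}\|_{L^{\infty}}\leq C\varepsilon^{\alpha\lambda}$, I would split into the regimes $t\leq\varepsilon^{\alpha}$ and $t>\varepsilon^{\alpha}$. In the first regime, $t^{\lambda}\leq\varepsilon^{\alpha\lambda}$ kills both $fG^{\varepsilon}$ and $\chi(t/\varepsilon^{\alpha})F^{\varepsilon}$ since both are bounded. In the second regime, $\chi(t/\varepsilon^{\alpha})=0$, and the standard tail bound $\int_{s}^{\infty}e^{-\eta^{2}}\mathrm{d}\eta\leq e^{-s^{2}}/(2s)$ with $s=t/\varepsilon^{\alpha}$ gives $|G^{\varepsilon}(t)|\leq\frac{\varepsilon^{\alpha}}{2t}e^{-t^{2}/\varepsilon^{2\alpha}}$, so $t^{\lambda}|G^{\varepsilon}(t)|\leq C\varepsilon^{\alpha}(t/\varepsilon^{\alpha})^{\lambda-1}e^{-t^{2}/\varepsilon^{2\alpha}}\cdot\varepsilon^{\alpha(\lambda-1)}\leq C\varepsilon^{\alpha\lambda}$ using boundedness of $s^{\lambda-1}e^{-s^{2}}$ for $s\geq 1$ (for $\lambda\geq 1$) or monotonicity $s^{\lambda-1}\leq 1$ (for $\lambda<1$).

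The proof is mostly a careful bookkeeping exercise. The main obstacle I anticipate is not any single estimate but rather keeping track of all derivative terms once $\partial_{t}^{k}$ acts through the product $\chi(t/\varepsilon^{\alpha})F^{\varepsilon}(t)$; making sure that the chosen weights $t^{(2k-3)/2}+\varepsilon^{\alpha(2k-3)/2}$ in \eqref{Corre-property} and $t^{(2k-3+\lambda)/2}$ in \eqref{cal-A-INFTY-T-SMALL} match the worst $\varepsilon^{-\alpha(k-1)}$ singularity \emph{after} squaring and rescaling is the key quantitative check that fixes the form of these weights.
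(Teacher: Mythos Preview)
Your proposal is correct and follows essentially the same approach as the paper: both arguments reduce to the rescaling $t=\varepsilon^{\alpha}s$ (equivalently $\sigma=\varepsilon^{\alpha}\eta$), use the Gaussian decay of the error-function tail together with the compact support of $\chi$, and read off the $\varepsilon$-power by direct bookkeeping. The paper carries out the computation term by term while you package it via $G^{\varepsilon},F^{\varepsilon}$ and a uniform power-counting scheme, but the mathematical content is the same; your generic decomposition of $\partial_t^k\cala_1^\varepsilon$ omits the harmless lowest-order term $f^{(k)}G^\varepsilon$ and the mixed $\chi^{(\ell)}\partial_t^{j}F^\varepsilon$ terms from the stated form, but these fit the same rescaling immediately and do not affect the argument.
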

\begin{proof}
We shall prove the estimates in \eqref{Corre-property} and \eqref{cal-A-INFTY-T-SMALL} for $ \cala_{1}^{\varepsilon}(t) $ only, and the estimates for $ \cala_{2}^{\varepsilon}(t) $ can be obtained in the same manner. Recalling \eqref{cal-A-1}, we have
 \begin{align}\label{cal-A-1-appen}
\displaystyle  \cala _{1}^{\varepsilon}(t)&= - 2\int _{0}^{t} (\varphi _{x}^{I,0}(0,s)+M)v ^{I,0}(0,s)\int _{s}^{\infty}\frac{		{\mathop{\mathrm{e}}}^{- \frac{\sigma ^{2}}{\varepsilon ^{2 \alpha}}}}{\varepsilon ^{\alpha}\sqrt{\pi}} \mathrm{d}\sigma \mathrm{d}s
   \nonumber \\
& \displaystyle \quad- 2(\varphi _{0x}(0)+M)v _{\ast} \int _{0}^{t}\chi(\frac{\tau}{\varepsilon ^{\alpha}}) \int _{0}^{\tau}\frac{   {\mathop{\mathrm{e}}}^{- \frac{\sigma ^{2}}{\varepsilon ^{2 \alpha}}}}{\varepsilon ^{ \alpha}\sqrt{ \pi }} \mathrm{d}\sigma  \mathrm{d}\tau .
\end{align}
It follows from direct computation that
\begin{align}\label{integral-esti-1}
&\displaystyle \int _{0}^{t} (\varphi _{x}^{I,0}(0,s)+M)v ^{I,0}(0,s)\int _{s}^{\infty}\frac{		{\mathop{\mathrm{e}}}^{- \frac{\sigma ^{2 }}{\varepsilon ^{2 \alpha}}}}{\varepsilon ^{\alpha}\sqrt{\pi}} \mathrm{d}\sigma \mathrm{d}s
 \nonumber \\
 &~\displaystyle = \int _{0}^{t}  (\varphi _{x}^{I,0}(0,s)+M)v ^{I,0}(0,s) \int _{\frac{s}{\varepsilon ^{\alpha}} } ^{\infty}		\frac{{\mathop{\mathrm{e}}}^{-\tau ^{2}}}{\sqrt{\pi}} \mathrm{d}\tau \mathrm{d}s
 \leq C \varepsilon ^{\alpha}\int _{0}^{\frac{t}{\varepsilon ^{\alpha}}}  \int _{\lambda} ^{\infty}		\frac{{\mathop{\mathrm{e}}}^{-\tau ^{2}}}{\sqrt{\pi}} \mathrm{d}\tau \mathrm{d}\lambda
  \nonumber \\
  &~\displaystyle \leq C \varepsilon ^{\alpha}\int _{0}^{\infty}  \int _{\lambda} ^{\infty}		\frac{{\mathop{\mathrm{e}}}^{-\tau ^{2}}}{\sqrt{\pi}} \mathrm{d}\tau \mathrm{d}\lambda \leq C \varepsilon ^{\alpha},
 \end{align}
 where we have used  Corollary \ref{cor-l-infty-vfi-I0} and \eqref{Sobolev-infty}, and the constant $ C>0 $ is independent of $ \varepsilon $. Similarly, we have
 \begin{align}\label{integral-esti-2}
 \displaystyle  \int _{0}^{t}\chi(\frac{\tau}{\varepsilon ^{\alpha}}) \int _{0}^{\tau}\frac{   {\mathop{\mathrm{e}}}^{- \frac{\sigma ^{2}}{\varepsilon ^{2 \alpha}}}}{\varepsilon ^{\alpha}\sqrt{ \pi }} \mathrm{d}\sigma  \mathrm{d}\tau \leq C \varepsilon ^{\alpha} ,
 \end{align}
 where $ C>0 $ is a constant independent of $ \varepsilon $. Therefore we get $ \|\cala_{1}^{\varepsilon}(t)\|_{L ^{\infty}((0,T))} \leq C \varepsilon ^{\alpha} $ for some constant $ C>0 $ which may depend on $ T $ but independent of $ \varepsilon $. Differentiating $ \cala_{1}^{\varepsilon}(t) $ with respect to $ t $ leads to
 \begin{align}\label{A-1-diff}
 \displaystyle   \partial _{t}\cala _{1}^{\varepsilon}(t)&= - 2(\varphi _{x}^{I,0}(0,t)+M)v ^{I,0}(0,t)\int _{t}^{\infty}\frac{		{\mathop{\mathrm{e}}}^{- \frac{\sigma ^{2}}{\varepsilon ^{2 \alpha}}}}{\varepsilon ^{\alpha}\sqrt{\pi}} \mathrm{d}\sigma
   \nonumber \\
& \displaystyle \quad- 2(\varphi _{0x}(0)+M)v _{\ast} \chi(\frac{t}{\varepsilon ^{\alpha}}) \int _{0}^{t}\frac{   {\mathop{\mathrm{e}}}^{- \frac{\sigma ^{2}}{\varepsilon ^{2 \alpha}}}}{\varepsilon ^{\alpha}\sqrt{ \pi }} \mathrm{d}\sigma  .
 \end{align}
 This along with \eqref{integral-esti-1} and \eqref{integral-esti-2} gives
 \begin{align*}
  \displaystyle \| \partial _{t}\cala _{1}^{\varepsilon}(t)\|_{L ^{2}((0,T))}^{2} \leq C \varepsilon ^{\alpha}.
  \end{align*}
  Differentiating \eqref{A-1-diff} with respect to $ t $, we get, thanks to Corollary \ref{cor-l-infty-vfi-I0},
   \begin{align*}
  \displaystyle  \partial _{t}^{2}\cala_{1}^{\varepsilon}(t) \sim \int _{t}^{\infty}\frac{		{\mathop{\mathrm{e}}}^{- \frac{\sigma ^{2}}{\varepsilon ^{2 \alpha}}}}{\varepsilon ^{\alpha}\sqrt{\pi}} \mathrm{d}\sigma + \frac{   {\mathop{\mathrm{e}}}^{- \frac{ t ^{2}}{ \varepsilon ^{2 \alpha}}}}{\varepsilon ^{\alpha} \sqrt{ \pi }} +
  \frac{1}{\varepsilon ^{\alpha}} \chi'(\frac{t}{\varepsilon ^{\alpha}})\int _{0}^{t}\frac{   {\mathop{\mathrm{e}}}^{- \frac{\sigma ^{2}}{4 \varepsilon ^{2 \alpha}}}}{\varepsilon
   ^{\alpha}\sqrt{ \pi }} \mathrm{d}\sigma
+\chi(\frac{t}{\varepsilon ^{\alpha}}) \frac{   {\mathop{\mathrm{e}}}^{- \frac{ t ^{2}}{4 \varepsilon ^{2 \alpha}}}}{\varepsilon ^{\alpha}\sqrt{ \pi }}.
  \end{align*}
  Then it follows from direct computation that
  \begin{align*}
  \displaystyle  \|\partial _{t}^{2}\cala_{1}^{\varepsilon}(t) \|_{L ^{1}((0,T))}
     &\leq C\int _{0}^{T}\int _{t}^{\infty}\frac{		{\mathop{\mathrm{e}}}^{- \frac{\sigma ^{2}}{\varepsilon ^{2 \alpha}}}}{\varepsilon ^{\alpha}\sqrt{\pi}} \mathrm{d}\sigma \mathrm{d}t + \int _{0}^{T} \frac{   {\mathop{\mathrm{e}}}^{- \frac{ t ^{2}}{ \varepsilon ^{2 \alpha}}}}{\varepsilon ^{\alpha} \sqrt{ \pi }} \mathrm{d}t + \int _{0}^{T}
  \frac{1}{\varepsilon ^{\alpha}} \chi'(\frac{t}{\varepsilon ^{\alpha}})\int _{0}^{t}\frac{   {\mathop{\mathrm{e}}}^{- \frac{\sigma ^{2}}{4 \varepsilon ^{2 \alpha}}}}{\varepsilon
   ^{\alpha}\sqrt{ \pi }} \mathrm{d}\sigma \mathrm{d}t
    \nonumber \\
    &~\displaystyle \quad+\int _{0}^{T}\chi(\frac{t}{\varepsilon ^{\alpha}}) \frac{   {\mathop{\mathrm{e}}}^{- \frac{ t ^{2}}{4 \varepsilon ^{2 \alpha}}}}{\varepsilon ^{\alpha}\sqrt{ \pi }} \mathrm{d}t \leq C
  \end{align*}
  and
  \begin{align*}
  \displaystyle  \|(t ^{\frac{1}{2}}+\varepsilon ^{\frac{\alpha}{2}})[\partial _{t}^{2}\cala_{1}^{\varepsilon}(t) \|_{L ^{2}((0,T))}^{2}&\leq C\int _{0}^{T}(t+\varepsilon ^{\alpha})\int _{t}^{\infty}\frac{		{\mathop{\mathrm{e}}}^{- \frac{\sigma ^{2}}{\varepsilon ^{2 \alpha}}}}{\varepsilon ^{\alpha}\sqrt{\pi}} \mathrm{d}\sigma \mathrm{d}t + C\int _{0}^{T} (t+\varepsilon ^{\alpha})\frac{   {\mathop{\mathrm{e}}}^{- \frac{ t ^{2}}{ \varepsilon ^{2 \alpha}}}}{\varepsilon ^{2\alpha}\pi } \mathrm{d}t
    \nonumber \\
    &\displaystyle \quad+ C\int _{0}^{T}(t+\varepsilon ^{\alpha})
  \frac{1}{\varepsilon ^{2\alpha}} [\chi'(\frac{t}{\varepsilon ^{\alpha}})]^{2}\int _{0}^{t}\frac{   {\mathop{\mathrm{e}}}^{- \frac{\sigma ^{2}}{4 \varepsilon ^{2 \alpha}}}}{\varepsilon
   ^{\alpha}\sqrt{ \pi }} \mathrm{d}\sigma \mathrm{d}t
    \nonumber \\
    &\displaystyle \quad+C\int _{0}^{T}(t+\varepsilon ^{\alpha})\chi(\frac{t}{\varepsilon ^{\alpha}}) \frac{   {\mathop{\mathrm{e}}}^{- \frac{ t ^{2}}{\varepsilon ^{2 \alpha}}}}{\varepsilon ^{2\alpha}\pi} \mathrm{d}t \leq C,
  \end{align*}
  where the constant $ C>0 $ is independent of $ \varepsilon $. Similar arguments further imply for any $ \lambda>0 $ that
  \begin{align*}
  &\displaystyle \|(t ^{\frac{1+\lambda}{2}}+\varepsilon ^{\frac{\alpha+\lambda}{2}})\partial _{t}^{2}\cala_{1}^{\varepsilon}(t) \|_{L ^{2}((0,T))} ^{2} + \|(t ^{\frac{3+\lambda}{2}}+\varepsilon ^{\frac{3\alpha+\lambda}{2}})\partial _{t}^{3}\cala_{1}^{\varepsilon}(t) \|_{L ^{2}((0,T))} ^{2} \leq C \varepsilon ^{\lambda \alpha}.
    \end{align*}
    Finally, by \eqref{A-1-diff}, the basic fact $ t ^{\lambda} \int _{t}^{\infty}		{\mathop{\mathrm{e}}}^{- \tau ^{2}}\mathrm{d}\tau \leq C _{\lambda} $ for some $ C _{\lambda} >0$ depending on $ \lambda $ and the fact that $ \chi $ is bounded and compactly supported, we have
    \begin{align*}
    \displaystyle  \|t ^{\lambda} \partial _{t}\cala_{1}^{\varepsilon}(t)\|_{L ^{\infty}}& \leq C t ^{\lambda}\int _{t}^{\infty}\frac{		{\mathop{\mathrm{e}}}^{- \frac{\sigma ^{2}}{\varepsilon ^{2 \alpha}}}}{\varepsilon ^{\alpha}\sqrt{\pi}} \mathrm{d}\sigma  + C\chi(\frac{t}{\varepsilon ^{\alpha}})  t ^{\lambda}\int _{0}^{t}\frac{   {\mathop{\mathrm{e}}}^{- \frac{\sigma ^{2}}{\varepsilon ^{2 \alpha}}}}{\varepsilon ^{\alpha}\sqrt{ \pi }} \mathrm{d}\sigma   \leq C
     \nonumber \\
     & \displaystyle \leq C \varepsilon ^{\alpha \lambda}\left( \frac{t}{\varepsilon ^{
     \alpha
     }} \right)^{\lambda}\left[ \int _{\frac{t}{\varepsilon ^{\alpha}}}^{\infty}		{\mathop{\mathrm{e}}}^{- \sigma ^{2}}\mathrm{d}\sigma+ \chi(\frac{t}{\varepsilon ^{\alpha}})\int _{0}^{\frac{t}{\varepsilon ^{\alpha}}}	{\mathop{\mathrm{e}}}^{- \sigma ^{2}}\mathrm{d}\sigma    \right] \leq C \varepsilon ^{\alpha \lambda}.
    \end{align*}
    This finishes the proof of Lemma \ref{LEM-correc-function}.
\end{proof}

Next we proceed with the problem on the leading-order outer-layer profile $ (\varphi ^{I,0}, v ^{I,0}) $. In \cite{Corrillo-Hong-Wang-vanishing}, we prove the global existence and uniqueness of classical solutions to the problem \eqref{refor-eq}. We quote the result here for later use.


\begin{lemma}\label{lem-regul-outer-layer-0}
Assume that $ (\varphi _{0}, v _{0}) \in H ^{7}\times H ^{7} $ and $ (\sqrt{v _{0}}) _{x} \in L ^{2}	 $ satisfying \eqref{compatibility-simple} and $\min _{x \in \bar{\mathcal{I}}} (\varphi _{0x}+M)>0 $. Then for any $ T>0 $, there exists a unique solution $ (\varphi ^{I,0}, v ^{I,0}) $ to the problem \eqref{eq-outer-0} on $ [0,T] $ satisfying
\begin{subequations}\label{con-vfi-v-I-0-regula}
\begin{align}
\displaystyle \displaystyle & K ^{-1} \leq \varphi _{x}^{I,0}+M \leq K, \ \ \ \partial _{t}^{k}\varphi ^{I,0} \in L _{T}^{2}H ^{8-2k}\ \ \mbox{for } k=0,1,2,3,4, \label{con-vfi-I-0-only}\\
\displaystyle v ^{I,0} \in &L _{T}^{\infty}H ^{7},\ \ \partial _{t}^{k}v ^{I,0} \in L _{T}^{2}H ^{9-2k}\ \ \mbox{for }k=1,2,3,4,  
\end{align}
where $ K>0 $ is a constant.
\end{subequations}

\end{lemma}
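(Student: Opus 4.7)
My plan is to reduce the problem for $(\varphi^{I,0}, v^{I,0})$ to a coupled system for $u^{I,0} := \varphi_x^{I,0} + M$ and $v^{I,0}$. The second equation $v_t^{I,0} = -u^{I,0} v^{I,0}$ is a linear ODE in time and integrates explicitly to
\begin{align*}
v^{I,0}(x,t) = v_0(x)\exp\!\Bigl(-\int_0^t u^{I,0}(x,s)\,ds\Bigr),
\end{align*}
so non-negativity of $v^{I,0}$ is automatic and the system is effectively scalar parabolic in $u^{I,0}$. Local-in-time existence then follows from a standard contraction-mapping argument on this reformulation, using the $H^7$-regularity of $(\varphi_0, v_0)$ and the compatibility condition $v_0|_{\partial\mathcal{I}} = v_\ast$ from \eqref{compatibility-simple-2}.

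To make the solution global and obtain the pointwise bounds $K^{-1}\le u^{I,0}\le K$, I differentiate the $\varphi^{I,0}$-equation once in $x$ and rewrite it in non-divergence form
\begin{align*}
u_t^{I,0} = u_{xx}^{I,0} - v_x^{I,0}\,u_x^{I,0} - u^{I,0}\,v_{xx}^{I,0},
\end{align*}
supplemented with the no-flux condition $(u_x^{I,0} - u^{I,0}v_x^{I,0})|_{\partial\mathcal{I}}=0$. The strong maximum principle then yields an upper bound, while the hypothesis $\min_{x\in\bar{\mathcal{I}}}(\varphi_{0x}+M)>0$ together with a comparison argument (controlling $v_{xx}^{I,0}$ by the explicit formula above and a Grönwall step) yields a strictly positive lower bound.

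For the hierarchy of time-regularity $\partial_t^k \varphi^{I,0}\in L_T^2 H^{8-2k}$ with $k=0,\dots,4$, I would differentiate the system in $t$ repeatedly and perform $L^2$-energy estimates on $\partial_t^k \varphi^{I,0}$. The compatibility conditions \eqref{compatibility-simple-1}, made explicit in \eqref{compatibility-vfi0}, guarantee $\partial_t^k \varphi^{I,0}|_{t=0}|_{\partial\mathcal{I}}=0$ for $k=1,2,3$, which are precisely the homogeneous Dirichlet boundary data one needs in order to carry out the energy estimate on each $\partial_t^k \varphi^{I,0}$. The companion estimates $\partial_t^k v^{I,0}\in L_T^2 H^{9-2k}$ are then read off by differentiating the ODE $v_t^{I,0}=-u^{I,0}v^{I,0}$ in $t$ and trading time-derivatives for spatial derivatives of $u^{I,0}$ already controlled above; the spatial regularity $v^{I,0}\in L_T^\infty H^7$ is obtained by differentiating the representation formula in $x$ up to seven times.

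The main obstacle is the coupling term $(\varphi_x^{I,0}+M)v_x^{I,0}$ in the $\varphi^{I,0}$-equation together with the absence of any diffusion for $v^{I,0}$: whenever $v_0$ is allowed to vanish in the interior, one must control $v_x^{I,0}/\sqrt{v^{I,0}}$ uniformly in $t$, which is exactly why the hypothesis $(\sqrt{v_0})_x\in L^2$ appears. The identity $(\sqrt{v^{I,0}})_x = \sqrt{v_0}\,(\sqrt{v_0})_x\,e^{w}/\sqrt{v^{I,0}} + \tfrac{1}{2}\sqrt{v^{I,0}}\,w_x$ with $w:=-\int_0^t u^{I,0}\,ds$ shows that $(\sqrt{v^{I,0}})_x \in L_T^\infty L^2$ once $u^{I,0}$ has sufficient spatial regularity, which closes the needed Hardy-type bound in the top-order estimate. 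All of the detailed calculations are carried out in \cite{Corrillo-Hong-Wang-vanishing}, and the statement is simply quoted here for use in the subsequent sections.
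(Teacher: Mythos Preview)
Your proposal is essentially aligned with the paper. The paper does not give a proof of this lemma at all: it simply states ``In \cite{Corrillo-Hong-Wang-vanishing}, we prove the global existence and uniqueness of classical solutions to the problem \eqref{refor-eq}. We quote the result here for later use,'' and then states the lemma without proof. You correctly identify this in your final sentence. The sketch you provide beforehand---reformulating in terms of $u^{I,0}=\varphi_x^{I,0}+M$, exploiting the explicit exponential formula for $v^{I,0}$, using maximum-principle/comparison arguments for the pointwise bounds, and bootstrapping the time-regularity via the compatibility conditions \eqref{compatibility-simple-1}---is a plausible outline of the argument in the cited companion paper, though of course the paper itself does not display any of these steps here.
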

Using Proposition \ref{prop-embeding-spacetime} in Appendix A and Lemma \ref{lem-regul-outer-layer-0}, we obtain the following estimates on $ (\varphi ^{I,0},v ^{I,0})$.
\begin{corollary}\label{cor-l-infty-vfi-I0}
Under the conditions of Lemma \ref{lem-regul-outer-layer-0}, it holds that
\begin{align}\label{l-INFTY-VFI-I-0}
\displaystyle \left\|\varphi ^{I,0}\right\|_{L _{T}^{\infty}H ^{7}}+\|v ^{I,0}\|_{L _{T}^{\infty}H ^{7}} +\|\partial _{t}^{k}\varphi ^{I,0}\|_{L _{T}^{\infty}H ^{7-2k}} +\|\partial _{t}^{k}v ^{I,0}\|_{L _{T}^{\infty}H ^{8-2k}}\leq C \ \ \  k=1,2,3.
\end{align}

\end{corollary}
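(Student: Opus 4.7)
The plan is to deduce the $L^\infty_T$ bounds in \eqref{l-INFTY-VFI-I-0} directly from the $L^2_T$ bounds of Lemma \ref{lem-regul-outer-layer-0} by applying the space-time embedding statement, namely Proposition \ref{prop-embeding-spacetime} (the standard Lions--Magenes / Aubin-type result: if $f\in L^2(0,T;H^{s+1})$ and $\partial_t f\in L^2(0,T;H^{s-1})$ with $f|_{t=0}\in H^s$, then $f\in C([0,T];H^s)\hookrightarrow L^\infty_T H^s$). Since Lemma \ref{lem-regul-outer-layer-0} already provides $L^2_T H^{8-2k}$ control on $\partial_t^k\varphi^{I,0}$ for $k=0,\dots,4$ and $L^2_T H^{9-2k}$ control on $\partial_t^k v^{I,0}$ for $k=1,\dots,4$ (together with $v^{I,0}\in L^\infty_T H^7$), all we have to do is match the pairs of indices correctly and check the initial values.

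First I would handle $\varphi^{I,0}$. Applying the embedding to $f=\varphi^{I,0}$ with $s=7$ uses $\varphi^{I,0}\in L^2_T H^8$ and $\partial_t\varphi^{I,0}\in L^2_T H^6$, yielding $\varphi^{I,0}\in L^\infty_T H^7$. Iterating, for $k=1,2,3$ I apply the embedding to $f=\partial_t^k\varphi^{I,0}$ with $s=7-2k$, using the pair $\partial_t^k\varphi^{I,0}\in L^2_T H^{8-2k}$ and $\partial_t^{k+1}\varphi^{I,0}\in L^2_T H^{6-2k}$ provided by \eqref{con-vfi-I-0-only}. Exactly the same argument applied to $v^{I,0}$ produces, for $k=1,2,3$, the bound $\partial_t^k v^{I,0}\in L^\infty_T H^{8-2k}$ from $\partial_t^k v^{I,0}\in L^2_T H^{9-2k}$ and $\partial_t^{k+1}v^{I,0}\in L^2_T H^{7-2k}$. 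The $L^\infty_T H^7$ bound on $v^{I,0}$ is already built into Lemma \ref{lem-regul-outer-layer-0}, so there is nothing new to show for it.

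The only delicate point, which I would address before invoking the embedding, is that the initial values $\partial_t^k\varphi^{I,0}|_{t=0}$ and $\partial_t^k v^{I,0}|_{t=0}$ (for $k=1,2,3$) must belong to the relevant Sobolev spaces. These are defined by differentiating the equations in \eqref{eq-outer-0} in $t$ and evaluating at $t=0$ (cf.\ the explicit formulas \eqref{compatibility-vfi0}), so they are polynomial expressions in $(\varphi_0,v_0)$ and their $x$-derivatives. The hypothesis $(\varphi_0,v_0)\in H^7\times H^7$, combined with the product/composition estimates in one dimension ($H^s$ is an algebra for $s\geq 1$), then gives $\partial_t^k\varphi^{I,0}|_{t=0}\in H^{7-2k}$ and $\partial_t^k v^{I,0}|_{t=0}\in H^{8-2k}$; the compatibility conditions \eqref{compatibility-simple-1}--\eqref{compatibility-simple-2} guarantee that these initial traces are consistent with the boundary data so that the embedding is applicable.

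No step of this argument is technically hard; the mildly delicate one is the bookkeeping for the initial values at the highest order $k=3$, where one wants $\partial_t^3\varphi^{I,0}|_{t=0}\in H^1$ — but this follows at once from the explicit third-order formula in \eqref{compatibility-vfi0} and the hypothesis $(\varphi_0,v_0)\in H^7\times H^7$. Once those initial bounds are in hand, the proof is a direct five-line application of Proposition \ref{prop-embeding-spacetime} to each of the four $\varphi^{I,0}$-terms and the three $v^{I,0}$-terms, combining all the resulting constants into the single $C$ appearing in \eqref{l-INFTY-VFI-I-0}.
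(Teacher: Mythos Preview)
Your proposal is correct and is exactly the approach the paper uses: the corollary is stated immediately after the sentence ``Using Proposition \ref{prop-embeding-spacetime} in Appendix A and Lemma \ref{lem-regul-outer-layer-0}, we obtain the following estimates,'' and the index-matching you describe is precisely what is needed. One small simplification: the version of Proposition \ref{prop-embeding-spacetime} quoted in the paper already delivers the $L^\infty_T H$ bound directly from $u\in L^2_T V$ and $u_t\in L^2_T V'$ without any separate hypothesis on the initial trace, so your careful discussion of the regularity of $\partial_t^k\varphi^{I,0}|_{t=0}$ and $\partial_t^k v^{I,0}|_{t=0}$, while not incorrect, is unnecessary here.
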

\begin{remark}
As mentioned before, the problem \eqref{eq-outer-0} is exactly the zero-diffusion problem of \eqref{refor-eq}, which is equivalent to the zero-diffusion problem of \eqref{eq-orignal} in the sense of classical solutions. Precisely, let $ (\varphi ^{I,0},v ^{I,0}) $ be the solution obtained in Lemma \ref{lem-regul-outer-layer-0} and denote $ u ^{I,0}=\varphi _{x}^{I,0}+M $. Then $ (u ^{I,0},v ^{I,0}) $ is the unique classical solution to the zero-diffusion problem of \eqref{eq-orignal}.
\end{remark}

Next lemma presents the regularity of the boundary-layer profiles $ v ^{B,\varepsilon} $ and $ \varphi ^{B,\varepsilon} $.
\begin{lemma}\label{lem-v-B-0}
Let $ (\varphi ^{I,0}, v ^{I,0}) $ be the solution to the problem \eqref{eq-outer-0} obtained in Lemma \ref{lem-regul-outer-layer-0}. Then for any $ T>0 $, there exists a positive constant $ \hat{\varepsilon} _{T} $ such that if $ \varepsilon \leq \hat{\varepsilon} _{T} $, the problem \eqref{first-bd-layer-pro-appro} admits a unique solution $ v ^{B,\varepsilon} $ on $ [0,T] $ such that for any $ l \in \mathbb{N} $, $ \langle z \rangle ^{l}\partial _{t}^{k} v ^{B,\varepsilon} \in L _{T}^{2}H _{z}^{6-2k}\,(k=0,1,2,3) $ and
\begin{subequations}\label{v-B-0-regularity}
\begin{gather}
\displaystyle 0 \leq v ^{B,\varepsilon} \leq v _{\ast},\ \ \ \|\langle z \rangle ^{l}v _{t}^{B,\varepsilon }\|_{L _{T}^{\infty}L _{z}^{2}}+ \|\langle z \rangle ^{l}\partial _{t}^{k}v ^{B,\varepsilon}\|_{L _{T}^{2} H _{z}^{3-2k}} \leq C  \ \ \mbox{for  } k=0,1,\label{v-B-0-regularity-a}
\\[2mm]
\displaystyle  \|\langle z \rangle ^{l}(t ^{\frac{1}{2}}+\varepsilon ^{\frac{\alpha}{2}})\partial _{t}^{i}\partial _{z}^{4-2i}v ^{B,\varepsilon}\|_{L _{T}^{2}L _{z}^{2}}+\|\langle z \rangle ^{l}(t ^{\frac{3}{2}}+\varepsilon ^{\frac{3 \alpha}{2}})\partial _{t}^{i}\partial _{z}^{5-2i}v ^{B,\varepsilon}\|_{L _{T}^{2} L _{z}^{2}} \leq C \ \ \mbox{for  }  i=0,1,2,\\[2mm]
\displaystyle  \|\langle z \rangle ^{l}(t ^{2}+\varepsilon ^{\frac{3 \alpha}{2}})\partial _{t}^{k}\partial _{z}^{6-2k}v ^{B,\varepsilon}\|_{L _{T}^{2} L _{z}^{2}}\leq C \ \ \mbox{for  } k=0,1,2,3.
\label{v-B-0-regularity-c}
\end{gather}
\end{subequations}
Furthermore, for $ \varphi ^{B,\varepsilon} $ defined in \eqref{vfi-bd-1ord-lt-approxi}, it holds that $ \langle z \rangle ^{l}\partial _{t} ^{k}\varphi ^{B,\varepsilon} \in L _{T}^{2}H ^{7-2k}\,(k=0,1,2,3) $ with
\begin{subequations}\label{con-vfi-B-1}
\begin{gather}
\displaystyle \|\langle z \rangle ^{l}\varphi _{t}^{ B,\varepsilon}\|_{L _{T}^{\infty}H _{z}^{1}}+ \|\langle z \rangle ^{l}\partial _{t}^{k}\varphi ^{B,\varepsilon}\|_{L _{T}^{2}H _{z}^{4-2k}} \leq C  \ \mbox{for }\, k=0,1, \label{con-vfi-B-1-a}
  \\[2mm]
 \displaystyle\|\langle z \rangle ^{l}(t ^{\frac{1}{2}}+\varepsilon ^{\frac{\alpha}{2}})\partial _{t}^{2}\varphi ^{B,\varepsilon}\|_{L _{T}^{2}L_{z}^{2}}+
 \|\langle z \rangle ^{l}(t ^{\frac{1}{2}}+\varepsilon ^{\frac{\alpha}{2}})\partial _{t}^{i}\partial _{z}^{k}\varphi ^{B,\varepsilon}\|_{L _{T}^{2}L _{z}^{2}} \leq C  \ \mbox{for }\, 2i+k =5,
 \\[2mm]
\displaystyle  \|\langle z \rangle ^{l}(t ^{\frac{3}{2}}+\varepsilon ^{\frac{3 \alpha}{2}})\partial _{t}^{k}\partial _{z}^{6-2k}\varphi ^{B,\varepsilon}\|_{L _{T}^{2}L _{z}^{2}} \leq C\ \ \mbox{for }\,k=0,1,2,
 \\[2mm]
 \displaystyle  \|\langle z \rangle ^{l}(t ^{2}+\varepsilon ^{\frac{3 \alpha}{2}})\partial _{t}^{3}\varphi ^{B,\varepsilon}\|_{L _{T}^{2}L _{z}^{2}}+\|\langle z \rangle ^{l}(t ^{2}+\varepsilon ^{\frac{3 \alpha}{2}})\partial _{t}^{k}\partial _{z}^{7-2k}\varphi ^{B,\varepsilon}\|_{L _{T}^{2}L _{z}^{2}} \leq C\ \ \mbox{for }\, k=0,1,2,3,
\end{gather}
\end{subequations}
where $ \alpha $ is as in \eqref{cal-A-1}, $ C >0$ is a constant depending on $ T $, but independent of $ \varepsilon $.
\end{lemma}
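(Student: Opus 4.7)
The strategy is to view \eqref{first-bd-layer-pro-appro} as a semilinear parabolic equation on the half-line with inhomogeneous Dirichlet data whose regularity is dictated by the corner-corrector $\cala_1^\varepsilon(t)$, and then combine weighted energy estimates in $z$ with time-weighted energy estimates matching the bounds in Lemma \ref{LEM-correc-function}. Concretely, fix a smooth cutoff $\chi_0(z)$ with $\chi_0(0)=1$ and compact support, and introduce the lifting $L(z,t):=\chi_0(z)\bigl(v_\ast-v^{I,0}(0,t)+\cala_1^\varepsilon(t)\bigr)$; then $w:=v^{B,\varepsilon}-L$ solves a semilinear heat equation on $\mathbb{R}_+$ with $w(0,t)=0$, rapid decay as $z\to\infty$, and zero initial data. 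The design of $\cala_1^\varepsilon(t)$ in \eqref{cal-A-1} is precisely what guarantees that $\partial_t^k w\vert_{t=0}$ vanishes at $z=0$ for $k=0,1,2$, allowing us to differentiate the equation in $t$ up to three times without producing corner singularities.

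First I would establish existence, uniqueness and the maximum-principle bound $0\le v^{B,\varepsilon}\le v_\ast$. Using the smallness $|\cala_1^\varepsilon(t)|\le C\varepsilon^\alpha\le v_\ast/2$ for $\varepsilon\le\hat\varepsilon_T$ from Lemma \ref{LEM-correc-function}, together with the positivity of $\varphi_x^{I,0}(0,t)+M$ from Lemma \ref{lem-regul-outer-layer-0}, the reaction term has the right sign so that $v=0$ is a subsolution and $v=v_\ast$ is a supersolution. The polynomial weight estimates on $v^{B,\varepsilon}$ are then obtained by multiplying the $w$-equation by $\langle z\rangle^{2l}w$ and integrating over $\mathbb{R}_+$: integration by parts transfers derivatives onto the weight $\langle z\rangle^{2l}$, and the resulting borderline terms are absorbed either by the diffusion or by the damping coming from $\varphi_x^{I,0}(0,t)+M\ge K^{-1}$ in the linear part. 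Gronwall then delivers $\langle z\rangle^l v^{B,\varepsilon}\in L_T^\infty L_z^2\cap L_T^2 H_z^1$, and additional spatial derivatives are gained by rewriting $v_{zz}^{B,\varepsilon}=v_t^{B,\varepsilon}+(\text{controlled nonlinearities})$.

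Next I would carry out the time-differentiated estimates with weights. Differentiating the $w$-equation $k$ times in $t$ produces a source that includes $\partial_t^k\cala_1^\varepsilon$ via the lifting, and by Lemma \ref{LEM-correc-function} this source is controlled only after multiplication by the weight $\bigl(t^{(2k-3)/2}+\varepsilon^{\alpha(2k-3)/2}\bigr)$ for $k=2,3$ (and globally in $L^2_T$ with a harmless $\varepsilon^{-\alpha/2}$ for $k=1$ that cancels against the explicit $\varepsilon^{\alpha/2}$ in the target weight). Testing the $k$-times differentiated equation against this very weight times $\partial_t^k w$, the boundary terms at $t=0$ vanish by the compatibility arranged through $\cala_1^\varepsilon$, the singular $\varepsilon^{-\alpha\cdot}$ contributions from the lifting are absorbed into the weighted source norms from Lemma \ref{LEM-correc-function}, and the linear damping controls the zero-order commutator terms. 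Iterating between time and space regularity via the equation then yields all of \eqref{v-B-0-regularity-a}--\eqref{v-B-0-regularity-c}.

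Finally, the estimates for $\varphi^{B,\varepsilon}$ follow from the representation \eqref{vfi-bd-1ord-lt-approxi}. Differentiating in $z$ gives $\varphi_z^{B,\varepsilon}=(\varphi_x^{I,0}(0,t)+M)(e^{v^{B,\varepsilon}}-1)$, so higher spatial derivatives reduce to those of $v^{B,\varepsilon}$ up to bounded polynomial factors of $e^{v^{B,\varepsilon}}$; the polynomial-weight transfer uses $\int_z^\infty\langle y\rangle^l|e^{v^{B,\varepsilon}(y,t)}-1|\,dy\lesssim \|\langle z\rangle^{l+2}v^{B,\varepsilon}\|_{L_z^2}$, which only costs a bounded enlargement of $l$. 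Time derivatives of $\varphi^{B,\varepsilon}$ are handled by Leibniz on the integrand, absorbing $\partial_t^j(\varphi_x^{I,0}+M)$ factors via Corollary \ref{cor-l-infty-vfi-I0} and $\partial_t^k v^{B,\varepsilon}$ via the estimates just derived, which directly reproduces the weight structure in \eqref{con-vfi-B-1-a}--\eqref{con-vfi-B-1}. The main obstacle in the whole scheme is the simultaneous bookkeeping of the spatial weight $\langle z\rangle^l$ and the $t$-$\varepsilon$ weight $t^{(2k-3)/2}+\varepsilon^{\alpha(2k-3)/2}$: if the exponents are mismatched by even a small amount, the boundary lifting contributes an uncontrollable $\varepsilon^{-\delta}$ blow-up, and it is precisely the constraint $1<\alpha<5/4$ in \eqref{al-constriant}, combined with the quantitative bounds of Lemma \ref{LEM-correc-function}, that makes the weights consistent.
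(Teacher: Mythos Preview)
Your proposal is correct and follows essentially the same approach as the paper: homogenize the boundary data via a compactly supported lifting (the paper's $\phi(z,t)=\eta(z)(v_\ast-v^{I,0}(0,t)+\cala_1^\varepsilon(t))$ is your $L$), obtain the pointwise bounds by a maximum-principle argument, run $\langle z\rangle^{2l}$-weighted energy estimates on the shifted unknown, then differentiate in time and pair with the $(t^{(2k-3)/2}+\varepsilon^{\alpha(2k-3)/2})$ weights dictated by Lemma~\ref{LEM-correc-function}, and finally read off the $\varphi^{B,\varepsilon}$ bounds from the explicit formula \eqref{vfi-bd-1ord-lt-approxi}. One small point worth flagging: at the first time-differentiated level the paper exploits the $L^1_T$ bound $\|\partial_t^2\cala_1^\varepsilon\|_{L^1((0,T))}\le C$ (rather than $L^2_T$) to get $\|\langle z\rangle^l v_t^{B,\varepsilon}\|_{L_T^\infty L_z^2}\le C$ uniformly in $\varepsilon$, which is slightly cleaner than the cancellation you sketch.
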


Before proving Lemma \ref{lem-v-B-0}, we extract some important facts resulting from Lemma \ref{lem-v-B-0} and Proposition \ref{prop-embeding-spacetime} in the following corollary.
\begin{corollary}\label{cor-V-B-0-INFTY}
Under the conditions of Lemma \ref{lem-v-B-0}, it holds for any $ l \in \mathbb{N} $ and $ k=0,1 $ that
\begin{align}
&\displaystyle \|\langle z \rangle ^{l}\partial _{t}^{k} v ^{B,\varepsilon}\|_{L _{T}^{\infty}H _{z}^{2-2k}} +\|\langle z \rangle ^{l}(t ^{\frac{1}{2}}+\varepsilon ^{\frac{\alpha}{2} })\partial _{t}^{k}\partial _{z}^{3-2k}v ^{B,\varepsilon}\|_{L _{T}^{\infty}L _{z}^{2}}+\|\langle z \rangle ^{l}(t ^{2}+\varepsilon ^{\frac{3 \alpha}{2}})\partial _{t}^{2}v ^{B,\varepsilon}\|_{L _{T}^{\infty}H _{z}^{1}}
 \nonumber \\
 &\displaystyle  \quad+\|\langle z \rangle ^{l}(t ^{\frac{3}{2}}+\varepsilon ^{\frac{3 \alpha}{2}})\partial _{t}^{k}\partial _{z}^{4-2k}v ^{B,\varepsilon}\|_{L _{T}^{\infty}L _{z}^{2}}+ \|\langle z \rangle ^{l}(t ^{2}+\varepsilon ^{\frac{3 \alpha}{2}})\partial _{t}^{k}\partial _{z}^{5-2k}v ^{B,\varepsilon}\|_{L _{T}^{\infty}L _{z}^{2}} \leq C, \label{L-INFT-V-B-0}
\end{align}
and that
\begin{subequations}\label{L-INFT-Vfi-B-0}
\begin{gather}
\displaystyle \|\langle z \rangle ^{l}\partial _{t}^{k}\varphi ^{B,\varepsilon}\|_{L _{T}^{\infty}H _{z}^{3-2k}}  +\|\langle z \rangle ^{l}(t+\varepsilon ^{\frac{\alpha}{2} })\partial _{t}^{k}\partial _{z}^{4-2k}\varphi ^{B,\varepsilon}\|_{L _{T}^{\infty}L _{z}^{2}} \leq C,
  \\[2mm]
  \|\langle z \rangle ^{l}(t ^{2}+\varepsilon ^{\frac{3 \alpha}{2}})\partial _{t}^{2}\varphi ^{B,\varepsilon}\|_{L _{T}^{\infty}H _{z}^{2}}+ \|\langle z \rangle ^{l}(t ^{\frac{3}{2}}+\varepsilon ^{\frac{3 \alpha}{2}})\partial _{t}^{k}\partial _{z}^{5-2k}\varphi ^{  B,\varepsilon}\|_{L _{T}^{\infty}L _{z}^{2}} \leq C,\\[2mm]
  \displaystyle \|\langle z \rangle ^{l}(t ^{2}+\varepsilon ^{\frac{3 \alpha}{2}})\partial _{t}^{k}\partial _{z}^{6-2k}\varphi ^{  B,\varepsilon}\|_{L _{T}^{\infty}L _{z}^{2}} \leq C,
\end{gather}
\end{subequations}
where $ C >0$ is a constant depending on $ T $, but independent of $ \varepsilon $.
\end{corollary}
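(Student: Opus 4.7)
The plan is to derive Corollary \ref{cor-V-B-0-INFTY} from Lemma \ref{lem-v-B-0} by repeated application of the standard space-time embedding (this is presumably the content of Proposition \ref{prop-embeding-spacetime}): if $g \in L^2(0,T;H^{k+1})$ and $\partial_t g \in L^2(0,T;H^{k-1})$, then $g \in C([0,T];H^k)$. To handle the time-weighted estimates, I will first multiply through by the appropriate weight $w(t) = t^a + \varepsilon^{b\alpha}$ and then apply the embedding to the weighted function $w(t) \partial_t^i \partial_z^j v^{B,\varepsilon}$, keeping in mind that $\partial_t(w(t) f) = w'(t) f + w(t) \partial_t f$ with $w'(t) \lesssim t^{a-1}$.

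First I would handle the unweighted estimate $\|\langle z \rangle^l v^{B,\varepsilon}\|_{L^\infty_T H^2_z}$: from \eqref{v-B-0-regularity-a} we have $\langle z \rangle^l v^{B,\varepsilon} \in L^2_T H^3_z$ and $\langle z \rangle^l \partial_t v^{B,\varepsilon} \in L^2_T H^1_z$, so the embedding gives the bound in $L^\infty_T H^2_z$; the case $k=1$ is already contained in \eqref{v-B-0-regularity-a}. For the weighted bound $\|\langle z \rangle^l (t^{1/2}+\varepsilon^{\alpha/2}) \partial_z^3 v^{B,\varepsilon}\|_{L^\infty_T L^2_z}$, I apply the embedding to $g := \langle z \rangle^l (t^{1/2}+\varepsilon^{\alpha/2}) \partial_z^3 v^{B,\varepsilon}$: one derivative $\partial_z g$ is controlled by the $i=0$ case in the second line of \eqref{v-B-0-regularity}, while $\partial_t g$ produces two terms: $\tfrac{1}{2} t^{-1/2} \partial_z^3 v^{B,\varepsilon}$ (controlled in $L^2_T L^2_z$ by \eqref{v-B-0-regularity-a} since $\tfrac{1}{2}t^{-1/2} \lesssim t^{-1/2}+\varepsilon^{-\alpha/2}$ is not integrable, so one actually uses that $g$ itself together with $\partial_z g$ suffices) --- here I'd use the slightly different version of the embedding, namely that $g \in L^2_T H^1_z$ combined with $\partial_t g \in L^2_T L^2_z$ yields $g \in C_T L^2_z$. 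Concretely, $\partial_t g = (1/(2t^{1/2})) \partial_z^3 v^{B,\varepsilon} + (t^{1/2}+\varepsilon^{\alpha/2}) \partial_t \partial_z^3 v^{B,\varepsilon}$; the first term is bounded in $L^2_T L^2_z$ by \eqref{v-B-0-regularity-a} (since $t^{-1/2} \in L^2(0,T)$ and $\partial_z^3 v^{B,\varepsilon}$ is essentially bounded in time by the argument just used for $k=0$), and the second term is bounded by the $i=1$ case of the second line of \eqref{v-B-0-regularity}. The $k=1$ time-weighted estimate follows analogously by applying the embedding to $\langle z \rangle^l(t^{1/2}+\varepsilon^{\alpha/2})\partial_t v^{B,\varepsilon}$.

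The higher-weight estimates are obtained by exactly the same recipe, matching the weights in \eqref{L-INFT-V-B-0} to those in \eqref{v-B-0-regularity}: for $\|\langle z \rangle^l (t^{3/2}+\varepsilon^{3\alpha/2})\partial_t^k\partial_z^{4-2k}v^{B,\varepsilon}\|_{L^\infty_T L^2_z}$, apply the embedding to $\langle z \rangle^l(t^{3/2}+\varepsilon^{3\alpha/2})\partial_t^k\partial_z^{4-2k}v^{B,\varepsilon}$; the spatial derivative uses the weight $t^{3/2}+\varepsilon^{3\alpha/2}$ against $\partial_z^{5-2k}v^{B,\varepsilon}$ (bound in \eqref{v-B-0-regularity}, second line), while the time derivative produces a term with the lower weight $t^{1/2}$ against $\partial_t^k\partial_z^{4-2k}v^{B,\varepsilon}$ (bound in \eqref{v-B-0-regularity}, second line with the $t^{1/2}+\varepsilon^{\alpha/2}$ weight) and a term with weight $t^{3/2}+\varepsilon^{3\alpha/2}$ against $\partial_t^{k+1}\partial_z^{4-2k}v^{B,\varepsilon}$, controlled by \eqref{v-B-0-regularity-c}. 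Similarly $\|\langle z \rangle^l(t^2+\varepsilon^{3\alpha/2})\partial_t^k\partial_z^{5-2k}v^{B,\varepsilon}\|_{L^\infty_T L^2_z}$ uses \eqref{v-B-0-regularity-c} for the spatial derivative and the same estimate together with \eqref{v-B-0-regularity-c} (at the next order) for the time derivative, while $\|\langle z \rangle^l(t^2+\varepsilon^{3\alpha/2})\partial_t^2 v^{B,\varepsilon}\|_{L^\infty_T H^1_z}$ reduces to applying the embedding to $t^2\partial_t^2 v^{B,\varepsilon}$ with \eqref{v-B-0-regularity-c} and the $k=2,\,i=2$ entries of \eqref{v-B-0-regularity}.

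The bounds \eqref{L-INFT-Vfi-B-0} for $\varphi^{B,\varepsilon}$ follow from \eqref{con-vfi-B-1} by exactly the same procedure. The only subtlety throughout is bookkeeping: the time-derivative weights $w'(t)$ are always of strictly lower order than the weights appearing in the hypothesis, and the matching between the weight on $g$ and the weight on $\partial_t g$ versus $\partial_z g$ in Lemma \ref{lem-v-B-0} is precisely arranged so that each required term falls under one of the estimates in \eqref{v-B-0-regularity-a}--\eqref{v-B-0-regularity-c}. This matching is the expected main (though routine) obstacle; once the weights line up there is nothing further to do. The $\varepsilon$-uniformity is preserved because all constants in Lemma \ref{lem-v-B-0} are $\varepsilon$-independent and the embedding constant depends only on $T$.
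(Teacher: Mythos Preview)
Your overall strategy---pull the $L^\infty_T$ bounds out of the $L^2_T$ bounds in Lemma~\ref{lem-v-B-0} via Proposition~\ref{prop-embeding-spacetime} applied to the weighted function $g=w(t)f$---is the right idea for most of the terms, but it breaks down precisely at the $t^{1/2}$--weight. You write that the contribution $\tfrac12 t^{-1/2}\partial_z^3 v^{B,\varepsilon}$ coming from $\partial_t g$ is ``bounded in $L^2_TL^2_z$ \dots\ since $t^{-1/2}\in L^2(0,T)$''. This is false: $\int_0^T t^{-1}\,dt=+\infty$. The same obstruction persists if you only ask for $\partial_t g\in L^2_T H^{-1}_z$, because writing $\partial_z^3 v^{B,\varepsilon}=\partial_z(\partial_z^2 v^{B,\varepsilon})$ still leaves you with $\int_0^T t^{-1}\|\partial_z^2 v^{B,\varepsilon}\|_{L^2_z}^2\,dt$, which diverges. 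So for the term $\|\langle z\rangle^l(t^{1/2}+\varepsilon^{\alpha/2})\partial_t^k\partial_z^{3-2k}v^{B,\varepsilon}\|_{L^\infty_TL^2_z}$ the embedding applied to $g=(t^{1/2}+\varepsilon^{\alpha/2})\partial_t^k\partial_z^{3-2k}v^{B,\varepsilon}$ does not close.

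The paper avoids this by working instead with the identity
\[
t\,\|f(t)\|_{L^2_z}^2=\int_0^t\Big(\|f\|_{L^2_z}^2+2\tau\!\int_{\mathbb{R}_+} f\,\partial_\tau f\,dz\Big)\,d\tau
\le C\int_0^t\Big(\|f\|_{L^2_z}^2+\tau^2\|\partial_\tau f\|_{L^2_z}^2\Big)\,d\tau,
\]
which is exactly what one obtains by multiplying the underlying energy inequality by $t$ \emph{before} integrating in time (see the proof of Lemma~\ref{lem-v-B-0} around \eqref{inft-B-0}). The point is that differentiating $t\|f\|^2$ produces the harmless extra term $\|f\|^2$, whereas differentiating $t^{1/2}f$ produces the singular factor $t^{-1/2}$. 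In practice the paper first obtains $t^{1/2}\partial_t\partial_z v^{B,\varepsilon}\in L^\infty_TL^2_z$ directly from the weighted energy estimate, and then transfers this to $t^{1/2}\partial_z^3 v^{B,\varepsilon}$ via the equation. For the remaining weights $t,\,t^{3/2},\,t^2$ in \eqref{L-INFT-V-B-0}--\eqref{L-INFT-Vfi-B-0} your approach is fine, since $w'(t)$ is then locally bounded and the bookkeeping you describe goes through.
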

\begin{proof}[Proof of Lemma \ref{lem-v-B-0}]
Given any $ \varepsilon>0 $, the global existence and uniqueness of solutions to the problem \eqref{first-bd-layer-pro-appro} with regularity as in \eqref{v-B-0-regularity} can be proved by similar arguments as \cite{Corrillo-Hong-Wang-vanishing}. Here we mainly focus on the \emph{a priori} estimates of the solution on $ [0,T] $ for any $ T>0 $, as well as the dependence of the norms of solutions on $ \varepsilon $ which are of importance in the study on the convergence of  boundary-layer profiles in the sequel. We first prove the bounds of the solution, i.e.,
\begin{gather}\label{bd-v-B-VE}
\displaystyle  0 \leq v ^{B,\varepsilon} \leq v _{\ast}.
\end{gather}
Recalling $  \varphi_{x}^{I,0}+M \geq K ^{-1}>0 $ from \eqref{con-vfi-I-0-only}, the estimate of $ \cala _{1}^{\varepsilon}(t) $ in \eqref{Corre-property} and the fact
\begin{gather}
\displaystyle v ^{I,0}(0,t)=v _{\ast}   \mathop{\mathrm{exp}}\nolimits \left( - \int _{0}^{t}(\varphi _{x}^{I,0}(0,t)+M)\mathrm{d}\tau \right),
\end{gather}
we conclude that there exists a positive constant $ \varepsilon _{1} $~(may depend on $ T $) such that for any $ \varepsilon \leq \varepsilon _{1} $, it holds that
\begin{align*}
\displaystyle  v _{\ast}\geq v _{\ast}- v ^{I,0}(0,t) +\cala_{1}^{\varepsilon}(t)>0.
\end{align*}
Then testing the equation \eqref{first-bd-layer-pro-appro} against $ v ^{-}:=-\max\{0,-v ^{B,\varepsilon}\} $, we derive that
\begin{align*}
&\displaystyle \frac{1}{2}\frac{\mathrm{d}}{\mathrm{d}t}\int _{\mathbb{R}_{+}}\left\vert v ^{-}\right\vert ^{2} \mathrm{d}z+\int _{\mathbb{R}_{+}}\left\vert \partial _{z}v ^{-}\right\vert ^{2}\mathrm{d}z +\int _{\mathbb{R}_{+}}(\varphi _{x}^{I,0}(0,t)+M)    {\mathop{\mathrm{e}}}^{v ^{B,\varepsilon}} \left\vert v ^{-}\right\vert ^{2} \mathrm{d}z
 \nonumber \\[2mm]
 &~\displaystyle \quad +\int _{\{v ^{B,\varepsilon}<0\}}(\varphi _{x} ^{I,0}(0,t)+M)\, v ^{I,0}(0,t)(    {\mathop{\mathrm{e}}}^{v ^{B,\varepsilon}}-1)v ^{B,\varepsilon} \mathrm{d}z=0,
\end{align*}
This entails that $ \|v ^{-}\|_{L _{T}^{\infty}L _{z}^{2}} \leq 0 $. Then we get $ v ^{B,\varepsilon} \geq 0 $. Similarly, testing the equation \eqref{first-bd-layer-pro} against $ v ^{+}:=\max\{v ^{B,\varepsilon}-v _{\ast},0\} $, we have $ v ^{B,\varepsilon} \leq v _{\ast}  $. Therefore \eqref{bd-v-B-VE} is proved.

To proceed, let $ \eta(z) \in C ^{\infty}([0,\infty)) $ such that
\begin{gather}\label{eta-defi}
\displaystyle \eta (0)=1,\ \ \eta(z)=0\ \ \mbox{for}\ z \geq 1,
\end{gather}
and denote by $ \overline{u ^{I,0}}:=\varphi  _{x}^{I,0}(0,t)+M $, $ \overline{v ^{I,0}}:=v ^{I,0}(0,t) $. Furthermore, take $ \vartheta= v ^{B,\varepsilon}- \eta(z)(v _{\ast} - \overline{v ^{I,0}}+\cala_{1}^{\varepsilon}(t))=:v ^{B,\varepsilon}-\phi(z,t)  $. Then $ \vartheta $ solves
\begin{align}\label{eq-for-esti-v-B-0}
 \displaystyle \begin{cases}
  \displaystyle \vartheta_{t}=\vartheta_{zz} -\overline{u ^{I,0}}    {\mathop{\mathrm{e}}}^{\vartheta+ \phi}(\vartheta+ \phi)- \overline{u ^{I,0}}\,\overline{v ^{I,0}}(    {\mathop{\mathrm{e}}}^{\vartheta+ \phi}-   1) + \varrho,\\
\displaystyle \vartheta(0,t)=0,\ \ \vartheta(+\infty,t)=0,\\
\displaystyle \vartheta(z,0)=0,
 \end{cases}
  \end{align}
where
\begin{gather*}
\displaystyle \varrho =  \eta _{zz}(z)(v _{\ast} - \overline{v ^{I,0}}+\cala_{1}^{\varepsilon}(t)) - \eta(z)(v _{\ast} - \overline{v ^{I,0}}+\cala_{1}^{\varepsilon}(t)) _{t}.
\end{gather*}
In view of \eqref{con-vfi-v-I-0-regula}, \eqref{l-INFTY-VFI-I-0} and \eqref{Corre-property}, it holds for any $ l \in \mathbb{N} $ that
\begin{subequations}\label{fip-vrho-esti}
\begin{gather}
\displaystyle  \|\langle z \rangle ^{l}\partial _{t}^{i}\phi\|_{L _{T}^{\infty} H _{z}^{5}}
 +\|\langle z \rangle ^{l}( t ^{\frac{2k-1}{2}}+ \varepsilon ^{\frac{\alpha(2k-1)}{2}}) \partial _{t}^{1+k}\phi\|_{L _{T}^{2} H _{z}^{6-2k}}\leq C,\ \ i=0,1,\, k=1,2,  \\
\displaystyle \|\langle z \rangle ^{l} \varrho\|_{L _{T} ^{\infty}H _{z}^{4}}+\|\langle z \rangle ^{l}\partial _{t}\varrho \|_{L _{T} ^{1}H _{z}^{2}} + \|\langle z \rangle ^{l} ( t ^{\frac{2k-1}{2}}+ \varepsilon ^{\frac{\alpha(2k-1)}{2}})  \partial _{t}^{k} \varrho\|_{L _{T}^{2}H _{z}^{4-2k}}\leq C , \ \    k=1,2,
 \end{gather}
\end{subequations}
where $ C>0 $ is a constant independent of $ \varepsilon $. Multiplying the equation $ \eqref{eq-for-esti-v-B-0}_{1} $ by $ \langle z \rangle ^{2l}\vartheta$ followed by an integration over $ \mathbb{R}_{+} $, we get
\begin{align}\label{v-B-0-esti-0}
   \displaystyle & \frac{1}{2}\frac{\mathrm{d}}{\mathrm{d}t}\int _{\mathbb{R}_{+}}\langle z \rangle ^{2l}\vartheta^{2} \mathrm{d}z+\int _{\mathbb{R}_{+}}\langle z \rangle ^{2l}\vartheta_{z}^{2}\mathrm{d}z +\int _{\mathbb{R}_{+}}\langle z \rangle ^{2l}\overline{u ^{I,0}} {\mathop{\mathrm{e}}}^{\vartheta+ \phi}\vartheta^{2}\mathrm{d}z
    \nonumber \\
    & \displaystyle  = \int _{\mathbb{R}_{+}}\langle z \rangle ^{2l}\vartheta  \varrho \mathrm{d}z- 2l \int _{\mathbb{R}_{+}}\langle z \rangle ^{2l-2}z \vartheta _{z} \vartheta\mathrm{d}z-\int _{\mathbb{R}_{+}}\langle z \rangle ^{2l}\overline{u ^{I,0}} {\mathop{\mathrm{e}}}^{\vartheta+ \phi}\vartheta  \phi\mathrm{d}z
     \nonumber \\
     &~\displaystyle \quad -\int _{\mathbb{R}_{+}}\langle z \rangle ^{2l} \overline{u ^{I,0}}\,\overline{v ^{I,0}}\left(     {\mathop{\mathrm{e}}}^{\vartheta+ \phi}-   1 \right) \vartheta\mathrm{d}z=:\cala,
   \end{align}
   where, due to $ \overline{u ^{I,0}}=\varphi _{x} ^{I,0}(0,t)+M  \geq K ^{-1}>0 $ from \eqref{con-vfi-I-0-only} and $  0 \leq v ^{B,\varepsilon} \leq v _{\ast} $, it holds that
   \begin{align}\label{v-b-0-good}
   \displaystyle  \int _{\mathbb{R}_{+}}\langle z \rangle ^{2l}\overline{u ^{I,0}} {\mathop{\mathrm{e}}}^{\vartheta+ \phi}\vartheta^{2}\mathrm{d}z \geq C \int _{\mathbb{R}_{+} }\langle z \rangle ^{2l} \vartheta ^{2} \mathrm{d}z,
   \end{align}
   where $ C $ is a positive constant depending on $T $.
   We now estimate the terms on the right hand side of \eqref{v-B-0-esti-0}. By \eqref{con-vfi-v-I-0-regula}, $  0 \leq v ^{B,\varepsilon} \leq v _{\ast} $ and the Cauchy-Schwarz inequality, we get
   \begin{align}\label{cal-A-esti}
   \cala
    &\displaystyle  \leq \|\langle z \rangle ^{l}\vartheta\|_{L  _{z}^{2}}\|\langle z \rangle ^{l}\varrho\|_{L _{z} ^{2}}+C  \|\langle z \rangle ^{l}\vartheta\|_{L  _{z}^{2}}\|\langle z \rangle ^{l}\vartheta _{z}\|_{L _{z} ^{2}}+ C\|\langle z \rangle ^{l}\vartheta\|_{L  _{z}^{2}}\|\langle z \rangle ^{l}\phi\|_{L _{z} ^{2}}
     \nonumber \\
     & \displaystyle \quad+C  \int _{\mathbb{R}_{+}} \langle z \rangle ^{2l} \overline{u ^{I,0}}\,\overline{v ^{I,0}}\left(\left\vert \vartheta\right\vert+ \left\vert \phi\right\vert  \right)\vartheta \mathrm{d}z
     \nonumber \\
          & \displaystyle \leq  \frac{1}{4}\int _{\mathbb{R}_{+}}\langle z \rangle ^{2l}\vartheta _{z}^{2} \mathrm{d}z+C \int _{\mathbb{R}_{+}}\langle z \rangle ^{2l} \vartheta ^{2} \mathrm{d}z + C \|\langle z \rangle ^{l} \varrho\|_{L _{z}^{2}}^{2}+C \|\langle z \rangle ^{l}\phi\|_{L _{z}^{2}}^{2} .
   \end{align}
   Inserting \eqref{v-b-0-good} and \eqref{cal-A-esti} into \eqref{v-B-0-esti-0}, we get after integrating the resulting inequality over $ (0,t) $ for any $ t \in (0,T] $ that
\begin{align}\label{con-vfi-B-0-0-pre}
   \displaystyle  \int _{\mathbb{R}_{+}}\langle z \rangle ^{2l}\vartheta ^{2} (\cdot,t)\mathrm{d}z+\int _{0}^{t} \int _{\mathbb{R}_{+}}\langle z \rangle ^{2l}\left( \vartheta ^{2}+ \vartheta _{z} ^{2} \right)  \mathrm{d}z \mathrm{d}\tau \leq C + \int _{0}^{t}\int _{\mathbb{R}_{+}}\langle z \rangle ^{2l} \vartheta ^{2} \mathrm{d}z \mathrm{d}\tau.
   \end{align}
  Applying the Gronwall inequality to \eqref{con-vfi-B-0-0-pre}, it follows that
   \begin{align}\label{con-vfi-B-0-0}
   \displaystyle  \int _{\mathbb{R}_{+}}\langle z \rangle ^{2l}\vartheta ^{2} (\cdot,t)\mathrm{d}z+\int _{0}^{t} \int _{\mathbb{R}_{+}}\langle z \rangle ^{2l}\left( \vartheta ^{2}+ \vartheta _{z} ^{2} \right)  \mathrm{d}z \mathrm{d}\tau \leq C.
   \end{align}
  Testing $ \eqref{eq-for-esti-v-B-0}_{1} $ against $ \langle z \rangle ^{2l}\vartheta _{t} $, we get
\begin{align}\label{vte-t-esti}
&\displaystyle \frac{1}{2}\frac{\mathrm{d}}{\mathrm{d}t}\int _{\mathbb{R}_{+}}\langle z \rangle ^{2l}\left( \vartheta_{z} ^{2}+\overline{u ^{I,0}} \vartheta ^{2}     {\mathop{\mathrm{e}}}^{\vartheta+ \phi}\right)  \mathrm{d}z + \int _{\mathbb{R}_{+}}\langle z \rangle ^{2l}\vartheta_{t} ^{2}\mathrm{d}z
 \nonumber \\
 &~\displaystyle =\frac{1}{2}\int _{\mathbb{R}_{+}} \langle z \rangle ^{2l} \partial _{t}\overline{u ^{I,0}} \vartheta^{2}    {\mathop{\mathrm{e}}}^{\vartheta+ \phi}\mathrm{d}z+ \frac{1}{2}\int _{\mathbb{R}_{+}}\langle z \rangle ^{2l} \overline{u ^{I,0}}\vartheta^{2}\left( \vartheta_{t}+ \phi _{t} \right)     {\mathop{\mathrm{e}}}^{\vartheta+ \phi} \mathrm{d}z  -2 l \int _{\mathbb{R}_{+}}\langle z \rangle ^{2l-2}z \vartheta _{t}\vartheta _{z} \mathrm{d}z
  \nonumber \\
  &~\displaystyle \quad- \int _{\mathbb{R}_{+}}\langle z \rangle ^{2l}\overline{u ^{I,0}}    {\mathop{\mathrm{e}}}^{\vartheta+ \phi}  \phi \vartheta_{t} \mathrm{d}z-\overline{u ^{I,0}}\,\overline{v ^{I,0}}\int _{\mathbb{R}_{+}}\langle z \rangle ^{2l}(    {\mathop{\mathrm{e}}}^{\vartheta+ \phi}-    1)\vartheta_{t} \mathrm{d}z- \int _{\mathcal{I} }\langle z \rangle ^{2l} \varrho\vartheta_{t} \mathrm{d}z
   \nonumber \\
   &~\displaystyle \leq  C|\partial _{t} \overline{u ^{I,0}}|  \int _{\mathbb{R}_{+}}\langle z \rangle ^{2l}\vartheta^{2} \mathrm{d}z+C | \overline{u ^{I,0}}| \int _{\mathbb{R}_{+}}\langle z \rangle ^{2l} \vartheta ^{2}(\left\vert \vartheta _{t}\right\vert+\left\vert \phi _{t}\right\vert)  \mathrm{d}z + C \int _{\mathbb{R}_{+}}\langle z \rangle ^{2l-1}\left\vert \vartheta _{t}\right\vert \left\vert \vartheta _{z}\right\vert \mathrm{d}z
    \nonumber \\
    &\displaystyle~ \quad +C \int _{\mathbb{R}_{+}}\langle z \rangle ^{2l}\left\vert \phi\right\vert \left\vert \vartheta _{t}\right\vert\mathrm{d}z+C | \overline{u ^{I,0}}||\overline{v ^{I,0}}|\int _{\mathbb{R}_{+}}\langle z \rangle ^{2l}\left( \left\vert \vartheta\right\vert+\left\vert \phi\right\vert \right) \left\vert \vartheta_{t}\right\vert  \mathrm{d}z - \int _{\mathbb{R}_{+} }\langle z \rangle ^{2l} \varrho\vartheta_{t} \mathrm{d}z
      \nonumber \\
        &~\displaystyle \leq \frac{1}{8}\int _{\mathbb{R}_{+}}\langle z \rangle ^{2l}\vartheta_{t} ^{2}\mathrm{d}z+C\int _{\mathbb{R}_{+}}\langle z \rangle ^{2l}\vartheta _{z}^{2}  \mathrm{d}z+ C \int _{\mathbb{R}_{+}}\langle z \rangle ^{2l} (\phi ^{2}+\phi _{t}^{2})\mathrm{d}z +\int _{\mathbb{R}_{+}}\langle z \rangle ^{2l} \varrho^{2}\mathrm{d}z
         \nonumber \\
         & ~\displaystyle \quad + C(\vert \partial _{t} \overline{u ^{I,0}}\vert+ \vert \overline{u ^{I,0}}\vert ^{2}+ | \overline{u ^{I,0}}|^{2}|\overline{v ^{I,0}}|^{2})\int _{\mathbb{R}_{+}}\langle z \rangle ^{2l}\vartheta^{2} \mathrm{d}z,
\end{align}
where we have used \eqref{con-vfi-v-I-0-regula}, \eqref{con-vfi-B-0-0}, $  0 \leq v ^{B,\varepsilon} \leq v _{\ast} $ and the Cauchy-Schwarz inequality. By \eqref{con-vfi-v-I-0-regula} and the Sobolev inequality \eqref{Sobolev-infty}, we obtain that
  \begin{align*}
 \begin{cases}
 	\displaystyle  \|\partial _{t}^{k}\varphi _{x}^{I,0}(0,t)\|_{L ^{2}(0,T)} \leq C\|\partial _{t}^{k}\varphi _{x}^{I,0}\|_{L _{T}^{2}H ^{1}} \leq C \ \ \mbox{for}\ \ 0 \leq k \leq 3,\\[1mm]
  \displaystyle \|\partial _{t}^{k}v ^{I,0}(0,t)\|_{L ^{2}(0,T)} \leq  C\|\partial _{t}^{k}v ^{I,0}\|_{L _{T}^{2}H ^{1}} \leq C \ \ \mbox{for}\ \ 0 \leq k \leq 4,
 \end{cases}
  \end{align*}
  which gives rise to
\begin{gather}\label{vfi-I-0-x-bd-infty}
\displaystyle   \|\partial _{t}^{k} \varphi _{x}^{I,0}(0,t)\|_{L ^{\infty}(0,T)} \leq C \ \ \mbox{for} \ 0 \leq k \leq 2 \ \ \mbox{and}\ \ \|\partial _{t}^{k} v^{I,0}(0,t)\|_{L ^{\infty}(0,T)} \leq C \ \ \mbox{for} \ \ 0 \leq k \leq 3.
\end{gather}
With \eqref{vfi-I-0-x-bd-infty}, we update \eqref{vte-t-esti} as
\begin{align*}
\displaystyle  &\displaystyle \frac{1}{2}\frac{\mathrm{d}}{\mathrm{d}t}\int _{\mathbb{R}_{+}}\langle z \rangle ^{2l}\left( \vartheta_{z} ^{2}+\overline{u ^{I,0}} \vartheta ^{2}     {\mathop{\mathrm{e}}}^{\vartheta+ \phi}\right)  \mathrm{d}z + \int _{\mathbb{R}_{+}}\langle z \rangle ^{2l}\vartheta_{t} ^{2}\mathrm{d}z
 \nonumber \\
 &~\displaystyle \leq C\int _{\mathbb{R}_{+}}\langle z \rangle ^{2l}\left( \vartheta^{2} +\vartheta _{z}^{2} \right)  \mathrm{d}z+ C \int _{\mathbb{R}_{+}}\langle z \rangle ^{2l} (\phi ^{2}+\phi _{t}^{2})\mathrm{d}z +\int _{\mathbb{R}_{+}}\langle z \rangle ^{2l} \varrho^{2}\mathrm{d}z .
\end{align*}
This along with \eqref{con-vfi-v-I-0-regula}, \eqref{fip-vrho-esti}, \eqref{v-b-0-good} and the Gronwall inequality yields for any $ t \in (0,T] $ that
\begin{align}\label{con-vfi-B-0-1}
\displaystyle  \int _{\mathbb{R}_{+}}\langle z \rangle ^{2l}\left( \vartheta^{2} +\vartheta _{z}^{2} \right) (\cdot,t) \mathrm{d}z+ \int _{0}^{t}\int _{\mathbb{R}_{+}}\langle  z\rangle ^{2l}\vartheta _{\tau}^{2} \mathrm{d}z \mathrm{d}\tau \leq C.
\end{align}
By \eqref{con-vfi-v-I-0-regula}, \eqref{fip-vrho-esti} and \eqref{con-vfi-B-0-1}, we further get from  $ \eqref{eq-for-esti-v-B-0}_{1} $ that
\begin{gather}\label{vte-xx}
\displaystyle \int _{0}^{T}\int _{\mathbb{R}_{+}}\langle z \rangle ^{2l}\vartheta _{zz}^{2} \mathrm{d}z \mathrm{d}t \leq C.
\end{gather}
Denote by $ \tilde{\vartheta}:=\vartheta_{t} $. Then by the compatibility condition \eqref{compatibility-vfi0}, $ \tilde{\vartheta} $ satisfies
\begin{align}\label{tild-vte-eq}
  \displaystyle \begin{cases}
    \displaystyle  \tilde{\vartheta}_{t}= \tilde{\vartheta}_{zz} -\overline{u ^{I,0}}    {\mathop{\mathrm{e}}}^{\vartheta+ \phi}\tilde{\vartheta}-\overline{u ^{I,0}}    {\mathop{\mathrm{e}}}^{\vartheta+ \phi}(\vartheta + \phi) \tilde{\vartheta}- \overline{u ^{I,0}}\,\overline{v ^{I,0}}{\mathop{\mathrm{e}}}^{\vartheta+ \phi} \tilde{\vartheta}+\tilde{\varrho},\\
\displaystyle \tilde{\vartheta}(0,t)=0,\ \ \tilde{\vartheta}(+\infty,t)=0,\\
\displaystyle \tilde{\vartheta}(z,0)=0,
  \end{cases}
  \end{align}
  where $ \tilde{\varrho} $ is given by
  \begin{align*}
  \displaystyle  \tilde{\varrho}&=- \partial _{t}\overline{u ^{I,0}}    {\mathop{\mathrm{e}}}^{\vartheta+ \phi}(\vartheta + \phi)-\overline{u ^{I,0}}    {\mathop{\mathrm{e}}}^{\vartheta+ \phi} \phi _{t}(1+\vartheta+ \phi)-\overline{u ^{I,0}}\,v ^{I,0}(0,t){\mathop{\mathrm{e}}}^{\vartheta+ \phi}\phi _{t}
   \nonumber \\
   & \displaystyle \quad -\partial _{t} \left( \overline{u ^{I,0}}\,\overline{v ^{I,0}} \right) (    {\mathop{\mathrm{e}}}^{\vartheta+ \phi}-   1)+ \partial _{t}\varrho.
  \end{align*}
  By \eqref{con-vfi-v-I-0-regula}, \eqref{fip-vrho-esti}, \eqref{con-vfi-B-0-0},
  \eqref{vfi-I-0-x-bd-infty}--\eqref{vte-xx} and \eqref{Corre-property}, it holds for any $ l \in \mathbb{N} $ that
  \begin{align}\label{tild-varrho-esti-1}
  \displaystyle \|\langle z \rangle ^{l}  \tilde{\varrho}\|_{L _{T}^{1}H _{z}^{2}}+\|\langle z \rangle ^{l} (t ^{\frac{2k+1}{2}}+\varepsilon ^{\frac{\alpha(2k+1)}{2}})\partial _{t}^{k}  \tilde{\varrho}\|_{L _{T}^{2}H _{z}^{2-2k}} \leq C  \ \ \mbox{for }k=0,1.
  \end{align}
Testing the equation in \eqref{tild-vte-eq} against $ \langle z \rangle ^{2l}\tilde{\vartheta} $, using integration by parts, the H\"older inequality and the Cauchy-Schwarz inequality, we have
\begin{align*}
&\displaystyle  \frac{1}{2}\frac{\mathrm{d}}{\mathrm{d}t}\int _{\mathbb{R}_{+}} \langle z \rangle ^{2l}\tilde{\vartheta}^{2}\mathrm{d}z+ \int _{\mathbb{R}_{+}}\langle z \rangle ^{2l} \tilde{\vartheta}^{2} \left(\overline{u ^{I,0}}    {\mathop{\mathrm{e}}}^{\vartheta+ \phi}+\overline{u ^{I,0}}\,\overline{v ^{I,0}}{\mathop{\mathrm{e}}}^{\vartheta+ \phi}  +\overline{u ^{I,0}}    {\mathop{\mathrm{e}}}^{\vartheta+ \phi}(\vartheta + \phi)  \right) \mathrm{d}z  + \int _{\mathbb{R}_{+}}\langle z \rangle ^{2l}\tilde{\vartheta}_{z}^{2}\mathrm{d}z
 \nonumber \\
 &~\displaystyle
 =  \int _{\mathbb{R}_{+}}\langle z \rangle ^{2l} \tilde{\vartheta}\tilde{\varrho}\mathrm{d}z-2 l \int _{\mathbb{R}_{+}}\langle z \rangle ^{2l-2}z \tilde{\vartheta}_{z}\tilde{\vartheta}\mathrm{d}z
  \nonumber \\
  &~\displaystyle\leq \frac{1}{2} \int _{\mathbb{R}_{+}}\langle z \rangle ^{2l}\tilde{\vartheta}_{z}^{2}\mathrm{d}z +C (1+\|\langle z \rangle ^{l}\tilde{\varrho}\|_{L _{z}^{2}}) \int _{\mathbb{R}_{+}} \langle z \rangle ^{2l}\tilde{\vartheta}^{2}\mathrm{d}z+C\|\langle z \rangle ^{l}\tilde{\varrho}\|_{L _{z}^{2}}.
\end{align*}
That is,
\begin{align}\label{v-B-0-t-sig}
&\displaystyle  \frac{\mathrm{d}}{\mathrm{d}t}\int _{\mathbb{R}_{+}} \langle z \rangle ^{2l}\tilde{\vartheta}^{2}\mathrm{d}z+\int _{\mathbb{R}_{+}}\langle z \rangle ^{2l}\tilde{\vartheta}_{z}^{2}\mathrm{d}z \leq C (1+\|\langle z \rangle ^{l}\tilde{\varrho}\|_{L _{z}^{2}}) \int _{\mathbb{R}_{+}} \langle z \rangle ^{2l}\tilde{\vartheta}^{2}\mathrm{d}z+\|\langle z \rangle ^{l}\tilde{\varrho}\|_{L _{z}^{2}}.
\end{align}
Then applying the Gronwall inequality to \eqref{v-B-0-t-sig}, by virtue of \eqref{tild-varrho-esti-1}, one has
\begin{align}\label{esti-con-v-B-0-T}
\displaystyle  \int _{\mathbb{R}_{+}} \langle z \rangle ^{2l}\tilde{\vartheta}^{2}(\cdot,t)\mathrm{d}z+\int _{0}^{t}\int _{\mathbb{R}_{+}}\langle z \rangle ^{2l}\tilde{\vartheta}_{z}^{2}\mathrm{d}z \mathrm{d}\tau \leq C
\end{align}
for any $ t \in (0,T] $, where $ C >0$ is a constant independent of $ \varepsilon $. Furthermore, by \eqref{fip-vrho-esti}, \eqref{con-vfi-B-0-1}, the equation in \eqref{eq-for-esti-v-B-0} and the bounds of $ v ^{B,\varepsilon} $, we have
\begin{align}\label{vte-zzz-multi}
\displaystyle \|\langle z \rangle ^{l} \vartheta _{zz}(\cdot,t)\|_{L _{z}^{2}}^{2}+  \int _{0}^{t}\int _{\mathbb{R}_{+}}\langle z \rangle ^{2l}\vartheta_{zzz}^{2}\mathrm{d}z \mathrm{d}\tau \leq C .
\end{align}
 Gathering \eqref{con-vfi-B-0-0}, \eqref{con-vfi-B-0-1}, \eqref{vte-xx}, \eqref{esti-con-v-B-0-T} and \eqref{vte-zzz-multi},  we get for any $ l \in \mathbb{N} $ and any $ t \in [0,T] $ that
 \begin{align}\label{con-vte-indep}
  \displaystyle  \displaystyle \|\langle z \rangle ^{l} \vartheta(\cdot,t)\|_{H _{z}^{2}}^{2}+ \|\langle z \rangle ^{l} \vartheta _{t}(\cdot,t)\|_{L _{z}^{2}}^{2}+ \int _{0}^{t} \int _{\mathbb{R}_{+}}\langle z \rangle ^{2l}\left( \vert \vartheta _{\tau z}\vert ^{2}+ \vert \vartheta _{zzz}\vert ^{2} \right)\mathrm{d}z \mathrm{d}\tau \leq C ,
  \end{align}
  where $ C>0 $ is a constant independent of $ \varepsilon $. This along with the definition of $ \vartheta $ gives
\begin{align}\label{con-v-B-0-indep}
\displaystyle \|\langle z \rangle ^{l} v ^{B,\varepsilon}(\cdot,t)\|_{H _{z}^{2}}^{2}+ \|\langle z \rangle ^{l} v _{t}^{ B,\varepsilon}(\cdot,t)\|_{L _{z}^{2}}^{2}+ \int _{0}^{t} \int _{\mathbb{R}_{+}}\langle z \rangle ^{2l}\left( \vert v _{\tau z}^{ B,\varepsilon}\vert ^{2}+ \vert v _{zzz}^{ B,\varepsilon}\vert ^{2} \right)\mathrm{d}z \mathrm{d}\tau \leq C .
\end{align}

Now let us turn to some higher-order estimates on $ v ^{B,\varepsilon} $ which, similar to the higher-order estimates on $ \tilde{\varrho} $ in \eqref{tild-varrho-esti-1}, may be time-weighted or $ \varepsilon $-dependent. Precisely, multiplying the equation in \eqref{tild-vte-eq} by $ \langle z \rangle ^{2l}\tilde{\vartheta}_{t} $ followed by an integration over $ \mathbb{R}_{+} $, one deduces that
\begin{align}\label{tild-te-esti-diff}
&\displaystyle \frac{1}{2} \frac{\mathrm{d}}{\mathrm{d}t}\int _{\mathbb{R}_{+}} \langle z \rangle ^{2l}\tilde{\vartheta} _z^{2}\mathrm{d}z+\int _{\mathbb{R}_{+}} \langle z \rangle ^{2l}\tilde{\vartheta} _t^{2}\mathrm{d}z
 \nonumber \\
 &~\displaystyle = -\int _{\mathbb{R}_{+}}\langle z \rangle ^{2l} \tilde{\vartheta} \tilde{\vartheta}_{t} \left(\overline{u ^{I,0}}    {\mathop{\mathrm{e}}}^{\vartheta+ \phi}+\overline{u ^{I,0}}\,\overline{v ^{I,0}}{\mathop{\mathrm{e}}}^{\vartheta+ \phi}  +\overline{u ^{I,0}}    {\mathop{\mathrm{e}}}^{\vartheta+ \phi}(\vartheta + \phi)  \right) \mathrm{d}z
  \nonumber \\
  & \displaystyle~ \quad -2l \int _{\mathbb{R}_{+}}\langle z \rangle ^{2l-2}z \tilde{\vartheta}_{z}\tilde{\vartheta}_{t}\mathrm{d}z+ \int _{\mathbb{R}_{+}}\langle z \rangle ^{2l} \tilde{\vartheta}_{t}\tilde{\varrho}\mathrm{d}z
   \nonumber \\
   &~\displaystyle \leq \frac{1}{2}\int _{\mathbb{R}_{+}} \langle z \rangle ^{2l}\tilde{\vartheta} _t^{2}\mathrm{d}z +C \int _{\mathbb{R}_{+}}\langle z \rangle ^{2l}\left( \tilde{\vartheta}^{2}+\tilde{\vartheta} _{z}^{2} \right)     \mathrm{d}z+C \int _{\mathbb{R}_{+}}\langle z \rangle ^{2l}\tilde{\varrho}^{2} \mathrm{d}z,
\end{align}
where we have used \eqref{vfi-I-0-x-bd-infty}, the bounds of $ \phi $ and $ v ^{B,\varepsilon} $ and the Cauchy-Schwarz inequality. Noting from \eqref{tild-varrho-esti-1} that $ \varepsilon ^{\alpha(2k+1)/2}\|\langle z \rangle ^{l} \partial _{t}^{k}  \tilde{\varrho}\|_{L _{T}^{2}H _{z}^{2-2k}} \leq C \,(k=0,1) $, we thus get after integrating \eqref{tild-te-esti-diff} over $ (0,t) $ that
\begin{align*}
 \displaystyle  \int _{\mathbb{R}_{+}} \langle z \rangle ^{2l}\tilde{\vartheta} _z^{2}(\cdot,t)\mathrm{d}z+\int _{0}^{t}\int _{\mathbb{R}_{+}} \langle z \rangle ^{2l}\tilde{\vartheta} _{\tau}^{2}\mathrm{d}z \mathrm{d}\tau \leq C v _{\ast}^{2} \varepsilon ^{- \alpha},
 \end{align*}
 where \eqref{esti-con-v-B-0-T} has been used, and the constant $ C>0 $ is independent of $ \varepsilon $. This along with $ \eqref{eq-for-esti-v-B-0}_{1} $, $\eqref{tild-vte-eq} _{1}  $ and \eqref{con-v-B-0-indep} implies that
 \begin{align}\label{v-B-0-zzz}
 \displaystyle  \int _{\mathbb{R}_{+}}\langle z \rangle ^{2l} \left(\vert \vartheta _{tz}\vert^{2} +\vert \vartheta _{zzz}\vert ^{2}\right)(\cdot,t)\mathrm{d}z + \int _{0}^{t} \int _{\mathbb{R}_{+}}\langle z \rangle ^{2l}\left(  \vert \vartheta  _{\tau \tau}\vert^{2} +\vert \partial _{z}^{4} \vartheta \vert ^{2} \right)  \mathrm{d}z \mathrm{d}\tau \leq  C\varepsilon ^{- \alpha}.
 \end{align}
 On the other hand, multiplying \eqref{tild-te-esti-diff} by $ t  $, we have
 \begin{align*}
 \displaystyle \frac{\mathrm{d}}{\mathrm{d}t} \left\{ t\int _{\mathbb{R}_{+}} \langle z \rangle ^{2l}\tilde{\vartheta} _z^{2}\mathrm{d}z \right\}+t\|\langle z \rangle ^{l}\tilde{\vartheta}_{t}\|_{L _{z}^{2}}^{2}  \leq  \|\langle z \rangle ^{l}\tilde{\vartheta}_{z}\|_{L _{z}^{2}}^{2}+ C t \left(\|\langle z \rangle ^{l}\tilde{\vartheta}\|_{L _{z}^{2}}^{2}+\|\langle z \rangle ^{l}\tilde{\vartheta}_{z}\|_{L _{z}^{2}}^{2}  \right) +t \|\langle z \rangle ^{l}\tilde{\rho}\|_{L _{z}^{2}}^{2},
 \end{align*}
 which along with \eqref{tild-varrho-esti-1} and \eqref{esti-con-v-B-0-T} immediately gives
 \begin{align*}
 \displaystyle  t \|\langle z \rangle ^{l}\tilde{\vartheta}_{z}(\cdot,t)\|_{L _{z}^{2}}^{2}+\int _{0}^{t}\tau \|\langle z \rangle ^{l}\tilde{\vartheta}_{\tau}\|_{L _{z}^{2}}^{2}\mathrm{d}\tau \leq C,
 \end{align*}
 where the constant $ C>0 $ is independent of $ \varepsilon $. By analogous arguments as proving \eqref{v-B-0-zzz}, we further get
 \begin{align*}
 \displaystyle  \displaystyle  t\int _{\mathbb{R}_{+}}\langle z \rangle ^{2l} \left(\vert \vartheta _{tz}\vert^{2} +\vert \vartheta _{zzz}\vert ^{2}\right)\mathrm{d}z + \int _{0}^{t} \tau\int _{\mathbb{R}_{+}}\left( \langle z \rangle ^{2l} \vert \vartheta _{\tau \tau}\vert^{2}+\langle z \rangle ^{2l} \vert \partial _{z}^{4} \vartheta \vert^{2} \right)   \mathrm{d}z \mathrm{d}\tau \leq C .
 \end{align*}
 Before proceeding, we would like to remark that for the problem \eqref{eq-for-esti-v-B-0}, we have estimates for $ \|\partial _{t}^{k}\vartheta\| _{ L _{T}^{2}H _{z}^{4-2k}}\,(k=0,1,2)$ provided $ \langle z \rangle ^{l} \partial _{t}^{k} \varrho \in L _{T}^{2}H _{z}^{2-2k}\,(k=0,1)$, and that the dependence of the norms of $ \partial _{t}^{k}\vartheta $ on $ \varepsilon $ is mainly determined by $ \varrho $. Notice that the initial data for the problem \eqref{tild-vte-eq} are compatible up to order one, and that $  \partial _{t}^{k} \tilde{\varrho} \,(k=0,1)$ possess the estimates in \eqref{tild-varrho-esti-1}. Therefore, by similar arguments as proving estimates for the problem \eqref{eq-for-esti-v-B-0}, we have for the problem \eqref{tild-vte-eq} that
 \begin{align}\label{esti-til-t-v-1}
  \displaystyle  \|(t ^{\frac{1}{2}}+\varepsilon ^{\frac{\alpha}{2}})\partial _{t}^{k}\tilde{\vartheta}\|_{L _{T}^{2}H _{z}^{2-2k}}^{2}+ \|(t ^{\frac{3}{2}}+\varepsilon ^{\frac{3 \alpha}{2}})\partial _{t}^{k}\partial _{z}^{3-2k}\tilde{\vartheta}\|_{L _{T}^{2}L _{z}^{2}}^{2} +\|(t ^{2}+\varepsilon ^{\frac{3 \alpha}{2}})\partial _{t}^{j}\partial _{z}^{4-2j}\tilde{\vartheta}\|_{L _{T}^{2}L _{z}^{2}}^{2}\leq C ,
  \end{align}
  where $ k=0,1 $, $ j=0,1,2 $. We should point out that the index of the power of the time weight function in the norm of $ \partial _{t}^{j} \partial _{z}^{4-2j}\tilde{\vartheta} $ is larger than that in the norm of $ \partial _{t}^{k}\partial _{z}^{3-2k}\tilde{\vartheta} $. This is because the former is derived based on the latter (need to control $ \|\langle z \rangle ^{l} t ^{2- \frac{1}{2}}\partial _{t} \partial _{z}\tilde{\vartheta}\|_{L _{T}^{2} L _{z}^{2}} $). Combining \eqref{esti-til-t-v-1} with \eqref{eq-for-esti-v-B-0}, \eqref{fip-vrho-esti}, \eqref{con-vte-indep} and \eqref{v-B-0-zzz} gives further that
  \begin{gather}\label{vete-highest}
  \displaystyle \|\langle z \rangle ^{l}(t ^{\frac{3+k}{2}}+\varepsilon ^{\frac{3 \alpha}{2}})\partial _{z}^{5+k}\vartheta\|_{L _{T}^{2}L ^{2}} \leq C, \ \ k=0,1.
  \end{gather}
  Collecting \eqref{con-v-B-0-indep}, \eqref{esti-til-t-v-1}--\eqref{vete-highest}, recalling \eqref{fip-vrho-esti} and the definition of $ \vartheta $, we then get \eqref{v-B-0-regularity-a}--\eqref{v-B-0-regularity-c}.

 To derive estimates for $ \varphi ^{B,\varepsilon} $, we need some more delicate estimates on $ v ^{B,\varepsilon} $. By \eqref{v-B-0-regularity} and Proposition \ref{prop-embeding-spacetime}, we get for any $ l \in \mathbb{N} $ and $ k=0,1 $ that
      \begin{align}
&\displaystyle \|\langle z \rangle ^{l}\partial _{t}^{k} v ^{B,\varepsilon}\|_{L _{T}^{\infty}H _{z}^{2-2k}} +\|\langle z \rangle ^{l}(t ^{\frac{1}{2}}+\varepsilon ^{\frac{\alpha}{2} })\partial _{t}^{k}\partial _{z}^{3-2k}v ^{B,\varepsilon}\|_{L _{T}^{\infty}L _{z}^{2}}+\|\langle z \rangle ^{l}(t ^{2}+\varepsilon ^{\frac{3 \alpha}{2}})\partial _{t}^{2}v ^{B,\varepsilon}\|_{L _{T}^{\infty}H _{z}^{1}}
 \nonumber \\
 &\displaystyle  \quad+\|\langle z \rangle ^{l}(t ^{\frac{3}{2}}+\varepsilon ^{\frac{3 \alpha}{2}})\partial _{t}^{k}\partial _{z}^{4-2k}v ^{B,\varepsilon}\|_{L _{T}^{\infty}L _{z}^{2}}+ \|\langle z \rangle ^{l}(t ^{2}+\varepsilon ^{\frac{3 \alpha}{2}})\partial _{t}^{k}\partial _{z}^{5-2k}v ^{B,\varepsilon}\|_{L _{T}^{\infty}L _{z}^{2}} \leq C ,\label{inft-B-0}
\end{align}
where we have used the following inequality for $ k=0,1 $,
\begin{align*}
\displaystyle t \| \partial _{t}^{k} \partial _{z}^{3-2k}v ^{B,\varepsilon}\|_{L _{z}^{2}}^{2}&= \int _{0}^{t} \left( \|\partial _{\tau}^{k}\partial _{z}^{3-2k}v ^{B,\varepsilon}\|_{L _{z}^{2}}^{2}+ 2\tau\int _{\mathbb{R}_{+}}\partial _{\tau}^{k}\partial _{z}^{3-2k}v ^{B,\varepsilon}\partial _{\tau}^{k+1} \partial _{z}^{3-2k}v ^{B,\varepsilon}\mathrm{d}z\right)\mathrm{d}\tau
 \nonumber \\
 & \leq C  \int _{0}^{t}\left(  \| \partial _{\tau}^{k}\partial _{z}^{3-2k}v ^{B,\varepsilon}\|_{L _{z}^{2}}^{2}+ \tau ^{2}\|\partial _{\tau} ^{k+1}\partial _{z}^{3-2k}v ^{B,\varepsilon}\|_{L _{z}^{2}}^{2} \right)\mathrm{d}\tau \leq C.
\end{align*}
Furthermore, in view of the Sobolev inequality \eqref{Sobolev-z-xi}, we have
    \begin{align}\label{inftY-V-B-0-st}
  &\displaystyle \|\langle z \rangle ^{l}\partial _{z}^{i} v ^{B,\varepsilon}\|_{L _{T}^{\infty}L _{z}^{\infty}}+ \|\langle z \rangle ^{l}(t ^{\frac{1}{4}}+\varepsilon ^{\frac{\alpha}{4}})\partial _{t}^{i}\partial _{z}^{2-2i}v ^{B,\varepsilon}\|_{L _{T}^{\infty}L _{z}^{\infty}} +\|t ^{2}\partial _{t}^{2} v ^{B,\varepsilon}\| _{L _{T}^{\infty}L _{z}^{\infty}}
   \nonumber \\
   & \quad+ \|\langle z \rangle ^{l}(t +\varepsilon ^{\alpha})\partial _{t}^{i}\partial _{z}^{3-2i}v ^{B,\varepsilon}\|_{L _{T}^{\infty}L _{z}^{\infty}}+\|\langle z \rangle ^{l}t ^{\frac{7}{4}}\partial _{t}^{i}\partial _{z}^{4-2i}v ^{B,\varepsilon}\|_{L _{T}^{\infty}L _{z}^{\infty}} \leq C
  \end{align}
  for $ i=0,1 $, and it holds for $ k=0,1,2 $ that
  \begin{align}
  \displaystyle  \displaystyle \varepsilon ^{\frac{3 \alpha}{2}}\|\langle z \rangle ^{l}\partial _{t}^{k}\partial _{z}^{4-2k}v ^{B,\varepsilon}\|_{L _{T}^{\infty}L _{z}^{\infty}} \leq C.
   \nonumber
  \end{align}
  Now we are ready to estimate $ \varphi ^{B,\varepsilon} $. Since
  \begin{gather*}
  \displaystyle  \partial _{z}^{\ell}      {\mathop{\mathrm{e}}}^{ v ^{B,\varepsilon}}= \sum _{  \ell _{1}+\cdots +\ell _{r}=\ell \atop
  1 \leq \ell _{1} \leq \cdots \leq\ell _{r} ,\,
  1 \leq r \leq \ell} C _{r}  {\mathop{\mathrm{e}}}^{v ^{B,\varepsilon}} \partial _{z}^{\ell _{1}}v ^{B,\varepsilon}\cdots \partial _{z}^{\ell _{r}}v ^{B,\varepsilon},\ \  \ell \geq 1
  \end{gather*}
  for some constant $ C _{r} $, then we get thanks to \eqref{v-B-0-regularity}, \eqref{inft-B-0} and \eqref{inftY-V-B-0-st} that for any $ l \in \mathbb{N} $,
\begin{gather}\label{L-INFT-H2-e}
\displaystyle \| \langle z \rangle ^{l} \partial _{t}^{i}(		{\mathop{\mathrm{e}}}^{v ^{B,\varepsilon}}-1  ) \|_{L _{T}^{\infty} H _{z} ^{2-2i}} \leq C, \ \ i=0,1,
\end{gather}
and that
  \begin{align}\label{pro-vfi-B-1-H3}
  \displaystyle  &\displaystyle \left\|   \langle z \rangle ^{l} ({\mathop{\mathrm{e}}}^{ v ^{B,\varepsilon}}-1)\right\|_{L _{T}^{2}H _{z}^{3}} ^{2}
   \nonumber \\
   &\displaystyle~ \leq  \sum _{  \ell _{1}+\cdots+ \ell _{r} \leq 3 \atop
  1 \leq \ell _{1} \leq \cdots \leq\ell _{r} ,\,
  1 \leq r \leq 3}C _{r}\|\langle z \rangle ^{l} \partial _{z}^{\ell _{1}}v ^{B,\varepsilon}\cdots \partial _{z}^{\ell _{r}}v ^{B,\varepsilon}\|_{L _{T}^{2}L _{z}^{2}}^{2}+\|\langle z \rangle ^{l} ({\mathop{\mathrm{e}}}^{ v ^{B,\varepsilon}}-1)\|_{L _{T}^{2}L _{z}^{2}}^{2}
   \nonumber \\
   & \displaystyle~ \leq  \sum _{  \ell _{1}+\cdots +\ell _{r} \leq 3 \atop
  1 \leq \ell _{1} \leq \cdots \leq\ell _{r} ,\,
  1 \leq r \leq 3} \makebox[-4pt]{~}C\int _{0}^{T}
  \|\partial _{z} ^{\ell _{1}}v ^{B,\varepsilon}\|_{L _{z}^{\infty}}^{2} \cdots \|\partial _{z} ^{\ell _{r-1}}v ^{B,\varepsilon}\|_{L _{z}^{\infty}}^{2}\|\langle z \rangle ^{l}\partial _{z} ^{\ell _{r}}v ^{B,\varepsilon}\|_{L _{z}^{2}}^{2}\mathrm{d}t +C\|\langle z \rangle ^{l} v ^{B,\varepsilon}\|_{L _{T}^{2}L _{z}^{2}}^{2}
   \nonumber \\
   &~\displaystyle \leq  \sum _{\ell=1}^{3}\| \langle z \rangle ^{l}\partial _{z}^{\ell}v ^{B,\varepsilon}\|_{L _{T}^{2}L _{z}^{2}   }^{2}+\|\langle z \rangle ^{l} v ^{B,\varepsilon}\|_{L _{T}^{2}L _{z}^{2}}^{2}  \leq C \|\langle z \rangle ^{l} v ^{B,\varepsilon}\|_{L _{T}^{2}H_{z}^{3}} ^{2} \leq C .
  \end{align}
  This along with \eqref{vfi-bd-1ord-lt-approxi} gives $\|\langle z \rangle ^{l}\varphi ^{B,\varepsilon}\|_{L _{T}^{2}H _{z}^{4}} \leq C   $. Noting $ \|\langle z \rangle ^{l}\partial _{z}^{i} v ^{B,\varepsilon}\|_{L _{T}^{\infty}L _{z}^{\infty}} \leq C ~(i=0,1)$ from \eqref{inftY-V-B-0-st} and
  \begin{align}
  \displaystyle  \partial _{z}^{4}   (   {\mathop{\mathrm{e}}}^{ v ^{B,\varepsilon}}-1)= \sum _{  \ell _{1}+\cdots +\ell _{r} = 4 \atop
  1 \leq \ell _{1} \leq \cdots \leq\ell _{r} \leq 3,\,
  1 \leq r \leq 4} C _{r}  {\mathop{\mathrm{e}}}^{v ^{B,\varepsilon}} \partial _{z}^{\ell _{1}}v ^{B,\varepsilon}\cdots \partial _{z}^{\ell _{r}}v ^{B,\varepsilon}+ 		{\mathop{\mathrm{e}}}^{v ^{B,\varepsilon}}\partial _{z}^{4}v ^{B,\varepsilon},
   \nonumber
  \end{align}
  by analogous arguments as in \eqref{pro-vfi-B-1-H3}, we have
  \begin{align*}
  \displaystyle   \|\langle z \rangle ^{l}\partial _{z}^{4}     (   {\mathop{\mathrm{e}}}^{ v ^{B,\varepsilon}}-1)\|_{L _{T}^{2}L _{z}^{2}} ^{2}& \leq C \sum _{\ell=1}^{4}\| \langle z \rangle ^{l}\partial _{z}^{\ell}v ^{B,\varepsilon}\|_{L _{T}^{2}L _{z}^{2}   }^{2}+C \|\langle z \rangle ^{l}\partial _{z}^{2}v ^{B,\varepsilon}\| _{L _{T}^{2}L _{z}^{\infty}}^{2} \|\langle z \rangle ^{l}\partial _{z}^{2}v ^{B,\varepsilon}\| _{L _{T}^{\infty}L _{z}^{2}}^{2}
   \nonumber \\
   & \displaystyle  \leq C \varepsilon ^{- \alpha},
  \end{align*}
  where we have used \eqref{v-B-0-regularity} and \eqref{L-INFT-V-B-0}. Therefore we arrive at $ \|\langle z \rangle ^{l}\partial _{z}^{5}\varphi ^{  B,\varepsilon}\|_{L _{T}^{2}L _{z}^{2}} \leq C  \varepsilon ^{-\alpha/2} $. Similarly, we have
\begin{gather*}
\displaystyle  \|\langle z \rangle ^{l}\partial _{z}^{i}(		{\mathop{\mathrm{e}}}^{v ^{B,\varepsilon}}-1 )\| _{L _{T}^{2} L _{z}^{2}} \leq C\varepsilon ^{- \frac{3 \alpha}{2}} \mbox{ for }i=5,6,
\end{gather*}
and thus $ \|\langle z \rangle ^{l}\partial _{z}^{k}\varphi ^{B,\varepsilon}\|_{L _{T}^{2}L _{z}^{2}} \leq C\varepsilon ^{-\frac{3 \alpha}{2}}  $ for $ k=6,7 $. Next we shall estimate the time derivatives of $ \varphi ^{  B,\varepsilon} $. First, we get from \eqref{vfi-bd-1ord-lt-approxi}, \eqref{l-INFTY-VFI-I-0} and \eqref{L-INFT-H2-e} that
\begin{gather}\label{PA-T-VFI-B-1-INFTY}
\displaystyle  \|\varphi _{t} ^{ B,\varepsilon}\|_{L _{T}^{\infty}H _{z}^{1}}  \leq C .
\end{gather}
By \eqref{v-B-0-regularity-a} and \eqref{L-INFT-V-B-0}, it holds that
  \begin{align}\label{pa-t-e-v-B-0}
  \displaystyle \|\langle z \rangle ^{l} \partial _{t}\partial _{z}(		{\mathop{\mathrm{e}}}^{v ^{B,\varepsilon}}-1)\|_{L _{T}^{2}L _{z}^{2}} ^{2}& \leq C \|v _{z}^{ B,\varepsilon}v _{t}^{ B,\varepsilon}\|_{L _{T}^{2}L_{z}^{2}} ^{2}+ \|v _{tz}^{ B,\varepsilon}\|_{L _{T}^{2}L_{z}^{2}} ^{2} \leq C .
  \end{align}
  This along with \eqref{vfi-bd-1ord-lt-approxi}, \eqref{con-vfi-v-I-0-regula}, \eqref{L-INFT-H2-e} and \eqref{PA-T-VFI-B-1-INFTY} implies that
  \begin{align*}
  \displaystyle \| \langle z \rangle ^{l} \partial _{t}\varphi ^{B,\varepsilon}\|_{L _{T}^{2}H _{z}^{2}}^{2} \leq C .
  \end{align*}
  By analogous arguments as in \eqref{pa-t-e-v-B-0}, we further derive that
  \begin{align*}
  \displaystyle \|\langle z \rangle ^{l}\partial _{t} \partial _{z}^{2}(		{\mathop{\mathrm{e}}}^{v ^{B,\varepsilon}}-1)\|_{L _{T}^{2} L _{z}^{2}} ^{2}& = \|\langle z \rangle ^{l}\partial _{z}^{2}(		{\mathop{\mathrm{e}}}^{ v ^{B,\varepsilon}}v _{t}^{ B,\varepsilon})\|_{L _{T}^{2} L _{z}^{2}} ^{2} \leq C\|\langle z \rangle ^{l}v _{t}^{ B,\varepsilon}\| _{L _{T}^{2}H _{z}^{2}}^{2} \leq C \varepsilon ^{- \alpha},
  \end{align*}
  \begin{align*}
  &\displaystyle  \|\langle z \rangle ^{l}\partial _{t} \partial _{z}^{k}(		{\mathop{\mathrm{e}}}^{v ^{B, \varepsilon}}-1)\|_{L _{T}^{2} L _{z}^{2}} ^{2}= \|\langle z \rangle ^{l}\partial _{z}^{k}(		{\mathop{\mathrm{e}}}^{v ^{B,\varepsilon} }v _{t}^{ B,\varepsilon})\|_{L _{T}^{2} L _{z}^{2}} ^{2}
   \nonumber \\
   &~\displaystyle \leq  C\|\langle z \rangle ^{l}v _{t}^{B,\varepsilon}\| _{L _{T}^{2}H _{z}^{k}}^{2}+ \sum _{j=0}^{k} \|\langle z \rangle ^{l}({\mathop{\mathrm{e}}}^{v ^{B,\varepsilon}}-1)\|_{L _{T}^{2} H _{z}^{k+1-j}}^{2}  \|v _{t}^{ B,\varepsilon}\|_{L _{T}^{\infty}H _{z}^{j}}^{2} \leq C \varepsilon ^{-3 \alpha}
  \end{align*}
  for $ k =3,4 $. In a similar way, one can derive the estimates on higher-order derivatives of $ {\mathop{\mathrm{e}}}^{v ^{B,\varepsilon}}-1 $. Precisely, we have the following estimates
  \begin{align*}
  \displaystyle \varepsilon ^{\frac{\alpha}{2}}\|\langle z \rangle ^{l}\partial _{t}^{2}(		{\mathop{\mathrm{e}}}^{v ^{B,\varepsilon}}-1)\| _{L _{T}^{2}L _{z}^{2}} + \varepsilon ^{\frac{3 \alpha}{2}}\|\langle z \rangle ^{l}\partial _{t}^{i}\partial _{z}^{k}(		{\mathop{\mathrm{e}}}^{v ^{B,\varepsilon}}-1)\| _{L _{T}^{2}L _{z}^{2}} \leq C
  \end{align*}
  for $5 \leq 2i+k \leq 6  $ with $ i \geq 2 $. This along with \eqref{vfi-bd-1ord-lt-approxi}, \eqref{con-vfi-v-I-0-regula} and \eqref{L-INFT-H2-e} yields that
  \begin{gather*}
 \displaystyle \varepsilon ^{\frac{\alpha}{2}}\|\langle z \rangle ^{l}\partial _{t}^{2}\varphi ^{B,\varepsilon}\|_{L _{T}^{2}L _{z} ^{2}} + \varepsilon ^{\frac{\alpha}{2}}\|\langle z \rangle ^{l}\partial _{t}^{2i}\partial _{z}^{k}\varphi ^{B,\varepsilon}\|_{L _{T}^{2} L _{z}^{2}} \leq C\ \ \mbox{for  } 2i+k =5, \\
 \displaystyle   \varepsilon ^{\frac{3 \alpha}{2}}\|\langle z \rangle ^{l}\partial _{t}^{2i}\partial _{z}^{k}\varphi ^{B,\varepsilon}\|_{L _{T}^{2} L _{z}^{2}} \leq C \ \ \mbox{for  } 6 \leq  2i+k \leq 7.
  \end{gather*}
  As for the time-weighted estimate for $ \varphi ^{B,\varepsilon} $, notice that the time-weighted estimates on $ v ^{B,\varepsilon} $ is independent of $ \varepsilon $, in other words, the time weight functions can remove the dependence of the norms on $ \varepsilon $ caused by the corner-correctors. Therefore the time-weighted estimate for $ \varphi ^{B,\varepsilon} $ can be derived by similar arguments as proving the $ \varepsilon $-dependent estimates. The details are omitted for brevity.
\end{proof}
The following lemma states the regularity of $ (\varphi ^{ b, \varepsilon}, v ^{b,\varepsilon}) $ which can be proved by a similar procedure as proving Lemma \ref{lem-v-B-0}.
\begin{lemma}\label{lem-first-bd-rt}
Assume the conditions in Lemma \ref{lem-regul-outer-layer-0} hold. Then for any $ T>0 $, there exists a constant $ \tilde{\varepsilon}_{T}>0 $ such that if $ \varepsilon \leq \tilde{\varepsilon}_{T} $, the problem \eqref{first-bd-pro-rt}, \eqref{firs-bd-1-rt} admits a unique solution $ (v ^{b,\varepsilon},\varphi ^{b,\varepsilon}) $ on $ [0,T] $ which satisfies
\begin{subequations}\label{con-b-0}
\begin{gather}
\displaystyle 0 \leq v ^{b,\varepsilon} \leq v _{\ast}, \ \ \|v _{t}^{ b,\varepsilon}\|_{L _{T}^{\infty}L _{\xi}^{2}}+\|\varphi _{t}^{ b,\varepsilon}\|_{L _{T}^{\infty}H _{\xi}^{1}} \leq C ,
 \\
 \displaystyle  \|\langle \xi \rangle ^{l}\partial _{t}^{k}v ^{b,\varepsilon}\|_{L _{T}^{2} H _{\xi}^{3-2k}}+ \|\langle \xi \rangle ^{l}\partial _{t}^{k}\varphi ^{b,\varepsilon}\|_{L _{T}^{2}H _{\xi}^{4-2k}} \leq C  \ \ \mbox{for  } k=0,1,\label{v-b-0-regularity-a}\\
\displaystyle  \|\langle \xi \rangle ^{l}(t ^{\frac{1}{2}}+\varepsilon ^{\frac{\alpha}{2}})\partial _{t}^{i}\partial _{\xi}^{4-2i}v ^{b,\varepsilon}\|_{L _{T}^{2}L _{\xi}^{2}} +\|\langle \xi \rangle ^{l}(t ^{\frac{3}{2}}+\varepsilon ^{\frac{3\alpha}{2}})\partial _{t}^{i}\partial _{\xi}^{5-2i}v ^{b,\varepsilon}\|_{L _{T}^{2}L _{\xi}^{2}}\leq C\ \ \mbox{for  } i=0,1,2,
\\
\displaystyle\|\langle \xi \rangle ^{l}(t ^{\frac{1}{2}}+\varepsilon ^{\frac{\alpha}{2}})\partial _{t}^{2}\varphi ^{b,\varepsilon}\|_{L _{T}^{2}L_{\xi}^{2}}+
 \|\langle \xi \rangle ^{l}(t ^{\frac{1}{2}}+\varepsilon ^{\frac{\alpha}{2}})\partial _{t}^{i}\partial _{\xi}^{k}\varphi ^{b,\varepsilon}\|_{L _{T}^{2}L _{\xi}^{2}} \leq C  \ \mbox{for }\, 2i+k =5,\\
\displaystyle  \|\langle z \rangle ^{l}(t ^{2}+\varepsilon ^{\frac{3 \alpha}{2}})\partial _{t}^{3}\varphi ^{b,\varepsilon}\|_{L _{T}^{2}L _{\xi}^{2}}+\|\langle \xi \rangle ^{l}(t ^{\frac{3}{2}}+\varepsilon ^{\frac{3 \alpha}{2}})\partial _{t}^{i}\partial _{\xi}^{6-2i}\varphi ^{b,\varepsilon}\|_{L _{T}^{2}L _{\xi}^{2}} \leq C\ \ \mbox{for }\, i=0,1,2,\\
\displaystyle \|\langle \xi \rangle ^{l}(t ^{2}+ \varepsilon ^{\frac{3 \alpha}{2}})\partial _{t}^{i}\partial _{\xi}^{6-2i}v ^{b,\varepsilon}\|_{L _{T}^{2} L _{\xi}^{2}}+\|\langle \xi \rangle ^{l}(t ^{2}+\varepsilon ^{\frac{3 \alpha}{2}})\partial _{t}^{i}\partial _{\xi}^{7-2i}\varphi ^{b,\varepsilon}\|_{L _{T}^{2}L _{\xi}^{2}} \leq C\ \ \mbox{for  } i=0,1,2,3,\label{v-b-0-regularity-c}
\end{gather}
\end{subequations}
where $ C >0$ is a constant depending on $ T $, but independent of $ \varepsilon $.
\end{lemma}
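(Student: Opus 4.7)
The plan is to reduce the problem for $(v^{b,\varepsilon},\varphi^{b,\varepsilon})$ posed on $\xi\in\mathbb{R}_-$ to the problem for $(v^{B,\varepsilon},\varphi^{B,\varepsilon})$ posed on $z\in\mathbb{R}_+$ already handled in Lemma \ref{lem-v-B-0}. First, I apply the change of variable $\tilde{z}:=-\xi\in\mathbb{R}_+$ to $\eqref{first-bd-pro-rt-approxi}$. Since the first and second $\xi$-derivatives are invariant under $\xi\mapsto -\xi$ (the former up to a harmless sign that cancels in the formula for $\varphi^{b,\varepsilon}$ when reoriented), the transformed system for $\tilde v(\tilde z,t):=v^{b,\varepsilon}(-\tilde z,t)$ has exactly the same structure as \eqref{first-bd-layer-pro-appro}: the coefficients $\varphi_x^{I,0}(1,t)+M$ and $v^{I,0}(1,t)$ play the roles of $\varphi_x^{I,0}(0,t)+M$ and $v^{I,0}(0,t)$, and the Dirichlet data at $\tilde z=0$ carry the corrector $\cala_2^{\varepsilon}(t)$, which by Lemma \ref{LEM-correc-function} obeys the same bounds \eqref{Corre-property}--\eqref{cal-A-INFTY-T-SMALL} as $\cala_1^{\varepsilon}(t)$. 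Moreover, by Corollary \ref{cor-l-infty-vfi-I0} and the trace inequality \eqref{vfi-I-0-x-bd-infty} applied at $x=1$, the required regularity of $\varphi^{I,0}(1,t)$ and $v^{I,0}(1,t)$ is identical to that at $x=0$, and the compatibility condition $v_0|_{\partial\mathcal{I}}=v_*$ in \eqref{compatibility-simple-2} provides the zero-order compatibility at $(\tilde z,t)=(0,0)$.

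Next, with this identification, the maximum principle argument that gave $0\le v^{B,\varepsilon}\le v_*$ (testing against $v^{\pm}$ after lifting the boundary data) carries over verbatim to yield $0\le v^{b,\varepsilon}\le v_*$, provided we choose $\tilde\varepsilon_T$ small enough so that $v_*-v^{I,0}(1,t)+\cala_2^\varepsilon(t)\in[0,v_*]$ on $[0,T]$. Then I introduce the cut-off lift $\phi(\tilde z,t)=\eta(\tilde z)(v_*-v^{I,0}(1,t)+\cala_2^\varepsilon(t))$ exactly as in \eqref{eta-defi}, set $\vartheta:=\tilde v-\phi$, and run the same energy hierarchy: weighted $L^2$ estimate with weight $\langle\tilde z\rangle^{2l}$ to get \eqref{con-vfi-B-0-0}, then pairing with $\vartheta_t$ to get $H^1$/$L^2_tL^2$ of $\vartheta_t$, then differentiating the equation in $t$ to get $\tilde\vartheta=\vartheta_t$ satisfying \eqref{tild-vte-eq} with forcing $\tilde\varrho$ obeying the same estimates \eqref{tild-varrho-esti-1}. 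Repeating this up to two time differentiations, together with elliptic regularity in $\tilde z$ extracted from the equation, produces all the weighted bounds \eqref{v-b-0-regularity-a}--\eqref{v-b-0-regularity-c} on $v^{b,\varepsilon}$; the time-weight dependence arises from Lemma \ref{LEM-correc-function} in the same way as before.

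Finally, having established the estimates on $v^{b,\varepsilon}$, I derive the bounds on $\varphi^{b,\varepsilon}$ directly from the representation formula $\eqref{first-bd-pro-rt-approxi}_2$. Differentiation in $\xi$ brings down factors of $(\varphi_x^{I,0}(1,t)+M)(e^{v^{b,\varepsilon}}-1)$ and its higher $\xi$-derivatives, all of which are products of $v^{b,\varepsilon}$-quantities already controlled, as was done in \eqref{pro-vfi-B-1-H3} and the subsequent paragraph for $\varphi^{B,\varepsilon}$. Differentiation in $t$ reduces to bounding $e^{v^{b,\varepsilon}}\partial_t^k\partial_\xi^j v^{b,\varepsilon}$ using Corollary \ref{cor-V-B-0-INFTY} (with the role of $z$ replaced by $\tilde z$), producing the weighted norms in \eqref{v-b-0-regularity-a}--\eqref{v-b-0-regularity-c}. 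Uniqueness follows from a standard energy estimate for the difference of two solutions against the maximum bounds $0\le v^{b,\varepsilon}\le v_*$.

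The main obstacle is not the algebra but tracking carefully how the $\varepsilon$-singular time derivatives of $\cala_2^\varepsilon(t)$ (through $\tilde\varrho$) propagate into the time-weighted versus $\varepsilon$-weighted norms of higher-order derivatives of $v^{b,\varepsilon}$, particularly at the third time derivative where the index of the time weight jumps (cf.~the discussion after \eqref{esti-til-t-v-1}). Once this bookkeeping is done in parallel with the left-boundary analysis, Lemma \ref{lem-first-bd-rt} follows with $\tilde\varepsilon_T$ determined by the same smallness conditions required to keep $v_*-v^{I,0}(1,t)+\cala_2^\varepsilon(t)$ in $[0,v_*]$.
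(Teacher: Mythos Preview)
Your proposal is correct and follows exactly the route the paper indicates: the paper states that Lemma \ref{lem-first-bd-rt} ``can be proved by a similar procedure as proving Lemma \ref{lem-v-B-0}'' without giving details, and your reflection $\tilde z=-\xi$ together with replacing the boundary-$0$ data by boundary-$1$ data and $\cala_1^\varepsilon$ by $\cala_2^\varepsilon$ is precisely how that similarity is made rigorous. The bookkeeping you flag about the $\varepsilon$-singular time derivatives of $\cala_2^\varepsilon$ is handled identically to the left-boundary case via Lemma \ref{LEM-correc-function}, so nothing new is needed.
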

Similar to Corollary \ref{cor-V-B-0-INFTY}, in view of Proposition \ref{prop-embeding-spacetime} in Appendix A, we have the following delicate estimates for $ (\varphi ^{b,1},v ^{b,\varepsilon}) $.
\begin{corollary}\label{cor-V-b-0-INFTY}
Under the conditions of Lemma \ref{lem-first-bd-rt}, it holds for any $ l \in \mathbb{N} $ and $ k=0,1 $ that
\begin{align}\label{L-INFT-V-veb-0}
&\displaystyle \|\langle \xi \rangle ^{l}\partial _{t}^{k} v ^{b,\varepsilon}\|_{L _{T}^{\infty}H _{\xi}^{2-2k}} +\|\langle \xi \rangle ^{l}(t ^{\frac{1}{2}}+\varepsilon ^{\frac{\alpha}{2} })\partial _{t}^{k}\partial _{\xi}^{3-2k}v ^{b,\varepsilon}\|_{L _{T}^{\infty}L _{\xi}^{2}}+\|\langle \xi \rangle ^{l}(t ^{2}+\varepsilon ^{\frac{3\alpha}{2}})\partial _{t}^{2}v ^{b,\varepsilon}\|_{L _{T}^{\infty}H _{z}^{1}}
 \nonumber \\
 & \displaystyle \quad+  \|\langle \xi \rangle ^{l}(t ^{\frac{3}{2}}+\varepsilon ^{\frac{3 \alpha}{2}})\partial _{t}^{k}\partial _{\xi}^{4-2k}v ^{b,\varepsilon}\|_{L _{T}^{\infty}L _{\xi}^{2}}+ \|\langle \xi \rangle ^{l}(t ^{2}+\varepsilon ^{\frac{3 \alpha}{2}})\partial _{t}^{k}\partial _{\xi}^{5-2k}v ^{b,\varepsilon}\|_{L _{T}^{\infty}L _{\xi}^{2}}\leq C,
\end{align}
and that
\begin{subequations}\label{L-INFT-veVfi-b-0}
\begin{gather}
\displaystyle \|\langle \xi \rangle ^{l}\partial _{t}^{k}\varphi ^{b,\varepsilon}\|_{L _{T}^{\infty}H _{\xi}^{3-2k}}  +\|\langle \xi \rangle ^{l}(t ^{\frac{1}{2}}+\varepsilon ^{\frac{\alpha}{2} })\partial _{t}^{k}\partial _{\xi}^{4-2k}\varphi ^{b,\varepsilon}\|_{L _{T}^{\infty}L _{\xi}^{2}} \leq C,
  \label{L-INFT-veVfi-b-uniform-0}\\
  \|\langle \xi \rangle ^{l}(t ^{2}+\varepsilon ^{\frac{3 \alpha}{2}})\partial _{t}^{2}\varphi ^{b,\varepsilon}\|_{L _{T}^{\infty}H _{\xi}^{2}}+ \|\langle \xi \rangle ^{l}(t ^{\frac{3}{2}}+\varepsilon ^{\frac{3 \alpha}{2}})\partial _{t}^{k}\partial _{\xi}^{5-2k}\varphi ^{  b,\varepsilon}\|_{L _{T}^{\infty}L _{\xi}^{2}} \leq C,\\
  \displaystyle \|\langle \xi \rangle ^{l}(t ^{2}+\varepsilon ^{\frac{3 \alpha}{2}})\partial _{t}^{k}\partial _{\xi}^{6-2k}\varphi ^{  b,\varepsilon}\|_{L _{T}^{\infty}L _{\xi}^{2}} \leq C,
\end{gather}
\end{subequations}
where $ C >0$ is a constant depending on $ T $, but independent of $ \varepsilon $.
\end{corollary}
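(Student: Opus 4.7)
The strategy is to mirror exactly the derivation of Corollary \ref{cor-V-B-0-INFTY} from Lemma \ref{lem-v-B-0}, since the problem \eqref{first-bd-pro-rt-approxi} for $(v^{b,\varepsilon},\varphi^{b,\varepsilon})$ on $\xi\in\mathbb{R}_-$ is structurally identical to the problem \eqref{first-bd-layer-pro-appro}--\eqref{vfi-bd-1ord-lt-approxi} for $(v^{B,\varepsilon},\varphi^{B,\varepsilon})$ on $z\in\mathbb{R}_+$, the only difference being the traces $\varphi_x^{I,0}(1,t)$, $v^{I,0}(1,t)$ and the corner corrector $\cala_2^\varepsilon$ in place of their $x=0$ counterparts. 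All of the analogous bounds on these data are in force (see \eqref{Corre-property}--\eqref{cal-A-INFTY-T-SMALL} and \eqref{l-INFTY-VFI-I-0}), so one can transfer the $L^2_T$--type estimates of Lemma \ref{lem-first-bd-rt} into $L^\infty_T$--type estimates by the space-time embedding Proposition \ref{prop-embeding-spacetime}.

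Concretely, for the unweighted bounds in \eqref{L-INFT-V-veb-0} with $k=0,1$, I would combine $\langle\xi\rangle^l\partial_t^k v^{b,\varepsilon}\in L^2_T H^{3-2k}_\xi$ and $\langle\xi\rangle^l\partial_t^{k+1}v^{b,\varepsilon}\in L^2_T H^{1-2k}_\xi$ from \eqref{v-b-0-regularity-a} and apply Proposition \ref{prop-embeding-spacetime} to get $\langle\xi\rangle^l\partial_t^k v^{b,\varepsilon}\in L^\infty_T H^{2-2k}_\xi$. For the time-weighted bounds, the key identity is
\begin{align*}
t^s\|g(t)\|_{L^2_\xi}^2
=\int_0^t\bigl(s\tau^{s-1}\|g\|_{L^2_\xi}^2+2\tau^s\langle g,g_\tau\rangle_{L^2_\xi}\bigr)\,\mathrm{d}\tau
\leq C\bigl(\|\tau^{(s-1)/2}g\|_{L^2_T L^2_\xi}^2+\|\tau^{s/2}g_\tau\|_{L^2_T L^2_\xi}^2\bigr),
\end{align*}
applied with $g=\partial_t^k\partial_\xi^{j} v^{b,\varepsilon}$ for the relevant $(k,j)$; the required $L^2_T L^2_\xi$ norms on $g$ and $g_\tau$ are precisely those supplied by \eqref{v-b-0-regularity-a}--\eqref{v-b-0-regularity-c} (with the $\varepsilon^{\alpha/2}$-type replacements handled term-by-term as in the proof of \eqref{L-INFT-V-B-0}). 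The mixed weights of the form $t^s+\varepsilon^{\alpha s}$ are obtained by splitting the integration over $\tau\in(0,\varepsilon^\alpha)$ and $\tau\in(\varepsilon^\alpha,t)$ and using the $\varepsilon$-dependent companion bound on each piece.

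For the $\varphi^{b,\varepsilon}$-estimates \eqref{L-INFT-veVfi-b-0}, I would use the explicit representation $\eqref{first-bd-pro-rt-approxi}_2$, namely
\begin{equation*}
\varphi^{b,\varepsilon}(\xi,t)=\int_{-\infty}^{\xi}(\varphi_x^{I,0}(1,t)+M)({\mathop{\mathrm{e}}}^{v^{b,\varepsilon}(y,t)}-1)\,\mathrm{d}y,
\end{equation*}
so that $\partial_\xi^{k}\varphi^{b,\varepsilon}$ for $k\geq 1$ is governed by $\partial_\xi^{k-1}({\mathop{\mathrm{e}}}^{v^{b,\varepsilon}}-1)$, while $\varphi^{b,\varepsilon}$ itself decays at $-\infty$ thanks to the decay of $v^{b,\varepsilon}$. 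Differentiating in $t$, using $0\leq v^{b,\varepsilon}\leq v_*$, the Fa\`a di Bruno-type expansion
\begin{equation*}
\partial_\xi^\ell{\mathop{\mathrm{e}}}^{v^{b,\varepsilon}}=\sum_{\substack{\ell_1+\cdots+\ell_r=\ell\\1\leq\ell_1\leq\cdots\leq\ell_r,\,1\leq r\leq\ell}}C_r\,{\mathop{\mathrm{e}}}^{v^{b,\varepsilon}}\partial_\xi^{\ell_1}v^{b,\varepsilon}\cdots\partial_\xi^{\ell_r}v^{b,\varepsilon},
\end{equation*}
the already-established $L^\infty_T$ bounds \eqref{L-INFT-V-veb-0} for $v^{b,\varepsilon}$, the Sobolev inequality in $\xi$, and the $L^\infty$-type bounds on $\partial_t^k(\varphi_x^{I,0}(1,t)+M)$ coming from \eqref{vfi-I-0-x-bd-infty}-type estimates at the right endpoint, one obtains \eqref{L-INFT-veVfi-b-uniform-0} by the same counting of derivatives used at the end of the proof of Lemma \ref{lem-v-B-0}.

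The only point requiring any real care—and hence the main obstacle—is the bookkeeping for the time-weighted estimates of the highest-order derivatives (for instance $\|\langle\xi\rangle^l(t^2+\varepsilon^{3\alpha/2})\partial_t^k\partial_\xi^{5-2k}v^{b,\varepsilon}\|_{L^\infty_T L^2_\xi}$): here the power of $t$ (or $\varepsilon^\alpha$) in the weight must be chosen one higher than the power used for $L^2_T L^2_\xi$-control of the same quantity, in order to absorb the extra $\tau$-factor produced by the identity above while keeping all norms on the right-hand side already controlled through \eqref{v-b-0-regularity-a}--\eqref{v-b-0-regularity-c}. Matching these powers in each case exactly parallels the left-boundary computation in Corollary \ref{cor-V-B-0-INFTY}, so the proof is complete modulo this symmetric bookkeeping, which I omit.
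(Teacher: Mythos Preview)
Your proposal is correct and follows exactly the route indicated by the paper, which simply states that the corollary is ``similar to Corollary \ref{cor-V-B-0-INFTY}, in view of Proposition \ref{prop-embeding-spacetime}'' and omits details. Your use of Proposition \ref{prop-embeding-spacetime} for the unweighted terms, the differentiation identity $t^s\|g\|^2=\int_0^t(s\tau^{s-1}\|g\|^2+2\tau^s\langle g,g_\tau\rangle)\,\mathrm{d}\tau$ for the time-weighted terms (this is precisely the computation appearing in the paper's derivation of \eqref{inft-B-0}), and the Fa\`a di Bruno expansion plus the explicit formula $\eqref{first-bd-pro-rt-approxi}_2$ for $\varphi^{b,\varepsilon}$ all match the left-boundary argument in the proof of Lemma \ref{lem-v-B-0} and Corollary \ref{cor-V-B-0-INFTY}.
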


We next turn to the existence and regularity of the outer layer profile $ (\varphi ^{I,1}, v ^{I,1}) $.
\begin{lemma}\label{lemf-vfi-V-I-1}
Assume the conditions in Lemma \ref{lem-regul-outer-layer-0} hold, and let $ \varphi ^{ B,\varepsilon} $ and $ \varphi ^{ b,\varepsilon} $ be the solution obtained in Lemmas \ref{lem-v-B-0} and \ref{lem-first-bd-rt}, respectively. Then for any $ T>0 $, the problem \eqref{first-outer-problem} admits a unique classical solution $ (\varphi ^{I,1}, v ^{I,1}) $ on $ [0,T]$ which satisfies
\begin{gather*}
\displaystyle  \partial _{t}^{k}\varphi^{I,1} \in L _{T}^{2}H ^{6-2k} \ \mbox{for } k=0,1,2,3,\ \  v ^{I,1} \in L _{T}^{\infty}H ^{5} \ \ \mbox{and} \ \ \partial _{t}^{k}v ^{I,1} \in L _{T}^{2}H ^{7-2k}\  \mbox{for}\ k=1,2,3,
\end{gather*}
along with the bounds
\begin{subequations}\label{vfi-I-1-esti-first-con-lem}
 \begin{gather}
\displaystyle \|\partial _{t}^{i}\varphi ^{I,1}\|_{L _{T}^{2}H ^{2-2i}}+\|\partial _{t}^{k} v ^{I,1}\|_{L _{T}^{2}H ^{1}}
\leq C,\\
 \displaystyle \|t ^{\frac{1+i}{2}}\partial _{x}^{1+i}\varphi _{t}^{I,1}\|_{L _{T}^{2}L ^{2}}+\|\varepsilon ^{\frac{\alpha}{2}}\partial _{t}^{i}\partial _{x}^{3-2i}\varphi ^{I,1} \|_{L_{T}^{2} H ^{1}} + \|(t +\varepsilon ^{\frac{\alpha}{2}})\partial _{t}^{2}\varphi ^{I,1}\|_{L _{T}^{2}L ^{2}}\leq C,\\
\displaystyle  \|t ^{2+\frac{i}{2}}\partial _{x}^{1+i}\partial _{t}^{2}\varphi ^{I,1}\|_{L _{T}^{2}L ^{2}}+\|\varepsilon ^{\frac{3\alpha}{2}}\partial _{t}^{j}\partial _{x}^{5-2j}\varphi ^{I,1}\|_{L _{T}^{2}H ^{1}}+\|(t ^{\frac{5}{2}}+\varepsilon ^{\frac{3\alpha}{2}})\partial _{t}^{3}\varphi ^{I,1}\|_{L _{T}^{2} L ^{2}}\leq C,\\
\displaystyle \|(t ^{\frac{3i+2}{2}}+\varepsilon ^{\frac{\alpha( 2i+1)}{2}})\partial _{t}^{2+i}v ^{I,1}\|_{L _{T}^{2} H ^{1}}+\|\varepsilon ^{\frac{\alpha( 2i+1)}{2}}\partial _{t}^{k}\partial _{x}^{2+2i}v ^{I,1}\|_{L _{T}^{2}H ^{1}}+\|\varepsilon ^{\frac{3\alpha}{2}}\partial _{t}^{2}v _{xx}^{I,1}\|_{L _{T}^{2}H ^{1}}\leq C, \label{v-I1-txx}
\end{gather}
\end{subequations}
where $ i=0,1 $, $ k=0,1 $, $ j=0,1,2 $, $ C>0 $ is a constant independent of $ \varepsilon $.
\end{lemma}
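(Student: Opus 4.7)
The plan is to treat \eqref{first-outer-problem} as a linear system: a non-homogeneous parabolic equation for $\varphi^{I,1}$ coupled to a linear transport-type ODE (in $t$, with $x$ as parameter) for $v^{I,1}$, whose coefficients $\varphi^{I,0}_{x}+M$, $v^{I,0}$, $v^{I,0}_{x}$ are already controlled by Lemma \ref{lem-regul-outer-layer-0} and Corollary \ref{cor-l-infty-vfi-I0}, and whose Dirichlet data $-\varphi^{B,\varepsilon}(0,t)$, $-\varphi^{b,\varepsilon}(0,t)$ are controlled, together with their $\varepsilon$-singular time derivatives, by Lemmas \ref{lem-v-B-0}, \ref{lem-first-bd-rt} and Corollaries \ref{cor-V-B-0-INFTY}, \ref{cor-V-b-0-INFTY}. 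My first move would be to homogenize the boundary via the lifting
\begin{equation*}
\Phi^{\varepsilon}(x,t) := -\eta(x/\delta)\,\varphi^{B,\varepsilon}(0,t) - \eta((1-x)/\delta)\,\varphi^{b,\varepsilon}(0,t)
\end{equation*}
with $\eta$ as in \eqref{eta-defi} and $\delta<1/2$ fixed, so that $\tilde{\varphi} := \varphi^{I,1}-\Phi^{\varepsilon}$ satisfies homogeneous Dirichlet conditions and zero initial data (the latter uses $\varphi^{B,\varepsilon}(0,0)=\varphi^{b,\varepsilon}(0,0)=0$, which is built into \eqref{first-bd-layer-pro-appro}, \eqref{first-bd-pro-rt-approxi}). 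The equation for $\tilde{\varphi}$ picks up source terms of the form $-\Phi^{\varepsilon}_{t}+\Phi^{\varepsilon}_{xx}-\Phi^{\varepsilon}_{x} v^{I,0}_{x}$ and $-(\varphi^{I,0}_x+M)v^{I,1}_{x}-\tilde\varphi_{x} v^{I,0}_{x}$, and the pointwise norms of $\Phi^{\varepsilon}$ and its derivatives at $x=0,1$ reduce to the boundary traces of $\varphi^{B,\varepsilon}$, $\varphi^{b,\varepsilon}$ at the lateral edges.

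Existence and uniqueness will follow by a standard Galerkin/fixed-point argument for this linear system in the class $L^{\infty}_{T}L^{2}\cap L^{2}_{T}H^{1}$. For the baseline energy estimate, I would test the $\tilde{\varphi}$-equation against $\tilde{\varphi}$ and the $v^{I,1}$-equation against $v^{I,1}$, integrate by parts, use the strict positivity $\varphi^{I,0}_{x}+M\geq K^{-1}$ of Lemma \ref{lem-regul-outer-layer-0} to control the resulting $\|v^{I,1}\|^{2}_{L^{2}}$ on the left, absorb the cross-term $(\varphi^{I,0}_x+M)v^{I,1}_{x}\tilde\varphi$ via Cauchy-Schwarz into $\|\tilde\varphi_{x}\|^{2}_{L^{2}}+\|v^{I,1}\|^{2}_{L^{2}}$, and close by Gronwall. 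Higher spatial regularity is obtained by differentiating the system $\partial_{x}^{k}$ times ($k=1,\dots,5$) and running the same energy scheme; each step consumes one derivative from $(\varphi^{I,0},v^{I,0})$ (which is fine thanks to \eqref{l-INFTY-VFI-I-0}) and one from $\Phi^{\varepsilon}$. Alternatively, once $\varphi^{I,1}$ is known, $v^{I,1}$ admits the explicit integrating-factor representation
\begin{equation*}
v^{I,1}(x,t) = -\int_{0}^{t}\exp\!\Bigl(-\int_{s}^{t}(\varphi^{I,0}_{x}+M)(x,\tau)\,d\tau\Bigr)\varphi^{I,1}_{x}(x,s)\,v^{I,0}(x,s)\,ds,
\end{equation*}
from which the regularity of $v^{I,1}$ is a direct corollary of that of $\varphi^{I,1}$ and of the zero-th order profile, yielding in particular the exponential damping that gives $\|v^{I,1}\|_{L^{\infty}_{T}H^{5}}<\infty$.

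The main obstacle, and the reason the estimates in \eqref{vfi-I-1-esti-first-con-lem} are a nested hierarchy of both $\varepsilon$-weighted and $t$-weighted norms, is that the corner-corrector $\cala_{1}^{\varepsilon}$ makes $\partial_{t}^{k}\varphi^{B,\varepsilon}(0,t)$ singular in $\varepsilon$ for $k\geq 2$: the boundary-layer profile $v^{B,\varepsilon}$ is compatible only up to order two, so time derivatives of order $k\geq 2$ of $\Phi^{\varepsilon}$ are of size $\varepsilon^{-\alpha(k-3/2)/2}$ in $L^{2}_{T}$. Consequently, a naive energy estimate on $\partial_{t}^{k}\varphi^{I,1}$ is not uniform in $\varepsilon$. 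The remedy I would follow, mirroring the scheme used for $v^{B,\varepsilon}$ in Lemma \ref{lem-v-B-0}, is to multiply the differentiated equations by carefully chosen powers of $t$ whose indices match those already appearing in the source estimates of Lemma \ref{LEM-correc-function} and Corollaries \ref{cor-V-B-0-INFTY}, \ref{cor-V-b-0-INFTY}: the factor $t^{\beta}$ converts an $\varepsilon^{-\alpha\beta}$ bound on the source into an $O(1)$ time-weighted bound, and the resulting system of estimates ties together exactly as in \eqref{vfi-I-1-esti-first-con-lem}, with the specific time exponents and $\varepsilon^{\alpha/2}$, $\varepsilon^{3\alpha/2}$ factors dictated by which derivative of $\Phi^{\varepsilon}$ enters the source. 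Uniqueness follows by applying the same baseline energy identity to the difference of two solutions.
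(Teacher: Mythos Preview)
Your approach is essentially the same as the paper's: homogenize the Dirichlet data by a lifting, reduce to a linear parabolic--transport system with source terms governed by the boundary traces $\partial_t^{k}\varphi^{B,\varepsilon}(0,t)$, $\partial_t^{k}\varphi^{b,\varepsilon}(0,t)$, and then run a hierarchy of time-weighted and $\varepsilon$-weighted energy estimates whose weights are dictated by the corner-corrector bounds of Lemma~\ref{LEM-correc-function} and Corollaries~\ref{cor-V-B-0-INFTY}, \ref{cor-V-b-0-INFTY}. The only cosmetic difference is that the paper uses the simpler linear lifting $b(x,t)=x\,\varphi^{b,\varepsilon}(0,t)+(1-x)\,\varphi^{B,\varepsilon}(0,t)$ (so $b_{xx}\equiv 0$ and the source terms $f_{1},f_{2}$ are explicit polynomials in $x$) rather than your cutoff-based $\Phi^{\varepsilon}$, and it does not write out the integrating-factor formula for $v^{I,1}$.
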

\begin{proof}
  The local existence and uniqueness of solutions to the problem \eqref{first-outer-problem} on the outer layer profile pair $ (\varphi ^{I,1}, v ^{I,1}) $ can be proved by the classical PDE theory for linear parabolic equations (cf. \cite[Sec. 7.1]{evans-book}) along with the Banach fixed point theorem. In the following, we will devote ourselves to establishing some \emph{a priori} estimates which entail the global existence and the desired regularity of the solution.

  Denote $ b(x,t):=x \varphi ^{b,\varepsilon}(0,t)+(1-x)\varphi ^{B,\varepsilon}(0,t) $ and $ \tilde{\varphi}:=\varphi ^{I,1}+b(x,t) $ with
\begin{gather}\label{bdry-value-vfi-B-b-1}
\begin{cases}
  \displaystyle  \varphi ^{B,\varepsilon}(0,t)= -\int _{0}^{\infty}(\varphi _{x}^{I,0}(0,t)+M)\left(    {\mathop{\mathrm{e}}}^{v ^{B,\varepsilon}(y,t)}-1 \right) \mathrm{d}y,
   \nonumber \\[4mm]
   \displaystyle \varphi ^{b,\varepsilon}(0,t)=\int _{- \infty}^{0}(\varphi _{x}^{I,0}(1,t)+M)\left(    {\mathop{\mathrm{e}}}^{v ^{b,\varepsilon}(y,t)}-1 \right)\mathrm{d}y.
\end{cases}
\end{gather}
Then we deduce from \eqref{first-outer-problem} that
  \begin{gather}\label{eq-vfi-I-1-refor}
  \displaystyle  \begin{cases}
    \displaystyle \tilde{\varphi}_{t}=\tilde{\varphi}_{xx}-(\varphi _{x}^{I,0}+M)v _{x}^{I,1}-\tilde{\varphi}_{x}v _{x}^{I,0}+f _{1}(x,t),\\[2mm]
    \displaystyle   v_{t} ^{I,1}=-\left( \varphi _{x}^{I,0}+M \right)v ^{I,1}-\tilde{\varphi}_{x}v ^{I,0}+f _{2}(x,t),\\[2mm]
    \displaystyle \tilde{\varphi}(0,t)=\tilde{\varphi}(1,t)=0,\\[2mm]
    \displaystyle (\tilde{\varphi}, v ^{I,1})(x,0)=(0,0),
  \end{cases}
  \end{gather}
  where we have used the fact $ v ^{B,\varepsilon}(z,0)=v ^{b,\varepsilon}(\xi,0)=0 $, and $ f _{i}(x,t)~(i=1,2) $ are given by
  \begin{gather}\label{f-1-2-defi}
  \displaystyle f _{1}(x,t):=b _{t}+b _{x}v _{x}^{I,0},\ \  f _{2}(x,t):= b _{x}v ^{I,0},\ \ k=0,1.
  \end{gather}
  To ensure the desired regularity of the solution, it is necessary to derive some estimates for $ f _{1}(x,t) $ and $ f _{2}(x,t) $. By \eqref{con-vfi-B-1} and \eqref{Sobolev-infty}, we deduce for $ k=0,1 $ that
  \begin{subequations}\label{esti-vfi-B-0-BDY}
    \begin{gather}
  \displaystyle  \|\partial _{t}^{k}\varphi ^{B,\varepsilon}(0,t)\|_{L ^{2}((0,T))} \leq C \|\partial _{t}^{k}\varphi ^{B,\varepsilon}\|_{L _{T}^{2}L _{z}^{\infty}} \leq C \|\partial _{t}^{k}\varphi ^{B,\varepsilon}\|_{L _{T}^{2}H_{z}^{1}} \leq  C,\\
  \displaystyle \|(t ^{\frac{1+3k}{2}}+\varepsilon ^{\frac{(1+2k)\alpha}{2}})\partial _{t}^{2+k}\varphi ^{B,\varepsilon}(0,t)\|_{L ^{2}((0,T))}\leq \|(t ^{\frac{1+3k}{2}}+\varepsilon ^{\frac{(1+2k)\alpha}{2}})\partial _{t}^{2+k}\varphi ^{B,\varepsilon}\|_{L _{T}^{2}H _{z}^{1}} \leq C.
  \end{gather}
  \end{subequations}
      Similarly, by \eqref{con-b-0} and \eqref{Sobolev-infty}, we have for $ \varphi ^{b,\varepsilon}(0,t) $ that
       \begin{gather}\label{esti-vfi-b-0-bd}
  \displaystyle  \|\partial _{t}^{k}\varphi ^{b,\varepsilon}(0,t)\|_{L ^{2}((0,T))} + \|(t ^{\frac{1+3k}{2}}+\varepsilon ^{\frac{(1+2k)\alpha}{2}})\partial _{t}^{2+k}\varphi ^{b,\varepsilon}(0,t)\|_{L ^{2}((0,T))} \leq C  \ \ \mbox{for }k=0,1.
  \end{gather}
Recalling the definitions of $ f _{1} $ and $ f _{2} $ in \eqref{f-1-2-defi}, we deduce for $ k=0,1,2 $ that
\begin{subequations}\label{f-1-f-2-derive}
       \begin{align}
       \displaystyle \|\partial _{t}^{k}f _{1}\|_{L _{T}^{2}H ^{4-2k}}^{2}
     &\displaystyle \leq  C \left(  \|\partial _{t}^{k+1}\varphi ^{B,\varepsilon}(0,t)\|_{L ^{2}((0,T))}^{2}+ \|\partial _{t}^{k+1}\varphi ^{b,\varepsilon}(0,t)\|_{L ^{2}((0,T))}^{2} \right)
   \nonumber \\
   & \displaystyle \quad +\sum _{j=0}^{k}\left(  \|\partial _{t}^{j}\varphi ^{B,\varepsilon}(0,t)\|_{L ^{2}((0,T))}^{2}+ \|\partial _{t}^{j}\varphi ^{b,\varepsilon}(0,t)\|_{L ^{2}((0,T))}^{2}  \right)
    \nonumber \\
    & \displaystyle \quad\quad \quad~\quad \quad\times\|\partial _{t}^{k-j}v ^{I,0}\| _{L _{T}^{\infty}H ^{5-2k}}^{2} ,\\
    \displaystyle  \|\partial _{t}^{k}f _{2}\|_{L _{T}^{2}H ^{5-2k}}^{2} & \leq \sum _{j=0}^{k}\left(  \|\partial _{t}^{j}\varphi ^{B,\varepsilon}(0,t)\|_{L ^{2}((0,T))}^{2}+ \|\partial _{t}^{j}\varphi ^{b,\varepsilon}(0,t)\|_{L ^{2}((0,T))}^{2}  \right)
   \nonumber \\
   & \displaystyle \quad\quad \quad~ \quad\times\|\partial _{t}^{k-j}v ^{I,0}\| _{L _{T}^{\infty}H ^{5-2k}}^{2} ,
       \end{align}
 \end{subequations}
 which along with \eqref{con-vfi-v-I-0-regula}, \eqref{esti-vfi-B-0-BDY} and \eqref{esti-vfi-b-0-bd} implies that
   \begin{gather}
  \displaystyle  \varepsilon ^{\frac{(2k+1)\alpha}{2}}\|\partial _{t}^{1+k}f _{1}\|_{L _{T}^{2}H ^{2-2k}}+\varepsilon ^{\frac{\alpha}{2}}\|\partial _{t}^{2}f _{2}\|_{L _{T}^{2}H ^{1}}+\|f _{1}\| _{L _{T}^{2}H ^{4}} +\|\partial _{t}^{k}f _{2}\|_{L _{T}^{2}H ^{5-2k}} \leq C  , \ \ k=0,1.\label{uniform-f-1-f-2}
  \end{gather}
By analogous arguments as deriving \eqref{f-1-f-2-derive}, we get, thanks to \eqref{con-vfi-v-I-0-regula}, \eqref{esti-vfi-B-0-BDY} and \eqref{esti-vfi-b-0-bd},
  \begin{align}
  \displaystyle  \|t ^{\frac{3k+1}{2}}\partial _{t}^{1+k}f _{1}\|_{L _{T}^{2}H ^{2-2k}}+  \|t ^{\frac{1}{2}}\partial _{t}^{2}f _{2}\|_{L _{T}^{2}H ^{1}}\leq C ,\ \ k=0,1. \label{f-1-f-2-ve-depen}   \end{align}
With the temporal weight functions identified in \eqref{f-1-f-2-ve-depen},  we are ready to establish estimates for the solution. The remaining proof only involves tedious calculations by using the similar arguments as those in the proof of Lemma \ref{lem-v-B-0}, and hence we omit the details for brevity.
\end{proof}

With the estimates in \eqref{vfi-I-1-esti-first-con-lem}, one can utilize Proposition \ref{prop-embeding-spacetime} to get the following $L^\infty$ estimates.
\begin{corollary}\label{cor-vfi-I-1-infty}
Let $ (\varphi ^{I,1}, v ^{I,1}) $ be the solution obtained in Lemma \ref{lemf-vfi-V-I-1}. It holds that
\begin{subequations}\label{vfi-I-1-infty}
\begin{gather}
\displaystyle \|\varphi ^{I,1}\| _{L _{T}^{\infty}H ^{1}}+\|t \varphi _{xx}^{I,1}\|_{L _{T}^{\infty}L ^{2}} +\|\varepsilon ^{\frac{\alpha}{2}}\varphi _{xx}^{I,1}\|_{L _{T}^{\infty}H ^{1}}+\|\varepsilon ^{\frac{3\alpha}{2}}\partial _{x}^{4}\varphi ^{I,1}\|_{L _{T}^{\infty}H ^{1}}+\|\varepsilon ^{\frac{\alpha}{2}}\varphi_{t} ^{I,1}\|_{L _{T}^{\infty}H ^{1}}\leq C, \\
\displaystyle \|t \varphi_{t} ^{I,1}\|_{L _{T}^{\infty}L ^{2}}+\|t ^{2+\frac{i}{2}}\partial _{x}^{1+i}\varphi _{t}^{I,1}\|_{L _{T}^{\infty}L ^{2}} + \|\varepsilon ^{\frac{3\alpha}{2}}\partial _{x}^{2}\varphi_{t} ^{I,1}\|_{L _{T}^{\infty}H ^{1}}\leq C,\\
\displaystyle \|(t ^{\frac{5}{2}}+\varepsilon ^{\frac{3\alpha}{2}})\partial _{t}^{2}\varphi ^{I,1}\|_{L _{T}^{\infty}H ^{1}} \leq C,
\end{gather}
\end{subequations}
and that
\begin{subequations}\label{v-I-1-infty}
\begin{gather}
\displaystyle  \|v^{I,1}\| _{L _{T}^{\infty}H ^{1}}+\|\varepsilon ^{\frac{\alpha}{2}}v _{xx} ^{I,1}\|_{L _{T}^{\infty}H ^{1}}+\|\varepsilon ^{\frac{3\alpha}{2}}\partial _{x}^{4}v ^{I,1}\|_{L _{T}^{\infty}H ^{1}} \leq C ,\\
\displaystyle \|t ^{\frac{3i+2}{2}} \partial _{t}^{1+i} v ^{I,1}\|_{L _{T}^{\infty}H ^{1}}+\|\varepsilon ^{\frac{(1+2i)\alpha}{2}}\partial _{t}^{1+i} v ^{I,1}\|_{L _{T}^{\infty}H ^{2}}+\|\varepsilon ^{\frac{3\alpha}{2}}\partial _{t}\partial _{x}^{3} v ^{I,1}\|_{L _{T}^{\infty}H ^{1}} \leq C.
\end{gather}
Here $ i=0,1 $, the constant $ C>0 $ is independent of $ \varepsilon $.
\end{subequations}
\end{corollary}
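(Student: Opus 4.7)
The plan is to deduce the corollary as a direct consequence of Lemma \ref{lemf-vfi-V-I-1} combined with the space-time embedding collected in Proposition \ref{prop-embeding-spacetime}. The central fact is that if $w(t)$ is a nonnegative weight (a sum of a monomial in $t$ and a constant of the form $\varepsilon^{\alpha/2}$ or $\varepsilon^{3\alpha/2}$) and $g=g(x,t)$ is such that both $w g$ and $(wg)_t=w' g + w g_t$ lie in $L^2_T L^2$, then $w g \in L^\infty_T L^2$, with the quantitative bound
\begin{equation*}
\|w g\|_{L^\infty_T L^2}^2 \le \|w(0)g(0)\|_{L^2}^2 + 2\|wg\|_{L^2_T L^2}\|(wg)_t\|_{L^2_T L^2}.
\end{equation*}
Since both $\varphi^{I,1}$ and $v^{I,1}$ vanish at $t=0$, as do all admissible weighted quantities derived from them (the weights that might be singular at $t=0$ are multiplied by factors that vanish in $H^1$), the boundary contribution is absent and the $L^\infty_T$ bound is controlled purely by $L^2_T$ norms.

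First I would obtain the unweighted bounds $\|\varphi^{I,1}\|_{L^\infty_T H^1}$ and $\|v^{I,1}\|_{L^\infty_T H^1}$ directly from $\varphi^{I,1},v^{I,1}\in L^2_T H^2$ and $\varphi^{I,1}_t,v^{I,1}_t\in L^2_T L^2$, which are supplied by \eqref{vfi-I-1-esti-first-con-lem}. To promote spatial derivatives, I would use the equations in \eqref{first-outer-problem} to exchange the Laplacian with a time derivative:
\begin{equation*}
\varphi_{xx}^{I,1} = \varphi_t^{I,1} + (\varphi_x^{I,0}+M)\, v_x^{I,1} + \varphi_x^{I,1}\, v_x^{I,0},\qquad v^{I,1} = -\frac{v_t^{I,1} + \varphi_x^{I,1}\, v^{I,0}}{\varphi_x^{I,0}+M},
\end{equation*}
and iterate by differentiating these identities in $x$ or $t$. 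This reduces every target quantity $\|w(t)\partial_t^j\partial_x^k\varphi^{I,1}\|_{L^\infty_T L^2}$ (and its $v^{I,1}$ analogue) either to a quantity of the same form with one fewer spatial derivative and one more time derivative, or to a product of a uniformly bounded coefficient (via Corollary \ref{cor-l-infty-vfi-I0}) with a term already controlled. The corollary then amounts to case-by-case matching of each claimed weight in \eqref{vfi-I-1-infty}--\eqref{v-I-1-infty} with the pair of $L^2_T$ norms in Lemma \ref{lemf-vfi-V-I-1} whose product bounds it through Proposition \ref{prop-embeding-spacetime}.

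As an illustration, $\|(t^{5/2}+\varepsilon^{3\alpha/2})\partial_t^2\varphi^{I,1}\|_{L^\infty_T H^1}$ will come from pairing $\|(t^{5/2}+\varepsilon^{3\alpha/2})\partial_t^2\varphi^{I,1}\|_{L^2_T H^1}$ with $\|(t^{5/2}+\varepsilon^{3\alpha/2})\partial_t^3\varphi^{I,1}\|_{L^2_T L^2}$, after writing $\partial_x^2\partial_t^2\varphi^{I,1}$ via the equation in terms of $\partial_t^3\varphi^{I,1}$ plus admissible products; both ingredients are provided by \eqref{vfi-I-1-esti-first-con-lem} (cf. \eqref{v-I1-txx} and the lines preceding it). Similarly, $\|t\varphi_{xx}^{I,1}\|_{L^\infty_T L^2}$ follows from $\|t\varphi_t^{I,1}\|_{L^\infty_T L^2}$ and the equation, while $\|t\varphi_t^{I,1}\|_{L^\infty_T L^2}$ itself is controlled by $\|t^{1/2}\varphi_{xt}^{I,1}\|_{L^2_T L^2}$ and $\|(t+\varepsilon^{\alpha/2})\partial_t^2\varphi^{I,1}\|_{L^2_T L^2}$ together with lower-order terms from $(\varphi_t^{I,1})_t= \partial_t^2\varphi^{I,1}$.

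The main obstacle in carrying out this program is keeping track of the derivatives of the weights: each time $\partial_t$ falls on $w(t)$, it produces a lower power of $t$ multiplying $\partial_t^j\partial_x^k\varphi^{I,1}$, and one must verify that this lower-weighted object is already under control in $L^2_T$ by a milder estimate from Lemma \ref{lemf-vfi-V-I-1}. The additive structure $w(t)=t^\beta + \varepsilon^{\gamma\alpha/2}$ is exactly tailored so that this accounting closes: wherever the $t^\beta$ piece is differentiated, the leftover can be absorbed, and wherever the $\varepsilon^{\gamma\alpha/2}$ piece is used, it simply provides a bounded multiplicative constant. Since no singular cancellation or new energy identity is needed, and all the requisite $L^2_T$ ingredients have already been engineered in Lemma \ref{lemf-vfi-V-I-1}, the proof reduces to this bookkeeping, and the omitted calculations are routine.
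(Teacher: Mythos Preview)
Your proposal is correct and follows exactly the approach the paper indicates: the paper states that the corollary follows by combining the $L^2_T$ estimates of Lemma \ref{lemf-vfi-V-I-1} with Proposition \ref{prop-embeding-spacetime}, and your sketch spells out precisely this mechanism, including the use of the equations in \eqref{first-outer-problem} to trade spatial for temporal derivatives and the bookkeeping of time weights (cf.\ the analogous computation displayed in the proof of Corollary \ref{cor-V-B-0-INFTY}). There is nothing to add.
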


With the estimates for lower-order profiles derived in Lemmas \ref{lem-regul-outer-layer-0}--\ref{lemf-vfi-V-I-1} at hand, we proceed to derived the estimates for higher-order profiles determined by the systems  \eqref{second-bd-eq} and \eqref{sec-bd-eq-rt}.
\begin{lemma}\label{lem-v-B-1}
Assume the conditions in Lemmas \ref{lem-regul-outer-layer-0}, \ref{lem-v-B-0} and \ref{lemf-vfi-V-I-1} hold. Then the problem \eqref{second-bd-eq} admits a unique solution $ (v ^{B,1}, \varphi ^{B,2}) $ on $ [0,T] $ for any $ T \in (0,\infty) $ which satisfies, for any $ l \in \mathbb{N} $,
\begin{gather*}
\displaystyle\langle z \rangle ^{l}\partial _{t}^{k}v ^{B,1} \in L _{T}^{2}H _{z}^{6-2k}\ \ \ \mbox{for }k=0,1,2,3,\  \mbox{and } \langle z \rangle ^{l}\partial _{t}^{k}\varphi ^{B,2}\in L _{T}^{2}H _{z}^{7-2k}\ \ \ \mbox{for }k=0,1,2.
 \end{gather*}
Precisely, we have
 \begin{subequations}\label{tild-v-B-1-lem}
\begin{gather}
\displaystyle \|\langle z \rangle ^{l}\partial _{t}^{k}v ^{B,1}\|_{L _{T}^{2}H _{z}^{2-2k}}
+\|\langle z \rangle ^{l} (t ^{\frac{3+3k}{2}}+\varepsilon ^{\frac{(1+2k)\alpha}{2}})\partial _{t}^{2+k}v ^{B,1}\|_{L _{T}^{2}L _{z}^{2}} \leq C ,\ k=0,1,
 \\
  \|(t+\varepsilon ^{\frac{\alpha}{2}})\langle z \rangle ^{l}\partial _{t}^{k}\partial _{z}^{3-2k}v ^{B,1}\| _{L _{T}^{2}L _{z}^{2}}+\|(t ^{\frac{3}{2}} +\varepsilon ^{\frac{\alpha}{2}})\langle z \rangle ^{l}\partial _{t}^{k}\partial _{z}^{4-2k}v ^{B,1}\| _{L _{T}^{2}L _{z}^{2}} \leq C,\ \ k=0,1,\\
  \|(t ^{\frac{5+k}{2}}+\varepsilon ^{\frac{3\alpha}{2}})\langle z \rangle ^{l}\partial _{t}^{i}\partial _{z}^{5+k-2i}v ^{B,1}\| _{L _{T}^{2}L _{z}^{2}}\leq C,\ i=0,1,2,\ k=0,1
\end{gather}
and
\begin{gather}
\displaystyle  \|\langle z \rangle ^{l}\varphi ^{B,2}\|_{L _{T}^{2}H _{z}^{3}}+\|(t ^{\frac{2+3j}{2} }+\varepsilon ^{\frac{(1+2j)\alpha}{2}})\langle z \rangle ^{l}\partial _{t}^{1+j}\varphi ^{B,2}\| _{L _{T}^{2}H _{z}^{2-2j}}\leq C ,\ \ j=0,1,  \\
 \displaystyle \|(t +\varepsilon ^{\frac{\alpha}{2}})\langle z \rangle ^{l}\partial _{z}^{4}\varphi ^{B,2}\|_{L _{T}^{2}L _{z}^{2}} +\|(t ^{\frac{3}{2}} +\varepsilon ^{\frac{3 \alpha}{4}})\langle z \rangle ^{l}\partial _{z}^{5}\varphi ^{B,2}\|_{L _{T}^{2}L _{z}^{2}} \leq C,
  \\
  \displaystyle \|(t ^{\frac{3}{2}} +\varepsilon ^{\frac{ \alpha}{2}})\langle z \rangle ^{l}\partial _{t}\partial _{z}^{3}\varphi ^{B,2}\|_{L _{T}^{2}L _{z}^{2}}  +\|(t ^{\frac{5}{2}}+\varepsilon ^{\frac{3\alpha}{2}})\langle z \rangle ^{l}\partial _{t}^{2}\varphi _{z}^{B,2}\| _{L _{T}^{2}L _{z}^{2}}  \leq C,
    \\
    \displaystyle \|(t ^{3}+\varepsilon ^{\frac{3\alpha}{2}})\langle z \rangle ^{l}\partial _{t}^{i}\partial _{z}^{6-2i}\varphi ^{B,2}\| _{L _{T}^{2}L _{z}^{2}} \leq C, \ \ i=0,1,2.
\end{gather}
\end{subequations}

\begin{proof}
The ideas and technicalities of the proof of Lemma \ref{lem-v-B-1} involves tedious and complicated calculations, the ideas and procedures are essentially similar to those in the proof of Lemma \ref{lemf-vfi-V-I-1}. For brevity, we conclude the proof without presenting the details here.
\end{proof}
\end{lemma}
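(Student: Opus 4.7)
The plan is to decouple the system \eqref{second-bd-eq} into a parabolic problem for $v^{B,1}$ (second equation) and a linear second-order ODE in $z$ for $\varphi^{B,2}$ (first equation), following the same energy-method template developed in Lemmas \ref{lem-v-B-0} and \ref{lemf-vfi-V-I-1}. First I would homogenize the boundary condition $v^{B,1}(0,t)=-v^{I,1}(0,t)$ via the lift $\phi(z,t)=-\eta(z)v^{I,1}(0,t)$ with $\eta$ as in \eqref{eta-defi}, and set $\tilde v=v^{B,1}-\phi$. The resulting problem for $\tilde v$ has zero initial data, homogeneous Dirichlet condition at $z=0$, fast decay at infinity, and a forcing $\tilde\varrho$ built from $\phi_t,\phi_{zz}$ and the terms of $\eqref{second-bd-eq}_2$ involving the lower-order profiles $v^{B,\varepsilon},\varphi^{B,\varepsilon},\varphi^{I,0},v^{I,0},\varphi^{I,1},v^{I,1}$. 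Using Corollaries \ref{cor-l-infty-vfi-I0}, \ref{cor-V-B-0-INFTY} and \ref{cor-vfi-I-1-infty}, together with the trace bound \eqref{vfi-I-0-x-bd-infty}, I would catalogue the regularity of $\tilde\varrho$ in weighted norms $\langle z\rangle^{l}H_z^{s}$, inheriting from the lower-order profiles both the time weights $t^{(3+3k)/2}$ and the $\varepsilon$-weights $\varepsilon^{(1+2k)\alpha/2}$; this is the analogue of \eqref{tild-varrho-esti-1} in the present setting.

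Next I would run the nested energy scheme for $\tilde v$. Testing the equation against $\langle z\rangle^{2l}\tilde v$ exploits the coercive damping $(\varphi_x^{I,0}(0,t)+M)\tilde v^{\,2}$ (positive by \eqref{con-vfi-I-0-only}) to absorb the weight commutators, and a Gronwall argument yields a uniform $L^\infty_T L^2_z\cap L^2_T H^1_z$ bound. Iteratively testing $\partial_t^{k}$ of the equation against $\langle z\rangle^{2l}\partial_t^{k}\tilde v$ and against $\langle z\rangle^{2l}\partial_t^{k+1}\tilde v$ gives the bounds for $\partial_t^{k}\tilde v$, with weight $t^{(3+3k)/2}+\varepsilon^{(1+2k)\alpha/2}$ chosen to match the weakest weight of $\partial_t^{1+k}\tilde\varrho$. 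The spatial derivatives $\partial_z^{j}\tilde v$ are then recovered by bootstrapping from $\eqref{second-bd-eq}_2$ treated as $\tilde v_{zz}=\tilde v_{t}+(\text{lower-order terms})$ and differentiating in $z$. Because $\tilde v(0,0)=0$ holds trivially (zero initial data, zero boundary lift at $t=0$), no corner-corrector is needed and the estimates proceed exactly as in the proof of Lemma \ref{lem-v-B-0}, with $\tilde\varrho$ playing the role of $\varrho$.

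Once $v^{B,1}$ is controlled, I would solve the first equation of \eqref{second-bd-eq} as a linear ODE in $z$, rewriting it in the form $(\varphi^{B,2}_z)_z-v_z^{B,\varepsilon}\varphi^{B,2}_z=G(z,t)$, where $G$ collects the remaining source terms. Multiplying by the integrating factor $e^{-v^{B,\varepsilon}}$ and integrating from $z$ to $\infty$, with $\varphi^{B,2}(+\infty,t)=0$ and decay of $\varphi^{B,2}_z$, one gets
\begin{align*}
\varphi^{B,2}_z(z,t)=-e^{v^{B,\varepsilon}(z,t)}\int_{z}^{\infty}G(y,t)\,e^{-v^{B,\varepsilon}(y,t)}\,\mathrm{d}y,\qquad \varphi^{B,2}(z,t)=-\int_{z}^{\infty}\varphi^{B,2}_y(y,t)\,\mathrm{d}y.
\end{align*}
Since $0\le v^{B,\varepsilon}\le v_\ast$ by \eqref{v-B-0-regularity-a} and $v_z^{B,\varepsilon}$, $v_z^{B,1}$, $\varphi_z^{B,\varepsilon}$ all decay faster than $\langle z\rangle^{-l}$ for any $l$, every weighted $L^2_z$ (and $H^s_z$) bound on $G$ transfers to one on $\varphi^{B,2}_z$ with one extra order of $z$-smoothness and the same $t$- and $\varepsilon$-weights; time derivatives commute with the $z$-integration, so the procedure yields the $\varphi^{B,2}$ portion of \eqref{tild-v-B-1-lem}.

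The main obstacle will be the weight bookkeeping at the high end: the forcing $\tilde\varrho$ mixes terms whose time and $\varepsilon$ scalings are asymmetric, and several of them (for instance, $\partial_t^{2}v^{I,1}$, $\partial_z^{4}v^{B,\varepsilon}$ and derivatives of the corner-corrector $\mathcal A_1^{\varepsilon}$) are controlled only with heavy weights coming from \eqref{v-I1-txx}, \eqref{v-B-0-regularity-c} and \eqref{Corre-property}. Choosing the exponents in \eqref{tild-v-B-1-lem} precisely to match the heaviest admissible weight at each level of differentiation is what makes the estimates close; the rest of the argument, including global existence and uniqueness via a Banach fixed-point/continuation scheme as in Lemma \ref{lemf-vfi-V-I-1}, is then routine.
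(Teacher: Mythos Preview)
Your overall plan---homogenize the boundary data, run weighted energy estimates on the parabolic component, then read off $\varphi^{B,2}$ from the $z$-ODE---is the right template and matches what the paper intends. There is, however, a gap in your decoupling. The term $\varphi_z^{B,2}\bigl(v^{I,0}(0,t)+v^{B,\varepsilon}\bigr)$ in $\eqref{second-bd-eq}_2$ involves the unknown $\varphi^{B,2}$, not a lower-order profile; so the forcing $\tilde\varrho$ you list is incomplete, and the energy scheme for $\tilde v$ as you describe it will not close on its own.

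The clean way to repair this is to solve the ODE $\eqref{second-bd-eq}_1$ for $\varphi_z^{B,2}$ \emph{first} and substitute back into $\eqref{second-bd-eq}_2$. Using the identity $\varphi_z^{B,\varepsilon}=(\varphi_x^{I,0}(0,t)+M)\bigl(e^{v^{B,\varepsilon}}-1\bigr)$ from \eqref{vfi-bd-1ord-lt-approxi}, the factor $\varphi_x^{I,0}(0,t)+M+\varphi_z^{B,\varepsilon}$ multiplying $v_z^{B,1}$ in $\eqref{second-bd-eq}_1$ collapses to $(\varphi_x^{I,0}(0,t)+M)e^{v^{B,\varepsilon}}$; after your integrating-factor step and integration from $z$ to $\infty$, the $v^{B,1}$-dependent part of $\varphi_z^{B,2}$ becomes exactly $(\varphi_x^{I,0}(0,t)+M)e^{v^{B,\varepsilon}}v^{B,1}$ (pointwise, no residual integral). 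Substituting this into $\eqref{second-bd-eq}_2$ yields a self-contained linear parabolic equation for $v^{B,1}$ with the \emph{enhanced} positive damping coefficient
\[
(\varphi_x^{I,0}(0,t)+M)\,e^{v^{B,\varepsilon}}\bigl[1+v^{I,0}(0,t)+v^{B,\varepsilon}\bigr]\ \ge\ K^{-1}>0,
\]
plus a forcing that now genuinely involves only lower-order profiles. At that point your energy and bootstrap arguments go through as written. The alternative, closer to the paper's pointer to Lemma~\ref{lemf-vfi-V-I-1}, is to carry out the a~priori estimates on the coupled pair $(v^{B,1},\varphi_z^{B,2})$ simultaneously rather than sequentially; either route works, but you must treat the $\varphi_z^{B,2}$ term in $\eqref{second-bd-eq}_2$ explicitly.
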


With Lemma \ref{lem-v-B-1}, the following estimates can be readily derived based on  Proposition \ref{prop-embeding-spacetime} in the Appendix A.
\begin{corollary}\label{cor-INF-V-B-1-VFI-B-2}
Let $ (v ^{B,1}, \varphi ^{B,2})$ be the solution obtained in Lemma \ref{lem-v-B-1}.
It holds for $ i=0,1 $ that
\begin{subequations}\label{INF-V-B-1}
\begin{gather}
\displaystyle \|\langle z \rangle ^{l}v ^{B,1}\|_{L _{T}^{\infty}H _{z}^{1}}+ \|\varepsilon ^{\frac{(1+2i)\alpha}{2}}\langle z \rangle ^{l}\partial _{z}^{2+2i} v ^{B,1}\|_{L _{T}^{\infty}H _{z}^{1}} +\|t ^{\frac{2+i}{2}}\langle z \rangle ^{l}\partial _{z}^{2+i} v ^{B,1}\|_{L _{T}^{\infty}L _{z}^{2}} \leq C , \\
\displaystyle \|t ^{\frac{5+i}{2}}\langle z \rangle ^{l}\partial _{z}^{4+i} v ^{B,1}\|_{L _{T}^{\infty}L _{z}^{2}}+\|(t ^{\frac{3}{2}}+\varepsilon ^{\frac{\alpha}{2}})\langle z \rangle ^{l}\partial _{t} v ^{B,1}\|_{L _{T}^{\infty}H _{z}^{1}} +\|\varepsilon ^{\frac{3\alpha}{2}}\langle z \rangle ^{l}\partial _{t}\partial _{z}^{2} v ^{B,1}\|_{L _{T}^{\infty}H _{z}^{1}} \leq C ,\\
\displaystyle  \|t ^{\frac{5+i}{2}}\langle z \rangle ^{l}\partial _{t}\partial _{z}^{2+i} v ^{B,1}\|_{L _{T}^{\infty}L _{z}^{2}}+\|(t ^{3}+\varepsilon ^{\frac{3\alpha}{2}})\langle z \rangle ^{l}\partial _{t}^{2}v ^{B,1}\|_{L _{T}^{\infty}H _{z}^{1}} \leq C,\label{INF-V-B-1-c}
\end{gather}
\end{subequations}
and that
\begin{subequations}\label{INF-VFI-B-2}
\begin{gather}
\displaystyle
\|(t ^{\frac{1}{2}}+\varepsilon ^{\frac{\alpha}{4}})\|\langle z \rangle ^{l}\varphi ^{B,2}\|_{L _{T}^{\infty}H _{z}^{2}}+ \|(t  +\varepsilon ^{\frac{\alpha}{2}})\langle z \rangle ^{l}\partial _{z}^{3} \varphi ^{B,2}\|_{L _{T}^{\infty}L _{z}^{2}} \leq C,\\
\displaystyle \|(t ^{\frac{3}{2}}+\varepsilon ^{\frac{3\alpha}{4}}))\langle z \rangle ^{l}\partial _{z}^{4} \varphi ^{B,2}\|_{L _{T}^{\infty}L _{z}^{2}}+\|(t ^{3}+\varepsilon ^{\frac{3\alpha}{2}})\langle z \rangle ^{l}\partial _{z}^{5} \varphi ^{B,2}\|_{L _{T}^{\infty}L _{z}^{2}} \leq C,
  \\
 \displaystyle \|( t ^{2} +\varepsilon ^{\alpha})\langle z \rangle ^{l}\partial _{t}\varphi ^{B,2}\|_{L _{T}^{\infty}H _{z}^{2}}+\|(t ^{3}+\varepsilon ^{\frac{3\alpha}{2}})\langle z \rangle ^{l}\partial _{t}\partial _{z}^{3}\varphi ^{B,2}\|_{L _{T}^{\infty}L _{z}^{2}}\leq C,
\end{gather}
\end{subequations}
where $ C>0 $ is a constant independent of $ \varepsilon $.
\end{corollary}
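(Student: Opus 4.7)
The plan is to apply Proposition \ref{prop-embeding-spacetime} to convert the weighted $L^{2}_{T}$-type bounds on $v^{B,1}$ and $\varphi^{B,2}$ established in Lemma \ref{lem-v-B-1} into the asserted $L^{\infty}_{T}$-type bounds. The underlying mechanism is the elementary identity, valid for any temporal weight $\rho(t)\geq 0$ with $\rho(0)f(0)=0$,
\begin{gather*}
\rho(t)^{2}\|f(t)\|_{X}^{2}
\;=\; \int_{0}^{t} 2\rho\rho'(\tau)\,\|f\|_{X}^{2}\,d\tau
\;+\; 2\int_{0}^{t} \rho^{2}(\tau)\,\langle f,\, \partial_{\tau}f\rangle_{X}\,d\tau,
\end{gather*}
which after Cauchy--Schwarz controls $\|\rho f\|_{L^{\infty}_{T} X}$ by $L^{2}_{T} X$ norms of suitably weighted copies of $f$ and $\partial_{t}f$. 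The $\varepsilon$-only weighted bounds correspond to $\rho\equiv \varepsilon^{\beta}$ (so the first integral vanishes), while the time-weighted bounds correspond to $\rho(t) = t^{\lambda}$, and the algebra $(\lambda - \tfrac12)+(\lambda + \tfrac12) = 2\lambda$ governs how the weights on the right-hand side must match.

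First I would handle the time-uniform bounds with no extra weight, e.g. $\|\langle z\rangle^{l} v^{B,1}\|_{L^{\infty}_{T} H^{1}_{z}}$: Lemma \ref{lem-v-B-1} supplies $\langle z\rangle^{l} v^{B,1}\in L^{2}_{T} H^{2}_{z}$ and $\langle z\rangle^{l}\partial_{t}v^{B,1}\in L^{2}_{T} L^{2}_{z}$, so the standard Lions--Magenes type embedding $L^{2}_{T}H^{2}\cap H^{1}_{T}L^{2}\hookrightarrow C_{T}H^{1}$ yields the claim. The $\varepsilon$-prefactor bounds such as $\|\varepsilon^{(1+2i)\alpha/2}\langle z\rangle^{l}\partial_{z}^{2+2i}v^{B,1}\|_{L^{\infty}_{T}H^{1}_{z}}$ are handled identically after pairing the two $L^{2}_{T}$ estimates from Lemma \ref{lem-v-B-1} that carry the matching $\varepsilon$-prefactor on $\partial_{z}^{2+2i}v^{B,1}$ and on its first $t$-derivative.

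For a representative time-weighted entry, say $\|t^{(2+i)/2}\langle z\rangle^{l}\partial_{z}^{2+i}v^{B,1}\|_{L^{\infty}_{T}L^{2}_{z}}$ with $i=0$, I take $\rho(t)=t$ and $f = \langle z\rangle^{l}\partial_{z}^{2}v^{B,1}$, apply the identity above, and estimate the cross term by integration by parts in $z$ to exchange $\partial_{\tau}\partial_{z}^{2}v^{B,1}$ (not directly controlled) for $\partial_{z}^{3}v^{B,1}$ paired with $\partial_{\tau}\partial_{z}v^{B,1}$. The resulting $\tau$-integral is then absorbed by the $(\tau+\varepsilon^{\alpha/2})$-weighted $L^{2}_{T}L^{2}_{z}$ bounds on $\partial_{z}^{3}v^{B,1}$ and $\partial_{\tau}\partial_{z}v^{B,1}$ from Lemma \ref{lem-v-B-1}. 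The bounds for $\varphi^{B,2}$ are obtained in the same manner, noting that $\varphi^{B,2}$ inherits its spatial regularity from $v^{B,1}$ and $v^{B,\varepsilon}$ via the first equation of \eqref{second-bd-eq}, which is essentially an ODE in $z$ for $\varphi^{B,2}_{z}$ at each fixed $t$.

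The main obstacle is the careful bookkeeping: the exponents $\tfrac{3+3k}{2}$, $\tfrac{5+k}{2}$, $\tfrac{(1+2k)\alpha}{2}$, $\tfrac{3\alpha}{2}$, etc., appearing in Lemma \ref{lem-v-B-1} have been tuned so that for every entry in \eqref{INF-V-B-1}--\eqref{INF-VFI-B-2} the two $L^{2}_{T}$ bounds invoked in the embedding match up exactly, and one must verify this case by case (paying particular attention to derivatives like $\partial_{t}^{2}v^{B,1}$ in \eqref{INF-V-B-1-c}, whose two required companion bounds live at the edge of what Lemma \ref{lem-v-B-1} provides). Beyond this matching, no new analytic input is required.
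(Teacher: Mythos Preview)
Your proposal is correct and follows essentially the same approach as the paper: the paper simply states that the corollary is ``readily derived based on Proposition \ref{prop-embeding-spacetime}'' applied to the $L^{2}_{T}$ bounds of Lemma \ref{lem-v-B-1}, and the elementary identity you write out is precisely the device the paper uses in the analogous passage around \eqref{inft-B-0}. Your additional remarks (the integration-by-parts swap in $z$, the ODE structure of $\eqref{second-bd-eq}_{1}$ for $\varphi^{B,2}$, and the bookkeeping caveat) are accurate refinements of what the paper leaves implicit.
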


Similar to Lemma \ref{lem-v-B-1}, we have the following existence and regularity  result for $ (\varphi ^{b,2},v ^{b,1}) $.
\begin{lemma}\label{lem-v-b-1}
Assume the conditions in Lemmas \ref{lem-regul-outer-layer-0}, \ref{lem-v-B-0} and \ref{lemf-vfi-V-I-1} hold. Then there exists a unique solution $ (\varphi ^{b,2}, v ^{b,1}) $ to the problem \eqref{sec-bd-eq-rt}  on $ [0,T] $ for any $ T \in (0,\infty) $ such that for any $ l \in \mathbb{N} $,
\begin{gather*}
\displaystyle \langle \xi \rangle ^{l}\partial _{t}^{k}v ^{b,1} \in L _{T}^{2}H _{\xi}^{6-2k}\ \ \ \mbox{for }k=0,1,2,3,\  \mbox{and } \langle \xi \rangle ^{l}\partial _{t}^{k}\varphi ^{b,2}\in L _{T}^{2}H _{\xi}^{7-2k}\ \ \ \mbox{for }k=0,1,2,
 \end{gather*}
 with
 \begin{subequations}\label{tild-v-b-1-in-lem}
\begin{gather}
\displaystyle \|\langle \xi \rangle ^{l}\partial _{t}^{k}v ^{b,1}\|_{L _{T}^{2}H _{\xi}^{2-2k}}
+\|\langle \xi \rangle ^{l} (t ^{\frac{3+3k}{2}}+\varepsilon ^{\frac{(1+2k)\alpha}{2}})\partial _{t}^{2+k}v ^{b,1}\|_{L _{T}^{2}L _{\xi}^{2}} \leq C ,\ \ k=0,1,
 \\
  \|(t+\varepsilon ^{\frac{\alpha}{2}})\langle \xi \rangle ^{l}\partial _{t}^{k}\partial _{\xi}^{3-2k}v ^{b,1}\| _{L _{T}^{2}L _{z}^{2}}+\|(t ^{\frac{3}{2}} +\varepsilon ^{\frac{\alpha}{2}})\langle \xi \rangle ^{l}\partial _{t}^{k}\partial _{\xi}^{4-2k}v ^{b,1}\| _{L _{T}^{2}L _{\xi}^{2}} \leq C,\ \ k=0,1,\\
  \|(t ^{\frac{5+k}{2}}+\varepsilon ^{\frac{3\alpha}{2}})\langle \xi \rangle ^{l}\partial _{t}^{i}\partial _{\xi}^{5+k-2i}v ^{b,1}\| _{L _{T}^{2}L _{\xi}^{2}}\leq C,\ i=0,1,2,\ \ k=0,1
\end{gather}
and
\begin{gather}
\displaystyle  \|\langle \xi \rangle ^{l}\varphi ^{b,2}\|_{L _{T}^{2}H _{\xi}^{3}}+\|(t ^{\frac{2+3j}{2} }+\varepsilon ^{\frac{(1+2j)\alpha}{2}})\langle \xi \rangle ^{l}\partial _{t}^{1+j}\varphi ^{b,2}\| _{L _{T}^{2}H _{\xi}^{2-2j}}\leq C ,\ \ j=0,1,  \\
 \displaystyle \|(t +\varepsilon ^{\frac{\alpha}{2}})\langle \xi \rangle ^{l}\partial _{\xi}^{4}\varphi ^{b,2}\|_{L _{T}^{2}L _{\xi}^{2}} +\|(t ^{\frac{3}{2}} +\varepsilon ^{\frac{3 \alpha}{4}})\langle z \rangle ^{l}\partial _{\xi}^{5}\varphi ^{b,2}\|_{L _{T}^{2}L _{\xi}^{2}} \leq C,
  \\
  \displaystyle \|(t ^{\frac{3}{2}} +\varepsilon ^{\frac{ \alpha}{2}})\langle \xi \rangle ^{l}\partial _{t}\partial _{\xi}^{3}\varphi ^{b,2}\|_{L _{T}^{2}L _{\xi}^{2}}  +\|(t ^{\frac{5}{2}}+\varepsilon ^{\frac{3\alpha}{2}})\langle \xi \rangle ^{l}\partial _{t}^{2}\varphi _{\xi}^{b,2}\| _{L _{T}^{2}L _{\xi}^{2}}  \leq C,
    \\
    \displaystyle \|(t ^{3}+\varepsilon ^{\frac{3\alpha}{2}})\langle \xi \rangle ^{l}\partial _{t}^{i}\partial _{\xi}^{6-2i}\varphi ^{b,2}\| _{L _{T}^{2}L _{\xi}^{2}} \leq C, \ \ i=0,1,2.
\end{gather}

\end{subequations}

\end{lemma}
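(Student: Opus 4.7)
The plan is to mirror the proof of Lemma \ref{lem-v-B-1}, now working in the right boundary-layer coordinate $\xi\in\mathbb{R}_-$. The system \eqref{sec-bd-eq-rt} decouples into (i) an ODE-type equation in $\xi$ expressing $\varphi^{b,2}_{\xi\xi}$ in terms of $v^{b,1}_\xi$, $\varphi^{b,2}_\xi$ and quantities already controlled by Lemmas \ref{lem-regul-outer-layer-0}, \ref{lem-first-bd-rt}, \ref{lemf-vfi-V-I-1}; and (ii) a linear parabolic equation for $v^{b,1}$ on $\mathbb{R}_-$, driven by these same lower-order data with inhomogeneous boundary condition $v^{b,1}(0,t)=-v^{I,1}(1,t)$. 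Note that $v^{I,1}(1,0)=0$ follows from \eqref{first-outer-problem}$_{4}$, so zero-order compatibility holds, exactly as in the left-boundary case.

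First I would homogenize the boundary data. Pick $\eta\in C^\infty((-\infty,0])$ with $\eta(0)=1$ and $\eta(\xi)=0$ for $\xi\le -1$, and set $\tilde v:=v^{b,1}+\eta(\xi)v^{I,1}(1,t)$. Then $\tilde v$ solves a zero-boundary, zero-initial linear parabolic problem with a forcing term $\tilde\varrho$ built from $\eta(\xi)\partial_t v^{I,1}(1,t)$, the traces $\partial_t^k\varphi^{I,j}(1,t),\partial_t^kv^{I,j}(1,t)$ ($j=0,1$) and the right-boundary profiles $\varphi^{b,\varepsilon},v^{b,\varepsilon}$. Using the trace bound $\|\partial_t^k f(1,t)\|_{L^2(0,T)}\le C\|\partial_t^k f\|_{L_T^2H^1}$ together with Corollary \ref{cor-V-b-0-INFTY} and Corollary \ref{cor-vfi-I-1-infty}, I would produce the analogue of the estimates \eqref{fip-vrho-esti} for $\tilde\varrho$, tracking both its uniform-in-$\varepsilon$ $L_T^2H^k_\xi$ norms and its time-weighted/$\varepsilon$-weighted higher-order norms, the latter inheriting the weight combinations $t^{(2k-1)/2}+\varepsilon^{(2k-1)\alpha/2}$ from Lemma \ref{LEM-correc-function} via the corner-corrector $\cala_2^\varepsilon$ appearing in $v^{b,\varepsilon}$.

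With $\tilde\varrho$ controlled, I would run the full energy-estimate ladder of Lemmas \ref{lem-v-B-0} and \ref{lem-v-B-1}: test the equation for $\tilde v$ successively against $\langle\xi\rangle^{2l}\tilde v$, $\langle\xi\rangle^{2l}\tilde v_t$ and the time-differentiated analogues, applying Grönwall at each level. The absorbing term $(\varphi^{I,0}_x(1,t)+M)\ge K^{-1}$ from \eqref{con-vfi-I-0-only} provides the sign-definite coercivity exactly as in \eqref{v-b-0-good}. Uniform-in-$\varepsilon$ estimates come directly when $\tilde\varrho$ is uniformly controlled; to obtain the combined weights $t^{m}+\varepsilon^{n\alpha/2}$ in \eqref{tild-v-b-1-in-lem} for higher derivatives, one multiplies each energy identity by the appropriate power of $t$ before invoking Grönwall, so that the $\varepsilon$-singular contribution of $\cala_2^\varepsilon$ at $t=0$ is tamed by the temporal weight and replaced by a uniform bound away from the initial time. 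Once all $v^{b,1}$-bounds are in hand, I recover $\varphi^{b,2}$ by integrating the first equation of \eqref{sec-bd-eq-rt} from $-\infty$ using $\varphi^{b,2}(-\infty,t)=0$, which yields a formula for $\varphi^{b,2}_\xi$ as an integral in $\xi$ of the already controlled profiles; differentiating and estimating via Sobolev/Leibniz as in \eqref{pro-vfi-B-1-H3}–\eqref{L-INFT-Vfi-B-0} delivers the stated $\varphi^{b,2}$ bounds.

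The main obstacle is not analytic but combinatorial: each higher-order, time-weighted norm in \eqref{tild-v-b-1-in-lem} requires identifying, among the many bilinear products in $\tilde\varrho$ and in the $\varphi^{b,2}$-formula, the factor that is most singular in $\varepsilon$ or in $t^{-1}$, and then checking that the weight prescribed on the left-hand side exactly absorbs it. Because the right-boundary profiles $v^{b,\varepsilon}, \varphi^{b,\varepsilon}$ obey the same weighted estimates as $v^{B,\varepsilon},\varphi^{B,\varepsilon}$ (Lemma \ref{lem-first-bd-rt}, Corollary \ref{cor-V-b-0-INFTY}) and since $\varphi^{I,k}(1,t), v^{I,k}(1,t)$ satisfy the same trace estimates at $x=1$ as at $x=0$, this bookkeeping is strictly parallel to that carried out in Lemma \ref{lem-v-B-1}. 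For this reason the detailed calculations are omitted, and the lemma follows by the same scheme.
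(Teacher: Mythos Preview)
Your proposal is correct and follows essentially the same approach as the paper: the paper itself omits the proof of Lemma~\ref{lem-v-b-1} entirely, simply declaring it ``similar to Lemma~\ref{lem-v-B-1}'' (whose own proof is in turn deferred to the scheme of Lemmas~\ref{lem-v-B-0} and~\ref{lemf-vfi-V-I-1}), so your outline---homogenize the boundary data, run the weighted-energy ladder with the coercive term $\varphi_x^{I,0}(1,t)+M\ge K^{-1}$, and recover $\varphi^{b,2}$ from the first equation of \eqref{sec-bd-eq-rt} via the integrating factor $e^{-v^{b,\varepsilon}}$---is precisely the intended route. The only minor imprecision is the word ``decouples'': the system is genuinely coupled through $\varphi^{b,2}_\xi$ in the $v^{b,1}$ equation, so one first solves the ODE for $\varphi^{b,2}_\xi$ as a linear functional of $v^{b,1}_\xi$ and substitutes back before running the parabolic estimates, but your later paragraphs make clear you already have this in mind.
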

Similar to Corollary \ref{cor-INF-V-B-1-VFI-B-2}, we have
\begin{corollary}\label{cor-INF-vb1-vfib2}
It holds that
\begin{subequations}\label{INF-vb1-vfib2}
\begin{gather}
\displaystyle \|\langle \xi \rangle ^{l}v ^{b,1}\|_{L _{T}^{\infty}H _{\xi}^{1}}+ \|\varepsilon ^{\frac{(1+2i)\alpha}{2}}\langle \xi \rangle ^{l}\partial _{\xi}^{2+2i} v ^{b,1}\|_{L _{T}^{\infty}H _{\xi}^{1}} +\|t ^{\frac{2+i}{2}}\langle \xi \rangle ^{l}\partial _{\xi}^{2+i} v ^{b,1}\|_{L _{T}^{\infty}L _{\xi}^{2}} \leq C , \\
\displaystyle \|t ^{\frac{5+i}{2}}\langle \xi \rangle ^{l}\partial _{\xi}^{4+i} v ^{b,1}\|_{L _{T}^{\infty}L _{\xi}^{2}}+\|(t ^{\frac{3}{2}}+\varepsilon ^{\frac{\alpha}{2}})\langle \xi \rangle ^{l}\partial _{t} v ^{b,1}\|_{L _{T}^{\infty}H _{\xi}^{1}} +\|\varepsilon ^{\frac{3\alpha}{2}}\langle \xi \rangle ^{l}\partial _{t}\partial _{\xi}^{2} v ^{b,1}\|_{L _{T}^{\infty}H _{\xi}^{1}} \leq C ,\\
\displaystyle  \|t ^{\frac{5+k}{2}}\langle \xi \rangle ^{l}\partial _{t}\partial _{\xi}^{2+k} v ^{b,1}\|_{L _{T}^{\infty}L _{\xi}^{2}}+\|(t ^{3}+\varepsilon ^{\frac{3\alpha}{2}})\langle \xi \rangle ^{l}\partial _{t}^{2}v ^{b,1}\|_{L _{T}^{\infty}H _{\xi}^{1}} \leq C ,
\end{gather}
\end{subequations}
and that
\begin{subequations}\label{INF-vfi-b2}
\begin{gather}
\displaystyle
\|(t ^{\frac{1}{2}}+\varepsilon ^{\frac{\alpha}{4}})\|\langle \xi \rangle ^{l}\varphi ^{b,2}\|_{L _{T}^{\infty}H _{\xi}^{2}}+ \|(t  +\varepsilon ^{\frac{\alpha}{2}})\langle \xi \rangle ^{l}\partial _{\xi}^{3} \varphi ^{b,2}\|_{L _{T}^{\infty}L _{\xi}^{2}} \leq C,\\
\displaystyle \|(t ^{\frac{3}{2}}+\varepsilon ^{\frac{3\alpha}{4}}))\langle \xi \rangle ^{l}\partial _{\xi}^{4} \varphi ^{b,2}\|_{L _{T}^{\infty}L _{\xi}^{2}}+\|(t ^{3}+\varepsilon ^{\frac{3\alpha}{2}})\langle \xi \rangle ^{l}\partial _{\xi}^{5} \varphi ^{b,2}\|_{L _{T}^{\infty}L _{\xi}^{2}} \leq C,
 \\
 \displaystyle \|(t ^{2} +\varepsilon ^{\alpha})\langle \xi \rangle ^{l}\partial _{t}\varphi ^{b,2}\|_{L _{T}^{\infty}H _{\xi}^{2}}+\|(t ^{3}+\varepsilon ^{\frac{3\alpha}{2}})\langle \xi \rangle ^{l}\partial _{t}\partial _{\xi}^{3}\varphi ^{b,2}\|_{L _{T}^{\infty}L _{\xi}^{2}}\leq C,
\end{gather}
\end{subequations}
where $ i=0,1 $, $ C>0 $ is a constant independent of $ \varepsilon $.
\end{corollary}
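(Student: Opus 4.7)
The corollary is the right-boundary analogue of Corollary \ref{cor-INF-V-B-1-VFI-B-2}, and the estimates in Lemma \ref{lem-v-b-1} on $(v^{b,1},\varphi^{b,2})$ are exactly parallel in form (with $z$ replaced by $\xi$ and the domain $\mathbb{R}_{+}$ replaced by $\mathbb{R}_{-}$) to those in Lemma \ref{lem-v-B-1} on $(v^{B,1},\varphi^{B,2})$. The plan therefore is to repeat verbatim the argument used for Corollary \ref{cor-INF-V-B-1-VFI-B-2}, namely to feed Lemma \ref{lem-v-b-1} into the space-time embedding result of Proposition \ref{prop-embeding-spacetime} in Appendix~A.

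The key mechanism is the one-dimensional trace/embedding inequality: for a weight $w(t)\ge 0$ and a function $g(\xi,t)$,
\begin{equation*}
\|wg\|_{L^{\infty}_{T}L^{2}_{\xi}}^{2}\le \|(wg)(\cdot,0)\|_{L^{2}_{\xi}}^{2}+C\,\|wg\|_{L^{2}_{T}L^{2}_{\xi}}\,\|(wg)_t\|_{L^{2}_{T}L^{2}_{\xi}},
\end{equation*}
which, after expanding $(wg)_t=w'g+wg_t$, shows that every $L^{\infty}_{T}L^{2}_{\xi}$ bound in \eqref{INF-vb1-vfib2}--\eqref{INF-vfi-b2} will follow as soon as both the corresponding weighted norm and its time derivative are controlled in $L^{2}_{T}L^{2}_{\xi}$ by Lemma \ref{lem-v-b-1}. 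For each claimed bound I would therefore choose $g=\langle\xi\rangle^{l}\partial_t^{i}\partial_\xi^{j}v^{b,1}$ or $g=\langle\xi\rangle^{l}\partial_t^{i}\partial_\xi^{j}\varphi^{b,2}$ and the appropriate weight $w(t)$ (either $t^{a}+\varepsilon^{b}$ or a pure power of $t$), and match the indices so that $wg$ and $(wg)_t$ both appear in the estimates of Lemma \ref{lem-v-b-1}. Since the initial data of $v^{b,1}$ and $\varphi^{b,2}$ vanish and the weights we use satisfy $w(0)=0$ except for the $\varepsilon^{b}$ part (which is bounded independently of $t$), the boundary term at $t=0$ is harmless.

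As a representative instance, for the bound $\|(t^{3}+\varepsilon^{3\alpha/2})\langle\xi\rangle^{l}\partial_t^{2}v^{b,1}\|_{L^{\infty}_{T}H^{1}_{\xi}}\le C$ in \eqref{INF-vb1-vfib2}, I would take $w(t)=t^{3}+\varepsilon^{3\alpha/2}$ and $g=\langle\xi\rangle^{l}\partial_t^{2}v^{b,1}$; then $wg$ is controlled by the estimate $\|\langle\xi\rangle^{l}(t^{3}+\varepsilon^{3\alpha/2})\partial_t^{2}v^{b,1}\|_{L^{2}_{T}L^{2}_{\xi}}\le C$ (a consequence of the last line of \eqref{tild-v-b-1-in-lem} with $k=1$ combined with Poincaré-type control via $\partial_\xi v^{b,1}$), while $(wg)_t=3t^{2}\partial_t^{2}v^{b,1}+w\partial_t^{3}v^{b,1}$ is controlled because $\|t^{2}\langle\xi\rangle^{l}\partial_t^{2}v^{b,1}\|_{L^{2}_{T}L^{2}_{\xi}}$ and $\|(t^{3}+\varepsilon^{3\alpha/2})\langle\xi\rangle^{l}\partial_t^{3}v^{b,1}\|_{L^{2}_{T}L^{2}_{\xi}}$ are bounded by \eqref{tild-v-b-1-in-lem}. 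The $H^{1}_{\xi}$ part is handled identically after an extra $\partial_\xi$, using the corresponding $\partial_\xi$ estimates in Lemma \ref{lem-v-b-1}. All remaining bounds in \eqref{INF-vb1-vfib2} and \eqref{INF-vfi-b2} are obtained by the same index bookkeeping.

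The only real obstacle is clerical rather than analytical: one must verify for each $(i,j,a,b)$ quadruple that Lemma \ref{lem-v-b-1} does supply the weighted norms of both $g$ and $g_t$ with compatible weight exponents. This matching was arranged precisely when the weights $(t^{(2k+1)/2}+\varepsilon^{(2k+1)\alpha/2})$, $(t^{(5+k)/2}+\varepsilon^{3\alpha/2})$ and so on were chosen in Lemma \ref{lem-v-b-1}; since those choices are exact mirrors of the ones made in Lemma \ref{lem-v-B-1}, the matching that worked to prove Corollary \ref{cor-INF-V-B-1-VFI-B-2} carries over without change. No new idea is required, and the proof is accordingly omitted for brevity, as in Corollary \ref{cor-INF-V-B-1-VFI-B-2}.
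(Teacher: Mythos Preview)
Your proposal is correct and follows essentially the same approach as the paper: the paper simply states that Corollary~\ref{cor-INF-vb1-vfib2} is obtained from Lemma~\ref{lem-v-b-1} via Proposition~\ref{prop-embeding-spacetime}, exactly as Corollary~\ref{cor-INF-V-B-1-VFI-B-2} was obtained from Lemma~\ref{lem-v-B-1}. You actually supply more detail than the paper does, making explicit the fundamental-theorem-of-calculus mechanism (which the paper uses in the same form, e.g.\ just after \eqref{inft-B-0}); one minor quibble is that in your representative instance the needed bound on $(t^{3}+\varepsilon^{3\alpha/2})\langle\xi\rangle^{l}\partial_t^{2}v^{b,1}$ follows directly from the first line of \eqref{tild-v-b-1-in-lem} with $k=0$ (since $t^{3}+\varepsilon^{3\alpha/2}\le C(t^{3/2}+\varepsilon^{\alpha/2})$ for $0<\varepsilon<1$, $t\le T$), so no Poincar\'e-type argument is needed there.
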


\section{Convergence of boundary layers} 
\label{sec:stability_of_boundary_layers}
This section is devoted to proving Theorem \ref{thm-stabi-refor}, while Theorem \ref{thm-original} is a consequence of Theorem \ref{thm-stabi-refor} by the change of variables as briefed in Section 1.
\subsection{Reformulation of the problem} 
\label{sub:reformulation_of_the_problem}
Denote by $ (\varphi ^{\varepsilon},v ^{\epsilon}) $ the solution to the problem \eqref{refor-eq}. To study the convergence of boundary-layer profiles stated in Theorem \ref{thm-stabi-refor}, by noticing that $\varphi ^{B,0}=\varphi ^{B,0}\equiv 0$, we write $ (\varphi ^{\varepsilon},v ^{\epsilon}) $ as
\begin{align*}
\displaystyle \begin{cases}
  \displaystyle\varphi ^{\varepsilon}=\varphi ^{I,0}(x,t)+\varepsilon ^{1/2}(\varphi ^{I,1}(x,t)+\varphi ^{B,1}(z,t)+\varphi ^{b,1}(\xi,t))+\mathcal{E}_{1}^{\varepsilon},\\[2mm]
  \displaystyle v ^{\varepsilon}=v ^{I,0}(x,t)+v ^{B,\varepsilon}(z,t)+v ^{b,\varepsilon}(\xi,t)+\mathcal{E}_{2}^{\varepsilon},
\end{cases}
\end{align*}
and derive equations for the remainder $ (\mathcal{E}_{1}^{\varepsilon},\mathcal{E}_{2}^{\varepsilon}) $ to show for any $ T>0 $ that
\begin{align}\label{cal-E}
\displaystyle \|\mathcal{E}_{1}^{\varepsilon}\|_{L _{T}^{\infty}L ^{\infty}}
  \leq C \varepsilon ^{\frac{3 \iota _{0}}{2}-\frac{3}{8}},\ \ \|t ^{\frac{5}{4}}\partial _{x}\mathcal{E}_{1}^{\varepsilon}\|_{L _{T}^{\infty}L ^{\infty}}  \leq C \varepsilon ^{2\iota _{0}-1},\ \ \ \|\mathcal{E}_{2}^{\varepsilon}\|_{L _{T}^{\infty}L ^{\infty}} \leq C  \varepsilon ^{\iota _{0}-\frac{1}{4}},
\end{align}
where $ 1/2<\iota _{0}<7/12 $ is as in \eqref{iota0-defi}. Note that the estimate \eqref{vfi-VI-0} naturally holds for small $\varepsilon>0$ under \eqref{cal-E} since $\frac{3 \iota _{0}}{2}-\frac{3}{8}<\frac{1}{2}$.  However, if we substitute the above perturbations into the equations in \eqref{refor-eq},  we shall find that the equations of $ \mathcal{E}_{i}^{\varepsilon} $ have some terms at the zero-th order of $ \varepsilon $, which are obstacles to derive the estimates in \eqref{cal-E}. To resolve this emerging issue, we  approximate the solution of \eqref{refor-eq}, \eqref{BD-POSITIVE-VE} with some higher-order profiles as follows:
\begin{align}
\displaystyle \Phi ^{a}(x,t)&:=\varphi ^{I,0}+\varepsilon ^{\frac{1}{2}}\left( \varphi ^{I,1}(x,t)+\varphi ^{B,\varepsilon}(z,t)+\varphi ^{b,\varepsilon}(\xi,t) \right)
 \nonumber \\
 &\quad  +\varepsilon \left( \varphi ^{B,2}(z,t)+\varphi ^{b,2}(\xi,t) \right)
 +b _{\varphi}^{\varepsilon}(x,t),
  \nonumber \\
  \displaystyle V ^{a}(x,t)&:=  v ^{I,0}(x,t)+v ^{B,\varepsilon}(z,t)+v ^{b,\varepsilon}(\xi,t)+ \varepsilon^{\frac{1}{2}}\left( v ^{I,1}(x,t)+v ^{B,1}(z,t)+v ^{b,1}(\xi,t) \right)
 +b _{v}^{\varepsilon}(x,t),\label{approximate-v-formula}
\end{align}
where the functions $ b _{\varphi}^{\varepsilon}(x,t) $ and $ b _{v}^{\varepsilon}(x,t) $ are constructed below to homogenize the boundary values of $ (\Phi^{a}, V^{a})  $:
\begin{subequations}
\begin{align}
\displaystyle \displaystyle\makebox[-0.5pt]{~} b _{\varphi}^{\varepsilon}(x,t)&=-(1-x)\left[ \varepsilon ^{\frac{1}{2}}\varphi ^{b,\varepsilon}(- \frac{1}{\varepsilon ^{1/2}} ,t)+\varepsilon \varphi ^{b,2}(-  \frac{1}{\varepsilon ^{1/2}},t) \right]-(1-x)		{\mathop{\mathrm{e}}}^{- \frac{x}{\varepsilon ^{\nu}}}\varepsilon \varphi^{B,2}(0,t)
 \nonumber \\
  & \displaystyle \quad-x \left[  \varepsilon ^{\frac{1}{2}}\varphi ^{B,\varepsilon}(\frac{1}{\varepsilon ^{1/2}},t)+\varepsilon \varphi ^{B,2}(\frac{1}{\varepsilon ^{1/2}},t) \right] - x 		{\mathop{\mathrm{e}}}^{- \frac{1-x}{\varepsilon ^{\nu}}}\varepsilon \varphi ^{b,2}(0,t),   \label{b-vfi-ve}
 \\
 \displaystyle \displaystyle\makebox[-0.5pt]{~} b _{v}^{\varepsilon}(x,t)&=(x-1)\left[v ^{b,\varepsilon}(- \frac{1}{\varepsilon ^{1/2}},t)+ \varepsilon ^{ \frac{1}{2}}v^{b,1}(-\frac{1}{\varepsilon ^{ 1/2}},t)+\cala_{1}^{\varepsilon}(t) \right]
  \nonumber \\
  & \displaystyle \quad -x \left[v ^{B,\varepsilon}( \frac{1}{\varepsilon ^{ 1/2}},t)+  \varepsilon ^{\frac{1}{2}}v ^{B,1}( \frac{1}{\varepsilon ^{1/2}},t)+\cala_{2}^{\varepsilon}(t)\right], \label{b-v-ve}
\end{align}
\end{subequations}
where $ \nu $ is as in \eqref{nu-constriant}. Then we perturb $ (\varphi ^{\varepsilon}, v ^{\varepsilon}) $ as
\begin{gather}\label{solution-recover-esti}
\displaystyle \varphi ^{\varepsilon}-\Phi^{a}= \varepsilon ^{\frac{1}{2}}\Phi ^{\varepsilon},\ \ v ^{\varepsilon}-V^{a}=\varepsilon ^{\frac{1}{2}}V ^{\varepsilon}.
\end{gather}
Clearly, we have
\begin{align}\label{cal-E-iden}
\begin{cases}
  \displaystyle \mathcal{E}_{1}^{\varepsilon}&=\varepsilon ^{1/2}\Phi ^{\varepsilon}+\varepsilon \left( \varphi ^{B,2}(z,t)+\varphi ^{b,2}(\xi,t) \right)  +b _{\varphi}^{\varepsilon}(x,t),\\[1mm]
\displaystyle \mathcal{E}_{2}^{\varepsilon}&= \varepsilon ^{1/2}V ^{\varepsilon}+\varepsilon ^{1/2}(v ^{I,1}(x,t)+v ^{B,1}(z,t)+v ^{b,1}(\xi,t))+b _{v}^{\varepsilon}(x,t).
\end{cases}
\end{align}
In view of the estimates in the previous section for the outer- and boundary-layer profiles, to get \eqref{cal-E}, it now remains to derive the equations for $ (\Phi ^{\varepsilon},V ^{\varepsilon}) $ and establish some desired estimates. Substituting \eqref{solution-recover-esti} into \eqref{refor-eq}, then the perturbation functions $ (\Phi ^{\varepsilon}, V ^{\varepsilon}) $ satisfy
\begin{gather}\label{eq-perturbation}
\displaystyle  \begin{cases}
\displaystyle \Phi _{t}^{\varepsilon}=\Phi _{xx}^{\varepsilon}-\varepsilon ^{\frac{1}{2}}\Phi _{x}^{\varepsilon}V _{x}^{\varepsilon}-\Phi _{x}^{\varepsilon}V_{x}^{a}- V _{x}^{\varepsilon}(\Phi_{x}^{a}+M)+\varepsilon ^{-\frac{1}{2}}\mathcal{R}_{1}^{\varepsilon},\\[1mm]
  \displaystyle V _{t}^{\varepsilon}=\varepsilon V _{xx}^{\varepsilon}-\varepsilon ^{\frac{1}{2}}\Phi _{x}^{\varepsilon}V ^{\varepsilon}-\Phi _{x}^{\varepsilon}V^{a}-(\Phi_{x}^{a}+M)V ^{\varepsilon}
  +\varepsilon ^{- \frac{1}{2}}\mathcal{R}_{2}^{\varepsilon},\\
  \displaystyle (\Phi ^{\varepsilon},V ^{\varepsilon})(x,0)=(0,0),\\
  \displaystyle (\Phi ^{\varepsilon},V ^{\varepsilon})(0,t)=(\Phi ^{\varepsilon},V ^{\varepsilon})(1,t)=(0,0),
  \end{cases}
\end{gather}
where the error terms $ \mathcal{R}_{i}^{\varepsilon}\,(i=1,2) $ are given by
\begin{align}\label{E-rror-fomula}
\displaystyle \mathcal{R}_{1}^{\varepsilon}=\Phi_{xx}^{a} -(\Phi_{x}^{a}+M)V_{x}^{a} -\Phi_{t}^{a}, \ \ \ \mathcal{R}_{2}^{\varepsilon}=\varepsilon V_{xx}^{a}-(\Phi_{x}^{a}+M)V ^{a}-V_{t}^{a}.
\end{align}
Notice that the coefficients and error terms in the problem \eqref{eq-perturbation} only involve the outer- and boundary-layer profiles which have been studied in the previous section. By standard arguments (cf. \cite{nishida-1978,zhaokun-2015-JDE}), one can prove the local existence of solutions to the problem \eqref{eq-perturbation} with $ \varepsilon>0 $ on a time interval $ [0,T _{\varepsilon}] $ for some $ T _{\varepsilon}>0 $ which may be small. Now the key of the matter is to establish some \emph{a priori} estimates for $ (\Phi ^{\varepsilon}, V ^{\varepsilon}) $ uniformly in $ \varepsilon $ from which one can extract the convergence of boundary layers. We also remark that following the procedure of establishing the uniform estimates, one can easily derive some higher-order estimates depending on $ \varepsilon $.  With these estimates, one can obtain the existence of solutions on the time interval $ [0,T] $ for any $ T>0 $ for fixed $ \varepsilon>0$.. Specifically, we will prove the following results for the problem \eqref{eq-perturbation}.

	\begin{proposition}\label{prop-pertur}
Assume the conditions in Theorem \ref{thm-stabi-refor} hold. Then for any $ T>0 $, there exists a constant $  \varepsilon _{T} >0$ depending on $ T $ such that for any $ \varepsilon \in (0,\varepsilon _{T}] $, the problem \eqref{eq-perturbation} admits a unique solution $ (\Phi ^{\varepsilon}, V ^{\varepsilon}) \in C([0,T];H ^{2}\times H ^{2}) $ which satisfies for any $ t \in [0,T] $,
\begin{gather*}
\displaystyle  \|\Phi ^{\varepsilon}(\cdot,t)\|_{L ^{2}}^{2} +\varepsilon ^{\frac{3}{2}-2 \iota _{0}}\|\Phi _{x} ^{\varepsilon}(\cdot,t)\|_{L ^{2}}^{2}+\|V ^{\varepsilon}(\cdot,t)\|_{L ^{2}}^{2}+ \varepsilon\|V _{x}^{\varepsilon}(\cdot,t)\|_{L ^{2}}^{2} \leq C \varepsilon ^{2 \iota _{0}-1},
 \nonumber \\
  \|t ^{\frac{5}{2}}\Phi _{xx}^{\varepsilon}(\cdot,t)\|_{ L ^{2}}^{2}+\varepsilon\|t ^{\frac{5}{2}}V _{xx}^{\varepsilon}(\cdot,t)\|_{L ^{2}}^{2} \leq C \varepsilon ^{4 \iota _{0}-\frac{7}{2}},
\end{gather*}
and
\begin{gather*}
\displaystyle \int _{0}^{t}\left(\|\Phi _{x}^{\varepsilon}\|_{L ^{2}}^{2}+\|V _{\tau}^{\varepsilon}\| _{ L ^{2}}^{2}+\varepsilon ^{\frac{3}{2}-2 \iota _{0}} \|\Phi _{\tau}^{\varepsilon}\| _{ L ^{2}}^{2} + \varepsilon ^{\frac{5}{2}-2 \iota _{0}} (\|\tau^{\frac{5}{2}}\Phi _{x \tau}^{\varepsilon}\|_{L ^{2}}^{2}+\varepsilon\|\tau^{\frac{5}{2}}V _{x \tau}^{\varepsilon}\|_{L ^{2}}^{2}) \right) \mathrm{d}\tau \leq C  \varepsilon ^{2\iota _{0}-1},
\end{gather*}
where $ C >0$ is a constant depending on $ T $ but independent of $ \varepsilon $.
\begin{proof}
 The proof of Proposition \ref{prop-pertur} consists of Lemmas \ref{lem-R-1-VE}--\ref{lem-FIDA-V-XX} in the sequel.
\end{proof}
\end{proposition}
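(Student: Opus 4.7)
The proposition will be established through a sequence of energy estimates on the perturbation system \eqref{eq-perturbation}, exploiting a cancellation structure that removes the $\varepsilon^{-1/2}$-singular coefficients arising from the boundary-layer derivatives in $V^a_x$. First I would sharpen the estimates on the remainders $\mathcal{R}_i^{\varepsilon}$ defined in \eqref{E-rror-fomula}. By construction of \eqref{approximate-v-formula}, the terms of orders $\varepsilon^{-1/2}$, $\varepsilon^{0}$ and $\varepsilon^{1/2}$ cancel when the expansion is plugged back into \eqref{refor-eq}, and what remains are (i) exponentially small cross-contributions where a profile centred at one boundary is evaluated near the other, controlled via \eqref{profile-decay}; (ii) discrepancies $v^{B,\varepsilon}-v^{B,0}$ and $\varphi^{B,\varepsilon}-\varphi^{B,1}$, controlled by \eqref{aprr-esti-v-B}--\eqref{apr-esti-vfi-B} and \eqref{appro-v-b-property}; (iii) time derivatives of the homogenizers $b_\varphi^{\varepsilon}$, $b_v^{\varepsilon}$ involving $\partial_t\cala_i^{\varepsilon}$, controlled by Lemma \ref{LEM-correc-function}; and (iv) higher-order profile residuals, controlled by Lemmas \ref{lem-v-B-0}--\ref{lem-v-b-1}. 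The exponent $\iota_0$ defined in \eqref{iota0-defi} is precisely the minimum of the competing powers produced by these four sources, so that $\|\varepsilon^{-1/2}\mathcal{R}_i^{\varepsilon}\|_{L^2_tL^2}\leq C\varepsilon^{\iota_0-1/2}$, with matching weighted bounds needed for the higher-order estimates.

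The core difficulty is the singular coefficient $V_x^a$ in the term $-\Phi_x^{\varepsilon}V_x^a$ of $\eqref{eq-perturbation}_1$, which carries the scaled derivatives $\varepsilon^{-1/2}\partial_z v^{B,\varepsilon}$ and $\varepsilon^{-1/2}\partial_\xi v^{b,\varepsilon}$. The cancellation idea I would use is the algebraic identity
\begin{equation*}
\Phi_{xx}^{\varepsilon}-\Phi_x^{\varepsilon}V_x^a=e^{V^a}\partial_x\bigl(e^{-V^a}\Phi_x^{\varepsilon}\bigr),
\end{equation*}
so that testing $\eqref{eq-perturbation}_1$ against $e^{-V^a}\Phi^{\varepsilon}$ eliminates the singular term and produces the positive dissipation $\int e^{-V^a}|\Phi_x^{\varepsilon}|^2\,dx$. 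The cross-coupling remainder $-\int e^{-V^a}V_x^{\varepsilon}(\Phi_x^a+M)\Phi^{\varepsilon}\,dx$ is then paired with the $V^{\varepsilon}$ equation tested against $V^{\varepsilon}$, where the term $-\int\Phi_x^{\varepsilon}V^a V^{\varepsilon}\,dx$ is handled by integration by parts as $\int\Phi^{\varepsilon}(V^aV^{\varepsilon})_x\,dx$. Using the positive lower bound $\Phi_x^a+M\geq c>0$ inherited from \eqref{con-vfi-I-0-only} and continuity in $\varepsilon$, one extracts a damping $\int(\Phi_x^a+M)|V^{\varepsilon}|^2$; a Gr\"onwall argument closes the basic $L^2$ bound at rate $\varepsilon^{2\iota_0-1}$.

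The $H^1$ bounds would be obtained by testing $\eqref{eq-perturbation}_1$ against $\varepsilon^{3/2-2\iota_0}\Phi_t^{\varepsilon}$ (with the weight $e^{-V^a}$ again deployed to cancel the singular term after differentiation) and $\eqref{eq-perturbation}_2$ against $V_t^{\varepsilon}$. The prefactor $\varepsilon^{3/2-2\iota_0}$ is chosen so that the quasilinear contribution $\varepsilon^{1/2}\Phi_x^{\varepsilon}V_x^{\varepsilon}$ can be absorbed, in the dissipation structure $\varepsilon\|V_x^{\varepsilon}\|^2$ provided by $\eqref{eq-perturbation}_2$, using the previous $L^2$ bound and smallness of $\varepsilon$. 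For the $H^2$-type bound I would differentiate the system once in $x$, test against $t^5\Phi_{xx}^{\varepsilon}$ and $t^5V_{xx}^{\varepsilon}$, and exploit the temporal weights $t^{5/2}$ already appearing in Corollaries \ref{cor-V-B-0-INFTY}--\ref{cor-INF-vb1-vfib2}; these weights are forced by the $\varepsilon$-singularity of $\partial_t^k\cala_i^{\varepsilon}$ near $t=0$ quantified in \eqref{Corre-property}, and away from the initial time they convert every $\varepsilon^{-1/2}$-loss coming from profile derivatives into a harmless $t^{1/2}$-loss.

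The main obstacle I anticipate is the bookkeeping of $\varepsilon$-exponents in the coupling of the two equations and in commuting the weight $e^{-V^a}$ with derivatives. Each $x$-derivative falling on a boundary-layer profile costs $\varepsilon^{-1/2}$, while each $L^2_x$-integration against such a profile recovers $\varepsilon^{1/2}$; the corner-corrector construction shifts these by $\varepsilon^{\alpha}$, and the compatibility conditions \eqref{al-constriant}, \eqref{nu-constriant} are precisely what is needed so that, after tracing all five competing terms through the estimates, the final exponent is bounded below by $\varepsilon^{2\iota_0-1}$. A careful bootstrap on the six quantities $\Phi^{\varepsilon}$, $V^{\varepsilon}$, $\Phi_x^{\varepsilon}$, $V_x^{\varepsilon}$, $t^{5/2}\Phi_{xx}^{\varepsilon}$, $t^{5/2}V_{xx}^{\varepsilon}$, using smallness of $\varepsilon$ to absorb the nonlinear contributions $\varepsilon^{1/2}\Phi_x^{\varepsilon}V_x^{\varepsilon}$ and $\varepsilon^{1/2}\Phi_x^{\varepsilon}V^{\varepsilon}$, then closes the a priori estimates and, combined with the local existence theory, extends the solution to $[0,T]$ for all $\varepsilon\leq\varepsilon_T$.
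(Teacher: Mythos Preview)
Your overall architecture---bound the remainders $\mathcal{R}_i^{\varepsilon}$, then a weighted $L^2$ estimate exploiting cancellation of the singular $V_x^a$ coefficient, then $H^1$ via testing against time derivatives, then a time-weighted higher-order estimate---matches the paper's Lemmas~\ref{lem-R-1-VE}--\ref{lem-FIDA-V-XX}. Your weight $e^{-V^a}$ is a legitimate alternative to the paper's $(\hat\Phi+M)^{-1}$ with $\hat\Phi=\varphi_x^{I,0}+\varepsilon^{1/2}(\varphi_x^{B,\varepsilon}+\varphi_x^{b,\varepsilon})$: since $\varphi_x^{I,0}(0,t)+M+\varphi_z^{B,\varepsilon}=(\varphi_x^{I,0}(0,t)+M)e^{v^{B,\varepsilon}}$, the two weights differ by a factor that is bounded together with its $x$-derivative, and both remove the $\varepsilon^{-1/2}$-singular part of $V_x^a$. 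The paper's choice has the mild advantage that the coupling coefficient $(\Phi_x^a+M)/(\hat\Phi+M)=1+O(\varepsilon^{1/2})$ reduces the cross term directly to $-\int V_x^{\varepsilon}\Phi^{\varepsilon}\,dx$ plus a small remainder (see \eqref{L-1-L2-esti}), whereas with $e^{-V^a}$ you would need the extra observation that $\partial_x\bigl[e^{-V^a}(\Phi_x^a+M)\bigr]$ is bounded, which follows from the same identity.

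There is, however, one genuine gap. For the highest-order estimate you propose to differentiate the system once in $x$ and test against $t^5\Phi_{xx}^{\varepsilon}$, $t^5V_{xx}^{\varepsilon}$. This runs into two problems: (i) $\Phi_x^{\varepsilon}$ and $V_x^{\varepsilon}$ do not vanish on $\partial\mathcal{I}$, so the integrations by parts produce uncontrolled boundary terms; (ii) $x$-differentiation converts $V_x^a$ into $V_{xx}^a\sim\varepsilon^{-1}v_{zz}^{B,\varepsilon}$, doubling the singularity and generating a term $\Phi_x^{\varepsilon}V_{xx}^a$ without a cancellation partner. The paper avoids both by differentiating in $t$ instead (Lemma~\ref{lem-FIDA-V-XX}): the functions $\Phi_t^{\varepsilon},V_t^{\varepsilon}$ inherit homogeneous Dirichlet conditions, the new coefficients $V_{xt}^a,\Phi_{xt}^a$ keep the $\varepsilon^{-1/2}$ scaling and are handled via the time-weighted bounds \eqref{time-weighted-for-high}, and $t^{5/2}\Phi_{xx}^{\varepsilon}$, $t^{5/2}V_{xx}^{\varepsilon}$ are recovered a posteriori from the equations once $t^{5/2}\Phi_t^{\varepsilon}$, $t^{5/2}V_t^{\varepsilon}$ are controlled.

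A smaller point: your claim $\|\varepsilon^{-1/2}\mathcal{R}_1^{\varepsilon}\|_{L^2_tL^2}\le C\varepsilon^{\iota_0-1/2}$ is not quite right for the full $\mathcal{R}_1^{\varepsilon}$. The paper splits $\mathcal{R}_1^{\varepsilon}=\mathcal{R}_{1,1}^{\varepsilon}+\mathcal{R}_{1,2}^{\varepsilon}$ (see \eqref{R-1-SPLIT}--\eqref{R-1-2-defi}); the piece $\mathcal{R}_{1,2}^{\varepsilon}$, containing $v_x^{B,\varepsilon}(\varphi_x^{I,1}(x,t)-\varphi_x^{I,1}(0,t))$, is only $O(\varepsilon^{1/2})$ in $L^2_tL^2$ and must be treated by an additional integration by parts when tested against $\Phi^{\varepsilon}/(\hat\Phi+M)$ (see \eqref{R-1-2-esti}), using the low regularity of $\varphi^{I,1}$ via $\int_0^x\varphi_{yy}^{I,1}\,dy$ rather than an $L^\infty$ bound on $\varphi_{xx}^{I,1}$.
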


\subsection{\emph{A priori} estimates} 
\label{subsec:em}

\subsubsection{Estimates on the error terms} 
\label{sub:estimates_on_the_error_terms}

Now let us turn to estimates on the error terms $ \mathcal{R}_{1}^{\varepsilon} $ and $ \mathcal{R}_{2}^{\varepsilon} $. Using \eqref{E-rror-fomula} and the first equations in \eqref{eq-outer-0} and in \eqref{first-outer-problem}, we get from a direct computation that
\begin{align}\label{R-1-SPLIT}
\displaystyle \mathcal{R}_{1}^{\varepsilon} =\mathcal{R}_{1,1}^{\varepsilon} +\mathcal{R}_{1,2}^{\varepsilon},
\end{align}
where
\begin{align}\label{R-1-1-defi}
\displaystyle \mathcal{R}_{1,1}^{\varepsilon}&=-   v _{x}^{ B,\varepsilon}\left[\varphi _{x}^{I,0}-\varphi _{x} ^{I,0}(0,t)-x\varphi _{xx} ^{I,0}(0,t)\right]- v _{x}^{ b,\varepsilon}\left[\varphi _{x} ^{I,0}-\varphi _{x}^{I,0}(1,t)-(x-1)\varphi _{xx}^{I,0}(1,t)) \right]
   \nonumber \\
   &\displaystyle \quad- \left[\varepsilon ^{\frac{1}{2}}v _{x}^{B,1}(\varphi _{x}^{I,0}-\varphi  _{x}^{I,0}(0,t))+  \varepsilon ^{\frac{1}{2}}v _{x}^{b,1}(\varphi _{x}^{I,0}-\varphi  _{x}^{I,0}(1,t))\right]
     \nonumber \\
     & \displaystyle \quad  - \varepsilon ^{\frac{1}{2}}\left[ \varphi _{x}^{ B,\varepsilon} (v _{x}^{I,0}-v _{x}^{I,0}(0,t))+\varphi _{x}^{ b,\varepsilon} (v _{x}^{I,0}-v _{x}^{I,0}(1,t)) \right] - \varepsilon ^{\frac{1}{2}}\left( v _{x}^{ B,\varepsilon}\varphi _{x}^{ b,\varepsilon}+v _{x}^{ b,\varepsilon}\varphi _{x}^{ B,\varepsilon} \right)
     \nonumber \\
                  &\displaystyle \quad  -\varepsilon(v _{x}^{B,1}\varphi _{x}^{ b,\varepsilon}+v _{x}^{b,1}\varphi _{x}^{ B,\varepsilon}+\varphi _{x} ^{B,2} v _{x}^{ b,\varepsilon}+\varphi _{x}^{b,2}v _{x}^{ B,\varepsilon})- \varepsilon \left[ \varphi _{x}^{I,1}(v _{x}^{B,1}+v _{x}^{b,1})  \right]-\varepsilon\varphi _{x}^{I,1}v _{x}^{I,1}
 \nonumber \\
               & \displaystyle \quad
            - \varepsilon \varphi _{x}^{B,2}\left[ v ^{I,0}
       +\varepsilon ^{\frac{1}{2}} (v ^{I,1}+v ^{B,1}+v ^{b,1}) \right] _{x}- \varepsilon \varphi _{x}^{b,2}\left[ v ^{I,0}
       +\varepsilon ^{\frac{1}{2}} (v ^{I,1}+v ^{B,1}+v ^{b,1}) \right] _{x}
      \nonumber \\
       & \displaystyle \quad
       - \left[ \varepsilon ^{\frac{1}{2}}( \varphi _{t}^{ B,\varepsilon}+\varphi _{t}^{  b,\varepsilon} )+ \varepsilon  ( \varphi _{t}^{B,2}+\varphi _{t} ^{b,2} ) \right]     +F ^{\varepsilon}=: \sum _{i=1} ^{11}\mathcal{P}_{i}+F ^{\varepsilon},
\end{align}
\begin{align}\label{R-1-2-defi}
\mathcal{R}_{1,2}^{\varepsilon}&=- \left[\varepsilon ^{\frac{1}{2}}v _{x}^{ B,\varepsilon}(\varphi _{x}^{I,1}-\varphi _{x}^{I,1}(0,t))+  \varepsilon ^{\frac{1}{2}}v _{x}^{ b,\varepsilon}(\varphi _{x}^{I,1}-\varphi _{x}^{I,1}(1,t))\right]  - \varepsilon v _{x}^{I,1}(\varphi _{x}^{ B,\varepsilon}+\varphi _{x}^{ b,\varepsilon})
\end{align}
with
\begin{equation}\label{F-defi}
\begin{aligned}
F^{\varepsilon}= & -\partial_x b_v^{\varepsilon}\left(\varphi_x^{I, 0}+M+\varepsilon^{\frac{1}{2}}\left(\varphi_x^{I, 1}+\varphi_x^{B, \varepsilon}+\varphi_x^{b, \varepsilon}\right)+\varepsilon\left(\varphi_x^{B, 2}+\varphi_x^{b, 2}\right)\right) \\
& -\partial_x b_{\varphi}^{\varepsilon}\left(v^{I, 0}+v^{B, \varepsilon}+v^{b, \varepsilon}+\varepsilon^{\frac{1}{2}}\big(v^{I, 1}+v^{B, 1}+v^{b, 1}\big)\right)_x-\partial_x b_{\varphi}^{\varepsilon} \partial_x b_v^{\varepsilon}-\partial_t b_{\varphi}^{\varepsilon} \\
& -\partial_x^2\left[(1-x) \mathrm{e}^{-\frac{x}{\varepsilon^\nu}} \varepsilon \varphi^{B, 2}(0, t)\right]-\partial_x^2\left[x \mathrm{e}^{-\frac{1-x}{\varepsilon^\nu}} \varepsilon \varphi^{b, 2}(0, t)\right].
\end{aligned}
\end{equation}

The estimates on $ \mathcal{R}_{1,1}^{\varepsilon} $ and $ \mathcal{R}_{1,2}^{\varepsilon} $ can be stated as follows.

\begin{lemma}\label{lem-R-1-VE}
Let $ 0< \varepsilon <1 $. It holds for any $ T>0 $ that
	\begin{gather}\label{con-R-1-in-lem}
 \displaystyle \makebox[-2pt]{~} \| \mathcal{R}_{1,1}^{\varepsilon}\|_{L _{T}^{2}L ^{2}} +\|t \mathcal{R}_{1,1}^{\varepsilon}\|_{L _{T}^{\infty}L _{z}^{2}}+\|t ^{\frac{5}{2}}\partial _{t}\mathcal{R}_{1,1}^{\varepsilon}\|_{L _{T}^{2}L ^{2}}\leq C  \varepsilon ^{\iota _{0}},\ \ \| \mathcal{R}_{1,2}^{\varepsilon}\|_{L _{T}^{2}L ^{2}}+ \| t\partial _{t}\mathcal{R}_{1,2}^{\varepsilon}\|_{L _{T}^{2}L ^{2}}\leq C \varepsilon ^{\frac{1}{2}},
\end{gather}
where $ \iota _{0}= \min \left\{ \frac{3}{4}-\nu, \frac{\alpha}{2}, \frac{2+\nu - \alpha}{2}, \frac{2-3 \nu}{2}  \right\} $, the constant $ C>0 $ may depend on $ T $ but independent of $ \varepsilon $.
\end{lemma}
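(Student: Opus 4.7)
The plan is to bound each of the eleven pieces $\mathcal{P}_1,\dots,\mathcal{P}_{11}$ together with $F^{\varepsilon}$ in the decomposition \eqref{R-1-1-defi} separately, and then treat $\mathcal{R}_{1,2}^{\varepsilon}$ by a parallel but simpler scheme. For the outer/boundary interactions $\mathcal{P}_1$--$\mathcal{P}_4$ the decisive observation is that the bracketed differences such as $\varphi_x^{I,0}(x,t)-\varphi_x^{I,0}(0,t)-x\varphi_{xx}^{I,0}(0,t)$ (resp.\ $\varphi_x^{I,0}(x,t)-\varphi_x^{I,0}(0,t)$) are $O(x^2)$ (resp.\ $O(x)$) near the boundary by Taylor expansion. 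Switching to the boundary-layer variable $z=x/\sqrt{\varepsilon}$ converts these factors into $\varepsilon z^2$ and $\sqrt{\varepsilon}\,z$; pairing with $v_x^{B,\varepsilon}=\varepsilon^{-1/2}\partial_z v^{B,\varepsilon}$ and using the polynomially weighted $L_T^2H_z^k$-bounds from Lemma \ref{lem-v-B-0} (the $\langle z\rangle^l$ factors absorb the surplus powers of $z$) yields $L_T^2L^2$-bounds of order $\varepsilon^{3/4}$ after the $\sqrt{\varepsilon}$ Jacobian. The cross-boundary interactions $\mathcal{P}_5$--$\mathcal{P}_6$ are super-polynomially small by the rapid decay of the left- and right-boundary profiles when evaluated at the opposite endpoint, while the second-order terms $\mathcal{P}_7$--$\mathcal{P}_{10}$ carry an explicit $\varepsilon$ prefactor which more than compensates the $\varepsilon^{-1/2}$ from each $\partial_x$, via Lemmas \ref{lem-v-B-1} and \ref{lem-v-b-1}.

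The terms $\mathcal{P}_{11}$ and $F^{\varepsilon}$ are the main obstacle. The singular behaviour of $\partial_t\varphi^{B,\varepsilon}$ near $t=0$ is inherited from $\partial_t\mathcal{A}_1^{\varepsilon}$ and is controlled via Lemma \ref{LEM-correc-function}, whose bound $\|\partial_t\mathcal{A}_i^{\varepsilon}\|_{L^2((0,T))}^2\le C\varepsilon^{\alpha}$ accounts for the $\varepsilon^{\alpha/2}$ component of $\iota_0$. In $F^{\varepsilon}$ the rapidly decaying traces $\varphi^{b,\varepsilon}(-1/\sqrt{\varepsilon},t)$ and $\varphi^{B,\varepsilon}(1/\sqrt{\varepsilon},t)$ are negligible, but the second $x$-derivatives of the exponential cut-offs $(1-x)\,e^{-x/\varepsilon^{\nu}}\,\varepsilon\varphi^{B,2}(0,t)$ and its right-boundary mirror produce two factors of $\varepsilon^{-\nu}$ on a strip of width $\varepsilon^{\nu}$, hence an $L^2_x$-norm of order $\varepsilon^{-3\nu/2}$. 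Multiplying by $\varepsilon$ and integrating against the weighted bound $|\varphi^{B,2}(0,t)|\le C(t^{1/2}+\varepsilon^{\alpha/4})^{-1}$ from Corollary \ref{cor-INF-V-B-1-VFI-B-2} gives the target $\varepsilon^{(2-3\nu)/2}$, with any logarithmic losses absorbed by the slack in $\iota_0$. The remaining $\varepsilon^{(2+\nu-\alpha)/2}$ component appears when $\partial_x b_v^{\varepsilon}$ is paired with a boundary-layer derivative weighted by the cut-off, where the constraint \eqref{nu-constriant}, $1+\nu>\alpha$, keeps the exponent positive.

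For the time-weighted bounds $\|t\,\mathcal{R}_{1,1}^{\varepsilon}\|_{L^\infty_TL^2}$ and $\|t^{5/2}\partial_t\mathcal{R}_{1,1}^{\varepsilon}\|_{L^2_TL^2}$, I shall differentiate each piece in $t$ and invoke the temporal-weight estimates of Section \ref{sec:study_on_the_inner_outer_layers}. The second-order time derivatives $\partial_t^2\varphi^{B,\varepsilon}$, $\partial_t\varphi^{B,2}(0,t)$ and $\partial_t^2\mathcal{A}_i^{\varepsilon}$ contribute $(t^{-1/2}+\varepsilon^{-\alpha/2})$-type singularities, and the $t^{5/2}$ weight is precisely calibrated against the $(t^{3/2}+\varepsilon^{3\alpha/2})$- and $(t^2+\varepsilon^{3\alpha/2})$-weighted bounds of Lemmas \ref{lem-v-B-0}, \ref{lem-v-B-1} and \ref{LEM-correc-function} to preserve the $\varepsilon^{\iota_0}$ rate. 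Finally $\mathcal{R}_{1,2}^{\varepsilon}$ in \eqref{R-1-2-defi} involves only $\varphi^{I,1}$, which carries no corner-corrector singularity by Lemma \ref{lemf-vfi-V-I-1}, so the same Taylor/scaling argument gives the uniform $\varepsilon^{1/2}$ rate directly, and the corresponding $L^2_T$ bound on $t\,\partial_t\mathcal{R}_{1,2}^{\varepsilon}$ follows without any logarithmic loss. The principal difficulty throughout is the simultaneous management of the three competing $\varepsilon$-effects---$\varepsilon^{\alpha/2}$ from the corner-correctors, $\varepsilon^{-3\nu/2}$ from the derivatives of the cut-offs, and $\varepsilon^{3/4}$ from the Taylor remainders---whose minimum is exactly $\iota_0$ under the constraints $1<\alpha<5/4$ and $0<\nu<1/4$ with $1+\nu>\alpha$.
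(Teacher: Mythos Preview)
Your overall scheme---term-by-term estimation of $\mathcal{P}_1,\dots,\mathcal{P}_{11}$ and $F^{\varepsilon}$ via Taylor expansion plus boundary-layer rescaling, and exploiting cross-boundary decay for the mixed terms---is exactly the paper's. However, several attributions are off and would mislead the execution. The $\varepsilon^{\alpha/2}$ rate does \emph{not} come from $\mathcal{P}_{11}$: by \eqref{con-vfi-B-1-a}, $\|\varphi_t^{B,\varepsilon}\|_{L_T^2 L_z^2}$ is uniformly bounded in $\varepsilon$, so $\|\varepsilon^{1/2}\varphi_t^{B,\varepsilon}\|_{L_T^2 L^2}\le C\varepsilon^{3/4}$, while the $\varepsilon\,\varphi_t^{B,2}$ part gives $\varepsilon^{5/4-\alpha/2}$ (see \eqref{p-11-l2l2}). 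The $\varepsilon^{\alpha/2}$ actually enters through $F^{\varepsilon}$ via $\partial_x b_v^{\varepsilon}$, controlled by $\|\cala_i^{\varepsilon}\|_{L^{\infty}((0,T))}\le C\varepsilon^{\alpha}$ (the zeroth-order corrector bound, not $\partial_t\cala_i^{\varepsilon}$); see \eqref{b-v--h1-esti}. Likewise, the $\varepsilon^{(2+\nu-\alpha)/2}$ term comes from $\partial_t b_\varphi^{\varepsilon}$ inside $F^{\varepsilon}$ (cf.\ \eqref{pa-t-b-vfi-ve-l2}), not from any pairing of $\partial_x b_v^{\varepsilon}$ with a cut-off. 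More importantly, you omit the $\varepsilon^{3/4-\nu}$ contribution, which arises when $\|\partial_x b_\varphi^{\varepsilon}\|_{L_T^2 L^{\infty}}\le C\varepsilon^{1-\nu}$ is paired with $\varepsilon^{-1/4}\|v_z^{B,\varepsilon}\|_{L_T^{\infty}L_z^2}$ in the block $\partial_x b_\varphi^{\varepsilon}\cdot(\cdots)_x$ of $F^{\varepsilon}$; this is typically the binding term in $\iota_0$.

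Finally, no logarithmic loss is needed for the $\partial_x^2[(1-x)e^{-x/\varepsilon^{\nu}}\varepsilon\varphi^{B,2}(0,t)]$ piece: Lemma~\ref{lem-v-B-1} gives $\|\langle z\rangle^l\varphi^{B,2}\|_{L_T^2 H_z^3}\le C$ uniformly in $\varepsilon$, hence $\|\varphi^{B,2}(0,t)\|_{L^2((0,T))}\le C$ directly; the pointwise $(t^{1/2}+\varepsilon^{\alpha/4})^{-1}$ bound from Corollary~\ref{cor-INF-V-B-1-VFI-B-2} is not the right tool here and introducing it only creates the spurious log you then try to absorb.
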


\begin{proof}
By Taylor's formula, \eqref{con-vfi-v-I-0-regula}, \eqref{L-INFT-V-B-0}, \eqref{Sobolev-infty} and \eqref{z-transfer}, we have
\begin{align}\label{P-1-esti}
&\displaystyle \|\mathcal{P}_{1}\|_{L _{T}^{2}L ^{2}} =\left\|\frac{\varphi _{x}^{I,0}-\varphi _{x} ^{I,0}(0,t)-x\varphi _{xx}^{I,0}(0,t)}{x ^{2}}x ^{2}v _{x}^{ B,\varepsilon}\right\|_{L _{T}^{\infty}L ^{2}}
 \nonumber \\
 &~\displaystyle \leq  \frac{1}{2} \|\partial _{x}^{3}\varphi ^{I,0}\|_{L _{T}^{\infty}L ^{\infty}} \|x ^{2}v _{x}^{ B,\varepsilon}\|_{L _{T}^{2}L  ^{2}} \leq C \varepsilon \|\varphi ^{I,0}\|_{L _{T}^{\infty}H ^{4}}\|z ^{2}v _{x}^{ B,\varepsilon}\|_{L _{T}^{2}L  ^{2}}
  \nonumber \\
  & ~\displaystyle \leq C \varepsilon ^{\frac{3}{4}}\|\varphi ^{I,0}\|_{L _{T}^{\infty}H ^{4}}\|z ^{2}v _{z}^{ B,\varepsilon}\|_{L _{T}^{\infty}L  _{z}^{2}} \leq C \varepsilon ^{\frac{3}{4}}.
\end{align}
Similarly, by \eqref{l-INFTY-VFI-I-0}, \eqref{L-INFT-V-B-0}, \eqref{tild-v-B-1-lem}, \eqref{Sobolev-z-xi} and \eqref{xi-transfer}, we have
\begin{gather*}
\displaystyle \|\mathcal{P}_{2}\|_{L _{T}^{\infty}L ^{2}} \leq  \frac{1}{2} \|\partial _{x}^{3}\varphi ^{I,0}\|_{L_{T}^{\infty} L ^{\infty}} \|(x-1)^{2}v _{x}^{ b,\varepsilon}\|_{L _{T}^{2}L  ^{2}} \leq C  \varepsilon ^{\frac{3}{4}},\\
 \displaystyle \|\mathcal{P}_{3}\|_{L _{T}^{2}L  ^{2}} \leq C \varepsilon ^{\frac{1}{2}} \|\varphi _{xx}^{I,0}\|_{L_{T}^{\infty} L ^{\infty}}\left( \|x v _{x}^{B,1}\|_{L _{T}^{2}L ^{2}}+\|(x-1) v _{x}^{b,1}\|_{L _{T}^{2}L ^{2}} \right) \leq C  \varepsilon ^{\frac{3}{4}},\\
\displaystyle \|\mathcal{P}_{4}\|_{L _{T}^{2}L  ^{2}} \leq C \varepsilon ^{\frac{1}{2}} \|v _{x}^{I,0}\|_{L _{T}^{\infty}L ^{\infty}}\left( \|x \varphi _{x}^{ B,\varepsilon}\|_{L _{T}^{2}L ^{2}}+\|(x-1) \varphi _{x}^{ b,\varepsilon}\|_{L _{T}^{2}L ^{2}}  \right) \leq C   \varepsilon ^{\frac{3}{4}}.
\end{gather*}
Notice that $ \frac{1}{2 \varepsilon ^{1/2}} < z=\frac{x}{\varepsilon ^{1/2}}< \frac{1}{\varepsilon ^{1/2}} $ for $1/2 \leq x \leq 1 $, and that $ - \frac{1}{\varepsilon ^{1/2}} \leq \xi=\frac{x-1}{\varepsilon ^{1/2}} \leq -\frac{1}{2\varepsilon ^{1/2}} $ for $ 0 \leq x \leq 1/2$. This along with \eqref{L-INFT-V-B-0}, \eqref{L-INFT-V-veb-0}, \eqref{INF-V-B-1} and \eqref{INF-vb1-vfib2} implies for $ m \in \mathbb{N}$ and $  k=0,1,2 $ that
\begin{align}\label{half-l-infty}
&\|\partial _{t}^{k}\partial _{x}^{i} v ^{B,\varepsilon}\|_{L ^{\infty}((1/2, 1)\times(0,T))}+\|\partial _{t}^{k}\partial _{x}^{i} v ^{b,\varepsilon}\|_{L ^{\infty}((0,\frac{1}{2})\times(0,T))}
 \nonumber \\
 &~=\displaystyle \varepsilon ^{\frac{m}{2}}\left( \varepsilon ^{-\frac{m}{2}}\|\partial _{t}^{k}\partial _{x}^{i} v ^{B,\varepsilon}\|_{L ^{\infty}((1/2, 1)\times(0,T))}+\varepsilon ^{-\frac{m}{2}}\|\partial _{t}^{k}\partial _{x}^{i} v ^{b,\varepsilon}\|_{L ^{\infty}((0,\frac{1}{2})\times(0,T))}  \right)
 \nonumber \\
    &~\displaystyle\leq C \varepsilon ^{\frac{m}{2}}\|z ^{m+i} \partial _{t}^{k} \partial _{z}^{i}v ^{B,\varepsilon} \|_{L ^{\infty}(0,T;L _{z}  ^{\infty}(0, \varepsilon ^{-1/2}))}+C\varepsilon ^{\frac{m}{2}} \|\xi ^{m+i} \partial _{t}^{k} \partial _{\xi}^{i}v ^{b,\varepsilon} \|_{L ^{\infty}(0,T;L _{\xi}  ^{\infty}(-\varepsilon ^{-1/2},0))}
     \nonumber \\
     &~ \displaystyle
    \leq C \varepsilon ^{\frac{m}{2}}\|\langle z \rangle ^{m+i}\partial _{t}^{k}\partial _{z}^{i}v ^{B,\varepsilon}\|_{L _{T}^{2}L _{z}  ^{\infty}}
+ C \varepsilon ^{\frac{m}{2}}\|\langle \xi \rangle ^{m+i}\partial _{t}^{k}\partial _{\xi}^{i}v ^{b,\varepsilon}\|_{L _{T}^{2} L _{\xi}  ^{\infty}} \leq C  \varepsilon ^{\frac{m-3 \alpha}{2}} ,
\end{align}
where $ i=0,1,\cdots,4-2k $, and $ C $ is a positive constant independent of $ \varepsilon $. Similarly, we have for $ m \in \mathbb{N}$ that
\begin{gather}
\displaystyle |\partial _{t}^{k}\partial _{x}^{i} v ^{ B,1}\|_{L ^{\infty}((1/2, 1)\times(0,T))}+\|\partial _{t}^{k}\partial _{x}^{i} v ^{ b,1}\|_{L ^{\infty}((0,\frac{1}{2})\times(0,T))}\leq C \varepsilon ^{\frac{m-3 \alpha}{2}} ,\nonumber\\
\displaystyle \|\partial _{t}^{k} \partial _{x}^{i}\varphi ^{B,\varepsilon}\|_{L ^{\infty}((1/2, 1)\times(0,T))}+\|\partial _{t}^{k} \partial _{x}^{i}\varphi ^{b,\varepsilon}\|_{L ^{\infty}((0,1/2)\times(0,T))}  \leq C \varepsilon ^{\frac{m-3 \alpha}{2}} \label{vfi-B-1-HALF-INTY}
\end{gather}
for $ k=0,1,2$, $i=0,1,\cdots,4-2k $, and
\begin{gather}\label{vfi-B-2-half}
\displaystyle  \displaystyle  \left( \|\partial _{t}^{k} \partial _{x}^{i}\varphi ^{B,2}\|_{L ^{\infty}((1/2, 1)\times(0,T))}+ \|\partial _{t}^{k} \partial _{x}^{i}\varphi ^{b,2}\|_{L ^{\infty}((0,1/2)\times(0,T))} \right)  \leq C \varepsilon ^{\frac{m-3 \alpha}{2}},
\end{gather}
if $  k=0,1$, $i=0,1,\cdots,4-2k $. Therefore, we deduce for $ m \in \mathbb{N} \cap \left\{ m \geq 1+ 3\alpha \right\}$ that
\begin{align}\label{P-6-1}
\displaystyle  \varepsilon ^{\frac{1}{2}}\|v _{x}^{ B,\varepsilon}\varphi _{x}^{ b,\varepsilon}\|_{L _{T}^{2}L ^{2}}
 &\displaystyle \leq \varepsilon ^{\frac{1}{2}} \|v _{x}^{ B,\varepsilon}\varphi _{x}^{ b,\varepsilon}\|_{L ^{2}(0,T;L ^{2}((0,1/2)))}+\varepsilon ^{\frac{1}{2}} \|v _{x}^{ B,\varepsilon}\varphi _{x}^{ b,\varepsilon}\|_{L ^{2}(0,T;L ^{2}((0,1/2)))}
  \nonumber \\
  &\displaystyle \leq \varepsilon ^{\frac{1}{2}}\left( \|\varphi _{x}^{ b,\varepsilon}\|_{L ^{\infty}((0,1/2 )\times(0,T))}\|v _{x}^{ B,\varepsilon}\|_{L _{T}^{2}L ^{2}} \right.
   \nonumber \\
   &\displaystyle  \qquad \qquad \  \left.+\|v _{x}^{ B,\varepsilon}\|_{L ^{\infty}((1/2,1)\times (0,T))}\|\varphi _{x}^{ b,\varepsilon}\|_{L _{T}^{2}L ^{2}} \right)
    \nonumber \\
    &\displaystyle \leq C \varepsilon ^{\frac{2m+1-6 \alpha}{4}}   \left(  \|v _{z} ^{ B,\varepsilon}\|_{L _{T} ^{2}L _{z}^{2}}+\| \varphi_{\xi}^{ b,\varepsilon}\|_{L _{T}^{2}L _{\xi}^{2}}\right)
     \nonumber \\
     &\displaystyle \leq C   \varepsilon ^{\frac{2m+1-6 \alpha}{4}} \leq C \varepsilon ^{\frac{3}{4}} ,
\end{align}
\begin{align}
\displaystyle \varepsilon ^{\frac{1}{2}} \|v _{x}^{ b,\varepsilon}\varphi _{x}^{ B,\varepsilon}\|_{L _{T}^{2}L ^{2}}
 &\displaystyle \leq \varepsilon ^{\frac{1}{2}} \|v _{x}^{ b,\varepsilon}\varphi _{x}^{ B,\varepsilon}\|_{L ^{2}(0,T;L ^{2}((0,1/2)))}+\varepsilon ^{1/2} \|v _{x}^{ b,\varepsilon}\varphi _{x}^{ B,\varepsilon}\|_{L _{T}^{2}(0,T;L ^{2}((1/2,1)))}
  \nonumber \\
  &\displaystyle \leq \varepsilon ^{\frac{1}{2}}\left( \|v _{x}^{ b,\varepsilon}\|_{L ^{\infty}((0,1/2)\times (0,T))}\|\varphi _{x}^{ B,\varepsilon}\|_{L _{T}^{2}L ^{2}} \right.
   \nonumber \\
   &\displaystyle  \qquad \qquad \  \left.+\|\varphi _{x}^{ B,\varepsilon}\|_{L ^{\infty}((1/2,1)\times (0,T))}\|v_{x}^{ b,\varepsilon}\|_{L _{T}^{2}L ^{2}} \right)
    \nonumber \\
    &\displaystyle \leq C   \varepsilon ^{\frac{2m+1-6 \alpha}{4}}   \left(  \|\varphi _{z}^{ B,\varepsilon}\|_{L ^{2}L _{z}^{2}}+\| v_{\xi}^{ b,\varepsilon}\|_{L ^{2}L _{\xi}^{2}}\right)
     \nonumber \\
     &\displaystyle \leq C  \varepsilon ^{\frac{2m+1-6 \alpha}{4}}\leq C \varepsilon ^{\frac{3}{4}}  ,
     \end{align}
     where we have used \eqref{inte-transfer} and Corollaries \ref{cor-V-B-0-INFTY}--\ref{cor-INF-vb1-vfib2}. Thus we have
     \begin{align}\label{p-5-l2}
     \displaystyle  \|\mathcal{P }  _{5}\| _{L _{T}^{2}L ^ 2}\leq \varepsilon ^{\frac{1}{2}}\left(  \|v _{x}^{B,\varepsilon }\varphi _{x}^{ b,\varepsilon}\|_{L _{T}^{2}L ^{2}}+\|v _{x}^{ b,\varepsilon}\varphi _{x}^{ B,\varepsilon}\|_{L _{T}^{2}L ^{2}}\right) \leq C \varepsilon ^{\frac{3}{4}}.
     \end{align}
By similar arguments as proving estimates for $ \mathcal{P}_{5} $, one can infer that $ \|\mathcal{P}_{6}\| _{L _{T}^{2}L ^{2}} \leq C\varepsilon ^{\frac{3}{4}}  $. Thanks to \eqref{l-INFTY-VFI-I-0}, \eqref{vfi-I-1-esti-first-con-lem}, \eqref{inte-transfer} and Corollaries \ref{cor-V-B-0-INFTY}--\ref{cor-INF-vb1-vfib2}, we derive that
\begin{align}
\displaystyle  \|\mathcal{P}_{7}\|_{L _{T}^{2}L ^{2}}& \leq C \varepsilon ^{\frac{1}{2}} \|\varphi _{x}^{I,1}\|_{L _{T}^{2}L ^{\infty}} \left( \|v _{z}^{B,1}\|_{L _{T}^{\infty} L ^{2}}+\|v _{\xi}^{b,1}\|_{L _{T}^{\infty}L  ^{2}} \right)\leq C  \varepsilon ^{\frac{3}{4}},\nonumber
\end{align}
\begin{align}\label{p-8-l2l2}
\displaystyle \|\mathcal{P}_{8}\|_{L _{T}^{2}L ^{2}}  \leq C \varepsilon \|\varphi _{x}^{I,1}\|_{L _{T}^{2}L ^{\infty}} \|v _{x}^{I,1}\|_{L _{T}^{\infty}L ^{2}} \leq C \varepsilon ,
\end{align}
\begin{align*}
\displaystyle \|\mathcal{P}_{9}\|_{L _{T}^{2}L ^{2}}  &\leq \varepsilon ^{\frac{1}{2}}\| \varphi _{z}^{B,2}\|_{L _{T}^{2}L ^{2}}  \|v _{x}^{I,0}\|_{L _{T}^{\infty}L ^{\infty}}
+\varepsilon \|\varphi _{z}^{B,2}\|_{L _{T}^{2}L _{z}^{\infty}}\|v _{x}^{I,1}\|_{L _{T}^{\infty}L ^{2}}
 \nonumber \\
 &~\displaystyle \quad+\varepsilon  ^{\frac{1}{2}}\left\|\varphi _{z}^{B,2}\right\| _{L _{T}^{2}L _{z}^{\infty}} \left( \|v _{z}^{B,1}\|_{L _{T}^{\infty}L ^{2}} +\|v _{\xi}^{b,1}\|_{L _{T}^{\infty}L ^{2}}  \right) \leq C \varepsilon ^{\frac{3}{4}},
 \end{align*}
 \begin{align*}
 \displaystyle  \|\mathcal{P}_{10}\|_{L _{T}^{2}L ^{2}}& \leq\varepsilon ^{\frac{1}{2}}\| \varphi _{\xi}^{b,2}\|_{L _{T}^{2}L ^{2}}  \|v _{x}^{I,0}\|_{L _{T}^{\infty}L ^{\infty}}
+\varepsilon \|\varphi _{\xi}^{b,2}\|_{L _{T}^{2}L _{\xi}^{\infty}}\|v _{x}^{I,1}\|_{L _{T}^{\infty}L ^{2}}
 \nonumber \\
 &~\displaystyle \quad+\varepsilon  ^{\frac{1}{2}}\|\varphi _{\xi}^{b,2}\| _{L _{T}^{2}L ^{\infty}} \left( \|v _{z}^{B,1}\|_{L _{T}^{\infty}L ^{2}} +\|v _{\xi}^{b,1}\|_{L _{T}^{\infty}L ^{2}}  \right) \leq C \varepsilon ^{\frac{3}{4}},
 \end{align*}
 and
 \begin{align}\label{p-11-l2l2}
 \displaystyle  \|\mathcal{P}_{11}\|_{L _{T}^{2}L ^{2}}& \leq \varepsilon ^{\frac{1}{2}} \left( \|\varphi _{t}^{ B,\varepsilon}\|_{L _{T}^{2}L ^{2}}+\|\varphi _{t}^{ b,\varepsilon}\|_{L _{T}^{2}L ^{2}}+  \varepsilon ^{\frac{1}{2}} \|\varphi _{t}^{B,2}\|_{L _{T}^{2}L ^{2}}+ \varepsilon ^{\frac{1}{2}}\|\varphi _{t} ^{b,2}\|_{L _{T}^{2}L ^{2}}\right)
  \nonumber \\
  & \displaystyle \leq C\varepsilon ^{\frac{3}{4}}+ C \varepsilon ^{\frac{5}{4}- \frac{\alpha}{2}}.
 \end{align}
For the last term $ F ^{\varepsilon} $, by similar arguments as proving \eqref{half-l-infty}, we first deduce from \eqref{b-vfi-ve} and Corollaries \ref{cor-V-B-0-INFTY}--\ref{cor-INF-vb1-vfib2} that
\begin{align}
\displaystyle  \|\partial _{x} b _{\varphi}^{\varepsilon}\|_{L _{T} ^{2}L^{2}} & \leq C \varepsilon ^{\frac{m+1}{2}} \left( \left\|\varepsilon ^{-m/2} \varphi ^{B,\varepsilon}( \varepsilon ^{-1/2} ,t)\right
\|_{L ^{2}((0,T))} +\left\|\varepsilon ^{-m/2} \varphi ^{B,2}( \varepsilon ^{-1/2} ,t)\right
\|_{L ^{2}((0,T))} \right)
 \nonumber \\
 &~\displaystyle \quad +C \varepsilon ^{\frac{m+1}{2}} \left( \left\|\varepsilon ^{-m/2} \varphi ^{b,\varepsilon}( -\varepsilon ^{-1/2} ,t)\right
\|_{L ^{2}((0,T))} +\left\|\varepsilon ^{-m/2} \varphi ^{b,2}(- \varepsilon ^{-1/2} ,t)\right
\|_{L ^{2}((0,T))} \right)
 \nonumber \\
  &~\displaystyle \quad +  \varepsilon\| \varphi ^{B,2}(0,t) \|_{L ^{2}((0,T))}\|[(1-x)		{\mathop{\mathrm{e}}}^{- \frac{x}{\varepsilon ^{\nu}}}]'\|_{L ^{2}} +\varepsilon\| \varphi ^{b,2}(0,t) \|_{L ^{2}((0,T))}\|[x		{\mathop{\mathrm{e}}}^{- \frac{1-x}{\varepsilon ^{\nu}}}]'\|_{L ^{2}}
  \nonumber \\
 & \displaystyle ~  \leq C \varepsilon + C \varepsilon ^{\frac{2- \nu}{2}},\label{b-vfi-x-l2}
\end{align}
\begin{align}\label{b-vfi-L-2-l-infty}
\displaystyle  \|\partial _{x} b _{\varphi}^{\varepsilon}\|_{L _{T} ^{2}L^{\infty}} & \leq C \varepsilon ^{\frac{m+1}{2}} \left( \left\|\varepsilon ^{-m/2} \varphi ^{B,\varepsilon}( \varepsilon ^{-1/2} ,t)\right
\|_{L ^{2}((0,T))} +\left\|\varepsilon ^{-m/2} \varphi ^{B,2}( \varepsilon ^{-1/2} ,t)\right
\|_{L ^{2}((0,T))} \right)
 \nonumber \\
 &~\displaystyle \quad +C \varepsilon ^{\frac{m+1}{2}} \left( \left\|\varepsilon ^{-m/2} \varphi ^{b,\varepsilon}( -\varepsilon ^{-1/2} ,t)\right
\|_{L ^{2}((0,T))} +\left\|\varepsilon ^{-m/2} \varphi ^{b,2}(- \varepsilon ^{-1/2} ,t)\right
\|_{L ^{2}((0,T))} \right)
 \nonumber \\
 &~\displaystyle \quad +  \varepsilon\| \varphi ^{B,2}(0,t) \|_{L ^{2}((0,T))}\|[(1-x)		{\mathop{\mathrm{e}}}^{- \frac{x}{\varepsilon ^{\nu}}}]'\|_{L ^{\infty}} +\varepsilon\| \varphi ^{b,2}(0,t) \|_{L ^{2}((0,T))}\|[x		{\mathop{\mathrm{e}}}^{- \frac{1-x}{\varepsilon ^{\nu}}}]'\|_{L ^{\infty}}
  \nonumber \\
 & \displaystyle ~  \leq C \varepsilon + C \varepsilon ^{1- \nu },
\end{align}
and
\begin{align}\label{pa-t-b-vfi-ve-l2}
\displaystyle \|\partial _{t} b _{\varphi}^{\varepsilon}\|_{L _{T} ^{2}L ^{2}} & \leq C \varepsilon ^{\frac{m+1}{2}} \left( \left\|\varepsilon ^{-m/2} \varphi _{t}^{ B,\varepsilon}( \varepsilon ^{-1/2} ,t)\right
\|_{L ^{2}((0,T))} +\left\|\varepsilon ^{-m/2} \varphi_{t} ^{B,2}( \varepsilon ^{-1/2} ,t)\right
\|_{L ^{2}((0,T))} \right)
 \nonumber \\
 &~\displaystyle \quad +C \varepsilon ^{\frac{m+1}{2}} \left( \left\|\varepsilon ^{-m/2} \varphi _{t}^{ b,\varepsilon}( -\varepsilon ^{-1/2} ,t)\right
\|_{L ^{2}((0,T))} +\left\|\varepsilon ^{-m/2} \varphi _{t}^{b,2}(- \varepsilon ^{-1/2} ,t)\right
\|_{L ^{2}((0,T))} \right)
 \nonumber \\
  &~\displaystyle \quad +  \varepsilon\| \varphi _{t}^{B,2}(0,t) \|_{L ^{2}((0,T))}\|(1-x)		{\mathop{\mathrm{e}}}^{- \frac{x}{\varepsilon ^{\nu}}}\|_{L ^{2}} +\varepsilon\| \varphi _{t}^{b,2}(0,t) \|_{L ^{2}((0,T))}\|x		{\mathop{\mathrm{e}}}^{- \frac{1-x}{\varepsilon ^{\nu}}}\|_{L ^{2}}
  \nonumber \\
  &~\displaystyle\leq C \varepsilon + C \varepsilon ^{\frac{2+\nu- \alpha}{2}},
\end{align}
where $ m \in \mathbb{N} \cap \left\{ m \geq 1+ 3\alpha \right\}$ is fixed, and we have used the fact $ 1< \alpha<1+\nu<5/4 $. Similarly, given any integer $ m \geq 1 $, by \eqref{b-v-ve}, \eqref{Corre-property} and Corollaries \ref{cor-V-B-0-INFTY}--\ref{cor-INF-vb1-vfib2}, we have the following estimate for $ b _{v}^{\varepsilon} $:
  \begin{align}\label{b-v--h1-esti}
  \displaystyle &\displaystyle \|b _{v}^{\varepsilon}\|_{L _{T} ^{\infty}H ^{1}}
 \nonumber \\
 & ~\displaystyle \leq C \varepsilon ^{\frac{m}{2}} \left( \left\|\varepsilon ^{-m/2} v ^{B,\varepsilon}( \varepsilon ^{-1/2} ,t)\right
\|_{L ^{\infty}((0,T))} +\left\|\varepsilon ^{\frac{1-m}{2}}v ^{B,1}( \varepsilon ^{-1/2} ,t)\right
\|_{L ^{\infty}((0,T))}\right.
 \nonumber \\
 &~\displaystyle \quad \left. \quad \quad~\ \ \quad \quad + \left\| \varepsilon ^{-m/2}v ^{b,\varepsilon}(- \varepsilon ^{-1/2},t) \right\|_{L ^{\infty}((0,T))}
+\|\varepsilon ^{\frac{1-m}{2}}v^{b,1}(- \varepsilon ^{-1/2},t)\|_{L ^{\infty}((0,T))}\right)
  \nonumber \\
  &~\displaystyle \quad+ C \left( \|\cala_{1}^{\varepsilon}\|_{L ^{\infty}((0,T))}+\|\cala_{2}^{\varepsilon}\|_{L ^{\infty}((0,T))} \right)
   \nonumber \\
      & ~ \displaystyle \leq C \varepsilon ^{\frac{m}{2}}\left( \|\langle z\rangle ^{m} v ^{B,\varepsilon} \|_{L _{T}^{\infty}L _{z}^{\infty}}+\|\langle z \rangle ^{m-1} v ^{B,1} \|_{L _{T}^{\infty}L _{z}^{\infty}}+\| \langle \xi \rangle ^{m}v ^{b,\varepsilon} \|_{L _{T}^{\infty}L _{\xi}^{\infty}} \right.
    \nonumber \\
   &~\displaystyle \qquad~\qquad \qquad~\qquad\left. +\|\langle \xi\rangle ^{m-1}v ^{b,1} \| _{L _{T}^{\infty}L _{\xi}^{\infty}} \right) +C \varepsilon ^{\frac{\alpha}{2}}\leq C \varepsilon ^{\frac{m}{2}}+C \varepsilon ^{\frac{\alpha}{2}} \leq C  \varepsilon +C \varepsilon ^{\frac{\alpha}{2}}.
  \end{align}
 Therefore we get by \eqref{b-vfi-x-l2}--\eqref{b-v--h1-esti} and Corollaries \ref{cor-V-B-0-INFTY}--\ref{cor-INF-vb1-vfib2} that
  \begin{align}\label{F-es-t-1}
\displaystyle  \|F ^{\varepsilon}\|_{L _{T}^{2}L ^{2}}
  & \leq C \|\partial _{x}b _{v}^{\varepsilon}\|_{L ^{\infty}((0,T))}\left( 1+\left\|\varphi ^{I,0}\right\|_{ L _{T}^{2}H ^{1}}+\varepsilon ^{\frac{1}{2}}\|\varphi ^{I,1}\|_{L _{T}^{2}H ^{1}}+\varepsilon ^{\frac{1}{4}}\|\varphi ^{B,\varepsilon}\|_{L _{T}^{2}H _{z}^{1}}\right.
 \nonumber \\
 & \displaystyle \left. \qquad \qquad \qquad \quad~\ \ \quad+ \varepsilon ^{\frac{1}{4}}\|\varphi ^{b,\varepsilon}\| _{L _{T}^{2}H _{\xi}^{1}}+\varepsilon ^{\frac{3}{4}}\|\varphi ^{B,2}\|_{L _{T}^{2}H _{z}^{1}}+\varepsilon ^{\frac{3}{4}}\|\varphi ^{b,2}\|_{L _{T}^{2}H _{\xi}^{1}}\right)
  \nonumber \\
  &\displaystyle \quad +\|\partial _{x}b _{\varphi}^{\varepsilon}\|_{L _{T}^{2}L ^{\infty}}\left( \|v ^{I,0}\|_{L _{T}^{\infty}H ^{1}}+\varepsilon ^{\frac{1}{4}}\|v ^{B,1}\|_{L _{T}^{\infty}H _{z}^{1}}+ \varepsilon ^{\frac{1}{4}}\|v ^{b,1}\| _{L _{T}^{2}H _{\xi}^{1}} \right.
   \nonumber \\
   &\displaystyle \quad \qquad \qquad\ \ \  \qquad\left. +\varepsilon ^{- \frac{1}{4}} \|v _{z}^{ B,\varepsilon}\|_{L _{T}^{\infty}L _{z}^{2}}+ \varepsilon ^{- \frac{1}{4}}\|v _{\xi}^{ b,\varepsilon}\|_{L _{T}^{\infty}L _{\xi}^{2}}\right) + \|\partial _{x} b _{\varphi}^{\varepsilon}\|_{L _{T}^{2}L ^{2}}\|\partial _{x} b _{v}^{\varepsilon}\| _{L ^{\infty}((0,T))}
   \nonumber \\
   &\displaystyle \quad+ \|\partial _{t}b _{\varphi}^{\varepsilon}\|_{L _{T}^{2}L^{2}}     + \varepsilon (\|\varphi^{B,2}(0,t)\|_{L ^{2}((0,T))}+\|\varphi^{b,2}(0,t)\|_{L ^{2}((0,T))}) \|[ (1-x)		{\mathop{\mathrm{e}}}^{- \frac{x}{\varepsilon ^{\nu}}}  ]''\|_{L ^{2}}
    \nonumber \\
    &\displaystyle \leq C \varepsilon ^{\frac{\alpha}{2}}+C \varepsilon ^{\frac{3}{4}}+C  \varepsilon ^{\frac{3}{4}-\nu} +C \varepsilon + C \varepsilon ^{\frac{2+\nu- \alpha}{2}} +C \varepsilon ^{\frac{2-3 \nu}{2}}.
\end{align}
Summing up, we now have for $ 0< \varepsilon<1 $ that
\begin{gather}\label{R-1-first-esti}
\displaystyle \|\mathcal{R}_{1,1}^{\varepsilon}\|_{L _{T}^{2}L ^{2}} \leq \sum _{i=1}^{11}\|\mathcal{P}_{i}\|_{L _{T}^{2}L ^{2}}+\|F ^{\varepsilon}\|_{L _{T}^{2}L ^{2}} \leq C\varepsilon ^{\iota _{0}},
\end{gather}
where $ \iota _{0} $ is as in \eqref{iota0-defi}. That is, $ \iota _{0}= \min \left\{ \frac{3}{4}-\nu, \frac{\alpha}{2}, \frac{2+\nu - \alpha}{2}, \frac{2-3 \nu}{2},\frac{5}{4}- \frac{\alpha}{2}  \right\} $. For $ \| \mathcal{R}_{1,2}^{\varepsilon}\|_{L _{T}^{2}L ^{2}}  $, we get from \eqref{L-INFT-V-B-0}, \eqref{L-INFT-Vfi-B-0} and \eqref{L-INFT-V-veb-0}--\eqref{vfi-I-1-esti-first-con-lem} that
\begin{align}\label{R-1-2-L2l2}
 \displaystyle  \| \mathcal{R}_{1,2}^{\varepsilon}\|_{L _{T}^{2}L ^{2}} & \leq C \left( \int _{0}^{T}\int _{\mathcal{I}} \vert v _{z}^{ B,\varepsilon}\vert ^{2}\left\vert \int _{0}^{x}\varphi _{yy}^{I,1}\mathrm{d}y \right\vert ^{2}  \mathrm{d}x \mathrm{d}t\right)^{\frac{1}{2}}+\left( \int _{0}^{T}\int _{\mathcal{I}} \vert v _{\xi}^{ b,\varepsilon}\vert ^{2}\left\vert \int _{x}^{1}\varphi _{yy}^{I,1}\mathrm{d}y \right\vert ^{2}  \mathrm{d}x \mathrm{d}t\right)^{\frac{1}{2}}
  \nonumber \\
  &\displaystyle \quad+C \varepsilon ^{\frac{1}{2}}\| v _{x}^{I,1} \|_{L _{T}^{2}L ^{2}}\left( \|\varphi _{z}^{ B,\varepsilon}\|_{L _{T}^{\infty}L _{z}^{\infty}}+\|\varphi _{\xi}^{ b,\varepsilon}\|_{L _{T}^{\infty}L _{\xi}^{\infty}} \right)
   \nonumber \\
   & \displaystyle \leq C \varepsilon ^{\frac{1}{2}}\|\varphi _{xx}^{I,1}\|_{L _{T}^{2}L ^{2}}\left( \|\langle z \rangle v ^{B,\varepsilon}\|_{L _{T}^{\infty}L _{z}^{2}}+\|\langle \xi \rangle v ^{b,\varepsilon}\|_{L _{T}^{\infty}L _{\xi}^{2}} \right) +C \varepsilon ^{\frac{1}{2}}
   \leq C \varepsilon ^{\frac{1}{2}}.
 \end{align}
Next we shall estimate $ \|t \mathcal{R}_{1,1}^{\varepsilon}\|_{L _{T}^{\infty}L _{z}^{2}} $. Recalling the estimates on $ \|\mathcal{P} _{i}\|_{L _{T}^{2}L ^{2}}\,(1 \leq i \leq 6) $ in \eqref{P-1-esti}--\eqref{p-5-l2} and Corollaries \ref{cor-V-B-0-INFTY}--\ref{cor-INF-vb1-vfib2}, we have
\begin{align}\label{P1-6-infty-l2}
\displaystyle  \|\mathcal{P} _{i}\|_{L _{T}^{\infty}L ^{2}} \leq C \varepsilon ^{\frac{3}{4}},\ \ 1 \leq i \leq 6.
\end{align}
For $ \mathcal{P}_{7} $, we get
\begin{align}\label{cal-P-7-inf-l2}
\displaystyle  \|t ^{\frac{1}{2}}\mathcal{P}_{7}\|_{L _{T}^{\infty}L ^{2}}& \leq C \varepsilon ^{\frac{1}{2}} \|t ^{\frac{1}{2}}\varphi _{x}^{I,1}\|_{L _{T}^{\infty}L ^{\infty}} \left( \|v _{z}^{B,1}\|_{L _{T}^{\infty} L ^{2}}+\|v _{\xi}^{b,1}\|_{L _{T}^{\infty}L ^{2}} \right)
 \nonumber \\
   &\displaystyle \leq C \varepsilon ^{\frac{3}{4}} \left( \|\varphi _{x}^{I,1}\|_{L _{T}^{\infty}L ^{2}}^{1/2}\|t \varphi _{xx}^{I,1}\|_{L _{T}^{\infty}L ^{2}}^{1/2} + \|\varphi _{x}^{I,1}\|_{L _{T}^{\infty}L ^{2}}\right) \left( \|v _{z}^{B,1}\|_{L _{T}^{\infty} L _{z}^{2}}+\|v _{\xi}^{b,1}\|_{L _{T}^{\infty}L _{\xi}^{2}} \right)
   \nonumber \\
   &\displaystyle \leq C \varepsilon ^{\frac{3}{4}},
\end{align}
where we have used \eqref{L-INFT-Vfi-B-0}, \eqref{vfi-I-1-infty}, \eqref{v-I-1-infty}, \eqref{tild-v-B-1-lem}, \eqref{INF-vb1-vfib2} and \eqref{Sobolev-infty}. By analogous arguments as in deriving \eqref{cal-P-7-inf-l2} and Corollaries \ref{cor-V-B-0-INFTY}--\ref{cor-INF-vb1-vfib2}, one can modify the estimates \eqref{p-8-l2l2}--\eqref{p-11-l2l2} to get
\begin{align}\label{p7-11-infty-l2}
\displaystyle \|t ^{\frac{1}{2}}\mathcal{P}_{i}\|_{L _{T}^{\infty} L ^{2}} \leq C \varepsilon ^{\frac{3}{4}} \ \ \mbox{for } i=8,9,10,\ \ \ \|t \mathcal{P}_{11}\|_{L _{T}^{\infty}L ^{2}} \leq C \varepsilon ^{\frac{3}{4}}+C \varepsilon ^{\frac{5}{4}-\frac{\alpha}{2}},
\end{align}
where we have used the condition $ 0<\varepsilon < 1 $. Now let us turn to estimate for $ \|t ^{\frac{9}{8}} F\|_{L _{T}^{\infty}L ^{2}} $. Similar to the derivation of \eqref{b-vfi-x-l2}--\eqref{pa-t-b-vfi-ve-l2}, we get
\begin{subequations}\label{inty-vfi-t-vfix-l2}
\begin{align}\label{t-pa-x-b-vfi-infty}
\displaystyle  \|t ^{\frac{1}{2}}\partial _{x} b _{\varphi}^{\varepsilon}\|_{L _{T} ^{\infty}L^{\infty}} & \leq C \varepsilon +  C \varepsilon\|t ^{\frac{1}{2}} \varphi ^{B,2}(0,t) \|_{L ^{\infty}((0,T))}\|[(1-x)		{\mathop{\mathrm{e}}}^{- \frac{x}{\varepsilon ^{\nu}}}]'\|_{L ^{\infty}}
 \nonumber \\
 & \displaystyle \quad +\varepsilon\| t ^{\frac{1}{2}}\varphi ^{b,2}(0,t) \|_{L ^{\infty}((0,T))}\|[x		{\mathop{\mathrm{e}}}^{- \frac{1-x}{\varepsilon ^{\nu}}}]'\|_{L ^{\infty}}
 \leq C\varepsilon + C \varepsilon ^{1- \nu },
\end{align}
\begin{align}
&\displaystyle \|t\partial _{t} b _{\varphi}^{\varepsilon}\|_{L _{T} ^{\infty}L ^{2}}
 \nonumber \\
  &~\displaystyle \leq C \varepsilon  +  C\varepsilon\|t  \varphi _{t}^{B,2}(0,t) \|_{L ^{\infty}((0,T))}\|(1-x)		{\mathop{\mathrm{e}}}^{- \frac{x}{\varepsilon ^{\nu}}}\|_{L ^{2}} + C\varepsilon\| t \varphi _{t}^{b,2}(0,t) \|_{L ^{\infty}((0,T))}\|x		{\mathop{\mathrm{e}}}^{- \frac{1-x}{\varepsilon ^{\nu}}}\|_{L ^{2}}
  \nonumber \\
  &~ \displaystyle \leq C \varepsilon +C \varepsilon ^{1+\nu} \left[ \|t ^{2}\varphi _{t}^{B,2}\|_{L _{T}^{\infty}L _{z}^{\infty  }}^{\frac{1}{2}}\|\varphi _{t}^{B,2}\|_{L _{T}^{\infty}L _{z}^{\infty  }}^{\frac{1}{2}}+\|t ^{2}\varphi _{t}^{b,2}\|_{L _{T}^{\infty}L _{\xi}^{\infty  }}^{\frac{1}{2}}\|\varphi _{t}^{b,2}\|_{L _{T}^{\infty}L _{\xi}^{\infty  }}^{\frac{1}{2}} \right]
   \nonumber \\
   & ~\displaystyle
 \leq C \varepsilon + C \varepsilon ^{\frac{2+\nu-\alpha}{2}},
\end{align}
\end{subequations}
where we have used \eqref{INF-VFI-B-2} and \eqref{INF-vfi-b2}. In virtue of \eqref{INF-VFI-B-2}, \eqref{INF-vfi-b2}, \eqref{b-v--h1-esti}, \eqref{inty-vfi-t-vfix-l2} and Corollaries \ref{cor-V-B-0-INFTY}--\ref{cor-INF-vb1-vfib2}, we proceed to modify the estimate \eqref{F-es-t-1} as
\begin{align}\label{F-es-t-1-weighted}
&\displaystyle  \|t  F ^{\varepsilon}\|_{L _{T}^{\infty}L ^{2}}
 \nonumber \\
   &~ \leq C \|t ^{\frac{1}{2}}\partial _{x}b _{v}^{\varepsilon}\|_{L ^{\infty}((0,T))}\left( 1+\left\|\varphi ^{I,0}\right\|_{ L _{T}^{\infty}H ^{1}}+\varepsilon ^{\frac{1}{2}}\|\varphi ^{I,1}\|_{L _{T}^{\infty}H ^{1}}+\varepsilon ^{\frac{1}{4}}\|\varphi ^{B,\varepsilon}\|_{L _{T}^{\infty}H _{z}^{1}}\right.
 \nonumber \\
 &~ \displaystyle \left. \qquad \qquad \qquad \quad~\ \ \quad+ \varepsilon ^{\frac{1}{4}}\|\varphi ^{b,\varepsilon}\| _{L _{T}^{\infty}H _{\xi}^{1}}+\varepsilon ^{\frac{3}{4}}\|\varphi ^{B,2}\|_{L _{T}^{2}H _{z}^{1}}+\varepsilon ^{\frac{3}{4}}\|\varphi ^{b,2}\|_{L _{T}^{\infty}H _{\xi}^{1}}\right)
  \nonumber \\
  &~\displaystyle \quad +\|t ^{\frac{1}{2}}\partial _{x}b _{\varphi}^{\varepsilon}\|_{L _{T}^{\infty}L ^{\infty}}\left( \|v ^{I,0}\|_{L _{T}^{\infty}H ^{1}}+\varepsilon ^{\frac{1}{4}}\|v ^{B,1}\|_{L _{T}^{\infty}H _{z}^{1}}+ \varepsilon ^{\frac{1}{4}}\|v ^{b,1}\| _{L _{T}^{2}H _{\xi}^{1}} \right.
   \nonumber \\
   &~\displaystyle \quad \qquad \qquad\ \ \ \quad \qquad  \qquad\left. +\varepsilon ^{- \frac{1}{4}} \|v _{z}^{ B,\varepsilon}\|_{L _{T}^{\infty}L _{z}^{2}}+ \varepsilon ^{- \frac{1}{4}}\|v _{\xi}^{ b,\varepsilon}\|_{L _{T}^{\infty}L _{\xi}^{2}}\right)
   \nonumber \\
      &~\displaystyle \quad
    + \varepsilon \|t ^{\frac{1}{2}}\varphi^{B,2}(0,t)\|_{L ^{\infty}((0,T))} \|[ (1-x)		{\mathop{\mathrm{e}}}^{- \frac{x}{\varepsilon ^{\nu}}}  ]''\|_{L ^{2}}
    +C \|t ^{\frac{1}{2}} \varphi^{b,2}(0,t)\|_{L ^{\infty}((0,T))}\|[ x		{\mathop{\mathrm{e}}}^{- \frac{1-x}{\varepsilon ^{\nu}}}  ]''\|_{L ^{2}}
     \nonumber \\
      &~\displaystyle \quad+ \|t ^{\frac{1}{2}}\partial _{x} b _{\varphi}^{\varepsilon}\|_{L _{T}^{\infty}L ^{\infty}}\|\partial _{x} b _{v}^{\varepsilon}\| _{L ^{\infty}((0,T))}+ \|t\partial _{t}b _{\varphi}^{\varepsilon}\|_{L _{T}^{\infty}L^{2}}  \nonumber \\
     &~\displaystyle
    \leq C \varepsilon ^{\frac{\alpha}{2}}+C \varepsilon ^{\frac{3}{4}}+C  \varepsilon ^{\frac{3}{4}-\nu}  +C \varepsilon ^{\frac{2-3 \nu}{2}}+ C\varepsilon ^{\frac{2+\nu- \alpha}{2}}.
\end{align}
Gathering \eqref{P1-6-infty-l2}, \eqref{cal-P-7-inf-l2}, \eqref{p7-11-infty-l2} and \eqref{F-es-t-1-weighted}, we then arrive at
\begin{align*}
\displaystyle \|t \mathcal{R}_{1,1}^{\varepsilon}\|_{L _{T}^{\infty}L^{2}} \leq C \varepsilon ^{\iota _{0}},
\end{align*}
where $ \iota _{0} $ is as in \eqref{iota0-defi}. For $ \mathcal{R}_{1,2}^{\varepsilon} $, by \eqref{Sobolev-infty}, Corollaries \ref{cor-V-B-0-INFTY}--\ref{cor-INF-vb1-vfib2} and analogous arguments as in \eqref{R-1-2-L2l2}, we get
\begin{align*}
\displaystyle \|t ^{\frac{1}{2}}\mathcal{R}_{1,2}^{\varepsilon}\|_{L _{T}^{\infty}L _{z}^{2}}& \leq C \varepsilon ^{\frac{1}{2}} \|t ^{\frac{1}{2}}\varphi _{xx}^{I,1}\|_{L _{T}^{\infty}L ^{2}} \left( \|\langle z \rangle v _{z}^{ B,\varepsilon}\|_{L _{T}^{\infty}L _{z}^{2}} +\|\langle \xi \rangle v _{\xi}^{ b,\varepsilon}\|_{L _{T}^{\infty}L _{\xi}^{2}} \right)
 \nonumber \\
 &\displaystyle \quad+C \varepsilon ^{\frac{1}{2}} \|t ^{\frac{1}{2}}v _{x}^{I,1}\|_{L _{T}^{\infty}L ^{2}}\left( \|\varphi _{z}^{ B,\varepsilon}\|_{L _{T}^{\infty}L _{z}^{\infty}}+\|\varphi _{\xi}^{ b,\varepsilon}\|_{L _{T}^{\infty}L _{\xi}^{\infty}} \right)
    \leq C \varepsilon ^{\frac{1}{2}}.
\end{align*}
Now we proceed to estimate $ \|\partial _{t} \mathcal{R}_{1,1}^{\varepsilon}\|_{L _{T}^{2}L ^{2} } $ and $ \|\partial _{t} \mathcal{R}_{1,2}^{\varepsilon}\|_{L _{T}^{2}L ^{2} } $. Notice that if $ \|h \chi\|_{Z} \leq C \|h\|_{X}\| \chi\|_{Y} $ for $ h \in X $ and $ \chi \in Y $ with $ X $, $ Y $ and $ Z $ being Banach spaces, then
\begin{gather}\label{product-tim}
\displaystyle \|\partial _{t}(h \chi)\|_{Z} \leq C\|\partial _{t}h\|_{X}\|\chi\|_{Y}+C \|h\|_{X}\| \partial _{t}\chi\|_{Y},
\end{gather}
provided $ \partial _{t}f \in X $ and $ \partial _{t}\chi \in Y $, and that the time-weighted estimates (e.g., the weight function $ t ^{\beta} $ with $ \beta \leq 3 $) of the profiles are uniform with respect to $ \varepsilon $. Therefore, by \eqref{con-vfi-v-I-0-regula}, \eqref{z-transfer} and \eqref{L-INFT-V-B-0} and similar arguments as proving \eqref{P-1-esti}, we have
\begin{align}\label{P-1-t-esti}
&\displaystyle \|\partial _{t}\mathcal{P}_{1}\|_{L _{T}^{2}L ^{2}}
 \nonumber \\
 &~ \displaystyle \leq  \left\|\frac{\partial _{t}\partial _{x}\varphi ^{I,0}(x,t)-\partial _{t}\partial _{x}\varphi ^{I,0}(0,t)-x\partial _{t}\partial _{x}^{2}\varphi ^{I,0}(0,t)}{x ^{2}}x ^{2}v _{x}^{ B,\varepsilon}\right\|_{L _{T}^{2}L ^{2}}
 \nonumber \\
 & ~\displaystyle \quad+ \left\|\frac{\partial _{t}\partial _{x}\varphi ^{I,0}(x,t)-\partial _{t}\partial _{x}\varphi ^{I,0}(0,t)-x\partial _{t}\partial _{x}^{2}\varphi ^{I,0}(0,t)}{x ^{2}} x ^{2}\partial _{t} v _{x}^{ B,\varepsilon}\right\|_{L _{T}^{2}L ^{2}}
  \nonumber \\
  & ~\displaystyle \leq  \|\partial _{t}\partial _{x}^{3}\varphi ^{I,0}\|_{L _{T}^{\infty} L ^{\infty}} \|x ^{2}v _{x}^{ B,\varepsilon}\|_{L _{T}^{2}L  ^{2}}+ \|\partial _{x}^{3}\varphi ^{I,0}\|_{L _{T} ^{\infty}L ^{\infty}} \|x ^{2}\partial _{t}v _{x}^{ B,\varepsilon}\|_{L _{T}^{2}L  ^{2}}
   \nonumber \\
   & ~\displaystyle \leq C \varepsilon ^{\frac{3}{4}}\|\partial _{t}\varphi ^{I,0}\|_{L _{T}^{\infty}H ^{4}}\|z ^{2}v _{z}^{ B,\varepsilon}\|_{L _{T}^{2}L  _{z}^{2}}+C \varepsilon ^{\frac{3}{4}}\|\varphi ^{I,0}\|_{L _{T}^{\infty}H ^{4}}\|z ^{2}\partial _{t} v _{z}^{ B,\varepsilon}\|_{L _{T} ^{2}L  _{z}^{2}}
    \nonumber \\
    &~\displaystyle \leq C \varepsilon ^{\frac{3}{4}}.
\end{align}
Similar arguments further entails that
\begin{align}\label{P-i-t-esti}
\displaystyle  \|t ^{\frac{5}{2}}\partial _{t}\mathcal{P}_{i}\|_{L _{T}^{2}L ^{2}} \leq C \varepsilon ^{\iota _{0}} \ \ \mbox{for }i=2,3,\cdots,11,\ \ \ \|t \partial _{t}\mathcal{R}_{1,2}^{\varepsilon}\|_{L _{T}^{2}L ^{2}}\leq C \varepsilon ^{\frac{1}{2}},
\end{align}
where $ \iota _{0} $ is as in \eqref{R-1-first-esti}. Now it remains to prove $ \|t ^{\frac{5}{2}}\partial _{t}F\|_{L _{T}^{2}L ^{2}}\leq C \varepsilon ^{3/4- \nu} $. For this, we first derive some more estimates on $ b _{\varphi}^{\varepsilon} $ and $ b _{v}^{\varepsilon} $. Indeed, by analogous arguments as in \eqref{b-vfi-x-l2}--\eqref{pa-t-b-vfi-ve-l2}, we have
\begin{subequations}\label{some-more-b-vfi-b-v}
\begin{align}
\displaystyle \|t \partial _{t}\partial _{x}b _{\varphi}^{\varepsilon}\|_{L _{T}^{2}L ^{\infty}} & \leq C \varepsilon +  C\varepsilon\| \varphi _{t}^{B,2}(0,t) \|_{L ^{2}((0,T))}\|[(1-x)		{\mathop{\mathrm{e}}}^{- \frac{x}{\varepsilon ^{\nu}}}]'\|_{L ^{\infty}}
 \nonumber
 \\
 &~\displaystyle \quad +C\varepsilon\| t \varphi _{t}^{b,2}(0,t) \|_{L ^{2}((0,T))}\|[x		{\mathop{\mathrm{e}}}^{- \frac{1-x}{\varepsilon ^{\nu}}}]'\|_{L ^{\infty}}
 \leq C \varepsilon + C \varepsilon ^{1- \nu},\label{pa-t-x-bvfi-ve}
 \\
 \displaystyle  \|t ^{\frac{5}{2}}\partial _{t}^{2} b _{\varphi}^{\varepsilon}\|_{L _{T} ^{2}L ^{2}} & \leq C \varepsilon +  \varepsilon\| t ^{\frac{5}{2}}\partial _{t}^{2}\varphi ^{B,2}(0,t) \|_{L ^{2}((0,T))}\|(1-x)		{\mathop{\mathrm{e}}}^{- \frac{x}{\varepsilon ^{\nu}}}\|_{L ^{2}}
  \nonumber \\
  &~\displaystyle \quad+\varepsilon\| t ^{\frac{5}{2}}\partial _{t}^{2}\varphi ^{b,2}(0,t) \|_{L ^{2}((0,T))}\|x		{\mathop{\mathrm{e}}}^{- \frac{1-x}{\varepsilon ^{\nu}}}\|_{L ^{2}} \leq C \varepsilon + C \varepsilon ^{1+ \frac{\nu}{2}},\label{pa-t-2-b-vfi-ve-esti}\\
  \displaystyle \|\partial _{t}b _{v}^{\varepsilon}\|_{L _{T} ^{2}H ^{1}}
  & \displaystyle \leq C \varepsilon+ C \left( \|\partial _{t}\cala_{1}^{\varepsilon}\|_{L ^{2}((0,T))}+\|\partial _{t}\cala_{2}^{\varepsilon}\|_{L ^{2}((0,T))} \right)
   \leq C \varepsilon +C \varepsilon ^{\frac{\alpha}{2}}, \label{pa-t-b-v-l2-no-weig}
   \\
  \displaystyle \|t ^{\frac{3}{2}}\partial _{t}b _{v}^{\varepsilon}\|_{L _{T} ^{2}H ^{1}}
  & \displaystyle \leq C \varepsilon+ C \left( \|t ^{\frac{3}{2}}\partial _{t}\cala_{1}^{\varepsilon}\|_{L ^{2}((0,T))}+\|t ^{\frac{3}{2}}\partial _{t}\cala_{2}^{\varepsilon}\|_{L ^{2}((0,T))} \right)
   \leq C \varepsilon +C \varepsilon ^{\alpha}, \label{pa-t-bv-time-weight}
   \\
  \displaystyle \| t  \partial _{t}^{2} b _{v}^{\varepsilon}\|_{L _{T} ^{2}H ^{1}}
  & \displaystyle \leq C \varepsilon+ C \left( \|t \partial _{t}^{2}\cala_{1}^{\varepsilon}\|_{L ^{2}((0,T))}+\|t \partial _{t}^{2}\cala_{2}^{\varepsilon}\|_{L ^{2}((0,T))} \right) \leq C \varepsilon +C \varepsilon ^{\frac{\alpha}{2}},\label{pa-t-2-b-v-ve-wei}
  \end{align}
\end{subequations}
where we have used \eqref{tild-v-B-1-lem}, \eqref{Corre-property} and \eqref{cal-A-INFTY-T-SMALL}. Notice also that $ \partial _{x}b _{v}^{\varepsilon} $ is independent of $ x $. Thus it holds that
\begin{align}\label{b-time-esti}
\displaystyle  \|t\partial _{t}^{2}\partial _{x} b _{v}^{\varepsilon} \|_{L ^{2}((0,T))}+\|\partial _{t}\partial _{x}b _{v}^{\varepsilon}\|_{L ^{2}((0,T))} \leq C \varepsilon +C \varepsilon ^{\frac{\alpha}{2}}.
\end{align}
With \eqref{con-vfi-v-I-0-regula}, \eqref{inty-vfi-t-vfix-l2}, \eqref{some-more-b-vfi-b-v}, \eqref{b-time-esti} and Corollaries \ref{cor-V-B-0-INFTY}--\ref{cor-INF-vb1-vfib2}, recalling the definition of $ F ^{\varepsilon} $ in \eqref{F-defi}, we have
\begin{align*}
&\displaystyle \|t ^{\frac{5}{2}}\partial _{t}F ^{\varepsilon}\|_{L _{T}^{2}L  ^{2}}
 \nonumber \\
 &~\displaystyle \leq  C \|t  ^{\frac{5}{2}}\partial _{t}\partial _{x}b _{v}^{\varepsilon}\|_{L ^{2}((0,T))}\left( 1+\|\varphi ^{I,0}\|_{ L _{T}^{\infty}H ^{1}}+\varepsilon ^{\frac{1}{2}}\|\varphi ^{I,1}\|_{L _{T}^{2}H ^{1}}+\varepsilon ^{\frac{1}{4}}\|\varphi ^{B,\varepsilon}\|_{L _{T}^{\infty}H _{z}^{1}}\right.
 \nonumber \\
 & \displaystyle \qquad\ \left. \qquad \qquad \qquad \quad~\ \ \quad+ \varepsilon ^{\frac{1}{4}}\|\varphi ^{b,\varepsilon}\| _{L _{T}^{\infty}H _{\xi}^{1}}+\varepsilon ^{\frac{3}{4}}\|\varphi ^{B,2}\|_{L _{T}^{\infty}H _{z}^{1}}+\varepsilon ^{\frac{3}{4}}\|\varphi ^{b,2}\|_{L _{T}^{2}H _{\xi}^{1}}\right)
  \nonumber \\
  &~\displaystyle \quad +C \|\partial _{x}b _{v}^{\varepsilon}\|_{L ^{\infty}((0,T))}\left( 1+\|t  ^{\frac{5}{2}}\varphi _{t}^{I,0}\|_{ L _{T}^{2}H ^{1}}+\varepsilon ^{\frac{1}{2}}\|t ^{\frac{5}{2}}\varphi _{t}^{I,1}\|_{L _{T}^{2}H ^{1}}+\varepsilon ^{\frac{1}{4}}\|t  ^{\frac{5}{2}}\varphi _{t}^{ B,\varepsilon}\|_{L _{T}^{2}H _{z}^{1}}\right.
 \nonumber \\
 & \displaystyle \qquad \left. \qquad \qquad \qquad \quad~\ \ \quad+ \varepsilon ^{\frac{1}{4}}\|t  ^{\frac{5}{2}}\varphi _{t}^{ b,\varepsilon}\| _{L _{T}^{2}H _{\xi}^{1}}+\varepsilon ^{\frac{3}{4}}\|t  ^{\frac{5}{2}}\varphi _{t}^{B,2}\|_{L _{T}^{2}H _{z}^{1}}+\varepsilon ^{\frac{3}{4}}\|t  ^{\frac{5}{2}}\varphi _{t}^{b,2}\|_{L _{T}^{2}H _{\xi}^{1}}\right)
  \nonumber \\
  &~\displaystyle \quad + \|t  ^{\frac{5}{2}}\partial _{x}b _{\varphi}^{\varepsilon}\|_{L _{T}^{\infty}L ^{\infty}}\left( \|v _{t} ^{I,0}\|_{L _{T}^{2}H ^{1}}+\varepsilon ^{\frac{1}{4}}\|v _{t} ^{B,1}\|_{L _{T}^{2}H _{z}^{1}}+ \varepsilon ^{\frac{1}{4}}\|v _{t} ^{b,1}\| _{L _{T}^{2}H _{\xi}^{1}} \right.
   \nonumber \\
   &~\displaystyle \quad \qquad \qquad\ \ \ \quad \qquad  \qquad\left. +\varepsilon ^{- \frac{1}{4}} \|v _{z t}^{ B,\varepsilon}\|_{L _{T}^{2}L _{z}^{2}}+ \varepsilon ^{- \frac{1}{4}}\|v _{\xi t}^{ b,\varepsilon}\|_{L _{T}^{2}L _{\xi}^{2}}\right)
    \nonumber \\
    &~\displaystyle \quad+\|t ^{\frac{5}{2}}\partial _{t}\partial _{x}b _{\varphi}^{\varepsilon}\|_{L _{T}^{2}L ^{\infty}}\left( \|v ^{I,0}\|_{L _{T}^{\infty}H ^{1}}+\varepsilon ^{\frac{1}{4}}\|v ^{B,1}\|_{L _{T}^{\infty}H _{z}^{1}}+ \varepsilon ^{\frac{1}{4}}\|v ^{b,1}\| _{L _{T}^{2}H _{\xi}^{1}} \right.
   \nonumber \\
   &~\displaystyle \quad \qquad \qquad\ \ \ \quad \qquad  \qquad\left. +\varepsilon ^{- \frac{1}{4}} \|v _{z}^{ B,\varepsilon}\|_{L _{T}^{\infty}L _{z}^{2}}+ \varepsilon ^{- \frac{1}{4}}\|v _{\xi}^{ b,\varepsilon}\|_{L _{T}^{\infty}L _{\xi}^{2}}\right)
    \nonumber \\
    &~\displaystyle \quad+ \|t  ^{\frac{5}{2}}\partial _{t}\partial _{x} b _{\varphi}^{\varepsilon}\|_{L _{T}^{2}L ^{2}}\|\partial _{x} b _{v}^{\varepsilon}\| _{L ^{\infty}((0,T))}+\|\partial _{x} b _{\varphi}^{\varepsilon}\|_{L _{T}^{2}L ^{2}}\|t  ^{\frac{5}{2}}\partial _{t}\partial _{x} b _{v}^{\varepsilon}\| _{L ^{\infty}((0,T))}+ \|t  ^{\frac{5}{2}}\partial _{t}^{2} b _{\varphi}^{\varepsilon}\|_{L _{T}^{2}L^{2}}  \nonumber \\
    &~\displaystyle \leq  C \varepsilon ^{\frac{3}{4}- \nu},
\end{align*}
where we have used $ 0< \varepsilon<1 $ and $ \alpha>1 $. This along with \eqref{R-1-1-defi}, \eqref{P-1-t-esti} and \eqref{P-i-t-esti} gives  $ \|t ^{\frac{5}{2}}\partial _{t}\mathcal{R}_{1,1}^{\varepsilon}\|_{L _{T}^{2}L ^{2}} \leq C \varepsilon ^{\iota _{0}} $, and thus the proof is complete.
\end{proof}

\begin{lemma}\label{lem-R-2-ESTI}
Let $ 0<T<\infty $, $ 0< \varepsilon<1 $ and $ 1<\alpha<1+\nu<5/4 $. Then there exists a constant $ C >0$ independent of $ \varepsilon $ such that
\begin{align}\label{con-R-2-ESTI-in-lem}
\displaystyle   \|\mathcal{R}_{2}^{\varepsilon}\|_{L _{T}^{2}L ^{2}}+\|t\partial _{t}\mathcal{R}_{2}^{\varepsilon}\|_{L ^{2}L ^{2}} \leq C \varepsilon ^{\iota _{1}},\ \
 \|t ^{\frac{1}{2}}\mathcal{R}_{2}^{\varepsilon}\|_{L _{T}^{\infty}L ^{2}} \leq C \varepsilon ^{\iota _{2}},
\end{align}
where $ \iota _{1}=\min \left\{ 1, \frac{\alpha}{2}, \frac{3- \alpha}{2}, 1- \nu \right\} \geq \iota _{0} $, $ \iota _{3}=\min \left\{ 1, \frac{5}{4}- \frac{\alpha}{2}, \frac{3- \alpha}{2}, \frac{\alpha}{2}\right\} \geq \iota _{0} $, the constant $ C>0 $ is independent of $ \varepsilon $.
\end{lemma}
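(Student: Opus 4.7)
The plan is to mirror the argument of Lemma \ref{lem-R-1-VE} by exploiting cancellations arising from the profile equations. First I would substitute \eqref{approximate-v-formula} into the expression \eqref{E-rror-fomula} and split
\[\mathcal{R}_2^\varepsilon = \mathcal{R}_{2,1}^\varepsilon + \mathcal{R}_{2,2}^\varepsilon,\]
where $\mathcal{R}_{2,2}^\varepsilon$ collects the cross-interactions between $\varphi^{I,1}$ and the leading-order boundary-layer profiles $v^{B,\varepsilon},v^{b,\varepsilon}$ (the analogue of \eqref{R-1-2-defi}), and $\mathcal{R}_{2,1}^\varepsilon$ contains the remaining terms. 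Using that $\partial_x^2 b_v^\varepsilon=0$ (since $b_v^\varepsilon$ is linear in $x$) together with the scaling identities $\partial_x^2 v^{B,\varepsilon}=\varepsilon^{-1}v_{zz}^{B,\varepsilon}$ and their analogues for $v^{B,1},v^{b,\varepsilon},v^{b,1}$, I would invoke the equations \eqref{eq-outer-0}$_2$, \eqref{first-bd-layer-pro-appro}$_1$, \eqref{first-bd-pro-rt-approxi}$_1$, \eqref{first-outer-problem}$_2$, \eqref{second-bd-eq}$_2$, and \eqref{sec-bd-eq-rt}$_2$ to cancel the leading-order contributions in each of the three regions (interior, left and right boundary layers).

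After these cancellations, $\mathcal{R}_{2,1}^\varepsilon$ reduces to five families of residual terms: (i) Taylor remainders such as $v^{B,\varepsilon}\bigl[\varphi_x^{I,0}(x,t)-\varphi_x^{I,0}(0,t)-x\varphi_{xx}^{I,0}(0,t)\bigr]$ and $\varphi_z^{B,\varepsilon}\bigl[v^{I,0}(x,t)-v^{I,0}(0,t)\bigr]$, together with their right-boundary analogues; (ii) the genuine viscous residues $\varepsilon v_{xx}^{I,0}$ and $\varepsilon^{3/2}v_{xx}^{I,1}$; (iii) mixed products between the left and right boundary-layer profiles such as $v^{B,\varepsilon}\varphi_\xi^{b,\varepsilon}$ or $\varphi_z^{B,2}v^{b,\varepsilon}$; (iv) contributions stemming from $b_v^\varepsilon$, namely $-(\Phi_x^a+M)b_v^\varepsilon-\partial_tb_v^\varepsilon$, the latter carrying the derivatives of the corner-correctors $\mathcal{A}_i^\varepsilon(t)$; and (v) exponentially small boundary-tail evaluations of the form $v^{B,\varepsilon}(1/\sqrt\varepsilon,t)$ produced when enforcing the Dirichlet data. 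The piece $\mathcal{R}_{2,2}^\varepsilon$ consists of Taylor products of $v_z^{B,\varepsilon}$ and $v_\xi^{b,\varepsilon}$ with Taylor remainders of $\varphi^{I,1}$ and inherits an $O(\varepsilon^{1/2})$ bound exactly as in \eqref{R-1-2-L2l2}.

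The estimation of each residue then rests on three ingredients already used in the proof of Lemma \ref{lem-R-1-VE}: Taylor's formula combined with the change of variables $x=\sqrt\varepsilon z$ and the weighted bounds $\|\langle z\rangle^l\partial_z^kv^{B,\varepsilon}\|$ from Lemma \ref{lem-v-B-0} and Corollary \ref{cor-V-B-0-INFTY} (which produces a gain of $\varepsilon^{3/4}$ or $\varepsilon$); the separation estimate \eqref{half-l-infty}--\eqref{vfi-B-2-half} for the left-right cross-products (which yields arbitrary polynomial decay in $\varepsilon$); and the bounds on $b_v^\varepsilon$ from \eqref{b-v--h1-esti} and \eqref{pa-t-b-v-l2-no-weig}--\eqref{pa-t-2-b-v-ve-wei} together with Lemma \ref{LEM-correc-function} for the corner-correctors. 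The four components of $\iota_1=\min\{1,\alpha/2,(3-\alpha)/2,1-\nu\}$ then arise, respectively, from $\varepsilon v_{xx}^{I,0}$ (giving $\varepsilon^{1}$), from $\partial_tb_v^\varepsilon$ via \eqref{Corre-property} (giving $\varepsilon^{\alpha/2}$), from $\varepsilon^{3/2}v_{xx}^{I,1}$ paired with $\|v_{xx}^{I,1}\|_{L^2_TL^2}\leq C\varepsilon^{-\alpha/2}$ from \eqref{v-I1-txx} (giving $\varepsilon^{(3-\alpha)/2}$), and from the exponentially small correction built into $b_v^\varepsilon$ (giving $\varepsilon^{1-\nu}$). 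For $\|t\partial_t\mathcal{R}_2^\varepsilon\|_{L^2_TL^2}$ one differentiates each term via the product rule \eqref{product-tim}, employs the time-weighted versions of the estimates above, and uses $\|t\partial_t^2\mathcal{A}_i^\varepsilon\|_{L^1((0,T))}\leq C$ and $\|t^{\frac{2k-3+\lambda}{2}}\partial_t^k\mathcal{A}_i^\varepsilon\|_{L^2}\leq C\varepsilon^{\alpha\lambda}$ from \eqref{Corre-property}--\eqref{cal-A-INFTY-T-SMALL} to tame the singular second time derivative of the corner-correctors.

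The main obstacle lies in the delicate interplay between the time weights and the $\varepsilon$-losses coming from the corner-correctors and from the higher time and spatial derivatives of the approximate boundary-layer profiles. In particular, for the $L^\infty_TL^2$ estimate $\|t^{1/2}\mathcal{R}_2^\varepsilon\|_{L^\infty_TL^2}$, several residual terms admit only $L^2$-in-time control without weight, so the $t^{1/2}$ factor must be paired carefully with the time-uniform-in-$\varepsilon$ estimates in Corollaries \ref{cor-V-B-0-INFTY}, \ref{cor-V-b-0-INFTY} and the $L^\infty$-in-$t$ bounds in Corollaries \ref{cor-vfi-I-1-infty}, \ref{cor-INF-V-B-1-VFI-B-2}, \ref{cor-INF-vb1-vfib2}; this is precisely how the component $5/4-\alpha/2$ inside $\iota_3$ is produced. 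The calculation itself is long but mechanical, paralleling \eqref{P-1-esti}--\eqref{F-es-t-1-weighted}, and hence would be executed term-by-term without further conceptual novelty.
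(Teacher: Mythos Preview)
Your overall plan matches the paper's: it writes $\mathcal{R}_2^\varepsilon$ as a sum of sixteen residues $\mathcal{K}_1,\ldots,\mathcal{K}_{16}$ (without a $\mathcal{R}_{2,1}/\mathcal{R}_{2,2}$ split) obtained by inserting the profile equations \eqref{eq-outer-0}$_2$, \eqref{first-bd-layer-pro-appro}$_1$, \eqref{first-bd-pro-rt-approxi}$_1$, \eqref{first-outer-problem}$_2$, \eqref{second-bd-eq}$_2$, \eqref{sec-bd-eq-rt}$_2$, and then estimates each $\mathcal{K}_i$ exactly by the mechanisms you list.

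Two details in your write-up need correction. First, the cross-interaction terms between $\varphi^{I,1}$ and the leading boundary-layer profiles in $\mathcal{R}_2^\varepsilon$ carry $v^{B,\varepsilon}$ \emph{without} a $z$-derivative (unlike \eqref{R-1-2-defi}, where $v_x^{B,\varepsilon}$ appears), so the analogue of \eqref{R-1-2-L2l2} actually yields an $O(\varepsilon)$ bound, not $O(\varepsilon^{1/2})$; this is not cosmetic, since $\iota_1=\alpha/2>1/2$ and an $O(\varepsilon^{1/2})$ contribution would violate the lemma. In the paper these are simply the terms $\mathcal{K}_2$ and $\mathcal{K}_5$ and require no separate treatment. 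Second, the component $1-\nu$ in $\iota_1$ cannot come from $b_v^\varepsilon$, which contains no $\nu$ at all (see \eqref{b-v-ve}); $\nu$ enters only through the localizer $e^{-x/\varepsilon^\nu}$ in $b_\varphi^\varepsilon$, and hence through the residue $-\partial_xb_\varphi^\varepsilon\,(v^{I,0}+v^{B,\varepsilon}+\cdots)$, which is missing from your five families and is the paper's $\mathcal{K}_{14}$. (In fact this term only gives $\varepsilon^{(2-\nu)/2}$, so $\alpha/2$ is the binding constraint throughout and the stated $\iota_1$ is slightly generous.)
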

\begin{proof}
From \eqref{first-bd-layer-pro-appro}--\eqref{first-bd-pro-rt-approxi}, we know that
\begin{gather}\label{v-ZZ}
\displaystyle \begin{cases}
 \displaystyle v _{zz}^{ B,\varepsilon}=v _{t}^{ B,\varepsilon} +(\varphi _{x} ^{I,0}(0,t)+M)v ^{B,\varepsilon}+\varphi _{z}^{ B,\varepsilon}(v ^{B,\varepsilon}+v ^{I,0}(0,t)),\\[2mm]
 \displaystyle v _{\xi \xi}^{ b,\varepsilon} =v _{t}^{ b,\varepsilon} +(\varphi  _{x}^{I,0}(1,t)+M)v ^{b,\varepsilon}+\varphi _{z}^{ b,\varepsilon}(v ^{b,\varepsilon}+v ^{I,0}(1,t)).
\end{cases}
\end{gather}
Plugging \eqref{v-ZZ} into $ \mathcal{R}_{2}^{\varepsilon} $ in \eqref{E-rror-fomula}, recalling the definition of $ \Phi^{a} $ and $ V^{a} $, we have
\begin{align}\label{R-2-PRECISE}
\displaystyle \mathcal{R}_{2}^{\varepsilon}&=   -\left[ v ^{B,\varepsilon}( \varphi _{x}^{I,0}(x,t)-\varphi _{x} ^{I,0}(0,t)-x\varphi _{xx} ^{I,0}(0,t))+v ^{b,\varepsilon}( \varphi _{x} ^{I,0}(x,t)- \varphi _{x}^{I,0}(1,t)- (x-1)\varphi _{xx}^{I,0}(1,t))  \right]
   \nonumber \\
 & \displaystyle  \quad - \varepsilon ^{\frac{1}{2}}\left[v ^{B,\varepsilon}( \varphi _{x}^{I,1}(x,t)-\varphi _{x} ^{I,1}(0,t))  +v ^{b,\varepsilon}( \varphi _{x} ^{I,1}(x,t)- \varphi _{x}^{I,1}(1,t))\right]
    \nonumber \\
    & \displaystyle  \quad  - \varepsilon ^{\frac{1}{2}}\left[v ^{B,1}( \varphi _{x}^{I,0}(x,t)-\varphi _{x} ^{I,0}(0,t))  +v ^{b,1}( \varphi _{x} ^{I,0}(x,t)- \varphi _{x}^{I,0}(1,t))\right]
    \nonumber \\
&\displaystyle \quad -\varepsilon ^{\frac{1}{2}} \left[ \varphi _{x}^{ B,\varepsilon}(v ^{I,0}-v ^{I,0}(0,t)-xv _{x}^{I,0}(0,t))+\varphi _{x}^{ b,\varepsilon}\left( v ^{I,0}-v ^{I,0}(1,t)-(x-1)v _{x}^{I,0}(1,t) \right) \right]
               \nonumber \\
 & \displaystyle \quad -\varepsilon \left[ \varphi _{x}^{ B,\varepsilon}(v ^{I,1}(x,t)-v ^{I,1}(0,t))+\varphi _{x}^{ b,\varepsilon}(v ^{I,1}(x,t)-v ^{I,1}(1,t)) \right]
                \nonumber \\
&\displaystyle  \quad-\varepsilon \left[ \varphi _{x}^{B,2}(v ^{I,0}(x,t)-v ^{I,0}(0,t))+ \varphi _{x}^{b,2}(v ^{I,0}(x,t)-v ^{I,0}(1,t))\right]
                 \nonumber \\
& \quad -\varepsilon \varphi _{x} ^{I,1}(v ^{I,1}+ v ^{B,1}+v ^{b,1}) -\varepsilon ^{\frac{1}{2}}\left( \varphi _{x}^{ B,\varepsilon}v ^{b,\varepsilon}+\varphi _{x}^{ b,\varepsilon}v ^{B,\varepsilon} \right) - \varepsilon \left( \varphi _{x}^{ B,\varepsilon}v ^{b,1}+\varphi _{x}^{ b,\varepsilon}v ^{B,1} \right)
        \nonumber \\
& \displaystyle \quad - \varepsilon \left(  \varphi _{x}^{B,2} v ^{b,\varepsilon}+\varphi _{x}^{b,2} v ^{B,\varepsilon} \right)  -\varepsilon ^{\frac{3}{2}}  \varphi _{x}^{B,2}(v ^{I,1}+v ^{B,1}+v ^{b,1})  - \varepsilon ^{\frac{3}{2}} \varphi _{x}^{b,2}  (v ^{I,1}+v ^{B,1}+v ^{b,1})
         \nonumber \\
&\displaystyle \quad-b _{v}^{\varepsilon}[\varphi _{x}^{I,0}+M+\varepsilon ^{\frac{1}{2}}(\varphi_{x} ^{I,1}+\varphi_{x}^{ B,\varepsilon}+\varphi_{x} ^{ b,\varepsilon})+\partial _{x}b _{\varphi}^{\varepsilon}  ]   \nonumber \\
&\displaystyle \quad - \partial _{x}b _{\varphi}^{\varepsilon}(v ^{I,0}+v ^{B,\varepsilon}+v ^{b,\varepsilon}+\varepsilon ^{\frac{1}{2}}(v ^{I,1}+ v ^{B,1}+v ^{b,1}))
+\left[ \varepsilon v _{xx}^{I,0}+\varepsilon ^{\frac{3}{2}} v _{xx}^{I,1} \right]-\partial _{t} b _{v}^{\varepsilon}  =:\sum _{i=1}^{16}\mathcal{K}_{i}.
\end{align}
The proof of \eqref{con-R-2-ESTI-in-lem} is quite similar as the one for Lemma \ref{lem-R-1-VE}. We first prove $ \|\mathcal{R}_{2}^{\varepsilon}\|_{L _{T}^{\infty}L ^{2}} \leq C \varepsilon ^{3/4} $. By \eqref{L-INFT-V-B-0}, \eqref{L-INFT-V-veb-0}, \eqref{inte-transfer} and Taylor's formula, we get
\begin{align*}
 \displaystyle \|\mathcal{K}_{1}\|_{L _{T}^{2}L ^{2}}& \leq C \varepsilon ^{\frac{5}{4}} \|\partial _{x}^{3}\varphi ^{I,0}\|_{L _{T}^{\infty}L ^{\infty}}\left( \|\langle z \rangle ^{2} v ^{B,\varepsilon}\|_{L _{T}^{2}L _{z}^{2}}+\|\langle \xi \rangle ^{2} v ^{b,\varepsilon}\|_{L _{T}^{2}L _{\xi}^{2}} \right)  \leq C \varepsilon ^{\frac{5}{4}}.
\end{align*}
Similarly, by \eqref{l-INFTY-VFI-I-0}, \eqref{L-INFT-V-B-0}, \eqref{L-INFT-Vfi-B-0}, \eqref{tild-v-B-1-lem}, \eqref{Sobolev-z-xi} and \eqref{xi-transfer}, we have
\begin{gather*}
\displaystyle \begin{cases}
	 	\displaystyle \|\mathcal{K}_{3}\|_{L _{T}^{2}L ^{2}}  \leq C \varepsilon ^{\frac{5}{4}} \left( \| \langle z \rangle v ^{B,1}\|_{L _{T}^{2}L _{z} ^{2}}+\| \langle \xi \rangle v ^{b,1}\|_{L _{T}^{2}L _{\xi}^{2}} \right)  \|\varphi _{xx}^{I,0}\| _{L _{T}^{\infty}L ^{\infty}} \leq C  \varepsilon ^{\frac{5}{4}} , \\[2mm]
	\displaystyle \|\mathcal{K}_{4}\|_{L _{T}^{2}L ^{2}} \leq C \varepsilon ^{\frac{5}{4}}\left(  \| \varphi _{z} ^{ B,\varepsilon}\|_{L _{T}^{2}L _{z}^{2}}+ \| \varphi _{\xi} ^{ b,\varepsilon}\|_{L _{T}^{2}L _{\xi} ^{2}} \right)  \|v _{xx}^{I,0}\| _{L _{T}^{\infty}L ^{\infty}} \leq C \varepsilon ^{\frac{5}{4}} ,\\[2mm]
		\displaystyle  \|\mathcal{K}_{6}\|_{L _{T}^{2}L ^{2}} \leq
   C \varepsilon ^{\frac{5}{4}} \left( \|\varphi _{z}^{B,2}\|_{L _{T}^{2}L _{z}^{2}}+\|\varphi _{\xi} ^{b,2}\|_{L _{T}^{2}L _{\xi}^{2}} \right)\|v _{x}^{I,0}\|_{L _{T}^{\infty}L ^{\infty}} \leq C \varepsilon ^{\frac{5}{4}}.
\end{cases}
\end{gather*}
For $ \mathcal{K}_{2} $, we get
\begin{align*}
\displaystyle  \|\mathcal{K}_{2}\|_{L _{T}^{2}L ^{2}} &
\leq C  \varepsilon ^{\frac{1}{2}} \left[ \left( \int _{0}^{T}\int _{\mathcal{I}} \vert v ^{B,\varepsilon}\vert ^{2} \Big(\int _{0}^{x}\varphi _{yy}^{I,1}\mathrm{d}y\Big) ^{2} \mathrm{d}x \right) ^{\frac{1}{2}} +\left( \int _{0}^{T}\int _{\mathcal{I}} \vert v ^{b,\varepsilon}\vert ^{2} \Big(\int _{1}^{x}\varphi _{yy}^{I,1}\mathrm{d}y\Big) ^{2} \mathrm{d}x \right) ^{\frac{1}{2}}\right]
  \nonumber \\
  & \displaystyle \leq C \varepsilon \left(  \|\langle z \rangle v ^{B,\varepsilon}\| _{L _{T}^{\infty} L _{z} ^{2}}+ \|\langle \xi \rangle v ^{b,\varepsilon}\| _{L _{T}^{\infty} L _{\xi} ^{2}} \right)  \|\varphi _{xx}^{I,1}\| _{L _{T}^{2} L ^{2}} \leq \varepsilon,
\end{align*}
where we have used \eqref{L-INFT-V-B-0}, \eqref{L-INFT-V-veb-0} and \eqref{vfi-I-1-esti-first-con-lem}. Similarly, we get for $ \mathcal{K}_{5} $ that
\begin{align*}
\displaystyle  \displaystyle
	\|\mathcal{K}_{5}\|_{L _{T}^{2}L ^{2}} ^{2} \leq C \varepsilon \left( \| \langle z \rangle\varphi _{z} ^{  B,\varepsilon}\|_{L _{T}^{\infty}L _{z}^{2}}+\| \langle \xi \rangle\varphi _{\xi} ^{ b,\varepsilon}\|_{L _{T}^{\infty}L _{\xi} ^{2}}   \right) \|v _{x}^{I,1}\|_{L _{T}^{2}L ^{2}} \leq C \varepsilon.
\end{align*}
In view of \eqref{vfi-I-1-esti-first-con-lem}, \eqref{tild-v-B-1-lem}, \eqref{tild-v-b-1-in-lem} and \eqref{Sobolev-infty}, we have
\begin{align}\label{k-7-l2-esti}
\displaystyle  \|\mathcal{K}_{7}\|_{L _{T}^{2}L ^{2}}  \leq C \varepsilon  \|\varphi _{x}^{I,1}\|_{L _{T}^{2}L ^{2}} \left( \|v ^{I,1}\|_{L _{T}^{\infty}L ^{\infty}}+\|v ^{B,1}\|_{L _{T}^{\infty}L ^{\infty}}+\|v ^{b,1}\|_{L _{T}^{\infty}L ^{\infty}}\right) \leq C \varepsilon.
\end{align}
By analogous arguments as in \eqref{k-7-l2-esti} along with \eqref{vfi-I-1-esti-first-con-lem}, \eqref{tild-v-B-1-lem}, \eqref{b-vfi-x-l2}, \eqref{b-v--h1-esti} and Corollaries \ref{cor-V-B-0-INFTY}--\ref{cor-INF-vb1-vfib2}, we derive that
\begin{subnumcases}
   	   \displaystyle \displaystyle \|\mathcal{K}_{11}\|_{L _{T}^{2}L ^{2}} \leq C \varepsilon ^{\frac{5}{4}} \|\varphi _{z}^{B,2}\|_{L _{T}^{2}L _{z}^{2}} \left( \|v ^{I,1}\|_{L _{T}^{\infty}L ^{\infty}}+\|v ^{B,1}\|_{L _{T}^{\infty}L _{z}^{\infty}}+\|v ^{b,1}\|_{L _{T}^{\infty}L _{\xi}^{\infty}} \right) \leq C \varepsilon ^{\frac{5}{4}},\\[1mm]
   	   \|\mathcal{K}_{12}\|_{L _{T}^{2}L ^{2}} \leq C \varepsilon ^{\frac{5}{4}} \|\varphi _{\xi}^{b,2}\|_{L _{T}^{2}L _{z}^{2}} \left( \|v ^{I,1}\|_{L _{T}^{\infty}L ^{\infty}}+\|v ^{B,1}\|_{L _{T}^{\infty}L _{z}^{\infty}}+\|v ^{b,1}\|_{L _{T}^{\infty}L _{\xi}^{\infty}} \right) \leq C \varepsilon ^{\frac{5}{4}},\\[1mm]
   	   \|\mathcal{K}_{13}\|_{L _{T}^{2}L ^{2}} \leq C\left(1+\|\varphi _{x}^{I,0}\|_{L _{T}^{\infty} L ^{2}}+\varepsilon ^{\frac{1}{2}}\|\varphi _{x}^{I,1}\|_{L _{T}^{\infty}L ^{2}}+\varepsilon ^{\frac{1}{4}}\|\varphi _{z}^{B,\varepsilon }\|_{L _{T}^{\infty}L _{z}^{2}} +\varepsilon ^{\frac{1}{4}}\|\varphi _{\xi}^{ b,\varepsilon}\|_{L _{T}^{\infty}L _{\xi}^{2}}\right.
   	    \nonumber \\[1mm]
   	    \qquad \qquad ~\qquad \qquad\left. +\|\partial _{x}b _{\varphi}^{\varepsilon}\|_{L _{T}^{2}L ^{2}}\right)\|b _{v}^{\varepsilon}\|_{L _{T}^{\infty}L ^{\infty}}  \leq C  \varepsilon +C \varepsilon ^{\frac{\alpha}{2}}, \label{K-1-3-l2}\\[1mm]
        \displaystyle  \|\mathcal{K}_{14}\|_{L _{T}^{2}L ^{2}} \leq C\|\partial _{x}b _{\varphi}^{\varepsilon}\|_{L _{T}^{2}L ^{2}}\left( 1+ \varepsilon ^{\frac{1}{2}}( \|v ^{I,1}\|_{L _{T}^{\infty}L ^{\infty}}+\|v ^{B,1}\|_{L _{T}^{\infty}L ^{\infty}}+\|v ^{b,1}\|_{L _{T}^{\infty}L ^{\infty}}) \right)
         \nonumber \\[1mm]
         \qquad \qquad \quad\leq  C \varepsilon + C \varepsilon ^{\frac{2- \nu}{2}}.
\end{subnumcases}
Recalling the arguments in proving the estimate on $ \mathcal{P}_{5} $ in \eqref{p-5-l2}, we get, thanks to \eqref{half-l-infty}--\eqref{vfi-B-1-HALF-INTY} and Lemmas \ref{lem-v-B-0}--\ref{lem-v-B-1} that
\begin{align*}
\displaystyle  |\mathcal{K}_{8}\|_{L _{T}^{2}L ^{2}} +\|\mathcal{K}_{9}\|_{L _{T}^{2}L ^{2}} +\|\mathcal{K}_{10}\|_{L _{T}^{2}L ^{2}} \leq C \varepsilon ^{\frac{5}{4}}.
\end{align*}
For $ \mathcal{K}_{15} $ and $ \mathcal{K}_{16} $, it follows from \eqref{con-vfi-v-I-0-regula}, \eqref{vfi-I-1-esti-first-con-lem} and \eqref{pa-t-b-v-l2-no-weig} that
\begin{align*}
\displaystyle \|\mathcal{K}_{15}\| _{L _{T}^{2}L ^{2}} \leq \varepsilon \|v _{xx}^{I,0}\|_{L _{T}^{\infty}L ^{2}}+ \varepsilon ^{\frac{3}{2}}\|v _{xx}^{I,1}\|_{L _{T}^{2}L ^{2}} \leq C \varepsilon+C \varepsilon ^{\frac{3- \alpha}{2}}, \ \ \ \|\mathcal{K}_{16}\| _{L _{T}^{2}L ^{2}} \leq C \varepsilon +C \varepsilon ^{\frac{\alpha}{2}}.
\end{align*}
Substituting the estimates $ \|\mathcal{K}_{i}\|_{L _{T}^{2}L ^{2}} $ into \eqref{R-2-PRECISE}, we then get
\begin{align*}
\displaystyle \|\mathcal{R}_{2}^{\varepsilon}\|_{L _{T}^{2}L ^{2}} \leq C \varepsilon
 ^{\iota _{1}},
\end{align*}
where $ \iota _{1}=\min \left\{ 1, \frac{\alpha}{2}, \frac{3- \alpha}{2}, 1- \nu \right\}\geq \iota _{0} $ for $ 1< \alpha<5/4 $. Next we estimate $ \|t ^{1/2}\mathcal{R}_{2}^{\varepsilon}\|_{L _{T}^{\infty}L ^{2}} $. By repeating the above arguments on $ \mathcal{K} _{i}$ with  $ L _{T}^{2} $ replaced by $ L _{T}^{\infty} $, one has
\begin{gather*}
\displaystyle  \|\mathcal{K}_i\|_{L _{T}^{\infty}L ^{2}} \leq C \varepsilon ^{\frac{5}{4}},\ \ \mbox{for } i=1,3,4,7,8,9,10,\ \ \|\mathcal{K}_i\|_{L _{T}^{\infty}L ^{2}} \leq C \varepsilon ^{\frac{5}{4}-\frac{\alpha}{2}}\ \mbox{for }i=6,11,12,\\
\displaystyle \|\mathcal{K}_{5}\|_{L _{T}^{\infty}L ^{2}}  \leq C \varepsilon,\ \   \|\mathcal{K}_{15}\|_{L _{T}^{\infty}L ^{2}} \leq C\varepsilon+C\varepsilon ^{\frac{3- \alpha}{2}},
\end{gather*}
where \eqref{vfi-I-1-esti-first-con-lem}, \eqref{tild-v-B-1-lem}, \eqref{b-vfi-x-l2}, \eqref{b-v--h1-esti} and Corollaries \ref{cor-V-B-0-INFTY}--\ref{cor-INF-vb1-vfib2} have been used. For $ \mathcal{K}_{2} $, by virtue of \eqref{cor-vfi-I-1-infty}, \eqref{L-INFT-V-B-0}, \eqref{L-INFT-V-veb-0}, \eqref{vfi-I-1-infty} and \eqref{Sobolev-infty}, it holds that
\begin{align*}
\displaystyle \|\mathcal{K}_{2}\|_{L _{T}^{\infty}L ^{2}}  \leq C \varepsilon ^{5/4} \left(  \|\langle z \rangle v ^{B,\varepsilon}\| _{L _{T}^{\infty} L _{z} ^{2}}+ \|\langle \xi \rangle v ^{b,\varepsilon}\| _{L _{T}^{\infty} L _{\xi} ^{2}} \right)   \|\varphi _{xx}^{I,1}\| _{L _{T}^{\infty} L ^{\infty}} \leq C \varepsilon ^{\frac{5}{4}-\frac{\alpha}{2}}.
\end{align*}
Furthermore, thanks to \eqref{b-v--h1-esti} and \eqref{t-pa-x-b-vfi-infty}, we can modify \eqref{K-1-3-l2} to derive
\begin{align*}
\displaystyle \|t ^{\frac{1}{2}}\mathcal{K}_{13}\|_{L _{T}^{\infty}L ^{2}}& \leq \left(1+\|\varphi _{x}^{I,0}\|_{L _{T}^{\infty} L ^{2}}+\varepsilon ^{\frac{1}{2}}\|\varphi _{x}^{I,1}\|_{L _{T}^{\infty}L ^{2}}+\varepsilon ^{\frac{1}{4}}\|\varphi _{z}^{ B,\varepsilon}\|_{L _{T}^{\infty}L _{z}^{2}} +\varepsilon ^{\frac{1}{4}}\|\varphi _{\xi}^{ b,\varepsilon}\|_{L _{T}^{\infty}L _{\xi}^{2}}\right.
   	    \nonumber \\
   	    &\qquad \left. +\|t ^{\frac{1}{2}}\partial _{x}b _{\varphi}^{\varepsilon}\|_{L _{T}^{\infty}L ^{2}}\right)\|b _{v}^{\varepsilon}\|_{L _{T}^{\infty}L ^{\infty}}  \leq C  \varepsilon +C \varepsilon ^{\frac{\alpha}{2}},
\end{align*}
\begin{align*}
\displaystyle  \|t ^{\frac{1}{2}}\mathcal{K}_{14}\|_{L _{T}^{\infty}L ^{2}} &\leq \|t ^{\frac{1}{2}}\partial _{x}b _{\varphi}^{\varepsilon}\|_{L _{T}^{\infty}L ^{2}}\left( 1+ \varepsilon ^{\frac{1}{2}}( \|v ^{I,1}\|_{L _{T}^{\infty}L ^{\infty}}+\|v ^{B,1}\|_{L _{T}^{\infty}L ^{\infty}}+\|v ^{b,1}\|_{L _{T}^{\infty}L ^{\infty}}) \right)
         \nonumber \\
         &\displaystyle\leq  C  \varepsilon +C \varepsilon ^{\frac{\alpha}{2}},
\end{align*}
where we have used the condition $ \nu<1/4<1 $. To estimate $ \|t ^{1/2}\mathcal{K}_{16}\|_{L _{T}^{\infty}L ^{2}} $, we need to derive a more delicate estimate for $ \partial _{t}b _{v}^{\varepsilon} $. Indeed, by analogous arguments as in \eqref{b-v--h1-esti}, it holds that
\begin{align}\label{t-pa-t-b-v}
\displaystyle \|t ^{\frac{1}{2}}\partial _{t}b _{v}^{\varepsilon}\|_{L _{T}^{\infty}H ^{1}} & \leq C \varepsilon  + C \left( \|t ^{\frac{1}{2}}\partial _{t}\cala_{1}^{\varepsilon}\|_{L ^{\infty}((0,T))}+\|t ^{\frac{1}{2}}\partial _{t}\cala_{2}^{\varepsilon}\|_{L ^{\infty}((0,T))} \right) \leq C \varepsilon+ C \varepsilon ^{\frac{\alpha}{2}},
\end{align}
where we have used \eqref{cal-A-INFTY-T-SMALL}. Therefore we get
\begin{align*}
\displaystyle  \|t ^{\frac{1}{2}}\mathcal{K}_{16}\|_{L _{T}^{\infty}L ^{2}} \leq  \|t ^{\frac{1}{2}}\partial _{t} b _{v}^{\varepsilon}\| _{L _{T}^{\infty}L ^{2}}  \leq C \varepsilon +C \varepsilon ^{\frac{\alpha}{2}}.
\end{align*}
Gathering the estimates on the $ \|\cdot\|_{L _{T}^{\infty}L ^{2}} $ norms of $ \mathcal{K}_{i} $, we thus conclude that
\begin{align*}
\displaystyle \|t ^{\frac{1}{2}}\mathcal{R}_{2}^{\varepsilon}\|_{L _{T}^{\infty}L ^{2}} \leq C \varepsilon ^{\iota _{2}},
\end{align*}
where $ \iota _{2}=\min \left\{ 1, \frac{5}{4}- \frac{\alpha}{2}, \frac{3- \alpha}{2}, \frac{\alpha}{2}\right\}\geq \iota _{0} $ for $ 1< \alpha<5/4 $. In what follows, we will estimate $\|t \partial _{t}\mathcal{R}_{2}^{\varepsilon}\|_{L _{T}^{2}L ^{2}}  $. By repeating the arguments for estimates on $ \|\mathcal{R}_{2}^{\varepsilon}\|_{L _{T}^{2}L ^{2}} $, in view of \eqref{product-tim} and the fact that the time-weighted norms of the profiles involved in $ \|t \partial _{t}\mathcal{K}_{i}\|_{L _{T}^{2}L ^{2}}\,(1 \leq i \leq 12  ) $ are uniform with respect to $ \varepsilon $, says
\begin{align*}
\displaystyle \|t \varphi _{txx}^{I,1}\|_{L _{T}^{2}L ^{2}} + \|t\varphi _{zt}^{B,2}\|_{L _{T}^{2}L _{z}^{2}}+\|t \varphi _{\xi t}^{b,2}\|_{L _{T}^{2}L _{\xi}^{2}} +\|t v _{tx}^{I,1}\| _{L _{T}^{2}L ^{2}}\leq C
\end{align*}
for some constant $ C>0 $ independent of $ \varepsilon $, one has
\begin{align}\label{pa-t-2-bbv}
\displaystyle \|t \partial _{t}\mathcal{K}_{i}\| _{L _{T}^{2}L ^{2}} \leq C \varepsilon ^{\iota _{1}}\ \ \mbox{for } 1 \leq i \leq 12 .
\end{align}
It now remains to estimate $ \|t\partial _{t}\mathcal{K} _{i}\| _{L _{T}^{2}L ^{2}}$ for $ i=13,14,15,16 $. It follows from \eqref{vfi-I-1-esti-first-con-lem}, \eqref{tild-v-B-1-lem}, \eqref{b-vfi-x-l2},  \eqref{pa-t-b-vfi-ve-l2}, \eqref{b-v--h1-esti}, \eqref{some-more-b-vfi-b-v}, \eqref{t-pa-t-b-v} and Corollaries \ref{cor-V-B-0-INFTY}--\ref{cor-INF-vb1-vfib2} that
   \begin{align*}
   &\displaystyle  \|t \partial _{t}\mathcal{K}_{13}\|_{L _{T}^{2}L ^{2}}
    \nonumber \\
    &~\displaystyle\leq \|t \partial _{t} b _{v}^{\varepsilon}\|_{L _{T}^{2}L ^{\infty}}\left(\|\varphi _{x}^{I,0}\|_{L _{T}^{\infty} L ^{2}}+\varepsilon ^{\frac{1}{2}}\|\varphi _{x}^{I,1}\|_{L _{T}^{\infty}L ^{2}}+\varepsilon ^{\frac{1}{4}}\|\varphi _{z}^{ B,\varepsilon}\|_{L _{T}^{\infty}L _{z}^{2}}+\varepsilon ^{\frac{1}{4}}\|\varphi _{\xi}^{ b,\varepsilon}\|_{L _{T}^{\infty}L _{\xi}^{2}} \right)
    \nonumber \\
    &~\displaystyle  \quad+\| b _{v}^{\varepsilon}\|_{L _{T}^{\infty}L ^{\infty}}\left(\|\varphi _{xt}^{I,0}\|_{L _{T}^{2} L ^{2}}+\varepsilon ^{\frac{1}{2}}\|t \varphi _{xt}^{I,1}\|_{L _{T}^{2}L ^{2}}+\varepsilon ^{\frac{1}{4}}\|t \varphi _{zt}^{ B,\varepsilon}\|_{L _{T}^{2}L _{z}^{2}}+\varepsilon ^{\frac{1}{4}}\|t \varphi _{\xi t}^{ b,\varepsilon}\|_{L _{T}^{2}L _{\xi}^{2}} \right)
     \nonumber \\
         & ~\displaystyle \quad+\|\partial _{t} b _{v}^{\varepsilon}\|_{L _{T}^{2}L ^{2}}+\|t \partial _{t} b _{v}^{\varepsilon}\|_{L _{T}^{\infty}L ^{\infty}} \|\partial _{x}b _{\varphi}^{\varepsilon}\|_{L _{T}^{2}L ^{2}} +\|b _{v}^{\varepsilon}\|_{L _{T}^{\infty}L ^{\infty}} \|t \partial _{t}\partial _{x}b _{\varphi}^{\varepsilon}\|_{L _{T}^{2}L ^{2}}
          \nonumber \\
          &~\displaystyle\leq C \varepsilon ^{\frac{\alpha}{2}},
   \end{align*}
   \begin{align*}
\displaystyle  \|t \partial _{t}\mathcal{K}_{14}\|_{L _{T}^{2}L ^{2}} &\leq C\|t \partial _{t}\partial _{x}b _{\varphi}^{\varepsilon}\|_{L _{T}^{2}L ^{\infty}}\left( 1+ \varepsilon ^{\frac{1}{2}}( \|v ^{I,1}\|_{L _{T}^{\infty}L ^{2}}+\|v ^{B,1}\|_{L _{T}^{\infty}L ^{2}}+\|v ^{b,1}\|_{L _{T}^{\infty}L ^{2}}) \right)
         \nonumber \\
         & \displaystyle \quad + \|\partial _{x}b _{\varphi}^{\varepsilon}\|_{L _{T}^{2}L ^{\infty}}\left( 1+\|t\partial _{t}v ^{B,\varepsilon}\|_{L _{T}^{\infty}L ^{2}}+\|t\partial _{t}v ^{b,\varepsilon}\|_{L _{T}^{\infty}L ^{2}}\right.
          \nonumber \\
         &\displaystyle \quad \left.  \qquad+\varepsilon ^{\frac{1}{2}}(\|t \partial _{t}v ^{I,1}\|_{L _{T}^{\infty}L ^{2}}+\|t\partial _{t}v ^{B,1}\|_{L _{T}^{\infty}L ^{2}} +\|t\partial _{t}v ^{b,1}\|_{L _{T}^{\infty}L ^{2}}) \right) \leq  C  \varepsilon +C \varepsilon ^{\frac{\alpha}{2}}.
\end{align*}
   For $ \|t\partial _{t}\mathcal{K} _{15}\| _{L _{T}^{2}L ^{2}}$, due to \eqref{con-vfi-v-I-0-regula} and \eqref{v-I1-txx}, it holds that
   \begin{gather}
   \displaystyle \|t\partial _{t}\mathcal{K} _{15}\| _{L _{T}^{2}L ^{2}} \leq C \varepsilon\|v _{txx}^{I,0}\| _{L _{T}^{2}L ^{2}}+C \varepsilon ^{\frac{3}{2}}\|v _{txx}^{I,1}\| _{L _{T}^{2}L ^{2}} \leq C \varepsilon+ C \varepsilon ^{\frac{3- \alpha}{2}} \nonumber
   \end{gather}
   for $ 1<\alpha<5/4 $ and $ 0< \varepsilon \leq 1 $. Finally, we utilize \eqref{pa-t-2-b-v-ve-wei} to derive that
  \begin{align}\label{K-15-t2}
   \displaystyle \|t \partial _{t}\mathcal{K}_{16}\| _{L _{T}^{2}L ^{2}} = \|t\partial _{t}^{2} b _{v}^{\varepsilon}\| _{L _{T}^{2}L ^{2}} \leq C \varepsilon +C \varepsilon ^{\frac{\alpha}{2}}.
   \end{align}
   Therefore we summarize from \eqref{pa-t-2-bbv}--\eqref{K-15-t2} that $\|t \partial _{t}\mathcal{R}_{2}^{\varepsilon}\|_{L _{T}^{2}L ^{2}} \leq C \varepsilon ^{\iota _{1}}  $. The proof of Lemma \ref{lem-R-2-ESTI} is complete.
\end{proof}

\vspace{2mm}
To derive some uniform-in-$ \varepsilon $ estimates for $ (\Phi ^{\varepsilon}, V ^{\varepsilon}) $, we need to introduce some preparatory results. Denote $ \hat{\Phi}=\varphi _{x}^{I,0}+\varepsilon ^{1/2}(\varphi _{x}^{ B,\varepsilon}+\varphi _{x}^{ b,\varepsilon}) $. In view of \eqref{vfi-bd-1ord-lt} and \eqref{firs-bd-1-rt}, it holds for any $ T>0 $ that
\begin{gather}\label{postive-appr}
\displaystyle 0<K _{\ast}^{-1} \leq \hat{\Phi}+M \leq K _{\ast}
\end{gather}
for some constant $ K _{\ast}>0 $ independent of $ \varepsilon$, provided $ \varepsilon \leq \varepsilon _{1} $ for some small constant $ \varepsilon _{1}>0 $ which may depend on $ T $. Furthermore, from \eqref{vfi-bd-1ord-lt-approxi} and $\eqref{first-bd-pro-rt-approxi}_{2}$, we deduce that
\begin{align*}
    \varphi^{B,\varepsilon}_{zz}-\varphi^{B,\varepsilon}_{z}v_{z}^{B,\varepsilon}=(\varphi_{x}^{I,0}(0,t)+M)v_{z}^{B,\varepsilon},\ \ \varphi^{b,\varepsilon}_{\xi\xi}-\varphi^{b,\varepsilon}_{\xi}v_{\xi}^{b,\varepsilon}=(\varphi_{x}^{I,0}(1,t)+M)v_{\xi}^{b,\varepsilon}.
\end{align*}
This along with $ \eqref{eq-outer-0}_{1} $ and \eqref{approximate-v-formula} implies that
\begin{align}
\displaystyle \hat{\Phi}_{x}-(\hat{\Phi}+M)V _{x}^{a}&= - \left[ v _{x}^{ B,\varepsilon}(\varphi _{x}^{I,0}(x,t)-\varphi _{x}^{I,0}(0,t))+v _{x}^{ b,\varepsilon}(\varphi _{x}^{I,0}(x,t)-\varphi _{x}^{I,0}(1,t)) \right]
 \nonumber \\
 & \displaystyle \quad - \varepsilon ^{\frac{1}{2}}\varphi _{x}^{ B,\varepsilon}\left[ v _{x}^{I,0}+ v _{x}^{ b,\varepsilon}+ \varepsilon ^{\frac{1}{2}}(v _{x}^{I,1}+v _{x}^{B,1}+ v _{x}^{b,1})+\partial _{x} b _{v}^{\varepsilon}\right]
  \nonumber \\
  &\displaystyle  \quad -\varepsilon ^{\frac{1}{2}}\varphi _{x}^{ b,\varepsilon}\left[ v _{x}^{I,0}+ v _{x}^{ B,\varepsilon}+ \varepsilon ^{\frac{1}{2}}(v _{x}^{I,1}+v _{x}^{B,1}+ v _{x}^{b,1})+\partial _{x} b _{v}^{\varepsilon}\right]
   \nonumber \\
   & \displaystyle \quad- (\varphi _{x}^{I,0}+M)\left[ \varepsilon ^{\frac{1}{2}}(v _{x}^{I,1}+v _{x}^{B,1}+v _{x}^{b,1}) +\partial _{x}b _{v}^{\varepsilon}\right]+\varphi _{t}^{I,0}. \nonumber
\end{align}
We proceed to show that
\begin{gather}\label{hat-fida-esti-con}
\displaystyle \|\hat{\Phi}_{t}\|_{L _{T}^{2}L ^{\infty}}+ \|\hat{\Phi}_{x}-(\hat{\Phi}+M)V _{x}^{a}\|_{L _{T}^{2}L ^{\infty}} \leq C.
\end{gather}
Indeed, thanks to \eqref{con-vfi-v-I-0-regula}, \eqref{l-INFTY-VFI-I-0}, \eqref{L-INFT-Vfi-B-0}, \eqref{v-B-0-regularity-a}, \eqref{con-vfi-B-1-a} and \eqref{L-INFT-veVfi-b-uniform-0}, we get
\begin{align}\label{esti-approx-fida-2}
\displaystyle  \|\hat{\Phi}_{t}\|_{L _{T}^{2}L ^{\infty}} \leq \|\varphi _{xt}^{I,0}\|_{L _{T}^{2}L ^{\infty}}+\|\varphi _{zt}^{ B,\varepsilon}\|_{L _{T}^{2}L _{z}^{\infty}}+\|\varphi _{\xi t}^{ b,\varepsilon}\|_{L_{T}^{2}L _{\xi}^{\infty}} \leq C
\end{align}
and
\begin{align}
&\displaystyle  \|\hat{\Phi}_{x}-(\hat{\Phi}+M)V _{x}^{a}\|_{L _{T}^{2}L ^{\infty}}
 \nonumber \\
 &~\displaystyle \leq \|\varphi _{xx}^{I,0}\|_{L _{T}^{\infty}L ^{\infty}} \left( \|v _{z}^{ B,\varepsilon}\|_{L _{T}^{2}L _{z}^{\infty}}+\|v _{\xi}^{ b,\varepsilon}\|_{L _{T}^{2}L _{\xi}^{\infty}} \right)+ C \left(  \|\varepsilon ^{\frac{1}{2}}\varphi _{x}^{ B,\varepsilon} v _{x}^{ b,\varepsilon}\|_{L _{T}^{2}L _{z}^{\infty}}+\|\varepsilon ^{\frac{1}{2}}\varphi _{x}^{ b,\varepsilon} v _{x}^{ B,\varepsilon}\|_{L _{T}^{2}L _{\xi}^{\infty}} \right)
 \nonumber \\
 &~ \displaystyle \quad +\left( \|v _{x}^{I,0}\|_{L _{T}^{2}L ^{\infty}}+\varepsilon ^{\frac{1}{2}}\|v _{x}^{I,1}\|_{L _{T}^{2}L ^{\infty}}+\|v _{z}^{B,1}\|_{L _{T}^{2}L ^{\infty}}
  + \|v _{\xi}^{b,1}\|_{L _{T}^{2}L ^{\infty}}+\|\partial _{x}b _{v}^{\varepsilon}\|_{L _{T}^{2}L ^{\infty}}\right)
   \nonumber \\
   &~\displaystyle \quad \quad \quad\times(1+\|\varphi _{\xi}^{b,\varepsilon }\|_{L _{T}^{\infty}L ^{\infty}}+ \|\varphi _{z}^{B,\varepsilon }\|_{L _{T}^{\infty}L ^{\infty}}) + \|\varphi _{t}^{I,0}\|_{L _{T}^{2}L ^{\infty}} \leq C,\nonumber
\end{align}
where we have used
\begin{gather*}
\displaystyle  \varepsilon ^{\frac{1}{2}}\|v _{x}^{I,1}\|_{L _{T}^{2}L ^{\infty}} \leq C \varepsilon ^{\frac{1}{2}} \|v _{x}^{I,1}\|_{L _{T}^{2}L ^{2}}^{\frac{1}{2}}\left(   \|v _{xx}^{I,1}\|_{L _{T}^{2}L ^{2}}^{\frac{1}{2}}+ \|v _{x}^{I,1}\|_{L _{T}^{2}L ^{2}}^{\frac{1}{2}}\right) \leq C \varepsilon ^{\frac{1}{2}-\frac{\alpha}{4}} +C  \varepsilon ^{\frac{1}{2}}\leq C
\end{gather*}
for $ 1<\alpha<5/4 $, and the following inequality
\begin{align}
\displaystyle  C \left(  \|\varepsilon ^{\frac{1}{2}}\varphi _{x}^{ B,\varepsilon} v _{x}^{ b,\varepsilon}\|_{L _{T}^{2}L^{\infty}}+\|\varepsilon ^{\frac{1}{2}}\varphi _{x}^{ b,\varepsilon} v _{x}^{B,\varepsilon }\|_{L _{T}^{2}L ^{\infty}} \right) \leq C \varepsilon\nonumber
\end{align}
due to \eqref{half-l-infty}, \eqref{vfi-B-1-HALF-INTY} and similar arguments as in \eqref{P-6-1}. Furthermore, recall the definition of $ \Phi ^{a} $ and $ \hat{\Phi} $, we have
\begin{align}\label{fida-hat-err}
\displaystyle  \Phi_{x}^{a}+M=\hat{\Phi}+M+ \varepsilon ^{\frac{1}{2}}\varphi _{x}^{I,1}+\varepsilon \left( \varphi _{x}^{B,2}+\varphi _{x}^{b,2} \right)+ \partial _{x}b _{\varphi}^{\varepsilon}=:\hat{\Phi}+M+\hat{\Phi} ^{\text{err}},
\end{align}
with $\hat{\Phi} ^{\text{err}}= \varepsilon ^{\frac{1}{2}}\varphi _{x}^{I,1}+\varepsilon ( \varphi _{x}^{B,2}+\varphi _{x}^{b,2})+\partial _{x}b _{\varphi}^{\varepsilon}$ satisfying
\begin{align}\label{hat-fida-err-esti1}
\displaystyle  \|\hat{\Phi} ^{\text{err}}\|_{L _{T} ^{2}L ^{\infty}} &\leq C \varepsilon ^{\frac{1}{2}}\left( \|\varphi _{x}^{I,1}\|_{L _{T} ^{2}L ^{\infty}}+\|\varphi _{z}^{B,2}\|_{L _{T} ^{2}L _{z}^{\infty}}+\|\varphi _{\xi}^{b,2}\|_{L _{T} ^{2}L _{\xi}^{\infty}} +\varepsilon ^{-\frac{1}{2}}\|\partial _{x}b _{\varphi}^{\varepsilon}\|_{L _{T} ^{2}L _{\xi}^{\infty}}\right)
 \nonumber \\
& \leq C \varepsilon ^{\frac{1}{2}}(1+ \varepsilon ^{\frac{1}{2}}+\varepsilon ^{\frac{1}{2}-\nu}) \leq C \varepsilon ^{\frac{1}{2}},
\end{align}
\begin{align}
\displaystyle \displaystyle  \|\hat{\Phi} _{x}^{\text{err}}\|_{L _{T} ^{2}L ^{\infty}} &\leq C\left(  \varepsilon ^{\frac{1}{2}}\|\varphi _{xx}^{I,1}\|_{L _{T} ^{2}L ^{\infty}}+\|\varphi _{zz}^{B,2}\|_{L _{T} ^{2}L _{z}^{\infty}}+\|\varphi _{\xi \xi}^{b,2}\|_{L _{T} ^{2}L _{\xi}^{\infty}} +\|\partial _{x}^{2} b _{\varphi}^{\varepsilon}\|_{L _{T} ^{2}L _{\xi}^{\infty}}\right)
 \nonumber \\
& \leq C \varepsilon ^{\frac{1}{2}-\frac{\alpha}{4}}+C+C \varepsilon+C \varepsilon ^{1-2 \nu} \leq C  \nonumber
\end{align}
for $ 1< \alpha<1+\nu<5/4 $, where we have used \eqref{vfi-I-1-esti-first-con-lem}, \eqref{tild-v-B-1-lem}, \eqref{tild-v-b-1-in-lem}, \eqref{b-vfi-L-2-l-infty}, \eqref{Sobolev-infty} and analogous arguments as in \eqref{b-vfi-L-2-l-infty}. Finally, we introduce some time-weighted estimates on the approximate solutions $ (\Phi ^{a}, V ^{a}) $.
\begin{lemma}
Let $ 0< \varepsilon <1 $. It holds for any $ T>0 $ that
\begin{align}\label{time-weighted-for-high}
\displaystyle  \varepsilon ^{\frac{1}{2}}\|t ^{\frac{3}{2}} V _{x}^{a}\|_{L _{T}^{\infty} L ^{\infty}}+\|t ^{\frac{3}{2}} V _{t}^{a}\|_{L _{T}^{\infty} L ^{\infty}}+\varepsilon ^{\frac{1}{2}}\|t ^{\frac{3}{2}}V _{tx}^{a}\|_{L _{T}^{\infty} L^{\infty}}+\|t ^{\frac{9}{4}}\Phi _{xt}^{a}\|_{L _{T}^{\infty} L ^{\infty}} +\|t ^{\frac{1}{2}}\Phi _{x}^{a}\|_{L _{T}^{\infty} L ^{\infty}} \leq C,
\end{align}
where $ C >0$ is a constant independent of $ \varepsilon$.

\end{lemma}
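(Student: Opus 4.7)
The plan is to decompose $\Phi^{a}$ and $V^{a}$ according to the formulas in \eqref{approximate-v-formula}, \eqref{b-vfi-ve} and \eqref{b-v-ve}, and to bound each constituent by invoking the regularity results already established in Section \ref{sec:study_on_the_inner_outer_layers} together with the Sobolev embedding $H^{1}\hookrightarrow L^{\infty}$ and the change-of-variable rules \eqref{bd-layer-variable}. The key observation is that every time an $x$-derivative falls on a boundary-layer profile (i.e.~$\varphi^{B,\varepsilon}$, $\varphi^{b,\varepsilon}$, $\varphi^{B,2}$, $\varphi^{b,2}$, $v^{B,\varepsilon}$, $v^{b,\varepsilon}$, $v^{B,1}$, $v^{b,1}$), the chain rule produces a factor $\varepsilon^{-1/2}$, which is exactly cancelled by the explicit $\varepsilon^{1/2}$ (or $\varepsilon$) prefactor in the ansatz; this is why the $\varepsilon^{1/2}$-factors appear on $V_{x}^{a}$ and $V_{tx}^{a}$ in \eqref{time-weighted-for-high}.

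For $\|t^{1/2}\Phi_{x}^{a}\|_{L_{T}^{\infty}L^{\infty}}$, I would control $\varphi_{x}^{I,0}$ via Corollary \ref{cor-l-infty-vfi-I0}, control $\varepsilon^{1/2}\varphi_{x}^{I,1}$ by interpolating $\|\varphi^{I,1}\|_{L_{T}^{\infty}H^{1}}$ and $\|t\varphi_{xx}^{I,1}\|_{L_{T}^{\infty}L^{2}}$ from \eqref{vfi-I-1-infty}, transform $\varepsilon^{1/2}\partial_{x}\varphi^{B,\varepsilon}=\partial_{z}\varphi^{B,\varepsilon}$ and invoke $\|\varphi^{B,\varepsilon}\|_{L_{T}^{\infty}H_{z}^{3}}\leq C$ from \eqref{L-INFT-Vfi-B-0} (likewise for $\varphi^{b,\varepsilon}$), rewrite $\varepsilon\partial_{x}\varphi^{B,2}=\varepsilon^{1/2}\partial_{z}\varphi^{B,2}$ and apply the $(t^{1/2}+\varepsilon^{\alpha/4})$-weighted bound from \eqref{INF-VFI-B-2} (likewise for $\varphi^{b,2}$), and finally bound $\partial_{x}b_{\varphi}^{\varepsilon}$ by \eqref{t-pa-x-b-vfi-infty}. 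For $\varepsilon^{1/2}\|t^{3/2}V_{x}^{a}\|_{L_{T}^{\infty}L^{\infty}}$ the same strategy applies: $\varepsilon^{1/2}v_{x}^{I,0}$ is trivially uniform, $\varepsilon^{1/2}\partial_{x}v^{B,\varepsilon}=\partial_{z}v^{B,\varepsilon}$ is controlled by $\|v^{B,\varepsilon}\|_{L_{T}^{\infty}H_{z}^{2}}$ via \eqref{L-INFT-V-B-0}, and the terms $\varepsilon \partial_{x}v^{B,1}=\varepsilon^{1/2}\partial_{z}v^{B,1}$ together with the analogous $\xi$-terms are estimated through \eqref{INF-V-B-1}; the $t^{3/2}$ weight is only needed to absorb the $\varepsilon^{-1}$ singularity coming from $\partial_{x}b_{v}^{\varepsilon}=b_{v,x}^{\varepsilon}$ via Lemma \ref{LEM-correc-function}.

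The terms involving one time derivative, namely $\|t^{3/2}V_{t}^{a}\|_{L_{T}^{\infty}L^{\infty}}$, $\varepsilon^{1/2}\|t^{3/2}V_{tx}^{a}\|_{L_{T}^{\infty}L^{\infty}}$ and $\|t^{9/4}\Phi_{xt}^{a}\|_{L_{T}^{\infty}L^{\infty}}$, follow the same decomposition but now require the time-weighted bounds of Lemmas \ref{lem-regul-outer-layer-0}--\ref{lem-v-b-1} together with Proposition \ref{prop-embeding-spacetime} to pass from $L_{T}^{2}$-type estimates to $L_{T}^{\infty}$-type estimates. For instance, to treat $t^{9/4}\varphi_{xt}^{I,1}$ I interpolate $\|t^{5/2}\varphi_{xt}^{I,1}\|_{L_{T}^{\infty}L^{2}}$ from \eqref{vfi-I-1-infty} with $\|t^{2}\partial_{t}\varphi^{I,1}\|$ in $H^{2}$ coming from $\varepsilon^{3\alpha/2}\partial_{t}\partial_{x}^{2}\varphi^{I,1}$ (which is bounded by \eqref{vfi-I-1-infty} after trading $\varepsilon$ against the time weight), and similarly for the approximate corner pieces using \eqref{pa-t-b-vfi-ve-l2}, \eqref{pa-t-bv-time-weight}, \eqref{t-pa-t-b-v}, and Lemma \ref{LEM-correc-function}.

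The main obstacle will be bookkeeping the $\varepsilon$-to-$t$ trade-off introduced by the corner-correctors $\mathcal{A}_{i}^{\varepsilon}(t)$ and by the higher-order boundary-layer profiles $\varphi^{B,2}$, $\varphi^{b,2}$, $v^{B,1}$, $v^{b,1}$. Indeed, the time derivatives of $\mathcal{A}_{i}^{\varepsilon}$ and of $v^{B,\varepsilon}$, $v^{b,\varepsilon}$ satisfy estimates of the form $\|t^{(2k-3+\lambda)/2}\partial_{t}^{k}\mathcal{A}_{i}^{\varepsilon}\|_{L^{2}}\leq C\varepsilon^{\alpha\lambda}$, so the time powers $t^{3/2}$ and $t^{9/4}$ in the statement are chosen precisely so that every occurrence of $\varepsilon^{-\alpha/2}$ or $\varepsilon^{-3\alpha/2}$ arising from a temporal derivative of a boundary-layer term is swallowed by the weight; careful accounting of which regularity estimate to pair with which piece is essential, but no essentially new ingredient beyond Section \ref{sec:study_on_the_inner_outer_layers} is required.
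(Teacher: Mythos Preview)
Your proposal is correct and follows the same strategy as the paper: decompose $\Phi^{a}$, $V^{a}$ via \eqref{approximate-v-formula}--\eqref{b-v-ve}, convert $x$-derivatives on boundary-layer pieces into $z$- or $\xi$-derivatives (cancelling the explicit $\varepsilon^{1/2}$ prefactors), and invoke the time-weighted $L^{\infty}_{T}H^{1}$ bounds of Corollaries~\ref{cor-l-infty-vfi-I0}--\ref{cor-INF-vb1-vfib2} together with the Sobolev embedding \eqref{Sobolev-infty}--\eqref{Sobolev-z-xi}. One small correction: the time weight in $\varepsilon^{1/2}\|t^{3/2}V_{x}^{a}\|$ is \emph{not} needed for $\partial_{x}b_{v}^{\varepsilon}$ (which is already uniformly $O(\varepsilon^{\alpha/2})$ by \eqref{b-v--h1-esti}) but rather to control the higher-order pieces $\partial_{z}v^{B,1}$, $\partial_{\xi}v^{b,1}$ in $L^{\infty}$, since only $\|v^{B,1}\|_{L^{\infty}_{T}H^{1}_{z}}$ is uniform and the second $z$-derivative carries a $t^{-1}$ or $\varepsilon^{-\alpha/2}$ loss (cf.\ \eqref{INF-V-B-1}); the $\varepsilon^{-1/2}$ in the final bound comes from the undressed leading-order layers $\partial_{x}v^{B,\varepsilon}$, $\partial_{x}v^{b,\varepsilon}$.
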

\begin{proof}
From \eqref{l-INFTY-VFI-I-0}, \eqref{L-INFT-V-B-0}, \eqref{L-INFT-V-veb-0}, \eqref{v-I-1-infty}, \eqref{tild-v-B-1-lem}, \eqref{INF-vfi-b2}, \eqref{b-v--h1-esti} and \eqref{inte-transfer}, we get
\begin{align*}
   \|t  V _{x}^{a}\|_{L _{T}^{\infty} L ^{\infty}}& \leq \|t  v _{x}^{I,0}\|_{L _{T}^{\infty}L ^{\infty}}+\varepsilon ^{-\frac{1}{2}}\left( \|t  v _{z}^{ B,\varepsilon}\|_{L _{T}^{\infty}H _{z}^{1}}+\|t  v _{ \xi}^{ b,\varepsilon}\|_{L _{T}^{\infty} H _{\xi}^{1}} \right)
    \nonumber \\
    & \displaystyle \quad+\varepsilon ^{\frac{1}{2}}\left( \|v _{x}^{I,1}\|_{L _{T}^{\infty}L ^{2}}^{\frac{1}{2}} \|v _{xx}^{I,1}\|_{L _{T}^{\infty}L ^{2}}^{\frac{1}{2}}+\|v _{x}^{I,1}\|_{L _{T}^{\infty}L ^{2}}\right)
       \nonumber \\
   & \displaystyle \quad+ \left( \|t  v _{z}^{B,1}\|_{L _{T}^{\infty} L ^{\infty}}+\|t ^{\frac{3}{2}} v _{ \xi}^{b,1}\|_{L _{T}^{\infty} L ^{\infty}} \right)+\|t \partial _{x} b _{v}\|_{L _{T} ^{\infty}L ^{\infty}} \leq C \varepsilon ^{- \frac{1}{2}},
  \end{align*}
  where $ C>0 $ is a constant that may depend on $ T $ but independent of $ \varepsilon $. By \eqref{l-INFTY-VFI-I-0}, \eqref{L-INFT-V-B-0}, \eqref{L-INFT-V-veb-0}, \eqref{v-I-1-infty}, \eqref{tild-v-B-1-lem}, \eqref{INF-vfi-b2}, \eqref{approximate-v-formula}, \eqref{t-pa-t-b-v} and \eqref{inte-transfer}, we have
  \begin{align*}
  \displaystyle  \|t ^{\frac{3}{2}} V _{t}^{a}\|_{L _{T}^{\infty} L ^{\infty}}& \leq \|t ^{\frac{3}{2}} v _{t}^{I,0}\|_{L _{T}^{\infty} H^{1}}+\left( \|t ^{\frac{3}{2}} v _{t}^{ B,\varepsilon}\|_{L _{T}^{\infty} H _{z}^{1}}+\|t ^{\frac{3}{2}} v _{t}^{ b,\varepsilon}\|_{L _{T}^{\infty} H _{\xi}^{1}} \right) +\varepsilon ^{\frac{1}{2}}\|t ^{\frac{3}{2}}v _{t}^{I,1}\|_{L _{T}^{\infty} H ^{1}}
   \nonumber \\
   & \displaystyle \quad+ \varepsilon ^{\frac{1}{2}}\left( \|t ^{\frac{3}{2}} v _{t}^{B,1}\|_{L _{T}^{\infty} H _{z}^{1}}+\|t ^{\frac{3}{2}} v _{t}^{b,1}\|_{L _{T}^{\infty} H _{\xi}^{1}} \right)+\|t ^{\frac{3}{2}}\partial _{t}b _{v}\|_{L _{T} ^{\infty}H ^{1}} \leq C .
  \end{align*}
  Notice that $ \partial _{x}b _{v} $ is independent of $ x $. Then we have from \eqref{t-pa-t-b-v} that $ \|t ^{\frac{1}{2}}\partial _{t}\partial _{x} b _{v}\|_{L _{T} ^{\infty}L ^{\infty}} \leq C$. Hence, it holds that
  \begin{align}\label{v-txa-time}
   \displaystyle  \|t ^{\frac{3}{2}} V _{tx}^{a}\|_{L _{T}^{\infty} L ^{\infty}}& \leq \|t ^{\frac{3}{2}} v _{tx}^{I,0}\|_{L _{T}^{\infty}L ^{\infty}}+\varepsilon ^{-\frac{1}{2}}\left( \|t ^{\frac{3}{2}} v _{tz}^{ B,\varepsilon}\|_{L _{T}^{\infty}H _{z}^{1}}+\|t ^{\frac{3}{2}} v _{t \xi}^{ b,\varepsilon}\|_{L _{T}^{\infty} H _{\xi}^{1}} \right)
    \nonumber \\
    & \displaystyle \quad+\varepsilon ^{\frac{1}{2}}\left( \|t ^{3} v _{tx}^{I,1}\|_{L _{T}^{\infty}L ^{2}}^{\frac{1}{2}} \|v _{txx}^{I,1}\|_{L _{T}^{\infty}L ^{2}}^{\frac{1}{2}}+\|t ^{\frac{3}{2}} v _{tx}^{I,1}\|_{L _{T}^{\infty}L ^{2}}\right)
       \nonumber \\
   & \displaystyle \quad+ \left( \|t ^{\frac{3}{2}} v _{tz}^{B,1}\|_{L _{T}^{\infty} L ^{\infty}}+\|t ^{\frac{3}{2}} v _{t \xi}^{b,1}\|_{L _{T}^{\infty} L ^{\infty}} \right)+\|t ^{\frac{3}{2}}\partial _{t}\partial _{x} b _{v}\|_{L _{T} ^{\infty}L ^{\infty}} \leq C \varepsilon ^{- \frac{1}{2}},
   \end{align}
   where we have used \eqref{l-INFTY-VFI-I-0}, \eqref{L-INFT-V-B-0}, \eqref{L-INFT-V-veb-0}, \eqref{v-I-1-infty}, \eqref{tild-v-B-1-lem}, \eqref{INF-vfi-b2}, \eqref{approximate-v-formula} and \eqref{inte-transfer}. By analogous arguments as proving \eqref{pa-t-2-b-vfi-ve-esti}, we know that
    \begin{align}
    \displaystyle  \|t ^{2}\partial _{t}\partial _{x}b _{\varphi}^{\varepsilon}\|_{L _{T}^{\infty}L ^{\infty}} & \leq C \varepsilon +  \varepsilon\|t ^{2} \varphi _{t}^{B,2}(0,t) \|_{L ^{\infty}((0,T))}\|[(1-x)		{\mathop{\mathrm{e}}}^{- \frac{x}{\varepsilon ^{\nu}}}]'\|_{L ^{\infty}}
 \nonumber
 \\
 &~\displaystyle \quad +\varepsilon\| t ^{2}\varphi _{t}^{b,2}(0,t) \|_{L ^{\infty}((0,T))}\|[x		{\mathop{\mathrm{e}}}^{- \frac{1-x}{\varepsilon ^{\nu}}}]'\|_{L ^{\infty}}
 \leq C \varepsilon + C \varepsilon ^{1- \nu} \leq C.\nonumber
    \end{align}
This along with analogous arguments as proving \eqref{v-txa-time} implies that
\begin{align}
\displaystyle  \|t ^{2}\Phi _{xt}^{a}\|_{L _{T}^{\infty} L ^{\infty}} & \leq \|t ^{2} \varphi _{tx}^{I,0}\|_{L _{T}^{\infty}L ^{\infty}}+\left( \|t ^{2} \varphi _{tz}^{ B,\varepsilon}\|_{L _{T}^{\infty}H _{z}^{1}}+\|t ^{2} \varphi _{t \xi}^{ b,\varepsilon}\|_{L _{T}^{\infty} H _{\xi}^{1}} \right) +\varepsilon ^{\frac{1}{2}}\|t ^{2}\varphi _{tx}^{I,1}\|_{L _{T}^{\infty} H ^{1}}
   \nonumber \\
   & \displaystyle \quad+ \varepsilon ^{\frac{1}{2}}\left( \|t ^{2} \varphi _{tz}^{B,2}\|_{L _{T}^{\infty} L ^{\infty}}+\|t ^{2} \varphi _{t \xi}^{b,2}\|_{L _{T}^{\infty} L ^{\infty}} \right)+\|t ^{2}\partial _{t}\partial _{x} b _{\varphi}^{\varepsilon}\|_{L _{T} ^{\infty}L ^{\infty}} \leq C\nonumber
\end{align}
for some constant $ C>0 $ independent of $ \varepsilon $. Similar arguments along with \eqref{t-pa-x-b-vfi-infty} further yields that  $\|t ^{\frac{1}{2}}\Phi _{x}^{a}\|_{L _{T}^{\infty} L ^{\infty}} \leq C  $. The proof is complete.
\end{proof}

\subsubsection{Lower-order estimates} 
\label{ssub:lower_order_estimates}
Now we shall establish some uniform-in-$\varepsilon$ estimates for $ (\Phi ^{\varepsilon}, V ^{\varepsilon})$ by beginning with the $ L ^{2} $ estimates. Throughout this section, we assume that $ (\Phi ^{\varepsilon}, V^{\varepsilon}) $ satisfies for any $ T>0 $,
\begin{gather}\label{apriori-assumption}
\displaystyle \sup _{t \in [0,T]} \left( \varepsilon\|V _{x}^{\varepsilon}(\cdot, t)\|_{L ^{2}}^{2} +\int _{0}^{t}\|\Phi ^{\varepsilon}\|_{L ^{\infty}}^{2}\mathrm{d}\tau \right) \leq \delta,
\end{gather}
for some small constant $ \delta>0 $ to be determined later. We remark that $ \delta $ could depend on $ T $ here since we are mainly concerned with the zero-diffusion limit of the problem with $ T>0 $ fixed. Besides, our estimates in the sequel are based on the estimates established in previous sections. Thus we always assume the conditions in Lemmas \ref{lem-regul-outer-layer-0}--\ref{lem-v-b-1}, \ref{lem-R-1-VE}, \ref{lem-R-2-ESTI} hold, and the results therein will be frequently used in the subsequent analysis without further clarification.

\begin{lemma}\label{lem-L2-pertur}
In addition to the conditions in Proposition \ref{prop-pertur}, we assume that the solution $ (\Phi ^{\varepsilon},V ^{\varepsilon}) $ to \eqref{eq-perturbation} on $ [0,T] $ satisfies \eqref{apriori-assumption}. Then there exists a constant $ \delta _{1}>0 $ such that for any $ t \in [0,T] $,
\begin{gather}\label{con-L2-PERT}
\|V ^{\varepsilon}(\cdot,t)\|_{L ^{2}}^{2}+ \|\Phi ^{\varepsilon}(\cdot,t)\|_{L ^{2}}^{2}+ \int _{0}^{t}\left( \|V ^{\varepsilon}\|_{L ^{2}}^{2}+\varepsilon \|V _{x}^{\varepsilon}\|_{L ^{2}}^{2}+\|\Phi _{x}^{\varepsilon}\|_{L ^{2}}^{2} \right) \mathrm{d}\tau \leq C \varepsilon ^{2\iota _{0}-1},
\end{gather}
provided $ \delta \leq \delta _{1} $, where $ \iota _{0} $ is as in \eqref{iota0-defi}, and $ C>0 $ is a constant depending on $ T $ but independent of $ \varepsilon $.
\end{lemma}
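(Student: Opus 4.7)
The plan is to carry out a weighted $L^2$-energy argument on \eqref{eq-perturbation}, designed so that the multi-scaled linear cross terms cancel up to absorbable residuals. First I would multiply the $\Phi^{\varepsilon}$-equation by $\Phi^{\varepsilon}$ and the $V^{\varepsilon}$-equation by $w(x,t)V^{\varepsilon}$, where $w$ is a smooth positive weight built from the approximate profiles (for instance a function of $\Phi_x^{a}+M$ or an exponential of $V^{a}$) chosen to produce the desired cancellation, and sum after integrating over $\mathcal{I}$. Using the Dirichlet conditions $\Phi^{\varepsilon}|_{\partial\mathcal{I}}=V^{\varepsilon}|_{\partial\mathcal{I}}=0$ the left-hand side yields the parabolic dissipations $\|\Phi_x^{\varepsilon}\|_{L^{2}}^{2}$ and $\varepsilon\|\sqrt{w}\,V_x^{\varepsilon}\|_{L^{2}}^{2}$ together with the coercive quantity $\int w(\Phi_x^{a}+M)|V^{\varepsilon}|^{2}\,dx$, all nonnegative thanks to \eqref{postive-appr}.

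The delicate part, which I expect to be the main obstacle, is the pair of multi-scaled linear cross terms
\[
-\int V_x^{\varepsilon}(\Phi_x^{a}+M)\Phi^{\varepsilon}\,dx\quad\text{and}\quad -\int \Phi_x^{\varepsilon}V_x^{a}\Phi^{\varepsilon}\,dx,
\]
in which $V_x^{a}$ and (after another integration by parts) $\Phi_{xx}^{a}$ carry inner-scale factors of respective size $\varepsilon^{-1/2}$ and $\varepsilon^{-1}$. The first one I would rewrite by integration by parts (boundary terms vanish) as
\[
\int V^{\varepsilon}(\Phi_x^{a}+M)\Phi_x^{\varepsilon}\,dx+\int V^{\varepsilon}\Phi_{xx}^{a}\Phi^{\varepsilon}\,dx,
\]
tune the weight $w$ so that the first of these pieces cancels against a matching contribution coming from the $V^{\varepsilon}$-equation side, and estimate the remaining piece together with the second cross term above by using the boundary-layer change of variables $z=x/\sqrt{\varepsilon}$, $\xi=(x-1)/\sqrt{\varepsilon}$, the Dirichlet estimate $|\Phi^{\varepsilon}(x,t)|\leq\sqrt{x}\,\|\Phi_x^{\varepsilon}\|_{L^{2}}$ near $x=0$ (respectively $\sqrt{1-x}\,\|\Phi_x^{\varepsilon}\|_{L^{2}}$ near $x=1$), and the uniform weighted bounds $\|\langle z\rangle^{1/2}\varphi_{zz}^{B,\varepsilon}\|_{L_z^{2}}+\|\langle\xi\rangle^{1/2}\varphi_{\xi\xi}^{b,\varepsilon}\|_{L_\xi^{2}}\leq C$ supplied by Corollaries \ref{cor-V-B-0-INFTY} and \ref{cor-V-b-0-INFTY}. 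The extra $\sqrt{\varepsilon}$ gained from the rescaling neutralises the $\varepsilon^{-1/2}$ singularity and yields bounds of the form $C(\|V^{\varepsilon}\|_{L^{2}}^{2}+\eta\|\Phi_x^{\varepsilon}\|_{L^{2}}^{2})$, absorbable into the dissipation and Gronwall factor. This is precisely the cancellation mechanism advertised in the introduction and represents the main departure from the Hardy-based scheme of \cite{Corrillo-Hong-Wang-vanishing}.

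For the nonlinear cross terms $-\varepsilon^{1/2}\int\Phi_x^{\varepsilon}V_x^{\varepsilon}\Phi^{\varepsilon}\,dx$ and $-\varepsilon^{1/2}\int\Phi_x^{\varepsilon}|V^{\varepsilon}|^{2}\,dx$ I would rely on the one-dimensional Gagliardo--Nirenberg inequality together with the a priori smallness \eqref{apriori-assumption}: since $\varepsilon\|V_x^{\varepsilon}\|_{L^{2}}^{2}\leq\delta$, one has $\varepsilon^{1/2}\|V_x^{\varepsilon}\|_{L^{2}}\leq\sqrt{\delta}$, so after Cauchy--Schwarz these terms can be absorbed into $\|\Phi_x^{\varepsilon}\|_{L^{2}}^{2}$ and $\varepsilon\|V_x^{\varepsilon}\|_{L^{2}}^{2}$ provided $\delta\leq\delta_{1}$ with $\delta_{1}$ small enough, fixing the threshold in the statement. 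The bounded linear term $-\int\Phi_x^{\varepsilon}V^{a}V^{\varepsilon}\,dx$ is harmless since $V^{a}$ is uniformly bounded. The forcing contributions $\varepsilon^{-1/2}\int\mathcal{R}_i^{\varepsilon}(\cdot)\,dx$ are handled by Cauchy--Schwarz together with $\|\mathcal{R}_i^{\varepsilon}\|_{L_T^{2}L^{2}}\leq C\varepsilon^{\iota_{0}}$ from Lemmas \ref{lem-R-1-VE}--\ref{lem-R-2-ESTI}; after time integration they produce a source of order $\varepsilon^{2\iota_{0}-1}$, matching exactly the asserted bound.

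Collecting everything gives, after absorption, a differential inequality of the form
\[
\frac{d}{dt}\bigl(\|\Phi^{\varepsilon}\|_{L^{2}}^{2}+\|V^{\varepsilon}\|_{L^{2}}^{2}\bigr)+c\bigl(\|\Phi_x^{\varepsilon}\|_{L^{2}}^{2}+\varepsilon\|V_x^{\varepsilon}\|_{L^{2}}^{2}+\|V^{\varepsilon}\|_{L^{2}}^{2}\bigr)\leq C\bigl(\|\Phi^{\varepsilon}\|_{L^{2}}^{2}+\|V^{\varepsilon}\|_{L^{2}}^{2}\bigr)+C\varepsilon^{-1}\bigl(\|\mathcal{R}_1^{\varepsilon}\|_{L^{2}}^{2}+\|\mathcal{R}_2^{\varepsilon}\|_{L^{2}}^{2}\bigr),
\]
to which Gronwall's lemma on $[0,T]$ combined with the trivial initial condition $(\Phi^{\varepsilon},V^{\varepsilon})|_{t=0}=(0,0)$ delivers \eqref{con-L2-PERT}. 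I expect the only nontrivial step to be the extraction of the cancellation in the second paragraph; once that is in place the remaining contributions are routine via Cauchy--Schwarz, Gagliardo--Nirenberg and the regularity estimates established in Section \ref{sec:study_on_the_inner_outer_layers}.
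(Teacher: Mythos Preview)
Your overall energy--Gronwall strategy is correct, but there is one genuine gap and one substantive difference from the paper's argument.

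\textbf{The gap} concerns your handling of $\varepsilon^{-1/2}\mathcal{R}_{1}^{\varepsilon}$. Lemma~\ref{lem-R-1-VE} does \emph{not} give $\|\mathcal{R}_{1}^{\varepsilon}\|_{L_T^{2}L^{2}}\leq C\varepsilon^{\iota_{0}}$: it splits $\mathcal{R}_{1}^{\varepsilon}=\mathcal{R}_{1,1}^{\varepsilon}+\mathcal{R}_{1,2}^{\varepsilon}$ and delivers the $\varepsilon^{\iota_{0}}$ bound only for the first piece, while for $\mathcal{R}_{1,2}^{\varepsilon}$ (the terms $-\varepsilon^{1/2}v_{x}^{B,\varepsilon}(\varphi_{x}^{I,1}-\varphi_{x}^{I,1}(0,t))$, etc.) one has only $\|\mathcal{R}_{1,2}^{\varepsilon}\|_{L_T^{2}L^{2}}\leq C\varepsilon^{1/2}$. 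Since $\iota_{0}>1/2$, plain Cauchy--Schwarz on this part produces a source of order $\varepsilon^{-1}\cdot\varepsilon=O(1)$, which does not lie below $C\varepsilon^{2\iota_{0}-1}$. The paper gets around this by integrating by parts inside $\int\varepsilon^{-1/2}\mathcal{R}_{1,2}^{\varepsilon}\frac{\Phi^{\varepsilon}}{\hat\Phi+M}\,dx$ to transfer the $\partial_{x}$ from $v_{x}^{B,\varepsilon}$ onto $\Phi^{\varepsilon}$; after that the boundary-layer factor $v^{B,\varepsilon}$ is $O(1)$ and the resulting $\Phi_{x}^{\varepsilon}$-contribution is absorbed into the dissipation (the terms $\mathscr{D}_{1},\mathscr{D}_{2},\mathscr{D}_{3}$ in the paper). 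Your Hardy-type bound $|\Phi^{\varepsilon}(x)|\leq\sqrt{x}\,\|\Phi_{x}^{\varepsilon}\|_{L^{2}}$ could in fact be adapted to this term too, but as written your treatment of $\mathcal{R}_{1}^{\varepsilon}$ does not close.

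\textbf{The cancellation mechanism} is also different. The paper does not weight the $V^{\varepsilon}$-equation; it multiplies the $\Phi^{\varepsilon}$-equation by $\frac{\Phi^{\varepsilon}}{\hat\Phi+M}$ with $\hat\Phi=\varphi_{x}^{I,0}+\varepsilon^{1/2}(\varphi_{x}^{B,\varepsilon}+\varphi_{x}^{b,\varepsilon})$. Integration by parts on $\Phi_{xx}^{\varepsilon}$ then produces the commutator $\int\frac{\Phi^{\varepsilon}\Phi_{x}^{\varepsilon}\hat\Phi_{x}}{(\hat\Phi+M)^{2}}\,dx$, which combines with $-\int\Phi_{x}^{\varepsilon}V_{x}^{a}\frac{\Phi^{\varepsilon}}{\hat\Phi+M}\,dx$ into
\[
\int\frac{\Phi^{\varepsilon}\Phi_{x}^{\varepsilon}}{(\hat\Phi+M)^{2}}\bigl(\hat\Phi_{x}-(\hat\Phi+M)V_{x}^{a}\bigr)\,dx,
\]
and the whole point is that $\|\hat\Phi_{x}-(\hat\Phi+M)V_{x}^{a}\|_{L_T^{2}L^{\infty}}\leq C$ uniformly in $\varepsilon$ (see \eqref{hat-fida-esti-con}), so this term is Gronwall-absorbable without any Hardy input. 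The remaining cross term $-\int V_{x}^{\varepsilon}\Phi^{\varepsilon}\frac{\Phi_{x}^{a}+M}{\hat\Phi+M}\,dx$ is split via $\Phi_{x}^{a}+M=(\hat\Phi+M)+\hat\Phi^{\mathrm{err}}$ with $\|\hat\Phi^{\mathrm{err}}\|_{L_T^{2}L^{\infty}}\leq C\varepsilon^{1/2}$, and the leading piece reduces to $\int V^{\varepsilon}\Phi_{x}^{\varepsilon}\,dx$ after one integration by parts. Thus the paper's cancellation is on the $\Phi^{\varepsilon}$-side and is algebraic (a choice of weight that makes a specific combination uniformly bounded), rather than relying on boundary-layer rescaling and Dirichlet Hardy estimates as you propose.
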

\begin{proof}
  Multiplying the first equation in \eqref{eq-perturbation} by $ \frac{\Phi ^{\varepsilon}}{\hat{\Phi}+M} $ followed by an integration over $ \mathcal{I} $ and integration by parts, one deduces that
\begin{align}\label{L-2-diff-id}
&\displaystyle \frac{1}{2}\frac{\mathrm{d}}{\mathrm{d}t}\int _{\mathcal{I} }\frac{\left\vert \Phi ^{\varepsilon}\right\vert ^{2}}{\hat{\Phi}+M} \mathrm{d}x +\int _{\mathcal{I} }\frac{\left\vert \Phi _{x}^{\varepsilon}\right\vert ^{2}}{\hat{\Phi}+M} \mathrm{d}x
 \nonumber \\
 &~\displaystyle = -\frac{1}{2}\int _{\mathcal{I} }\frac{\left\vert \Phi ^{\varepsilon}\right\vert ^{2}}{(\hat{\Phi}+M)^{2}}\hat{\Phi}_{t} \mathrm{d}x - \varepsilon ^{\frac{1}{2}}\int _{\mathcal{I} }  \frac{\Phi ^{\varepsilon}\Phi _{x}^{\varepsilon}V _{x}^{\varepsilon}}{\hat{\Phi}+M}\mathrm{d}x +\int _{\mathcal{I} }\varepsilon ^{- \frac{1}{2}}\mathcal{R}_{1}^{\varepsilon}\frac{\Phi^{\varepsilon}}{\hat{\Phi}+M} \mathrm{d}x
  \nonumber \\
  &~\displaystyle \quad - \int _{\mathcal{I} }V _{x}^{\varepsilon}\Phi ^{\varepsilon} \frac{\Phi _{x}^{a}+M}{\hat{\Phi}+M}\mathrm{d}x+\int _{\mathcal{I} }\frac{\Phi ^{\varepsilon}\Phi _{x}^{\varepsilon}}{(\hat{\Phi}+M)^{2}}\left( \hat{\Phi}_{x}-(\hat{\Phi}+M)V _{x}^{a} \right)  \mathrm{d}x.
\end{align}
The terms on the right hand side of \eqref{L-2-diff-id} can be treated as follows. By \eqref{postive-appr}, we derive that
\begin{gather}\label{fir-l2}
\displaystyle   -\frac{1}{2}\int _{\mathcal{I} }\frac{\left\vert \Phi ^{\varepsilon}\right\vert ^{2}}{(\hat{\Phi}+M)^{2}} \hat{\Phi}_{t} \mathrm{d}x \leq C \|\hat{\Phi}_{t}\|_{L ^{\infty}}\int _{\mathcal{I} }\frac{\left\vert \Phi ^{\varepsilon}\right\vert ^{2}}{\hat{\Phi}+M}\mathrm{d}x.
\end{gather}
Thanks to \eqref{postive-appr}, \eqref{apriori-assumption}, \eqref{Sobolev-modified} and the Cauchy-Schwarz inequality, it holds that
\begin{align}\label{firs-l2-nonlinear}
\displaystyle \varepsilon ^{\frac{1}{2}}\int _{\mathcal{I} }  \frac{\Phi ^{\varepsilon}\Phi _{x}^{\varepsilon}V _{x}^{\varepsilon}}{\hat{\Phi}+M}\mathrm{d}x &\leq \frac{1}{32}\int _{\mathcal{I} }\frac{\left\vert \Phi _{x}^{\varepsilon}\right\vert ^{2}}{\hat{\Phi}+M} \mathrm{d}x+ C (\varepsilon\|V _{x}^{\varepsilon}\|_{L ^{2}}^{2})\|\Phi ^{\varepsilon}\|_{L ^{\infty} }^{2}
 \nonumber \\
 &\displaystyle\leq \frac{1}{32}\int _{\mathcal{I} }\frac{\left\vert \Phi _{x}^{\varepsilon}\right\vert ^{2}}{\hat{\Phi}+M} \mathrm{d}x+ C \delta\int _{\mathcal{I} }\frac{\left\vert \Phi _{x}^{\varepsilon}\right\vert ^{2}}{\hat{\Phi}+M} \mathrm{d}x
  \leq \frac{1}{16}\int _{\mathcal{I} }\frac{\left\vert \Phi _{x}^{\varepsilon}\right\vert ^{2}}{\hat{\Phi}+M} \mathrm{d}x,
\end{align}
provided $ C \delta \leq \frac{1}{32} $. Recalling \eqref{R-1-SPLIT}, we have
\begin{align}\label{R-1-SPLIT-first}
\displaystyle \int _{\mathcal{I} }\varepsilon ^{-\frac{1}{2}}\mathcal{R}_{1}^{\varepsilon}\frac{\Phi^{\varepsilon}}{\hat{\Phi}+M} \mathrm{d}x = \int _{\mathcal{I}}\varepsilon ^{-\frac{1}{2}}\mathcal{R}_{1,1}^{\varepsilon}\frac{\Phi^{\varepsilon}}{\hat{\Phi}+M} \mathrm{d}x+ \int _{\mathcal{I}}\varepsilon ^{-\frac{1}{2}}\mathcal{R}_{1,2}^{\varepsilon}\frac{\Phi^{\varepsilon}}{\hat{\Phi}+M} \mathrm{d}x
,
\end{align}
where, due to \eqref{postive-appr} and the Cauchy-Schwarz inequality, it holds that
\begin{align}\label{R-1-1-integral-con}
\displaystyle \int _{\mathcal{I}}\varepsilon ^{-\frac{1}{2}}\mathcal{R}_{1,1}^{\varepsilon}\frac{\Phi^{\varepsilon}}{\hat{\Phi}+M} \mathrm{d}x
\leq  \varepsilon ^{-1}\|\mathcal{R}_{1,1}^{\varepsilon}\|_{L ^{2}}^{2} +C \|\Phi ^{\varepsilon}\|_{L ^{2}}^{2}.
\end{align}
For the second term on the right hand side of \eqref{R-1-SPLIT-first}, by \eqref{R-1-2-defi} and integration by parts, we first split it into three terms:
\begin{align}\label{R-1-2-esti}
&\displaystyle  \int _{\mathcal{I}}\varepsilon ^{-\frac{1}{2}}\mathcal{R}_{1,2}^{\varepsilon}\frac{\Phi^{\varepsilon}}{\hat{\Phi}+M} \mathrm{d}x
 \nonumber \\
 &~\displaystyle=-\int _{\mathcal{I}}v _{x}^{ B,\varepsilon}(\varphi _{x}^{I,1}(x,t)-\varphi _{x}^{I,1}(0,t)) \frac{\Phi^{\varepsilon}}{\hat{\Phi}+M} \mathrm{d}x+\int _{\mathcal{I}}   v _{x}^{ b,\varepsilon}(\varphi _{x}^{I,1}(x,t)-\varphi _{x}^{I,1}(1,t))\frac{\Phi^{\varepsilon}}{\hat{\Phi}+M} \mathrm{d}x
 \nonumber \\
 & \displaystyle \quad- \int _{\mathcal{I}}\varepsilon ^{\frac{1}{2}} v _{x}^{I,1}(\varphi _{x}^{ B,\varepsilon}+\varphi _{x}^{ b,\varepsilon}) \frac{\Phi^{\varepsilon}}{\hat{\Phi}+M} \mathrm{d}x=:\mathscr{D}_{1}+\mathscr{D}_{2}+\mathscr{D} _{3}.
\end{align}
Next we will estimate $ \mathscr{D}_{i}\,(i=1,2,3) $ term by term. For $ \mathscr{D}_{1} $, we get by virtue of \eqref{postive-appr}, \eqref{Sobolev-modified}, \eqref{z-transfer}, Corollaries \ref{cor-l-infty-vfi-I0}--\ref{cor-V-b-0-INFTY}, integration by parts and the Cauchy-Schwarz inequality that
\begin{align*}
\displaystyle \mathscr{D}_{1}&=\int _{\mathcal{I}} v ^{B,\varepsilon}(\varphi _{x}^{I,1}(x,t)-\varphi _{x}^{I,1}(0,t)) \frac{\Phi _{x}^{\varepsilon}}{\hat{\Phi}+M} \mathrm{d}x  + \int _{\mathcal{I}} v ^{B,\varepsilon}\varphi _{xx}^{I,1}\Phi ^{\varepsilon} \mathrm{d}x
 \nonumber \\
 & \displaystyle \quad-\int _{\mathcal{I}}v ^{B,\varepsilon}(\varphi _{x}^{I,1}(x,t)-\varphi _{x}^{I,1}(0,t)) \frac{\Phi^{\varepsilon}}{(\hat{\Phi}+M)^2}(\varphi _{xx}^{I,0}+\varepsilon ^{\frac{1}{2}}(\varphi _{xx}^{B,\varepsilon}+\varphi _{xx}^{b,\varepsilon}))  \mathrm{d}x
  \nonumber \\
  &~\displaystyle \leq \|\Phi _{x}\|_{L ^{2}} \left( \int _{\mathcal{I}} \vert v ^{B,\varepsilon}\vert ^{2}\left\vert \int _{0}^{x}\varphi _{yy}^{I,1}\mathrm{d}y \right\vert ^{2} \mathrm{d}x\right)^{\frac{1}{2}}+\|v ^{B,\varepsilon}\|_{L ^{2}} \|\Phi ^{\varepsilon}\| _{L ^{\infty}} \|\varphi _{xx}^{I,1}\|_{L ^{2}}
   \nonumber \\
   &~\displaystyle \quad +C \|\Phi ^{\varepsilon}\| _{L ^{\infty}}\left( \int _{\mathcal{I}} \vert v ^{B,\varepsilon}\vert ^{2}\left\vert \int _{0}^{x}\varphi _{yy}^{I,1}\mathrm{d}y \right\vert ^{2}  \mathrm{d}x\right)^{\frac{1}{2}} \left( \|\varphi _{xx}^{I,0}\|_{L ^{2}}+\varepsilon ^{\frac{1}{2}}(\|\varphi _{xx}^{ B,\varepsilon}\|_{L ^{2}} +\|\varphi _{xx}^{ b,\varepsilon}\|_{L ^{2}}) \right)     \nonumber \\
   &~\displaystyle \leq  \frac{1}{24}\int _{\mathcal{I} }\frac{\left\vert \Phi _{x}^{\varepsilon}\right\vert ^{2}}{\hat{\Phi}+M} \mathrm{d}x+ C\varepsilon ^{\frac{1}{2}} \|\langle z \rangle v ^{B,\varepsilon}\|_{L _{z}^{2}}^{2}\|\varphi _{xx}^{I,1}\|_{L ^{2}}^{2}(1+\|\varphi _{xx}^{I,0}\|_{L ^{2}}^{2}+\|\varphi _{zz}^{ B,\varepsilon}\|_{L _{z}^{2}}^{2}+\|\varphi _{\xi \xi}^{ b,\varepsilon}\|_{L _{\xi}^{2}}^{2} )
     \nonumber \\
     &~\displaystyle \leq  \frac{1}{24}\int _{\mathcal{I} }\frac{\left\vert \Phi _{x}^{\varepsilon}\right\vert ^{2}}{\hat{\Phi}+M} \mathrm{d}x+ C \varepsilon ^{\frac{1}{2}}\|\varphi _{xx}^{I,1}\|_{L ^{2}}^{2}.
\end{align*}
In the same method, we have
\begin{align*}
 \displaystyle \mathscr{D}_{2} \leq\frac{1}{24}\int _{\mathcal{I} }\frac{\left\vert \Phi _{x}^{\varepsilon}\right\vert ^{2}}{\hat{\Phi}+M} \mathrm{d}x+ C \varepsilon ^{\frac{1}{2}}\|\varphi _{xx}^{I,1}\|_{L ^{2}}^{2}.
 \end{align*}
 For $ \mathscr{D}_{3} $, it follows from \eqref{L-INFT-Vfi-B-0}, \eqref{v-I-1-infty}, \eqref{postive-appr}, \eqref{Sobolev-z-xi} and the Cauchy-Schwarz inequality that
 \begin{align*}
 \displaystyle \mathscr{D}_{3}  \leq  C \|v _{x}^{I,1}\|_{L ^{2}}\left( \|\varphi _{z}^{ B,\varepsilon}\|_{L ^{2}}+\|\varphi _{\xi}^{ b,\varepsilon}\|_{L ^{2}} \right)\|\Phi ^{\varepsilon}\|_{L ^{\infty}} \leq \frac{1}{24}\int _{\mathcal{I} }\frac{\left\vert \Phi _{x}^{\varepsilon}\right\vert ^{2}}{\hat{\Phi}+M} \mathrm{d}x +C \varepsilon ^{\frac{1}{2}}.
 \end{align*}
 Therefore we have from \eqref{R-1-2-esti} that
      \begin{align*}
      \displaystyle   \int _{\mathcal{I}}\varepsilon ^{- \frac{1}{2}}\mathcal{R}_{1,2}^{\varepsilon}\frac{\Phi^{\varepsilon}}{\hat{\Phi}+M} \mathrm{d}x \leq  \frac{1}{8}\int _{\mathcal{I} }\frac{\left\vert \Phi _{x}^{\varepsilon}\right\vert ^{2}}{\hat{\Phi}+M} \mathrm{d}x+
       C \varepsilon ^{\frac{1}{2}}\left( 1+\|\varphi _{xx}^{I,1}\|_{L ^{2}}^{2} \right).
      \end{align*}
      This along with \eqref{R-1-1-integral-con} gives
      \begin{align}\label{third-term}
      \displaystyle  \int _{\mathcal{I} }\varepsilon ^{- \frac{1}{2}}\mathcal{R}_{1}^{\varepsilon}\frac{\Phi^{\varepsilon}}{\hat{\Phi}+M} \mathrm{d}x &\leq  \frac{1}{8}\int _{\mathcal{I} }\frac{\left\vert \Phi _{x}^{\varepsilon}\right\vert ^{2}}{\hat{\Phi}+M} \mathrm{d}x+
       C \varepsilon ^{\frac{1}{2}}\left(1+ \|\varphi _{xx}^{I,1}\|_{L ^{2}}^{2} \right) +C\varepsilon ^{-1}\|\mathcal{R}_{1,1}^{\varepsilon}\|_{L ^{2}}^{2} +C \|\Phi ^{\varepsilon}\|_{L ^{2}}^{2}.
      \end{align}
Recalling \eqref{fida-hat-err}, we have
\begin{align}\label{L-1-L2-esti}
\displaystyle  - \int _{\mathcal{I} }V _{x}^{\varepsilon}\Phi ^{\varepsilon} \frac{\Phi _{x}^{a}+M}{\hat{\Phi} +M}\mathrm{d}x& =- \int _{\mathcal{I}} V _{x}^{\varepsilon}\Phi ^{\varepsilon}   \mathrm{d}x- \int _{\mathcal{I}}V _{x}^{\varepsilon}\Phi ^{\varepsilon} \frac{\hat{\Phi}^{\text{err}}}{\hat{\Phi} +M}  \mathrm{d}x =: \mathcal{L}_{1}+\mathcal{L}_{2},
\end{align}
where, thanks to \eqref{postive-appr}, integration by parts and the Cauchy-Schwarz inequality, it holds that
\begin{align}
\displaystyle \mathcal{L}_{1} = \int _{\mathcal{I}} V ^{\varepsilon}\Phi _{x}^{\varepsilon} \mathrm{d}x \leq \frac{1}{16}\int _{\mathcal{I}} \frac{\left\vert \Phi _{x}^{\varepsilon}\right\vert ^{2}}{\hat{\Phi}+M} \mathrm{d}x +C \|V ^{\varepsilon}\|_{L ^{2}}^{2}.\nonumber
\end{align}
For $ \mathcal{L}_{2} $, we get by \eqref{postive-appr}, \eqref{apriori-assumption}, \eqref{z-transfer} and the Cauchy-Schwarz inequality that
\begin{align}
 \displaystyle  \mathcal{L}_{2} &\leq C\|V _{x}^{\varepsilon}\|_{L ^{2}}\|\Phi ^{\varepsilon}\|_{L ^{2}}\|\hat{\Phi}^{\text{err}}\|_{L ^{\infty}}
  \leq \frac{\varepsilon}{32}\|V _{x}^{\varepsilon}\|_{L ^{2}}^{2} +C \varepsilon ^{-1}\|\hat{\Phi}^{\text{err}}\|_{L ^{\infty}}^{2}\|\Phi ^{\varepsilon}\|_{L ^{2}}^{2}.\nonumber
 \end{align}
 Therefore we have from \eqref{L-1-L2-esti} that
 \begin{align}\label{L2-Fourth-esti}
 \displaystyle   - \int _{\mathcal{I} }V _{x}^{\varepsilon}\Phi ^{\varepsilon} \frac{\Phi _{x}^{a}+M}{\hat{\Phi} +M}\mathrm{d}x & \leq \frac{1}{16}\int _{\mathcal{I}} \frac{\left\vert \Phi _{x}^{\varepsilon}\right\vert ^{2}}{\hat{\Phi}+M} \mathrm{d}x+ \frac{\varepsilon}{32}\|V _{x}^{\varepsilon}\|_{L ^{2}}^{2} + C \varepsilon ^{-1}\|\hat{\Phi}^{\text{err}}\|_{L ^{\infty}}^{2}\|\Phi ^{\varepsilon}\|_{L ^{2}}^{2}+C \|V ^{\varepsilon}\|_{L ^{2}}^{2}.
 \end{align}
For the last term in \eqref{L-2-diff-id}, from the Cauchy-Schwarz inequality, we get
\begin{align}\label{five-l2}
&\displaystyle \int _{\mathcal{I} }\frac{\Phi ^{\varepsilon}\Phi _{x}^{\varepsilon}}{(\hat{\Phi}+M)^{2}}\left( \hat{\Phi}_{x}-(\hat{\Phi}+M)V _{x}^{a} \right)  \mathrm{d}x
  \leq C \| \hat{\Phi}_{x}-(\hat{\Phi}+M)V _{x}^{a}\|_{L ^{\infty}} \int _{\mathcal{I} } \frac{\Phi ^{\varepsilon}\Phi _{x}^{\varepsilon}}{(\hat{\Phi}+M)^{2}}\mathrm{d}x
  \nonumber \\
   &\displaystyle ~ \leq \frac{1}{4}\int _{\mathcal{I}} \frac{\left\vert \Phi _{x}^{\varepsilon}\right\vert ^{2}}{\hat{\Phi}+M} \mathrm{d}x+ C \| \hat{\Phi}_{x}-(\hat{\Phi}+M)V _{x}^{a}\|_{L ^{\infty}} ^{2}\int _{\mathcal{I} }\frac{\left\vert \Phi ^{\varepsilon}\right\vert ^{2}}{\hat{\Phi}+M}\mathrm{d}x.
\end{align}
Thus, plugging \eqref{fir-l2}, \eqref{firs-l2-nonlinear}, \eqref{third-term}, \eqref{L2-Fourth-esti} and \eqref{five-l2} into \eqref{L-2-diff-id}, it follows that
\begin{align}\label{fida-l2-final-esti}
&\displaystyle  \frac{1}{2}\frac{\mathrm{d}}{\mathrm{d}t}\int _{\mathcal{I} }\frac{\left\vert \Phi ^{\varepsilon}\right\vert ^{2}}{\hat{\Phi}+M} \mathrm{d}x + \frac{1}{2}\int _{\mathcal{I} }\frac{\left\vert \Phi _{x}^{\varepsilon}\right\vert ^{2}}{\hat{\Phi}+M} \mathrm{d}x
 \nonumber \\
 &~\displaystyle\leq \frac{\varepsilon}{32}\|V _{x}^{\varepsilon}\|_{L ^{2}}^{2}+C\left( \|\hat{\Phi}_{t}\|_{L ^{\infty}}+\| \hat{\Phi}_{x}-(\hat{\Phi}+M)V _{x}^{a}\|_{L ^{\infty}} ^{2}+ \varepsilon ^{-1}\|\hat{\Phi}^{\text{err}}\|_{L ^{\infty}}^{2}+1 \right) \int _{\mathcal{I} }\frac{\left\vert \Phi ^{\varepsilon}\right\vert ^{2}}{\hat{\Phi}+M}\mathrm{d}x
  \nonumber \\
  &~\displaystyle \quad+ C \|V ^{\varepsilon}\|_{L ^{2}}^{2}+ C\varepsilon ^{-1}\|\mathcal{R}_{1,1}^{\varepsilon}\|_{L ^{2}}^{2} + C \varepsilon ^{\frac{1}{2}}\left(1+ \|\varphi _{xx}^{I,1}\|_{L ^{2}}^{2} \right),
\end{align}
provided $ C \delta \leq \frac{1}{32} $ in \eqref{firs-l2-nonlinear}, where we have used \eqref{postive-appr}.

To proceed, multiplying the second equation in \eqref{eq-perturbation} by $ V ^{\varepsilon} $ and integrating the resulting equation over $ \mathcal{I} $, we get
\begin{align}\label{V-L2-differ}
&\displaystyle \frac{1}{2}\frac{\mathrm{d}}{\mathrm{d}t}\int _{\mathcal{I} }\left\vert V ^{\varepsilon}\right\vert ^{2}\mathrm{d}x+ \varepsilon\int _{\mathcal{I} }\left\vert V _{x}^{\varepsilon}\right\vert ^{2} \mathrm{d}x
 \nonumber \\
  &~\displaystyle =- \int _{\mathcal{I} }\varepsilon ^{\frac{1}{2}}\Phi _{x}^{\varepsilon} \vert V ^{\varepsilon}\vert ^{2} \mathrm{d}x- \int _{\mathcal{I} }\Phi _{x}^{\varepsilon}V^{a} V ^{\varepsilon} \mathrm{d}x + \int _{\mathcal{I} }V ^{\varepsilon}\varepsilon ^{- \frac{1}{2}}\mathcal{R}_{2}^{\varepsilon} \mathrm{d}x- \int _{\mathcal{I} }(\Phi_{x}^{a}+M)\left\vert V ^{\varepsilon}\right\vert ^{2} \mathrm{d}x = \sum _{i=1}^{4}\mathcal{N}_{i},
\end{align}
where, due to integration by parts and the Cauchy-Schwarz inequality,
\begin{align}\label{N-1}
\displaystyle \mathcal{N}_{1} &= 2 \varepsilon ^{\frac{1}{2}}\int _{\mathcal{I} }\Phi ^{\varepsilon}V _{x}^{\varepsilon}V ^{\varepsilon} \mathrm{d}x \leq \frac{\varepsilon}{8}\int _{\mathcal{I} }\left\vert V _{x}^{\varepsilon}\right\vert ^{2}\mathrm{d}x+C \|\Phi ^{\varepsilon}\|_{L ^{\infty}}^{2}\int _{\mathcal{I} }\left\vert V ^{\varepsilon}\right\vert ^{2} \mathrm{d}x.
\end{align}
For $ \mathcal{N}_{2} $, we first notice from \eqref{b-v--h1-esti} and Corollaries \ref{cor-l-infty-vfi-I0}--\ref{cor-INF-vb1-vfib2} that
\begin{align}\label{v-a-bd}
\displaystyle \|V ^{a}\|_{L _{T}^{\infty}L ^{\infty}} \leq C
\end{align}
for some constant $ C>0 $ independent of $ \varepsilon $. This along with \eqref{postive-appr} and the Cauchy-Schwarz inequality yields that
\begin{align*}
\displaystyle \mathcal{N}_{2}& \leq C \|V ^{a}\|_{L ^{\infty}}\|\Phi _{x}^{\varepsilon}\|_{L ^{2}}\|V ^{\varepsilon}\|_{L ^{2}} \leq \frac{1}{8}\int _{\mathcal{I} }\frac{\left\vert \Phi _{x}^{\varepsilon}\right\vert ^{2}}{\hat{\Phi}+M} \mathrm{d}x +C\|V ^{\varepsilon}\|_{L ^{2}}  ^{2}.
\end{align*}
For $ \mathcal{N}_{3} $, by the Cauchy-Schwarz inequality, one has $ \mathcal{N}_{3} \leq \|V ^{\varepsilon}\|_{L ^{2}} ^{2}+C \varepsilon ^{-1}\|\mathcal{R}_{2}^{\varepsilon}\| _{L ^{2}} ^{2}$. With \eqref{fida-hat-err}, $ \mathcal{N}_{4} $ can be estimated as follows:
\begin{align}
\mathcal{N}_{4}&= - \int _{\mathcal{I} }(\hat{\Phi}+M)\vert V ^{\varepsilon}\vert ^{2} \mathrm{d}x- \int _{\mathcal{I}} \hat{\Phi}^{\text{err}}\vert V ^{\varepsilon}\vert ^{2}  \mathrm{d}x  \leq  - \int _{\mathcal{I} }(\hat{\Phi}+M)\vert V ^{\varepsilon}\vert ^{2} \mathrm{d}x+C \|\hat{\Phi}^{\text{err}}\|_{L ^{\infty}}\|V ^{\varepsilon}\|_{L ^{2}}^{2}.\nonumber
\end{align}
Inserting the estimates on $ \mathcal{N}_{i}~(1 \leq i \leq 4) $ into \eqref{V-L2-differ}, we get by virtue of \eqref{postive-appr} and Lemma \ref{lem-R-2-ESTI} that
\begin{align}\label{V-l2-ineq}
\displaystyle  &\displaystyle \frac{1}{2}\frac{\mathrm{d}}{\mathrm{d}t}\int _{\mathcal{I} }\left\vert V ^{\varepsilon}\right\vert ^{2}\mathrm{d}x+ \frac{7\varepsilon}{8}\int _{\mathcal{I} }\left\vert V _{x}^{\varepsilon}\right\vert ^{2} \mathrm{d}x+ \int _{\mathcal{I} }(\hat{\Phi}+M)\left\vert V ^{\varepsilon}\right\vert ^{2} \mathrm{d}x
 \nonumber \\
 &~\displaystyle\leq \frac{1}{8}\int _{\mathcal{I} }\frac{\left\vert \Phi _{x}^{\varepsilon}\right\vert ^{2}}{\hat{\Phi}+M} \mathrm{d}x+C(\|\Phi ^{\varepsilon}\|_{L ^{\infty}}^{2}+\|\hat{\Phi}^{\text{err}}\|_{L ^{\infty}})\|V ^{\varepsilon}\|_{L ^{2}}  ^{2}
 +C \varepsilon ^{-1}\|\mathcal{R}_{2}^{\varepsilon}\|_{L ^{2}}^{2},
\end{align}
Combining \eqref{V-l2-ineq} with \eqref{fida-l2-final-esti}, in view of \eqref{postive-appr}, we obtain that
\begin{align}
&\displaystyle \frac{\mathrm{d}}{\mathrm{d}t}\int _{\mathcal{I} } \left( \vert V ^{\varepsilon}\vert ^{2} +\frac{\left\vert \Phi ^{\varepsilon}\right\vert ^{2}}{\hat{\Phi}+M}\right) \mathrm{d}x+ \int _{\mathcal{I} }\left( \frac{\left\vert \Phi  _{x}^{\varepsilon}\right\vert ^{2}}{\hat{\Phi}+M}+ \varepsilon\vert V _{x}^{\varepsilon}\vert ^{2}+(\hat{\Phi}+M)\left\vert V ^{\varepsilon}\right\vert ^{2}\right)  \mathrm{d}x
 \nonumber \\
 &~\displaystyle\leq  C \varepsilon ^{\frac{1}{2}}\left(1+ \|\varphi _{xx}^{I,1}\|_{L ^{2}}^{2}\right)+C \left( 1+\|\Phi ^{\varepsilon}\|_{L ^{\infty}}^{2}+\|\hat{\Phi}^{\text{err}}\|_{L ^{\infty}}\right) \|V ^{\varepsilon}\|_{L ^{2}}  ^{2} + C\varepsilon ^{-1}(\|\mathcal{R}_{1,1}^{\varepsilon}\|_{L ^{2}}^{2} +|\mathcal{R}_{2}^{\varepsilon}\|_{L ^{2}}^{2})
  \nonumber \\
  &~\displaystyle \quad+C\left(  \|\hat{\Phi}_{t}\|_{L ^{\infty}}+\| \hat{\Phi}_{x}-(\hat{\Phi}+M)V _{x}^{a}\|_{L ^{\infty}} ^{2}+ \varepsilon ^{-1}\|\hat{\Phi}^{\text{err}}\|_{L ^{\infty}}^{2}+1  \right) \int _{\mathcal{I} }\frac{\left\vert \Phi ^{\varepsilon}\right\vert ^{2}}{\hat{\Phi}+M}\mathrm{d}x ,\nonumber
\end{align}
which along with \eqref{vfi-I-1-esti-first-con-lem}, \eqref{tild-v-B-1-lem}, \eqref{tild-v-b-1-in-lem}, \eqref{b-vfi-x-l2}, \eqref{b-vfi-L-2-l-infty}, \eqref{hat-fida-esti-con}, \eqref{hat-fida-err-esti1}, \eqref{Sobolev-infty}, \eqref{Sobolev-z-xi}, Lemmas \ref{lem-R-1-VE}, \ref{lem-R-2-ESTI} and the Gronwall inequality entails for any $ t \in [0,T] $ that
\begin{align}
&\displaystyle  \|V ^{\varepsilon}(\cdot,t)\|_{L ^{2}}^{2}+ \|\Phi ^{\varepsilon}(\cdot,t)\|_{L ^{2}}^{2}+ \int _{0}^{t}\left( \|V ^{\varepsilon}\|_{L ^{2}}^{2}+\varepsilon \|V _{x}^{\varepsilon}\| _{L ^{2}}^{2}+\|\Phi _{x}^{\varepsilon}\|_{L ^{2}}^{2} \right) \mathrm{d}\tau
 \nonumber \\
 &~\displaystyle \leq  C \varepsilon ^{\frac{1}{2}}\left( 1+\|\varphi _{xx}^{I,1}\|_{L _{T}^{2}L ^{2}}^{2}\right)+C\varepsilon ^{-1}(\|\mathcal{R}_{1,1}^{\varepsilon}\|_{L _{T}^{2} L ^{2}}^{2} +|\mathcal{R}_{2}^{\varepsilon}\|_{L _{T}^{2}L ^{2}}^{2}  ) \leq C \varepsilon ^{2 \iota _{0}-1},\nonumber
\end{align}
where we have used $ \iota _{1} \geq \iota _{0}$ and $ \frac{1}{2}< \iota _{0} <\frac{7}{12} $. The proof of Lemma \ref{lem-L2-pertur} is complete.
\end{proof}

We proceed to establish $ H ^{1} $ estimate for $ (\Phi ^{\varepsilon}, V ^{\varepsilon}) $.
\begin{lemma}\label{lem-V-x-per}
Under the conditions of Lemma \ref{lem-L2-pertur}, it holds for any $ t \in [0,T] $ that
\begin{gather}\label{con-fida-v-x-lem}
\displaystyle \varepsilon\|V _{x}^{\varepsilon}(\cdot,t)\|_{L ^{2}}^{2}+\int _{0}^{t}\|V _{\tau}^{\varepsilon}\| _{ L ^{2}}^{2} \mathrm{d}\tau \leq C \varepsilon ^{2\iota _{0}-1},\ \ \
\|\Phi _{x}^{\varepsilon}(\cdot,t)\|_{L ^{2}}^{2}+\int _{0}^{t} \|\Phi _{\tau}^{\varepsilon}\| _{ L ^{2}}^{2} \mathrm{d}\tau \leq C \varepsilon ^{4 \iota _{0}-\frac{5}{2}},
\end{gather}
provided $ \delta $ is suitably small, where the constant $ C>0 $ is independent of $ \varepsilon $.

\end{lemma}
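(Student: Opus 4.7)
The proof splits into two steps: a standard energy estimate for $V^\varepsilon$ and a more delicate estimate for $\Phi^\varepsilon$ that hinges on an integration-by-parts cancellation.

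For Step~1 (the $V^\varepsilon$-bound), I will test $\eqref{eq-perturbation}_{2}$ by $V_t^\varepsilon$. Since $V^\varepsilon|_{\partial\mathcal{I}}=0$ forces $V_t^\varepsilon|_{\partial\mathcal{I}}=0$, integration by parts on the diffusion term produces $\tfrac{\varepsilon}{2}\tfrac{d}{dt}\|V_x^\varepsilon\|_{L^2}^2$, giving
\begin{align*}
\|V_t^\varepsilon\|_{L^2}^2+\tfrac{\varepsilon}{2}\tfrac{d}{dt}\|V_x^\varepsilon\|_{L^2}^2
&=-\varepsilon^{1/2}\!\!\int\!\Phi_x^\varepsilon V^\varepsilon V_t^\varepsilon\,dx-\!\!\int\!\Phi_x^\varepsilon V^a V_t^\varepsilon\,dx\\
&\quad-\!\!\int\!(\Phi_x^a+M)V^\varepsilon V_t^\varepsilon\,dx+\varepsilon^{-1/2}\!\!\int\!\mathcal{R}_2^\varepsilon V_t^\varepsilon\,dx.
\end{align*}
Each term is treated by Young's inequality after absorbing $\tfrac12\|V_t^\varepsilon\|_{L^2}^2$: the nonlinear term uses $\varepsilon\|V^\varepsilon\|_{L^\infty}^2\leq C$ (from \eqref{apriori-assumption}, the Gagliardo--Nirenberg inequality, and Lemma~\ref{lem-L2-pertur}); the linear terms use $\|V^a\|_{L_T^\infty L^\infty}\leq C$ from \eqref{v-a-bd} and the uniform bounds in \eqref{postive-appr}; and the source term exploits $\|\mathcal{R}_2^\varepsilon\|_{L_T^2L^2}\leq C\varepsilon^{\iota_1}$ with $\iota_1\geq\iota_0$ from Lemma~\ref{lem-R-2-ESTI}. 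Integrating in time against $V_x^\varepsilon(\cdot,0)=0$ and invoking Lemma~\ref{lem-L2-pertur} delivers the first bound in \eqref{con-fida-v-x-lem}.

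For Step~2 (the $\Phi^\varepsilon$-bound), I will test $\eqref{eq-perturbation}_{1}$ by $\Phi_t^\varepsilon$. The critical term is $-\int V_x^\varepsilon(\Phi_x^a+M)\Phi_t^\varepsilon\,dx$: a direct Cauchy--Schwarz bound produces only $\int_0^t\|V_x^\varepsilon\|_{L^2}^2\,d\tau\leq C\varepsilon^{2\iota_0-2}$, which is \emph{weaker} than the target $\varepsilon^{4\iota_0-5/2}$ because $2\iota_0-2<4\iota_0-5/2$ for $\iota_0>1/2$. The fix is to integrate by parts in $x$ and then extract a total time derivative:
\begin{align*}
-\!\!\int\!V_x^\varepsilon(\Phi_x^a+M)\Phi_t^\varepsilon\,dx
&=\!\!\int\!V^\varepsilon\Phi_{xx}^a\Phi_t^\varepsilon\,dx+\tfrac{d}{dt}\!\!\int\!V^\varepsilon(\Phi_x^a+M)\Phi_x^\varepsilon\,dx\\
&\quad-\!\!\int\!V_t^\varepsilon(\Phi_x^a+M)\Phi_x^\varepsilon\,dx-\!\!\int\!V^\varepsilon\Phi_{xt}^a\Phi_x^\varepsilon\,dx.
\end{align*}
Transporting the total time derivative to the left and integrating over $(0,t)$ using $V^\varepsilon(\cdot,0)=0$ and $\Phi_x^\varepsilon(\cdot,0)=0$, the resulting boundary-in-time term at time $t$ is absorbed by $\tfrac{1}{8}\|\Phi_x^\varepsilon(t)\|_{L^2}^2+C\|V^\varepsilon(t)\|_{L^2}^2$ using Lemma~\ref{lem-L2-pertur}. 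The produced term with $V_t^\varepsilon$ in place of $V_x^\varepsilon$ is, after time integration, bounded by $C\bigl(\int_0^t\|V_\tau^\varepsilon\|_{L^2}^2\,d\tau\bigr)^{1/2}\bigl(\int_0^t\|\Phi_x^\varepsilon\|_{L^2}^2\,d\tau\bigr)^{1/2}\leq C\varepsilon^{2\iota_0-1}$ via Step~1 and Lemma~\ref{lem-L2-pertur}.

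The remaining contributions --- the nonlinear term $\varepsilon^{1/2}\int\Phi_x^\varepsilon V_x^\varepsilon\Phi_t^\varepsilon$, the terms $\int\Phi_x^\varepsilon V_x^a\Phi_t^\varepsilon$ and $\varepsilon^{-1/2}\int\mathcal{R}_1^\varepsilon\Phi_t^\varepsilon$, and the new pieces $\int V^\varepsilon\Phi_{xx}^a\Phi_t^\varepsilon$ and $\int V^\varepsilon\Phi_{xt}^a\Phi_x^\varepsilon$ --- are controlled via Young's inequality combined with: the a priori smallness \eqref{apriori-assumption}; Gagliardo--Nirenberg $\|\Phi_x^\varepsilon\|_{L^\infty}^2\leq C(\|\Phi_x^\varepsilon\|_{L^2}^2+\|\Phi_x^\varepsilon\|_{L^2}\|\Phi_{xx}^\varepsilon\|_{L^2})$, with $\|\Phi_{xx}^\varepsilon\|_{L^2}$ read off $\eqref{eq-perturbation}_{1}$; the time-weighted bounds $\varepsilon^{1/2}\|t^{3/2}V_x^a\|_{L_T^\infty L^\infty}\leq C$ and $\|t^{9/4}\Phi_{xt}^a\|_{L_T^\infty L^\infty}\leq C$ of \eqref{time-weighted-for-high}, whose singularities at $t=0$ are tamed by $\Phi_x^\varepsilon(\cdot,0)=0$; the profile estimates of Section~\ref{sec:study_on_the_inner_outer_layers}; and Lemma~\ref{lem-R-1-VE}. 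Grouping everything, taking $\delta_1$ sufficiently small to absorb nonlinearities, and applying Gronwall's inequality then yields the second bound in \eqref{con-fida-v-x-lem}. The hardest step is precisely this cancellation for $\int V_x^\varepsilon(\Phi_x^a+M)\Phi_t^\varepsilon\,dx$, which single-handedly accounts for the $\varepsilon^{2\iota_0-1/2}$ gap between the naive and sharp bounds; the integration-by-parts manoeuvre converts the singular factor $V_x^\varepsilon$ into the better controlled $V_t^\varepsilon$ (Step~1) together with a time-boundary contribution in $V^\varepsilon$ (Lemma~\ref{lem-L2-pertur}).
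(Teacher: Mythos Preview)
Your Step~1 is essentially the paper's argument and is fine. The problem is Step~2: you test $\eqref{eq-perturbation}_1$ with the \emph{unweighted} multiplier $\Phi_t^\varepsilon$, whereas the paper uses $\dfrac{\Phi_t^\varepsilon}{\hat\Phi+M}$, and this weight is not cosmetic --- it is what makes the estimate close with the sharp rate.

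Two concrete failures of the unweighted scheme:

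\emph{(i) The $V_x^a$ term.} With the unweighted multiplier you face $-\int \Phi_x^\varepsilon V_x^a\Phi_t^\varepsilon\,dx$ with $\|V_x^a(\cdot,t)\|_{L^\infty}\sim \varepsilon^{-1/2}$ (the leading boundary-layer contribution is $\varepsilon^{-1/2}v_z^{B,\varepsilon}$). Invoking \eqref{time-weighted-for-high} only gives $\|V_x^a\|_{L^\infty}\le C\varepsilon^{-1/2}t^{-3/2}$, so after Young the Gronwall coefficient is $C\varepsilon^{-1}t^{-3}$, which is not integrable near $t=0$; the vanishing of $\Phi_x^\varepsilon(\cdot,0)$ does not help here. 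The paper avoids this entirely: the weight produces, after the integration by parts on $\Phi_{xx}^\varepsilon$, the combination $\hat\Phi_x-(\hat\Phi+M)V_x^a$ in which the $\varepsilon^{-1/2}$ boundary-layer singularities \emph{cancel} (see \eqref{hat-fida-esti-con}), yielding an $L_T^2L^\infty$ bound uniform in $\varepsilon$ and $t$.

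\emph{(ii) The $\Phi_{xx}^a$ term you create.} Your integration by parts on $-\int V_x^\varepsilon(\Phi_x^a+M)\Phi_t^\varepsilon\,dx$ keeps the full coefficient $(\Phi_x^a+M)$ and therefore generates $\int V^\varepsilon\Phi_{xx}^a\Phi_t^\varepsilon\,dx$. Since $\Phi_{xx}^a$ contains $\varepsilon^{-1/2}\varphi_{zz}^{B,\varepsilon}$, any Young/H\"older splitting of this term yields, after time integration, at best $C\varepsilon^{2\iota_0-2}$. For $\iota_0\in(\tfrac12,\tfrac7{12})$ one has $2\iota_0-2<4\iota_0-\tfrac52$, so this is strictly weaker than the bound claimed in \eqref{con-fida-v-x-lem}. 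In the paper the weight reduces $(\Phi_x^a+M)/(\hat\Phi+M)$ to $1+\hat\Phi^{\mathrm{err}}/(\hat\Phi+M)$ with $\|\hat\Phi^{\mathrm{err}}\|_{L_T^2L^\infty}\le C\varepsilon^{1/2}$ (see \eqref{fida-hat-err}--\eqref{hat-fida-err-esti1}); the integration by parts is then performed on the coefficient-$1$ piece $-\int V_x^\varepsilon\Phi_t^\varepsilon\,dx$, which creates no singular profile derivative, and the $\hat\Phi^{\mathrm{err}}$ remainder is handled directly via Step~1.

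In short, the cancellation you identify (trading $V_x^\varepsilon$ for $V_t^\varepsilon$ plus a time-boundary term) is indeed the heart of the argument, but it must be executed with the weighted multiplier $\Phi_t^\varepsilon/(\hat\Phi+M)$; otherwise both the $V_x^a$ term and the new $\Phi_{xx}^a$ term defeat the target rate.
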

\begin{proof}
  Multiplying the second equation in \eqref{eq-perturbation} by $ V _{t}^{\varepsilon} $ and integrating the resulting equation over $ \mathcal{I} $, we have
\begin{align}\label{V-x-diff-iden}
&\displaystyle \frac{1}{2} \frac{\mathrm{d}}{\mathrm{d}t}\int _{\mathcal{I}} \varepsilon\left\vert V _{x}^{\varepsilon}\right\vert ^{2} \mathrm{d}x+\int _{\mathcal{I} }\left\vert V _{t} ^{\varepsilon}\right\vert ^{2} \mathrm{d}x
 \nonumber \\
 &~\displaystyle = -\int _{\mathcal{I}}(\Phi _{x}^{a}+M) V ^{\varepsilon}V _{t}^{\varepsilon}\mathrm{d}x - \int _{\mathcal{I} }\Phi _{x}^{\varepsilon}V ^{a} V _{t}^{\varepsilon} \mathrm{d}x-\varepsilon ^{\frac{1}{2}} \int _{\mathcal{I}}\Phi _{x}^{\varepsilon}V ^{\varepsilon}V_{t}^{\varepsilon}\mathrm{d}x
 + \varepsilon ^{-\frac{1}{2}}\int _{\mathcal{I}}\mathcal{R}_{2}^{\varepsilon} V_{t}^{\varepsilon}\mathrm{d}x,
\end{align}
where, due to \eqref{postive-appr}, \eqref{fida-hat-err} and the Cauchy-Schwarz inequality, we get
\begin{align}\label{v-x-sti-one}
&\displaystyle  -\int _{\mathcal{I}}(\Phi _{x}^{a}+M) V ^{\varepsilon}V _{t}^{\varepsilon}\mathrm{d}x \leq \frac{1}{4}\int _{\mathcal{I}} \left\vert V _{t}^{\varepsilon}\right\vert ^{2} \mathrm{d}x
+C \Big( 1+\|\hat{\Phi}^{\text{err}}\|_{L ^{\infty}}^{2}\Big) \|V ^{\varepsilon}\|_{L ^{2}}  ^{2},
\end{align}
where $ \hat{\Phi}^{\text{err}} $ is as in \eqref{fida-hat-err}. By \eqref{v-a-bd} and the Cauchy-Schwarz inequality, we get
\begin{align}
\displaystyle   \int _{\mathcal{I} }\Phi _{x}^{\varepsilon}V ^{a} V _{t}^{\varepsilon} \mathrm{d}x \leq \|V ^{a}\|_{L ^{\infty}}\|\Phi _{x}^{\varepsilon}\|_{L ^{2}}\|V _{t}^{\varepsilon}\|_{L ^{2}} \leq \frac{1}{4}\|V _{t} ^{\varepsilon}\|_{L ^{2}}^{2}+C \|\Phi _{x}^{\varepsilon}\|_{L ^{2}}^{2}.
\end{align}
Thanks to \eqref{con-L2-PERT}, \eqref{Sobolev-modified}, the Cauchy-Schwarz inequality and Lemma \ref{lem-R-2-ESTI}, we deduce that
\begin{align}
\displaystyle -\varepsilon ^{\frac{1}{2}} \int _{\mathcal{I}}\Phi _{x}^{\varepsilon}V ^{\varepsilon}V_{t}^{\varepsilon}\mathrm{d}x &\leq \frac{1}{8}\int _{\mathcal{I}} \left\vert V _{t}^{\varepsilon}\right\vert ^{2} \mathrm{d}x+C \varepsilon \|V ^{\varepsilon}\|_{L ^{\infty}}^{2}\int _{\mathcal{I}} \left\vert \Phi _{x}^{\varepsilon}\right\vert ^{2}\mathrm{d}x
 \nonumber \\
 & \displaystyle \leq \frac{1}{8}\int _{\mathcal{I}} \left\vert V _{t}^{\varepsilon}\right\vert ^{2} \mathrm{d}x+C \varepsilon \|V ^{\varepsilon}\|_{L ^{2}}\|V _{x}^{\varepsilon}\|_{L ^{2}}\int _{\mathcal{I}} \left\vert \Phi _{x}^{\varepsilon}\right\vert ^{2}\mathrm{d}x
  \nonumber \\
   &\displaystyle\leq \frac{1}{8}\int _{\mathcal{I}} \left\vert V _{t}^{\varepsilon}\right\vert ^{2} \mathrm{d}x+C \varepsilon ^{\iota _{0}+ \frac{1}{2}} \|V _{x}^{\varepsilon}\|_{L ^{2}}  \int _{\mathcal{I}} \left\vert \Phi _{x}^{\varepsilon}\right\vert ^{2}\mathrm{d}x
    \nonumber \\
    & \displaystyle \leq \frac{1}{8}\int _{\mathcal{I}} \left\vert V _{t}^{\varepsilon}\right\vert ^{2} \mathrm{d}x+C \varepsilon \|V _{x}^{\varepsilon}\| _{L ^{2}}^{2}\|\Phi _{x}^{\varepsilon}\|_{L ^{2}}^{2}+C \varepsilon ^{2 \iota _{0}}\|\Phi _{x}^{\varepsilon}\|_{L ^{2}}^{2},
\end{align}
and that
\begin{gather}\label{v-x-esti-last}
\displaystyle  \varepsilon ^{- \frac{1}{2}}\int _{\mathcal{I}}\mathcal{R}_{2}^{\varepsilon} V_{t}^{\varepsilon}\mathrm{d}x \leq \frac{1}{8}\int _{\mathcal{I}} \left\vert V _{t}^{\varepsilon}\right\vert ^{2} \mathrm{d}x+C \varepsilon ^{-1}\|\mathcal{R}_{2}^{\varepsilon}\|_{L ^{2}}^{2} .
\end{gather}
With \eqref{v-x-sti-one}--\eqref{v-x-esti-last} and the fact $ 0< \varepsilon<1 $, we thus update \eqref{V-x-diff-iden} as
\begin{align*}
\displaystyle  \frac{\mathrm{d}}{\mathrm{d}t}\int _{\mathcal{I}}\varepsilon\left\vert V _{x}^{\varepsilon}\right\vert ^{2} \mathrm{d}x+ \|V _{t} ^{\varepsilon}\|_{L ^{2}}^{2} & \leq C  \|\Phi _{x}^{\varepsilon}\| _{L ^{2}}^{2}(1+\varepsilon \|V _{x}^{\varepsilon}\| _{L ^{2}}^{2})+C \varepsilon ^{-1}\|\mathcal{R}_{2}^{\varepsilon}\|_{L ^{2}}^{2}+ C \Big( 1+\|\hat{\Phi}^{\text{err}}\|_{L ^{\infty}}^{2}\Big) \|V ^{\varepsilon}\|_{L ^{2}}  ^{2}  ,
\end{align*}
which followed by an integration over $ [0,t] $ for any $ t \in (0,T] $ gives
\begin{align}\label{esti-v-x-final}
\displaystyle  \varepsilon\|V _{x} ^{\varepsilon}(\cdot,t)\|_{L ^{2}}^{2}+ \int _{0}^{t}\|V _{\tau} ^{\varepsilon}\|_{L ^{2}}^{2}\mathrm{d}\tau \leq C\int _{0} ^{t}\|\Phi _{x}^{\varepsilon}\| _{L ^{2}}^{2}(\varepsilon \|V _{x}^{\varepsilon}\| _{L ^{2}}^{2}) \mathrm{d}\tau+C \varepsilon ^{ 2\iota _{0}-1}+C \varepsilon ^{2\iota _{1}-1},
\end{align}
where we have used \eqref{vfi-I-1-esti-first-con-lem}, \eqref{tild-v-B-1-lem}, \eqref{tild-v-b-1-in-lem}, \eqref{b-vfi-L-2-l-infty}, \eqref{con-R-2-ESTI-in-lem}, \eqref{postive-appr} and Lemma \ref{lem-L2-pertur}. Applying the Gronwall inequality to \eqref{esti-v-x-final}, we ultimately obtain that
\begin{align}\label{con-V-x-L2}
\displaystyle   \varepsilon\|V _{x} ^{\varepsilon}(\cdot,t)\|_{L ^{2}}^{2}+\|V ^{\varepsilon}(\cdot,t)\|_{L ^{2}}^{2}+ \int _{0}^{t}\|V _{\tau} ^{\varepsilon}\|_{L ^{2}}^{2}\mathrm{d}\tau \leq C \varepsilon ^{2\iota _{0}-1}+C \varepsilon ^{2\iota _{1}-1} \leq C \varepsilon ^{2 \iota _{0}-1},
\end{align}
where we have used the facts $ \iota _{1}\geq \iota _{0} $ and $ 0< \varepsilon<1 $. Now let us turn to estimates on $ \Phi _{x}^{\varepsilon} $. Taking the $ L ^{2} $ inner product of the first equation in \eqref{eq-perturbation} with $ \frac{\Phi _{t}^{\varepsilon}}{\hat{\Phi}+M} $ followed by an integration by parts, we have
\begin{align}\label{fida-x-diff-id}
&\displaystyle \frac{1}{2} \frac{\mathrm{d}}{\mathrm{d}t}\int _{\mathcal{I}}\frac{\left\vert \Phi _{x}^{\varepsilon}\right\vert ^{2}}{\hat{\Phi}+M}\mathrm{d}x+ \int _{\mathcal{I}}\frac{\left\vert \Phi _{t}^{\varepsilon}\right\vert ^{2}}{\hat{\Phi}+M} \mathrm{d}x
 \nonumber \\
&~\displaystyle = - \frac{1}{2}\int _{\mathcal{I}}\frac{\left\vert \Phi _{x}^{\varepsilon}\right\vert ^{2}}{(\hat{\Phi}+M)^{2}}\hat{\Phi}_{t}\mathrm{d}x +\int _{\mathcal{I}} \frac{\Phi _{t}^{\varepsilon}\Phi _{x}^{\varepsilon}}{\hat{\Phi}+M}\left( \hat{\Phi}_{x}-(\hat{\Phi}+M)V _{x}^{a} \right)  \mathrm{d}x +\int _{\mathcal{I}}   \frac{\Phi _{t}^{\varepsilon}}{\hat{\Phi}+M}\varepsilon ^{\frac{1}{2}}\Phi _{x}^{\varepsilon}V _{x}^{\varepsilon}\mathrm{d}x
 \nonumber \\
 &\displaystyle ~\quad- \int _{\mathcal{I}} V_{x}^{\varepsilon}\Phi _{t}^{\varepsilon}\frac{\Phi _{x}^{a}+M}{\hat{\Phi}+M} \mathrm{d}x+\int _{\mathcal{I}}  \frac{\Phi _{t}^{\varepsilon}}{\hat{\Phi}_{x}+M}\varepsilon ^{- \frac{1}{2}}\mathcal{R}_{1}^{\varepsilon} \mathrm{d}x=:\sum _{i=1}^{5}\mathcal{Q}_{i}.
\end{align}
We next estimate $ \mathcal{Q}_{i}~(1 \leq i \leq 5) $. Thanks to \eqref{postive-appr}, we get $ \mathcal{Q} _{1} \leq C \| \hat{\Phi}_{t}\|_{L ^{\infty}}  \|\Phi _{x}^{\varepsilon}\|_{ L ^{2}}^{2} $. From\eqref{postive-appr} and the Cauchy-Schwarz inequality, we deduce that
\begin{align*}
\displaystyle  \mathcal{Q} _{2} & \leq C \int _{\mathcal{I}} \frac{|\Phi _{t}^{\varepsilon}||\Phi _{x}^{\varepsilon}|}{\hat{\Phi}+M} \vert \hat{\Phi} _{x}- \hat{\Phi}V_{x}^{a}\vert \mathrm{d}x  \leq  C \|\hat{\Phi} _{x}- \hat{\Phi}V_{x}^{a}\| _{L ^{\infty}} \|\Phi _{t}^{\varepsilon}\|_{L ^{2}}\|\Phi _{x}^{\varepsilon}\|_{L ^{2}}
  \nonumber \\
  & \leq \frac{1}{8}\|\Phi _{t}^{\varepsilon}\|_{L ^{2}} ^{2}+ C\|\hat{\Phi} _{x}- \hat{\Phi}V_{x}^{a}\| _{L ^{\infty}} ^{2} \|\Phi _{x}^{\varepsilon}\|_{L ^{2}}^{2} .
\end{align*}
By \eqref{l-INFTY-VFI-I-0}, \eqref{L-INFT-V-B-0}, \eqref{L-INFT-V-veb-0}, \eqref{v-I-1-infty}, \eqref{tild-v-B-1-lem}, \eqref{INF-vfi-b2}, \eqref{b-v--h1-esti}, \eqref{postive-appr}, \eqref{fida-hat-err}, \eqref{con-V-x-L2}, \eqref{Sobolev-infty} and the first equation in \eqref{eq-perturbation}, we have
  \begin{align*}
  \displaystyle  \|\Phi _{xx}^{\varepsilon}\|_{L ^{2}}^{2}& \leq \|\Phi _{t}^{\varepsilon}\|_{L ^{2}}^{2}+\varepsilon \|\Phi _{x}^{\varepsilon}\|_{L ^\infty}^{2}\|V _{x}^{\varepsilon}\|_{L ^{2}}^{2}+\|v _{x}^{I,0}\|_{L ^{2}}^{2}\|\Phi _{x}^{\varepsilon}\|_{L ^{2}}^{2}+\|V _{x}^{a}-v _{x}^{I,0}\|_{L ^{2}}^{2}\|\Phi _{x}^{\varepsilon}\|_{L ^{\infty}}^{2}
   \nonumber \\
   & \displaystyle \quad+C \|V _{x}^{\varepsilon}\|_{L ^{2}}^{2} ( 1+\|\hat{\Phi}^{\text{err}}\|_{L ^{\infty}}^{2}) +\varepsilon ^{-1}\|\mathcal{R}_{1}^{\varepsilon}\|_{L ^{2}}^{2}
    \nonumber \\
       &\displaystyle \leq \|\Phi _{t}^{\varepsilon}\|_{L ^{2}}^{2}+C \|\Phi _{x}^{\varepsilon}\|_{L ^{2}} \|\Phi _{xx}^{\varepsilon}\|_{L ^{2}}(\varepsilon \|V _{x}^{\varepsilon}\|_{L ^{2}}^{2})+C \|\Phi _{x}^{\varepsilon}\|_{L ^{2}}^{2}+\varepsilon ^{-1}\|\mathcal{R}_{1}^{\varepsilon}\|_{L ^{2}}^{2} +\frac{1}{4}\|\Phi _{xx}^{\varepsilon}\|_{L ^{2}}^{2}
        \nonumber \\
        &~ \displaystyle \quad+C \|\Phi _{x}^{\varepsilon}\|_{L ^{2}}^{2}\left( \varepsilon ^{- \frac{1}{2}}\|v _{z}^{ B,\varepsilon}\|_{L _{z}^{2}}^{2}+\varepsilon ^{- \frac{1}{2}}\|v _{\xi}^{ b,\varepsilon}\|_{L _{\xi}^{2}}^{2}+\varepsilon ^{\frac{1}{2}}(\|v _{z}^{B,1}\|_{L _{z}^{2}}^{2}+\|v _{\xi}^{b,1}\|_{L _{\xi}^{2}}^{2}) +\|\partial _{x}b _{v}^{\varepsilon}\|_{L ^{2}}^{2}\right)^{2}
         \nonumber \\
          &~\displaystyle \quad+C \|V _{x}^{\varepsilon}\|_{L ^{2}}^{2} ( 1+\|\hat{\Phi}^{\text{err}}\|_{L ^{\infty}}^{2})+\varepsilon ^{-1}\|\mathcal{R}_{1}^{\varepsilon}\|_{L ^{2}}^{2}
         \nonumber \\
         &~\displaystyle \leq \frac{1}{2}\|\Phi _{xx}^{\varepsilon}\|_{L ^{2}}^{2}+C \|\Phi _{t}^{\varepsilon}\|_{L ^{2}}^{2} + C \varepsilon ^{-1}(\|\Phi _{x}^{\varepsilon}\|_{L ^{2}}^{2}+\|\mathcal{R}_{1}^{\varepsilon}\|_{L ^{2}}^{2})+C \|V _{x}^{\varepsilon}\|_{L ^{2}}^{2} ( 1+\|\hat{\Phi}^{\text{err}}\|_{L ^{\infty}}^{2})
  \end{align*}
  for $ 0< \varepsilon<1 $. Therefore we have
   \begin{align}\label{fida-xx-esti-ineq-fial}
   \displaystyle  \|\Phi _{xx}^{\varepsilon}\|_{L ^{2}} &\leq C \|\Phi _{t}^{\varepsilon}\|_{L ^{2}}+C\varepsilon ^{- \frac{1}{2}}(\|\mathcal{R}_{1}^{\varepsilon}\|_{L ^{2}}+\|\Phi _{x}^{\varepsilon}\|_{L ^{2}}) +C \|V _{x}^{\varepsilon}\|_{L ^{2}}( 1+\|\hat{\Phi}^{\text{err}}\|_{L ^{\infty}}).
   \end{align}
   With \eqref{postive-appr}, \eqref{apriori-assumption}, \eqref{con-V-x-L2}, \eqref{fida-xx-esti-ineq-fial}, \eqref{Sobolev-infty}, the condition $ 0<\varepsilon<1 $ and the Cauchy-Schwarz inequality, one can estimate $ \mathcal{Q}_{3} $ as follows:
   \begin{align}\label{Q-3-ESTI}
   \displaystyle  \mathcal{Q}_{3} &\leq C \|\Phi _{t}^{\varepsilon}\|_{L ^{2}} (\varepsilon ^{\frac{1}{2}}\|V _{x}^{\varepsilon}\|_{L ^{2}})\|\Phi _{x}^{\varepsilon}\|_{L ^{\infty}} \leq  C \|\Phi _{t}^{\varepsilon}\|_{L ^{2}} (\varepsilon ^{\frac{1}{2}} \|V _{x}^{\varepsilon}\|_{L ^{2}})\left( \|\Phi _{xx}^{\varepsilon}\|_{L ^{2}}^{\frac{1}{2}}\|\Phi _{x}^{\varepsilon}\|_{L ^{2}}^{\frac{1}{2}} +\|\Phi _{x}^{\varepsilon}\|_{L ^{2}}\right)
    \nonumber \\
    & \displaystyle\leq C \|\Phi _{t}^{\varepsilon}\|_{L ^{2}}(\varepsilon ^{\frac{1}{2}} \|V _{x}^{\varepsilon}\|_{L ^{2}})\Big( \|\Phi _{t}^{\varepsilon}\|_{L ^{2}}+\varepsilon ^{- \frac{1}{4}}\|\Phi _{x}^{\varepsilon}\|_{L ^{2}}+\varepsilon ^{- \frac{1}{4}}\|\mathcal{R}_{1}^{\varepsilon}\|_{L ^{2}} \Big)+\|\Phi _{t}^{\varepsilon}\|_{L ^{2}}(\varepsilon ^{\frac{1}{2}} \|V _{x}^{\varepsilon}\|_{L ^{2}}^{\frac{3}{2}})\|\Phi _{x}^{\varepsilon}\|_{L ^{2}}^{\frac{1}{2}}
    \nonumber \\
    &\displaystyle \quad+\|\Phi _{t}^{\varepsilon}\|_{L ^{2}}(\varepsilon ^{\frac{1}{2}} \|V _{x}^{\varepsilon}\|_{L ^{2}}^{\frac{3}{2}})\|\hat{\Phi}^{\text{err}}\|_{L ^{\infty}}^{\frac{1}{2}}\|\Phi _{x}^{\varepsilon}\|_{L ^{2}}^{\frac{1}{2}}
     \nonumber \\
     &\displaystyle \leq \Big( \frac{1}{16}+\varepsilon ^{\frac{1}{2}} \|V _{x}^{\varepsilon}\|_{L ^{2}}\Big) \|\Phi _{t}^{\varepsilon}\|_{L ^{2}}^{2}+ C\varepsilon \|V _{x}^{
         \varepsilon}\|_{L ^{2}}^{2}\left( \varepsilon ^{-\frac{1}{2}}\|\Phi _{x}^{\varepsilon}\|_{L ^{2}}^{2}+\varepsilon ^{- \frac{1}{2}}\|\mathcal{R}_{1}^{\varepsilon}\|_{L ^{2}}^{2} \right) +C  \varepsilon\|V _{x}^{\varepsilon}\|_{L ^{2}}^{3}\|\Phi _{x}^{\varepsilon}\|_{L ^{2}}
          \nonumber \\
          &\displaystyle \quad+C \varepsilon  \|V _{x}^{\varepsilon}\|_{L ^{2}}^{3}\|\hat{\Phi}^{\text{err}}\|_{L ^{\infty}}\|\Phi _{x}^{\varepsilon}\|_{L ^{2}}
           \nonumber \\
           & \displaystyle \leq\Big( \frac{1}{16}+ C  \delta ^{\frac{1}{2}}\Big) \|\Phi _{t}^{\varepsilon}\|_{L ^{2}}^{2}
           +C \varepsilon ^{2 \iota _{0}-\frac{3}{2}}(\|\mathcal{R}_{1}^{\varepsilon}\|_{L ^{2}}^{2}+\|\Phi _{x}^{\varepsilon}\|_{L ^{2}}^{2}) + C  \varepsilon ^{2 \iota _{0}- \frac{3}{2}}(\varepsilon ^{\frac{1}{2}}\|V _{x}^{\varepsilon}\|_{L ^{2}})\|\Phi _{x}^{\varepsilon}\|_{L ^{2}}
            \nonumber \\
            & \displaystyle \quad+C \varepsilon ^{3\iota _{0}-2}\|\hat{\Phi}^{\text{err}}\|_{L ^{\infty}}\|\Phi _{x}^{\varepsilon}\|_{L ^{2}}
             \nonumber \\
        & \displaystyle \leq \frac{1}{8} \|\Phi _{t}^{\varepsilon}\|_{L ^{2}}^{2}+ C \varepsilon ^{2\iota _{0}-\frac{3}{2}}( \varepsilon \|V _{x}^{\varepsilon}\|_{L ^{2}}^{2}+\|\Phi _{x}^{\varepsilon}\|_{L ^{2}}^{2})+C \varepsilon ^{2 \iota _{0}- \frac{3}{2}}\|\mathcal{R}_{1}^{\varepsilon}\|_{L ^{2}}^{2}+C \varepsilon ^{4 \iota _{0}-\frac{5}{2}}\|\hat{\Phi}^{\text{err}}\|_{L ^{\infty}}^{2},
   \end{align}
   provided $ C \delta ^{\frac{1}{2}} \leq \frac{1}{16} $. For $ \mathcal{Q} _{4} $, we utilize \eqref{fida-hat-err}, \eqref{con-V-x-L2}, integration by parts and the Cauchy-Schwarz inequality to derive that
\begin{align*}
\displaystyle \mathcal{Q}_{4}& =- \int _{\mathcal{I}} V_{x}^{\varepsilon}\Phi _{t}^{\varepsilon} \mathrm{d}x- \int _{\mathcal{I}} V_{x}^{\varepsilon}\Phi _{t}^{\varepsilon}\frac{\hat{\Phi}^{\text{err}}}{\hat{\Phi}+M}  \mathrm{d}x
\leq  \int _{\mathcal{I}} V^{\varepsilon}\Phi _{xt} ^{\varepsilon}\mathrm{d}x +C\|V _{x}^{\varepsilon}\|_{L ^{2}}\|\Phi _{t}^{\varepsilon}\|_{L ^{2}} \|\hat{\Phi}^{\text{err}}\|_{L ^{\infty}}
  \nonumber \\
  &\displaystyle \leq\frac{\mathrm{d}}{\mathrm{d}t}\int _{\mathcal{I}} V ^{\varepsilon}\Phi _{x}^{\varepsilon} \mathrm{d}x - \int _{\mathcal{I}} V _{t}^{\varepsilon}\Phi _{x}^{\varepsilon} \mathrm{d}x + \frac{1}{8}\|\Phi _{t}^{\varepsilon}\|_{L ^{2}}^{2}+C\|V _{x}^{\varepsilon}\|_{L ^{2}}^{2}\|\hat{\Phi}^{\text{err}}\|_{L ^{\infty}}^{2}
  \nonumber \\
 & \displaystyle\leq \frac{\mathrm{d}}{\mathrm{d}t}\int _{\mathcal{I}} V ^{\varepsilon}\Phi _{x}^{\varepsilon} \mathrm{d}x + \frac{1}{8}\|\Phi _{t}^{\varepsilon}\|_{L ^{2}}^{2}+ 2( \|V_{t}^{\varepsilon}\|_{L ^{2}}^{2} + \|\Phi _{x}^{\varepsilon}\|_{L ^{2}}^{2})+C \varepsilon ^{2 \iota _{0}-2}\|\hat{\Phi}^{\text{err}}\|_{L ^{\infty}}^{2}.
\end{align*}
Finally, by the Cauchy-Schwarz equality, we have for $ \mathcal{Q}_{5} $ that
   \begin{align*}
\displaystyle \displaystyle \mathcal{Q}_{5} \leq \frac{1}{8}\int _{\mathcal{I}} \left\vert \Phi _{t}^{\varepsilon}\right\vert ^{2} \mathrm{d}x+C \varepsilon ^{-1} \|\mathcal{R}_{1}^{\varepsilon} \| _{L ^{2}}^{2}.
\end{align*}
Therefore, inserting estimates on $ \mathcal{Q} _{i}~(0 \leq i  \leq 5) $ into  \eqref{fida-x-diff-id}, we get after taking $ \varepsilon $ suitably small that
\begin{align}
&\displaystyle   \frac{1}{2} \frac{\mathrm{d}}{\mathrm{d}t}\int _{\mathcal{I}}\frac{\left\vert \Phi _{x}^{\varepsilon}\right\vert ^{2}}{\hat{\Phi}_{x}+M}\mathrm{d}x+ \frac{1}{2}\int _{\mathcal{I}}\frac{\left\vert \Phi _{t}^{\varepsilon}\right\vert ^{2}}{\hat{\Phi}_{x}+M} \mathrm{d}x
 \nonumber \\
 &~\displaystyle\leq \frac{\mathrm{d}}{\mathrm{d}t}\int _{\mathcal{I}} V ^{\varepsilon}\Phi _{x}^{\varepsilon} \mathrm{d}x+C\left( \|\hat{\Phi}_{t}\|_{L ^{\infty}}+\|\hat{\Phi} _{x}- (\hat{\Phi}+M)V_{x}^{a}\| _{L ^{\infty}} ^{2} \right) \| \Phi_{x}^{\varepsilon}\| _{L ^{2}}^{2}+C\|V _{t}^{\varepsilon}\|_{L ^{2}}^{2}
  \nonumber \\
  &~\displaystyle \quad+ C  \varepsilon ^{\frac{4 \iota _{0} -3}{2}}( \varepsilon \|V _{x}\|_{L ^{2}}^{2}+\|\Phi _{x}^{\varepsilon}\|_{L ^{2}}^{2})+C( \varepsilon ^{2 \iota _{0}- \frac{3}{2}}+\varepsilon ^{-1})\|\mathcal{R}_{1}^{\varepsilon}\|_{L ^{2}}^{2}+C \varepsilon ^{4 \iota _{0}-\frac{5}{2}}\|\hat{\Phi}^{\text{err}}\|_{L ^{\infty}}^{2}.\nonumber
\end{align}
Integrating the above inequality over $ (0,t) $ for any $ t \in [0,T] $, we derive that
\begin{align}
&\displaystyle  \|\Phi _{x}^{\varepsilon}(\cdot,t)\|_{L ^{2}}+ \int _{0}^{t}\|\Phi _{\tau}^{\varepsilon}\|_{L ^{2}}^{2}\mathrm{d}\tau
 \nonumber \\
 &\leq   \int _{\mathcal{I}} V ^{\varepsilon}\Phi _{x}^{\varepsilon} \mathrm{d}x+\int _{0}^{t}\left( \|\hat{\Phi}_{\tau}\|_{L ^{\infty}}+\|\hat{\Phi} _{x}- (\hat{\Phi}+M)V_{x}^{a}\| _{L ^{\infty}} ^{2} \right) \|\hat{\Phi}_{x}^{\varepsilon}\| _{L ^{2}}^{2}\mathrm{d}\tau+C\|V _{t}^{\varepsilon}\|_{L _{T}^{2}L ^{2}}^{2}
 \nonumber \\
 & \displaystyle  \quad+ C  \varepsilon ^{\frac{4 \iota _{0}- 3}{2}}( \varepsilon \|V _{x}\|_{L _{T}^{2}L ^{2}}^{2}+\|\Phi _{x}^{\varepsilon}\|_{L _{T}^{2}L ^{2}}^{2})+C( \varepsilon ^{2 \iota _{0}- \frac{3}{2}}+\varepsilon ^{-1})\|\mathcal{R}_{1}^{\varepsilon}\|_{L _{T}^{2}L ^{2}}^{2}+C \varepsilon ^{4 \iota _{0}-\frac{5}{2}}\|\hat{\Phi}^{\text{err}}\|_{L _{T}^{2}L ^{\infty}}^{2}
  \nonumber \\
   & \displaystyle \leq \frac{1}{2}\|\Phi _{x}^{\varepsilon}(\cdot,t)\|_{L ^{2}}^{2}+C \|V ^{\varepsilon}(\cdot,t)\|_{L ^{2}}^{2}+\int _{0}^{t}\left( \|\hat{\Phi}_{\tau}\|_{L ^{\infty}}+\|\hat{\Phi} _{x}- (\hat{\Phi}+M)V_{x}^{a}\| _{L ^{\infty}} ^{2} \right) \|\Phi_{x}^{\varepsilon}\| _{L ^{2}}^{2}\mathrm{d}\tau +C \varepsilon ^{4 \iota _{0}-\frac{5}{2}}+C
     \nonumber \\
     & \displaystyle \leq \frac{1}{2}\|\Phi _{x}^{\varepsilon}(\cdot,t)\|_{L ^{2}}^{2}+\int _{0}^{t}\left( \|\hat{\Phi}_{t}\|_{L ^{\infty}}+\|\hat{\Phi} _{x}-( \hat{\Phi}+M)V_{x}^{a}\| _{L ^{\infty}} ^{2} \right) \|\Phi_{x}^{\varepsilon}\| _{L ^{2}}^{2}\mathrm{d}\tau +C \varepsilon ^{4 \iota _{0}-\frac{5}{2}},\nonumber
\end{align}
where we have used \eqref{con-R-1-in-lem} \eqref{b-vfi-L-2-l-infty}, \eqref{hat-fida-err-esti1}, \eqref{con-L2-PERT}, \eqref{con-V-x-L2}, $ \iota _{0}<\frac{7}{12} $ from \eqref{range-iota0}, $ 0<\varepsilon<1 $ and the Cauchy-Schwarz inequality. Therefore by \eqref{hat-fida-esti-con} and the Gronwall inequality, we get
\begin{align}
\displaystyle  \|\Phi _{x}^{\varepsilon}(\cdot,t)\|_{L ^{2}}+ 2\int _{0}^{t}\|\Phi _{\tau}^{\varepsilon}\|_{L ^{2}}^{2}\mathrm{d}\tau \leq C \varepsilon ^{4 \iota _{0}-\frac{5}{2}}.\nonumber
\end{align}
This along with \eqref{con-V-x-L2} gives \eqref{con-fida-v-x-lem}. Thus we finish the proof of Lemma \ref{lem-V-x-per}.
\end{proof}
As a direct consequence of the Lemmas \ref{lem-L2-pertur} and \ref{lem-V-x-per}, we have the following corollary.
\begin{corollary}\label{cor-in-lower-order}
Assume the conditions of Lemmas \ref{lem-L2-pertur} and \ref{lem-V-x-per} hold. Then for any solution $ (\Phi ^{\varepsilon},V ^{\varepsilon}) $ to \eqref{eq-perturbation} on $ [0,T] $ satisfying \eqref{apriori-assumption}, we have
\begin{gather}\label{vfid-l-infty-verify}
\displaystyle \sup _{t \in [0,T]} \left( \varepsilon ^{\frac{1}{2}}\|V ^{\varepsilon}(\cdot,t)\|_{L ^{\infty}}^{2}+\varepsilon ^{\frac{3}{4}- \iota _{0}} \|\Phi ^{\varepsilon}(\cdot, t)\|_{L ^{\infty}} ^{2} \right) +\int _{0}^{T}\|\Phi ^{\varepsilon}\|_{L ^{\infty}}^{2}\mathrm{d}t \leq C \varepsilon ^{2\iota _{0}-1}.
\end{gather}
where $ C>0 $ is a constant depending on $ T $ but independent of $ \varepsilon $.
\end{corollary}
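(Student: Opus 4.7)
The plan is to deduce the three pointwise-in-time $L^\infty$ bounds by combining the $L^2$ and $L^\infty(0,T;H^1)$ estimates obtained in Lemmas \ref{lem-L2-pertur} and \ref{lem-V-x-per} through the one-dimensional Gagliardo--Nirenberg (Sobolev) inequality. Since $\Phi^\varepsilon$ and $V^\varepsilon$ both vanish on $\partial\mathcal{I}$, for any $f\in H^1_0(\mathcal{I})$ one has
\begin{equation*}
\|f\|_{L^\infty}^{2} \leq C\,\|f\|_{L^2}\,\|f_x\|_{L^2},
\end{equation*}
and the proof consists essentially in feeding the bounds of the previous two lemmas into this inequality with the correct powers of $\varepsilon$.

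For the $V^\varepsilon$ bound, apply the interpolation inequality and use Lemmas \ref{lem-L2-pertur} and \ref{lem-V-x-per}, namely $\|V^\varepsilon(\cdot,t)\|_{L^2}\leq C\varepsilon^{\iota_0-1/2}$ and $\varepsilon^{1/2}\|V_x^\varepsilon(\cdot,t)\|_{L^2}\leq C\varepsilon^{\iota_0-1/2}$, to obtain
\begin{equation*}
\varepsilon^{1/2}\|V^\varepsilon(\cdot,t)\|_{L^\infty}^{2} \leq C\,\|V^\varepsilon(\cdot,t)\|_{L^2}\cdot\varepsilon^{1/2}\|V_x^\varepsilon(\cdot,t)\|_{L^2} \leq C\varepsilon^{2\iota_0-1},
\end{equation*}
uniformly in $t\in[0,T]$. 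For the $\Phi^\varepsilon$ bound, since $\|\Phi^\varepsilon(\cdot,t)\|_{L^2}^{2}\leq C\varepsilon^{2\iota_0-1}$ and $\|\Phi_x^\varepsilon(\cdot,t)\|_{L^2}^{2}\leq C\varepsilon^{4\iota_0-5/2}$, the same inequality yields
\begin{equation*}
\|\Phi^\varepsilon(\cdot,t)\|_{L^\infty}^{2} \leq C\,\varepsilon^{\iota_0-1/2}\cdot\varepsilon^{2\iota_0-5/4} = C\varepsilon^{3\iota_0-7/4},
\end{equation*}
and multiplying by $\varepsilon^{3/4-\iota_0}$ (which is a positive power since $\iota_0<7/12$) gives precisely $C\varepsilon^{2\iota_0-1}$.

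Finally, for the time-integrated term, I will use the same interpolation together with the Cauchy--Schwarz inequality in time, taking advantage of the uniform bound $\|\Phi^\varepsilon(\cdot,t)\|_{L^2}\leq C\varepsilon^{\iota_0-1/2}$ and the integral control $\int_0^T\|\Phi_x^\varepsilon\|_{L^2}^{2}\,\mathrm{d}\tau\leq C\varepsilon^{2\iota_0-1}$ from Lemma \ref{lem-L2-pertur}:
\begin{equation*}
\int_0^T \|\Phi^\varepsilon\|_{L^\infty}^{2}\,\mathrm{d}\tau \leq C\sup_{t\in[0,T]}\|\Phi^\varepsilon(\cdot,t)\|_{L^2}\int_0^T\|\Phi_x^\varepsilon\|_{L^2}\,\mathrm{d}\tau \leq C\varepsilon^{\iota_0-1/2}\cdot T^{1/2}\Bigl(\int_0^T\|\Phi_x^\varepsilon\|_{L^2}^{2}\,\mathrm{d}\tau\Bigr)^{1/2} \leq C\varepsilon^{2\iota_0-1}.
\end{equation*}
No new obstacle arises here: all the hard work (cancellation, corner-correction error handling, choice of time weights) has already been done in Lemmas \ref{lem-L2-pertur} and \ref{lem-V-x-per}, and Corollary \ref{cor-in-lower-order} is essentially a bookkeeping consequence via one-dimensional interpolation, with the factor $\varepsilon^{3/4-\iota_0}$ in front of $\|\Phi^\varepsilon\|_{L^\infty}^{2}$ tuned exactly to absorb the $\varepsilon^{4\iota_0-5/2}$ growth of $\|\Phi_x^\varepsilon\|_{L^2}^{2}$.
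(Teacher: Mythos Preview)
Your proof is correct and follows essentially the same route as the paper: both rely on the one-dimensional interpolation inequality $\|f\|_{L^\infty}^2 \leq C\|f\|_{L^2}\|f_x\|_{L^2}$ for $f\in H_0^1(\mathcal{I})$ applied to $V^\varepsilon$ and $\Phi^\varepsilon$, combined with the bounds from Lemmas \ref{lem-L2-pertur} and \ref{lem-V-x-per}. The only (cosmetic) difference is in the time-integrated term: the paper uses the Poincar\'e-type bound $\|\Phi^\varepsilon\|_{L^\infty}\leq C\|\Phi_x^\varepsilon\|_{L^2}$ from \eqref{Sobolev-modified} directly, so that $\int_0^T\|\Phi^\varepsilon\|_{L^\infty}^2\,\mathrm{d}\tau\leq C\int_0^T\|\Phi_x^\varepsilon\|_{L^2}^2\,\mathrm{d}\tau\leq C\varepsilon^{2\iota_0-1}$, whereas you route through the $L^\infty_tL^2_x$ bound plus Cauchy--Schwarz in time; both arrive at the same estimate.
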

\begin{proof}
  From \eqref{con-L2-PERT}, \eqref{con-fida-v-x-lem}, \eqref{Sobolev-infty}, \eqref{Sobolev-modified}, $ 0< \varepsilon<1 $ and the fact $ \iota _{0}<\frac{7}{12} $, it holds that
  \begin{align}
  \displaystyle \sup _{t \in [0,T]}\|V ^{\varepsilon}\|_{L ^{\infty}}^{2}& \leq \sup _{t \in [0,T]} \left( \|V _{x}^{\varepsilon}\|_{L ^{2}}\|V ^{\varepsilon}\|_{L ^{2}} +\|V ^{\varepsilon}\|_{L ^{2}}^{2}\right) \leq C \varepsilon ^{\frac{2 \iota _{0}-2}{2}+\frac{2 \iota _{0}-1}{2}}+ C \varepsilon ^{2\iota _{0}-1} \leq C \varepsilon ^{2\iota _{0}- \frac{3}{2}},
   \nonumber \\
    \sup _{t \in [0,T]}\|\Phi ^{\varepsilon}\|_{L ^{\infty}}^{2}& \leq \sup _{t \in [0,T]} \left( \|\Phi _{x}^{\varepsilon}\|_{L ^{2}}\|\Phi ^{\varepsilon}\|_{L ^{2}} +\|\Phi ^{\varepsilon}\|_{L ^{2}}^{2}\right) \leq C \varepsilon ^{\frac{1}{2}(4 \iota _{0}-\frac{5}{2})+\frac{2 \iota _{0}-1}{2}}+ C \varepsilon ^{2 \iota _{0}-1} \leq C \varepsilon ^{3 \iota _{0}- \frac{7}{4}}\nonumber
    \end{align}
    and
    \begin{align}
    \displaystyle \int _{0}^{t} \|\Phi ^{\varepsilon}\|_{L ^{\infty}}^{2} \mathrm{d}\tau \leq C\int _{0}^{t} \|\Phi _{x}^{\varepsilon}\|_{L ^{2}}^{2}\mathrm{d}\tau \leq C \varepsilon ^{2\iota _{0}-1}\nonumber
    \end{align}
    for any $ t \in [0,T] $. The proof is complete.
\end{proof}

\begin{remark}
In view of \eqref{con-fida-v-x-lem} and \eqref{vfid-l-infty-verify}, the \emph{a priori} assumption \eqref{apriori-assumption} can be verified. Indeed, the estimates \eqref{con-fida-v-x-lem} and \eqref{vfid-l-infty-verify} hold provided $ \delta $ is suitably small, says $ \delta \leq \delta _{0} $ for some constant $ \delta _{0}<1 $ (see \eqref{firs-l2-nonlinear} and \eqref{Q-3-ESTI}). Therefore if we fix $ \delta= \frac{\delta _{0}}{2} $, then we have
\begin{align}
\displaystyle  \sup _{t \in [0,T]} \left( \varepsilon\|V _{x}^{\varepsilon}(\cdot, t)\|_{L ^{2}}^{2} +\int _{0}^{t}\|\Phi ^{\varepsilon}\|_{L ^{\infty}}^{2}\mathrm{d}\tau \right)  \leq C \varepsilon ^{\iota _{4}} < \delta,\nonumber
\end{align}
provided $ \varepsilon>0 $ is small enough. Hence, for suitably small $ \varepsilon $, all the estimates in Lemmas \ref{lem-L2-pertur}--\ref{lem-V-x-per} and Corollary \ref{cor-in-lower-order} hold true.
\end{remark}
\subsubsection{Higher-order estimates} 
\label{sub:higher_order_estimates}
To finish the proof of Proposition \ref{prop-pertur}, we need to derive some higher-order estimates for $ (\Phi ^{\varepsilon},V ^{\varepsilon}) $ in this subsection.
\begin{lemma}\label{lem-FIDA-V-XX}
Under the conditions of Lemmas \ref{lem-L2-pertur} and \ref{lem-V-x-per}, the solution $ (\Phi ^{\varepsilon},V ^{\varepsilon}) $ to \eqref{eq-perturbation} on $ [0,T] $ possesses the following estimate:
\begin{align*}
&\displaystyle   \|t ^{\frac{5}{2}}\Phi _{t} ^{\varepsilon}(\cdot,t)\|_{L ^{2}}^{2}+\|t ^{\frac{5}{2}}V _{t}^{\varepsilon}(\cdot,t)\|_{L ^{2}}^{2}+\|t ^{\frac{5}{2}}\Phi _{xx}^{\varepsilon}(\cdot,t)\|_{ L ^{2}}^{2}+\varepsilon\|t ^{\frac{5}{2}}V _{xx}^{\varepsilon}(\cdot,t)\|_{L ^{2}}^{2}
 \nonumber \\
 & \displaystyle \quad + \int _{0}^{t} \left( \|\tau^{\frac{5}{2}}\Phi _{x \tau}^{\varepsilon}\|_{L ^{2}}^{2}+\varepsilon\|\tau^{\frac{5}{2}}V _{x \tau}^{\varepsilon}\|_{L ^{2}}^{2}\right) \mathrm{d}\tau  \leq C\varepsilon ^{4\iota _{0}- \frac{7}{2}},\ \ \ t \in (0,T],
\end{align*}
where $ C>0 $ is a constant independent of $ \varepsilon $.
\end{lemma}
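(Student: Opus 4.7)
The plan is to differentiate both equations of \eqref{eq-perturbation} once in $t$, let $\tilde{\Phi}=\Phi_t^{\varepsilon}$ and $\tilde{V}=V_t^{\varepsilon}$, and run an energy estimate weighted by $t^{5}$ for $(\tilde{\Phi},\tilde{V})$ in a manner parallel to the $H^{1}$ estimate of Lemma~\ref{lem-V-x-per}, with the weight $t^{5/2}$ chosen precisely to neutralize the singular-in-$\varepsilon$ behavior of the corner-corrector contributions in $\partial_t\mathcal{R}_i^{\varepsilon}$ (so that $\|t^{5/2}\partial_t\mathcal{R}_{1,1}^{\varepsilon}\|_{L_T^2L^2}$ and $\|t\partial_t\mathcal{R}_{1,2}^{\varepsilon}\|_{L_T^2L^2}$, $\|t\partial_t\mathcal{R}_2^{\varepsilon}\|_{L_T^2L^2}$ from Lemmas~\ref{lem-R-1-VE} and~\ref{lem-R-2-ESTI} can be used directly). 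The $\tilde{\Phi}$ equation reads
\begin{align*}
\tilde{\Phi}_t=\tilde{\Phi}_{xx}-\tilde{\Phi}_xV_x^{a}-\Phi_x^{\varepsilon}V_{xt}^{a}-\tilde{V}_x(\Phi_x^{a}+M)-V_x^{\varepsilon}\Phi_{xt}^{a}-\varepsilon^{1/2}(\tilde{\Phi}_xV_x^{\varepsilon}+\Phi_x^{\varepsilon}\tilde{V}_x)+\varepsilon^{-1/2}\partial_t\mathcal{R}_1^{\varepsilon},
\end{align*}
together with the boundary and initial conditions $\tilde{\Phi}|_{\partial\mathcal{I}}=0$ and $\tilde{\Phi}|_{t=0}=\varepsilon^{-1/2}\mathcal{R}_1^{\varepsilon}|_{t=0}$ (which vanishes due to the zero initial data of $\Phi^{\varepsilon}$ and the construction of the correctors), and analogously for $\tilde{V}$.

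The main estimate proceeds by testing the $\tilde{\Phi}$ equation against $t^{5}\tilde{\Phi}/(\hat{\Phi}+M)$ and the $\tilde{V}$ equation against $t^{5}\tilde{V}$, then integrating in $x$ and in $\tau\in(0,t)$. Integration by parts yields the good terms $\int_0^t\!\int t^{5}|\tilde{\Phi}_x|^{2}/(\hat{\Phi}+M)\,dxd\tau$, $\varepsilon\int_0^t\!\int t^{5}|\tilde{V}_x|^{2}dxd\tau$ and $\int_0^t\!\int t^{5}(\hat{\Phi}+M)|\tilde{V}|^{2}dxd\tau$; the differentiation of the weight $t^{5}$ produces $\tfrac{5}{2}t^{4}|\tilde{\Phi}|^{2}$ and $\tfrac{5}{2}t^{4}|\tilde{V}|^{2}$, both of which are already controlled by the time-integral bounds $\int_0^T(\|\Phi_t^{\varepsilon}\|_{L^2}^{2}+\|V_t^{\varepsilon}\|_{L^2}^{2})d\tau\le C\varepsilon^{4\iota_0-5/2}$ from Lemma~\ref{lem-V-x-per} and thus contribute at the target order.

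All remaining cross terms are handled in the same spirit as Lemma~\ref{lem-V-x-per}. The linear terms involving $V_{xt}^{a},\Phi_{xt}^{a},V_t^{a}$ are absorbed after applying Cauchy--Schwarz, using the time-weighted bounds in \eqref{time-weighted-for-high} to absorb the $\varepsilon^{-1/2}$ loss of $V_{xt}^{a}$ against the quartic weight $t^{5}$; the cubic term $\varepsilon^{1/2}\Phi_x^{\varepsilon}\tilde{V}_x\tilde{\Phi}$ is controlled via $\|\Phi_x^{\varepsilon}\|_{L^\infty}\le C(\|\Phi_{xx}^{\varepsilon}\|_{L^2}^{1/2}\|\Phi_x^{\varepsilon}\|_{L^2}^{1/2}+\|\Phi_x^{\varepsilon}\|_{L^2})$ combined with the smallness bound $\varepsilon\|V_x^{\varepsilon}\|_{L^2}^{2}\le C\varepsilon^{2\iota_0}$ from Lemma~\ref{lem-V-x-per}, at the cost of one further application of the elliptic representation $\|\Phi_{xx}^{\varepsilon}\|_{L^2}\lesssim\|\Phi_t^{\varepsilon}\|_{L^2}+\varepsilon^{-1/2}(\|\Phi_x^{\varepsilon}\|_{L^2}+\|\mathcal{R}_1^{\varepsilon}\|_{L^2})+\|V_x^{\varepsilon}\|_{L^2}(1+\|\hat{\Phi}^{\mathrm{err}}\|_{L^\infty})$ already derived in \eqref{fida-xx-esti-ineq-fial}. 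The source terms are bounded by Cauchy--Schwarz as
\begin{align*}
\Bigl|\int_0^t\!\int \tau^{5}\varepsilon^{-1/2}\partial_\tau\mathcal{R}_1^{\varepsilon}\,\tilde{\Phi}\,dxd\tau\Bigr|\le \eta\int_0^t\!\int\tau^{5}|\tilde{\Phi}|^{2}dxd\tau+C\varepsilon^{-1}\bigl(\|\tau^{5/2}\partial_\tau\mathcal{R}_{1,1}^{\varepsilon}\|_{L_T^2L^2}^{2}+\|\tau^{5/2}\partial_\tau\mathcal{R}_{1,2}^{\varepsilon}\|_{L_T^2L^2}^{2}\bigr),
\end{align*}
so that Lemmas~\ref{lem-R-1-VE}, \ref{lem-R-2-ESTI} produce the desired power $\varepsilon^{2\iota_0-1}$, while the accumulated factor $\varepsilon^{-3/2}$ coming from the quartic-term estimate matches the claimed rate $\varepsilon^{4\iota_0-7/2}$.

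Gathering the above and invoking the Gronwall inequality then yields the $L^{\infty}_t L^2_x$ bounds for $t^{5/2}\tilde{\Phi},t^{5/2}\tilde{V}$ and the $L^2_t L^2_x$ bounds for $t^{5/2}\tilde{\Phi}_x$ and $\varepsilon^{1/2}t^{5/2}\tilde{V}_x$. Finally, the second spatial derivative bounds are recovered algebraically: solving the first equation in \eqref{eq-perturbation} for $\Phi_{xx}^{\varepsilon}$ and the second for $\varepsilon V_{xx}^{\varepsilon}$ and multiplying by $t^{5/2}$ reduces $\|t^{5/2}\Phi_{xx}^{\varepsilon}\|_{L^2}$ and $\varepsilon^{1/2}\|t^{5/2}V_{xx}^{\varepsilon}\|_{L^2}$ to the already-controlled time derivatives, the already-estimated lower-order quantities, and the weighted norms of $\mathcal{R}_i^{\varepsilon}$. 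I expect the main obstacle to be bookkeeping: one has to trace precisely how each power of $\varepsilon$ from the correctors (through $\hat{\Phi}^{\mathrm{err}}$, $V_t^{a}$ with $\varepsilon^{-1/2}\partial_t b_v^{\varepsilon}$, and $\partial_t\mathcal{R}_i^{\varepsilon}$) interacts with the weight $t^{5/2}$ and the smallness $\varepsilon\|V_x^{\varepsilon}\|_{L^2}^{2}\le C\varepsilon^{2\iota_0}$ in the cubic absorptions, so that none of the cross terms exceeds the target rate $\varepsilon^{4\iota_0-7/2}$; this is where the particular choice of exponent $5/2$ in the weight (rather than a smaller power) is dictated.
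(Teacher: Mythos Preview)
Your proposal is correct and follows the same overall strategy as the paper: differentiate \eqref{eq-perturbation} once in $t$, run a $t^5$-weighted energy estimate on $(\Phi_t^{\varepsilon},V_t^{\varepsilon})$, feed in the lower-order bounds from Lemmas~\ref{lem-L2-pertur}--\ref{lem-V-x-per} and the time-weighted error bounds from Lemmas~\ref{lem-R-1-VE}--\ref{lem-R-2-ESTI}, and finally read off the second spatial derivatives algebraically from the equations.

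The one technical difference worth noting is the choice of multiplier for the $\Phi_t^{\varepsilon}$-equation. You test against $t^5\Phi_t^{\varepsilon}/(\hat{\Phi}+M)$, mirroring the cancellation device of Lemmas~\ref{lem-L2-pertur}--\ref{lem-V-x-per}; this cleanly removes the singular $V_x^{a}$-term but forces you through Gronwall because of the residual coefficients $\hat{\Phi}_t$ and $\hat{\Phi}_x-(\hat{\Phi}+M)V_x^{a}$. The paper instead tests simply against $t^5\Phi_t^{\varepsilon}$ (no weight) and pays the crude price $\|t^{5/2}V_x^{a}\|_{L^\infty}^2\|\Phi_t^{\varepsilon}\|_{L^2}^2\le C\varepsilon^{-1}\|\Phi_t^{\varepsilon}\|_{L^2}^2$; after time-integration this is $C\varepsilon^{-1}\cdot\varepsilon^{4\iota_0-5/2}=C\varepsilon^{4\iota_0-7/2}$, exactly the target rate, so no Gronwall is needed and the two partial inequalities combine directly. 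Your route is slightly more structured, the paper's slightly more economical; both land on the same estimate.
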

\begin{proof}
Differentiating the equations in \eqref{eq-perturbation} with respect to $ t $, then it follows that
\begin{gather}\label{high-order-eq}
\displaystyle \displaystyle \makebox[-4.7pt]{~} \begin{cases}
\displaystyle \Phi _{tt}^{\varepsilon}=\Phi _{xxt}^{\varepsilon}-\varepsilon ^{\frac{1}{2}}\Phi _{xt}^{\varepsilon}V _{x}^{\varepsilon} -\varepsilon ^{\frac{1}{2}}\Phi _{x}^{\varepsilon}V _{xt}^{\varepsilon} -\Phi _{xt}^{\varepsilon} V_{x}^{a}-\Phi _{x}^{\varepsilon}V_{xt}^{a}- V _{xt}^{\varepsilon}(\Phi_{x}^{a}+M)- V _{x}^{\varepsilon}\Phi_{xt}^{a}+\varepsilon ^{-\frac{1}{2}}\partial _{t}\mathcal{R}_{1}^{\varepsilon},\\[1mm]
  \displaystyle V _{tt}^{\varepsilon}=\varepsilon V _{xxt}^{\varepsilon}-\varepsilon ^{\frac{1}{2}}\Phi _{xt}^{\varepsilon}V ^{\varepsilon}-\varepsilon ^{\frac{1}{2}}\Phi _{x}^{\varepsilon}V _{t}^{\varepsilon}-\Phi _{xt}^{\varepsilon}V^{a}-\Phi _{x}^{\varepsilon}V_{t}^{a}-\Phi_{xt}^{a}V ^{\varepsilon}-(\Phi_{x}^{a}+M)V _{t}^{\varepsilon}
  +\varepsilon ^{-\frac{1}{2}}\partial _{t}\mathcal{R}_{2}^{\varepsilon}.
    \end{cases}\makebox[-5pt]{~}
\end{gather}
Multiplying the first equation in \eqref{high-order-eq} by $ t ^{5}\Phi _{t}^{\varepsilon} $ and then integrating the resulting equation over $ \mathcal{I} $, we get after using integration by parts that
\begin{align}\label{fida-t-diff-iden}
&\displaystyle \frac{1}{2}\frac{\mathrm{d}}{\mathrm{d}t}\int _{\mathcal{I}}  t ^{5}\left\vert\Phi _{t}^{\varepsilon}\right\vert ^{2} \mathrm{d}x
+\int _{\mathcal{I}} t ^{5}\left\vert \Phi _{xt}^{\varepsilon}\right\vert ^{2} \mathrm{d}x
 \nonumber \\
&~\displaystyle = \frac{5}{2}\int _{\mathcal{I}} t ^{4}\left\vert\Phi _{t}^{\varepsilon}\right\vert ^{2}  \mathrm{d}x-\varepsilon ^{\frac{1}{2}}\int _{\mathcal{I}} t ^{5}\Phi _{xt}^{\varepsilon}V_{x}^{\varepsilon}\Phi _{t}^{\varepsilon} \mathrm{d}x - \varepsilon ^{\frac{1}{2}}\int _{\mathcal{I}}t ^{5} \Phi _{x}^{\varepsilon}V _{xt}^{\varepsilon}\Phi _{t}^{\varepsilon} \mathrm{d}x- \int _{\mathcal{I}} t ^{5}\Phi _{xt}^{\varepsilon}V_{x}^{a}\Phi _{t}^{\varepsilon} \mathrm{d}x
 \nonumber \\
 & \displaystyle~ \quad - \int _{\mathcal{I}} t ^{5}\Phi _{x}^{\varepsilon}V_{xt}^{a}\Phi _{t}^{\varepsilon} \mathrm{d}x - \int _{\mathcal{I}} t ^{5} V _{xt}^{\varepsilon}(\Phi_{x}^{a}+M) \Phi _{t}^{\varepsilon} \mathrm{d}x-  \int _{\mathcal{I}}t ^{5}  V _{x}^{\varepsilon}\Phi_{xt}^{a}\Phi _{t}^{\varepsilon} \mathrm{d}x
 \nonumber \\
 &~\displaystyle \quad +  \varepsilon ^{- \frac{1}{2}}\int _{\mathcal{I}} t ^{5}\partial _{t}\mathcal{R}_{1}^{\varepsilon}\Phi _{t}^{\varepsilon} \mathrm{d}x=:\sum _{i=1}^{8}\mathcal{H}_{i}.
\end{align}
Next we estimate $ \mathcal{H}_{i}\,(1 \leq i \leq 7) $ term by term. Clearly, we have $ \mathcal{H} _{1} \leq C  \|\Phi _{t}^{\varepsilon}\|_{L ^{2}}^{2}$. In view of \eqref{con-fida-v-x-lem}, \eqref{Sobolev-modified} and the Cauchy-Schwarz inequality, we have
\begin{align}\label{cal-H-2}
\displaystyle  \mathcal{H}_{2} &\leq \frac{1}{8}\|t ^{\frac{5}{2}}\Phi _{xt}^{\varepsilon}\|_{L ^{2}}^{2} +C\varepsilon \|V _{x}^{\varepsilon}\|_{L ^{2}}^{2}\|t ^{\frac{5}{2}}\Phi _{t}^{\varepsilon}\|_{L ^{\infty}}^{2} \leq (\frac{1}{8}+ C \varepsilon  ^{2\iota _{0}-1})\|t ^{\frac{3}{2}}\Phi _{xt}^{\varepsilon}\|_{L ^{2}}^{2}.
 \end{align}
 Similarly, by \eqref{con-fida-v-x-lem}, \eqref{Sobolev-infty}, $ 0< \varepsilon<1 $, the fact $ \iota _{0} <\frac{7}{12}<\frac{5}{8} $ and the Cauchy-Schwarz inequality, we have
 \begin{align}
 \displaystyle  \mathcal{H}_{3}& \leq \frac{\varepsilon}{16}\|t ^{\frac{5}{2}}V_{xt}^{\varepsilon}\|_{L ^{2}}^{2}+C\|\Phi _{x}^{\varepsilon}\|_{L ^{2}}^{2}\|t ^{\frac{5}{2}}\Phi _{t}^{\varepsilon}\|_{L ^{\infty}}^{2}
  \nonumber \\
  & \displaystyle\leq \frac{\varepsilon}{16}\|t ^{\frac{5}{2}}V_{xt}^{\varepsilon}\|_{L ^{2}}^{2}+C t ^{\frac{5}{2}}\|\Phi _{x}^{\varepsilon}\|_{L ^{2}}^{2}\left( \|\Phi _{t}^{\varepsilon}\|_{L ^{2}}\|\Phi _{tx}^{\varepsilon}\|_{L ^{2}}+\|\Phi _{t}^{\varepsilon}\|_{L ^{2}}^{2} \right)
  \nonumber \\
  &\displaystyle\leq \frac{1}{16}(\varepsilon\|t ^{\frac{5}{2}}V_{xt}^{\varepsilon}\|_{L ^{2}}^{2}+\|t ^{\frac{5}{2}}\Phi _{xt}^{\varepsilon}\|_{L ^{2}}^{2}) +\|\Phi _{t}^{\varepsilon}\|_{L ^{2}}^{2}\|\Phi _{x}^{\varepsilon}\|_{L ^{2}}^{2}(1+\|\Phi _{x}^{\varepsilon}\|_{L ^{2}}^{2})
   \nonumber \\
   & \displaystyle \leq \frac{1}{16}(\varepsilon\|t ^{\frac{5}{2}}V_{xt}^{\varepsilon}\|_{L ^{2}}^{2}+\|t ^{\frac{5}{2}}\Phi _{xt}^{\varepsilon}\|_{L ^{2}}^{2})+C \varepsilon ^{8 \iota _{0}-5}\|\Phi _{t}^{\varepsilon}\|_{L ^{2}}^{2}.\nonumber
 \end{align}
 For $ \mathcal{ H}_{i}~(4 \leq i \leq 7) $, thanks to \eqref{time-weighted-for-high} and the Cauchy-Schwarz inequality, we have
\begin{gather}\label{h-esti}
 \displaystyle \begin{cases}
 	\displaystyle  \mathcal{H}_{4} \leq \frac{1}{8}\|t ^{\frac{5}{2}}\Phi _{xt}^{\varepsilon}\|_{L ^{2}}^{2}+C \|t ^{\frac{5}{2}}V _{x}^{a}\|_{L ^{\infty}}^{2}\|\Phi _{t}^{\varepsilon}\|_{L ^{2}} ^{2} \leq \frac{1}{8}\|t ^{\frac{5}{2}}\Phi _{xt}^{\varepsilon}\|_{L ^{2}}^{2} + C \varepsilon ^{- 1}\|\Phi _{t}^{\varepsilon}\|_{L ^{2}} ^{2},
 	 \\[2mm]
 	  \displaystyle\mathcal{H}_{5} \leq C \|t ^{\frac{5}{2}} V _{xt}^{a}\|_{L ^{\infty}}\left( \|\Phi _{x}^{\varepsilon}\|_{L ^{2}} ^{2}+\|\Phi _{t}^{\varepsilon}\|_{L ^{2}}^{2}\right) \leq C \varepsilon ^{-
 	  \frac{1}{2}}\left( \|\Phi _{x}^{\varepsilon}\|_{L ^{2}} ^{2}+\|\Phi _{t}^{\varepsilon}\|_{L ^{2}}^{2}\right),
 	\\[2mm]
 	\displaystyle \mathcal{H}_{6} \leq  \frac{\varepsilon}{16}\|t ^{\frac{5}{2}}V_{xt}^{\varepsilon}\|_{L ^{2}}^{2}+C \varepsilon ^{-1} \|t ^{\frac{5}{2}}(\Phi _{x}^{a}+M)\|_{L ^{\infty}}^{2}\|\Phi _{t}^{\varepsilon}\|_{L ^{2}}^{2} \leq \frac{\varepsilon}{16}\|t ^{\frac{5}{2}}V_{xt}^{\varepsilon}\|_{L ^{2}}^{2}+ \varepsilon ^{-1}\|\Phi _{t}^{\varepsilon}\|_{L ^{2}}^{2},
 	\\[2mm]
 	 	\displaystyle
 	\mathcal{H}_{7} \leq C \|t ^{\frac{5}{2}}\Phi _{xt}^{a}\|_{L ^{\infty}}\left( \varepsilon ^{\iota _{0}-\frac{1}{4}} \|V _{x}^{\varepsilon}\|_{L ^{2}}^{2}+\varepsilon ^{-\iota _{0}+\frac{1}{4}}\|\Phi _{t}^{\varepsilon}\|_{L ^{2}}^{2} \right) \leq C  \varepsilon ^{\iota _{0}-\frac{1}{4}} \|V _{x}^{\varepsilon}\|_{L ^{2}}^{2}+\varepsilon ^{-\iota _{0}+\frac{1}{4}}\|\Phi _{t}^{\varepsilon}\|_{L ^{2}}^{2},\\[2mm]
 	\displaystyle\mathcal{H}_{8} \leq  \|t ^{\frac{5}{2}}\Phi _{t}^{\varepsilon}\|_{L ^{2}}^{2}+\varepsilon ^{-1}\|t ^{\frac{5}{2}}\partial _{t}\mathcal{R}_{1}^{\varepsilon}\|_{L  ^{2}}^{2}.
 	\end{cases}
 \end{gather}
 Plugging estimates on $ \mathcal{H }_{i}\,(1 \leq i \leq 8) $ into \eqref{fida-t-diff-iden} followed by an integration over $ (0,t) $ for any $ t \in (0,T] $, it follows that
\begin{align}\label{fida-t-final-diff-per}
&\displaystyle \|t ^{\frac{5}{2}}\Phi _{t} ^{\varepsilon}(\cdot,t)\|_{L ^{2}}^{2}+ \int _{0}^{t} \tau ^{\frac{5}{2}}\|\Phi _{x \tau}^{\varepsilon}\|_{L ^{2}}^{2}\mathrm{d}\tau
 \nonumber \\
 &~\displaystyle\leq \frac{\varepsilon}{4}\int _{0}^{t}\tau ^{5}\|V _{x \tau}^{\varepsilon}\| _{L ^{2}}^{2}\mathrm{d}\tau+ C (1+\varepsilon ^{8 \iota _{0}-5}+\varepsilon ^{-1}+\varepsilon ^{-\frac{1}{2}}+\varepsilon ^{- \iota _{0}+\frac{1}{4}})\int _{0}^{t} \|\Phi _{\tau} ^{\varepsilon}(\cdot,\tau)\|_{L ^{2}}^{2}\mathrm{d}\tau
  \nonumber \\
  &~\displaystyle \quad+C \varepsilon ^{- \frac{1}{2}}\int _{0}^{t}\|\Phi _{x}^{\varepsilon}\|_{L ^{2}}^{2} \mathrm{d}\tau
   +\int _{0}^{t}\varepsilon ^{\iota _{0}-\frac{1}{4}} \|V _{x}^{\varepsilon}\|_{L ^{2}}^{2}\mathrm{d}\tau + C \varepsilon ^{-1}\|t ^{\frac{5}{2}}\partial _{t}\mathcal{R}_{1}^{\varepsilon}\|_{L _{T}^{2} L  ^{2}}^{2}
   \nonumber \\
   &~\displaystyle \leq  \frac{\varepsilon}{4}\int _{0}^{t}\tau ^{5}\|V _{x \tau}^{\varepsilon}\| _{L ^{2}}^{2}\mathrm{d}\tau+C \varepsilon ^{4 \iota _{0}- \frac{7}{2}}+C\varepsilon ^{3 \iota _{0}- \frac{9}{4}} +C \varepsilon ^{2 \iota _{0}-\frac{3}{2}}
    \leq  \frac{\varepsilon}{4}\int _{0}^{t}\tau ^{5}\|V _{x \tau}^{\varepsilon}\| _{L ^{2}}^{2}\mathrm{d}\tau+C \varepsilon ^{4 \iota _{0}- \frac{7}{2}},
\end{align}
provided $ C \varepsilon  ^{2\iota _{0}-1} \leq \frac{1}{8} $ in \eqref{cal-H-2},
where we have used $ \frac{1}{2}<\iota _{0}<\frac{7}{12} $, $ 0< \varepsilon <1 $ and Lemmas \ref{lem-R-1-VE}, \ref{lem-L2-pertur} and \ref{lem-V-x-per}. To proceed, multiplying the second equation in \eqref{high-order-eq} by $ t ^{5} V _{t}^{\varepsilon} $, we get after integrating the resulting equation over $ \mathcal{I} $ that
\begin{align}\label{V-t-est-diff-iden}
&\displaystyle \frac{1}{2}\frac{\mathrm{d}}{\mathrm{d}t} \int _{\mathcal{I}} t ^{5}\left\vert V _{t}^{\varepsilon}\right\vert ^{2} \mathrm{d}x+ \varepsilon \int _{\mathcal{I}}t ^{5}\left\vert V _{xt}^{\varepsilon}\right\vert ^{2} \mathrm{d}x
 \nonumber \\
 &~\displaystyle \leq C\int _{\mathcal{I}} \left\vert V _{t}^{\varepsilon}\right\vert ^{2} \mathrm{d}x- \varepsilon ^{ \frac{1}{2}}\int _{\mathcal{I}} t ^{5}\Phi _{xt}^{\varepsilon}V ^{\varepsilon}V _{t}^{\varepsilon} \mathrm{d}x -\int _{\mathcal{I}} \varepsilon ^{ \frac{1}{2}}t ^{5}\Phi _{x}^{\varepsilon}V _{t}^{\varepsilon} V _{t}^{\varepsilon}\mathrm{d}x - \int _{\mathcal{I}} t ^{5}\Phi _{xt}^{\varepsilon}V^{a} V _{t}^{\varepsilon} \mathrm{d}x - \int _{\mathcal{I}}t ^{5} \Phi _{x}^{\varepsilon}V_{t}^{a}V _{t}^{\varepsilon} \mathrm{d}x
  \nonumber \\
  & ~\displaystyle \quad - \int _{\mathcal{I}} t ^{5}\Phi_{xt}^{a} V ^{\varepsilon}V _{t}^{\varepsilon} \mathrm{d}x + \varepsilon ^{- \frac{1}{2}}\int _{\mathcal{I}}t ^{5} \partial _{t}\mathcal{R}_{2}^{\varepsilon}V _{t}^{\varepsilon} \mathrm{d}x-\int _{\mathcal{I}} t ^{5}\left( \Phi_{x}^{a}+M \right)\left\vert V _{t}^{\varepsilon}\right\vert ^{2}  \mathrm{d}x
   \nonumber \\
   &~\displaystyle=:C\int _{\mathcal{I}} \left\vert V _{t}^{\varepsilon}\right\vert ^{2} \mathrm{d}x+\sum _{i=1}^{7}\mathcal{L}_{i},
\end{align}
where, similar to \eqref{h-esti}, $ \mathcal{L}_{i}\,(1 \leq i \leq 6) $ enjoy the following estimates:
\begin{align}\label{L-ESTI-fINAL}
\begin{cases}	
\displaystyle \mathcal{L}_{1} \leq \frac{1}{4}\|t ^{\frac{5}{2}}\Phi _{xt}^{\varepsilon}\|_{ L ^{2}}^{2}+ C \varepsilon \|V ^{\varepsilon}\|_{L ^{\infty}}^{2}\|V _{t}^{\varepsilon}\|_{L ^{2}}^{2},\ \ \ \mathcal{L}_{2}  \leq \frac{\varepsilon}{8}\|t ^{\frac{5}{2}} V _{tx}^{\varepsilon}\|_{L ^{2}}^{2}+\|\Phi ^{\varepsilon}\|_{L ^{\infty}}^{2}\|V _{t}^{\varepsilon}\|_{L ^{2}}^{2},\\[2mm]
\displaystyle \mathcal{L}_{3} \leq \frac{1}{4}\|t ^{\frac{5}{2}}\Phi _{xt}^{\varepsilon}\|_{ L ^{2}}^{2}+C \|V ^{a}\|_{L ^{\infty}}^{2}\|V _{t}^{\varepsilon}\|_{L ^{2}}^{2},\ \ \ \mathcal{L}_{4} \leq \|t ^{\frac{5}{2}}V _{t} ^{a}\|_{L ^{\infty}} \left( \|\Phi _{x}^{\varepsilon}\| _{L ^{2}}^{2}+\|V _{t}^{\varepsilon}\|_{L ^{2}}^{2}\right),\\[2mm]
\displaystyle \mathcal{L}_{5} \leq C \|t ^{\frac{5}{2}}\Phi _{xt}^{a}\|_{L ^{\infty}}\left( \|V ^{\varepsilon}\|_{L ^{2}}^{2}+\|V _{t}^{\varepsilon}\|_{L ^{2}}^{2} \right),\ \ \ \mathcal{L }_{6} \leq \|V _{t}^{\varepsilon}\|_{L ^{2}}^{2}+C \varepsilon ^{-1}\|t ^{\frac{5}{2}}\partial _{t}\mathcal{R}_{2}^{\varepsilon}\|_{L ^{2}}^{2},\\[2mm]
\displaystyle \mathcal{L}_{7} \leq C  \|t ^{\frac{5}{2}}(\Phi _{x}^{a}+M)\|_{L ^{\infty}}\|V _{t}^{\varepsilon}\|_{L ^{2}}^{2} \leq C \|V _{t}^{\varepsilon}\|_{L ^{2}}^{2}.
\end{cases}
\end{align}
Therefore, we integrate \eqref{V-t-est-diff-iden} over $ (0,t)\subset (0,T] $ to get
\begin{align}\label{V-t-final-diff-pert}
&\displaystyle  \|t ^{\frac{5}{2}} V _{t}^{\varepsilon}(\cdot,t)\|_{L ^{2}}^{2}+ \varepsilon\int _{0}^{t}\|\tau ^{\frac{5}{2}}V _{x \tau}^{\varepsilon}\|_{L ^{2}}^{2}\mathrm{d}\tau
 \nonumber \\
 &~\displaystyle \leq \frac{1}{2}\int _{0}^{t}\|\tau ^{\frac{5}{2}}\Phi _{x \tau}^{\varepsilon}\|_{L ^{2}}^{2}\mathrm{d}\tau +C \int _{0}^{t} \left( 1+\varepsilon\|V ^{\varepsilon}\|_{L ^{\infty}}^{2}+\|\Phi^{\varepsilon}\|_{L ^{\infty}}^{2} +\|\tau ^{\frac{5}{2}}(\Phi _{x}^{a}+M)\|_{L ^{\infty}}\right)\|V _{\tau}^{\varepsilon}(\cdot,\tau)\| _{L ^{2}}^{2}\mathrm{d}\tau
  \nonumber \\
  &~\displaystyle \quad +C\|\Phi _{x}^{\varepsilon}\|_{L _{T}^{2}L ^{2}}^{2}+C \|V ^{\varepsilon}\|_{L _{T}^{2}L ^{2}} ^{2}+C \varepsilon ^{-1}\|t ^{\frac{3}{2}}\partial _{t}\mathcal{R}_{2}^{\varepsilon}\|_{L _{T}^{2} L ^{2}}^{2}
  \nonumber \\
  &~\displaystyle \leq  \frac{1}{2}\int _{0}^{t}\|\tau ^{\frac{5}{2}}\Phi _{x \tau}^{\varepsilon}\|_{L ^{2}}^{2}\mathrm{d}\tau+C (1+\varepsilon ^{2\iota _{0}+\frac{1}{2}}+ \varepsilon ^{3 \iota _{0}- \frac{7}{4}})\varepsilon ^{2\iota _{0}-1} + C \varepsilon ^{4\iota _{0}- \frac{7}{2}}
   \nonumber \\
   &~\displaystyle\leq  \frac{1}{2}\int _{0}^{t}\|\tau ^{\frac{5}{2}}\Phi _{x \tau}^{\varepsilon}\|_{L ^{2}}^{2}\mathrm{d}\tau+C \varepsilon ^{4\iota _{0}- \frac{7}{2}}.
\end{align}
where we have used \eqref{time-weighted-for-high}, \eqref{con-L2-PERT}, \eqref{con-fida-v-x-lem}, \eqref{vfid-l-infty-verify}, $ 0< \varepsilon <1 $, $ \iota _{2}\geq \iota _{0} $, $ \frac{1}{2}< \iota _{0}<\frac{7}{12} $ and Lemma \ref{lem-R-2-ESTI}. Combining \eqref{V-t-final-diff-pert} with \eqref{fida-t-final-diff-per}, we arrive at
\begin{align*}
&\displaystyle  \|t ^{\frac{5}{2}}\Phi _{t} ^{\varepsilon}(\cdot,t)\|_{L ^{2}}^{2}+\|t ^{\frac{5}{2}}V _{t}^{\varepsilon}(\cdot,t)\|_{L ^{2}}^{2}+ \int _{0}^{t} \left( \|\tau ^{\frac{5}{2}}\Phi _{x \tau}^{\varepsilon}\|_{L ^{2}}^{2}+\varepsilon\|\tau ^{\frac{5}{2}}V _{x \tau}^{\varepsilon}\|_{L ^{2}}^{2} \right) \mathrm{d}\tau \leq C \varepsilon ^{4\iota _{0}- \frac{7}{2}}.
\end{align*}
This along with \eqref{eq-perturbation},\eqref{time-weighted-for-high}, \eqref{con-L2-PERT}, \eqref{con-fida-v-x-lem} and Lemmas \ref{lem-R-1-VE}, \ref{lem-R-2-ESTI} further entails that
\begin{align}
\displaystyle  \varepsilon ^{\frac{7}{2}-4 \iota _{0}}\|t ^{\frac{5}{2}}\Phi _{xx}^{\varepsilon}\|_{ L _{T}^{\infty}L ^{2}}^{2}+\varepsilon ^{\frac{9}{2}-4 \iota _{0}}\|t ^{\frac{5}{2}}V _{xx}^{\varepsilon}\|_{L _{T}^{\infty}L ^{2}}^{2} \leq C.\nonumber
\end{align}
Consequently, we finish the proof of Lemma \ref{lem-FIDA-V-XX}.
\end{proof}


\subsection{Proof of Theorem \ref{thm-stabi-refor}} 
\label{sub:proof_of_theorem_ref}
Based on Proposition \ref{prop-pertur}, we prove the estimates of $ \mathcal{E}_{1}^{\varepsilon} $ and $ \mathcal{E}_{2}^{\varepsilon} $ asserted in \eqref{cal-E} and further show the convergence results stated in Theorem \ref{thm-stabi-refor}. Recalling from \eqref{b-v--h1-esti} and Corollaries \ref{cor-V-B-0-INFTY}--\ref{cor-INF-vb1-vfib2}, we get for $ 1<\alpha<5/4 $ that
    \begin{subequations}\label{infty-profiles-for-conver}
   \begin{gather}
  \displaystyle \varepsilon ^{\frac{\alpha}{4}}\left( \|\varphi ^{B,2}\|_{L _{T}^{\infty}L _{z}^{\infty}}+\|\varphi ^{b,2}\|_{L _{T}^{\infty}L _{\xi}^{\infty}} \right)  +\|\varphi^{I,1}\|_{L _{T}^{\infty}L ^{\infty}} \leq C,\\
  \|t ^{\frac{1}{2}}\varphi ^{B,2}\|_{L _{T}^{\infty}H_{z} ^{2}} +\|t ^{\frac{1}{2}}\varphi^{b,2}\|_{L _{T}^{\infty}H _{\xi}^{2}}+\|t ^{\frac{1}{2}}\varphi _{x}^{I,1}\|_{L _{T}^{\infty}L ^{\infty}} \leq C,\label{infty-profiles-for-conver-time}\\
  \displaystyle \| v ^{B,1}\|_{ L ^{\infty}(0,T;L _{z}^{\infty})}+\|v ^{b,1}\|_{ L ^{\infty}(0,T;L _{\xi}^{\infty})}+
  \|v^{I,1}\|_{L _{T}^{\infty} L ^{\infty}}+  \varepsilon ^{-\frac{\alpha}{2}}\|\partial _{x}^{l} b _{v} ^{\varepsilon}\|_{L _{T} ^{\infty}L ^{\infty}} \leq C,
   \end{gather}
   \end{subequations}
   where $ l=0,1 $. By analogous arguments as proving \eqref{b-vfi-L-2-l-infty}, we get by virtue of \eqref{vfi-B-1-HALF-INTY}, \eqref{vfi-B-2-half}, \eqref{INF-VFI-B-2} and \eqref{INF-vfi-b2} that
\begin{align}
\displaystyle
\| b _{\varphi}^{\varepsilon}\|_{L _{T} ^{\infty}L^{\infty}} & \leq C\varepsilon +  \varepsilon\| \varphi ^{B,2}(0,t) \|_{L ^{\infty}((0,T))}\|[(1-x)   {\mathop{\mathrm{e}}}^{- \frac{x}{\varepsilon ^{\nu}}}]\|_{L ^{\infty}}
 \nonumber \\
  & \displaystyle \quad+\varepsilon\| \varphi ^{b,2}(0,t) \|_{L ^{\infty}((0,T))}\|[x   {\mathop{\mathrm{e}}}^{- \frac{1-x}{\varepsilon ^{\nu}}}]\|_{L ^{\infty}}\leq C \varepsilon + C \varepsilon ^{1- \frac{\alpha}{4} } \leq C \varepsilon ^{\frac{5}{8}},
  \label{bvfi-infty-last}\\
\displaystyle  \|t ^{\frac{1}{2}}\partial _{x} b _{\varphi}^{\varepsilon}\|_{L _{T} ^{\infty}L^{\infty}} & \leq C\varepsilon +  \varepsilon\| t ^{\frac{1}{2}}\varphi ^{B,2}(0,t) \|_{L ^{\infty}((0,T))}\|[(1-x)   {\mathop{\mathrm{e}}}^{- \frac{x}{\varepsilon ^{\nu}}}]'\|_{L ^{\infty}}
 \nonumber \\
 & \displaystyle \quad+\varepsilon\| t ^{\frac{1}{2}}\varphi ^{b,2}(0,t) \|_{L ^{\infty}((0,T))}\|[x    {\mathop{\mathrm{e}}}^{- \frac{1-x}{\varepsilon ^{\nu}}}]'\|_{L ^{\infty}} \leq C \varepsilon + C \varepsilon ^{1- \nu} \leq C \varepsilon ^{1- \nu},\label{time-pa-x-bvfi-infty-last}
\end{align}
where we have used $ 1<\alpha<5/4 $. Therefore, we get from \eqref{cal-E-iden}, \eqref{vfid-l-infty-verify}, \eqref{infty-profiles-for-conver}, \eqref{bvfi-infty-last}, $ 1< \alpha<1+\nu<5/4 $ and $ \frac{1}{2}< \iota _{0}<\frac{7}{12} $ that
\begin{align}
&\displaystyle  \|\mathcal{E}_{1}^{\varepsilon}\|_{L _{T}^{\infty}L ^{\infty}}
 \nonumber \\
 &~\displaystyle \leq  \varepsilon \left(\|\varphi ^{\varepsilon B,2}\|_{L _{T}^{\infty}L _{z}^{\infty}}+\|\varphi ^{\varepsilon b,2}\|_{L _{T}^{\infty}L _{\xi}^{\infty}}\right) + \|b _{\varphi}^{\varepsilon}\|_{L _{T}^{\infty}L ^{\infty}}+\varepsilon ^{\frac{1}{2}}\|\Phi ^{\varepsilon}\|_{L _{T}^{\infty}L ^{\infty}}
  \nonumber \\
  &~\displaystyle\leq C \varepsilon ^{1- \frac{\alpha}{4}}+C \varepsilon ^{\frac{5}{8}}+C \varepsilon ^{\frac{3 \iota _{0}}{2}-\frac{3}{8}} \leq C \varepsilon ^{\frac{3 \iota _{0}}{2}-\frac{3}{8}}\nonumber
\end{align}
and
\begin{align}
\displaystyle  \|\varphi ^{\varepsilon}- \varphi ^{I,0}\|_{L _{T}^{\infty}L ^{\infty}} &\leq
C \varepsilon ^{\frac{1}{2}}\left( \|\varphi ^{ I,1}\|_{L _{T}^{\infty}L ^{\infty}} +\|\varphi ^{B,\varepsilon}\|_{L _{T}^{\infty}L _{z}^{\infty}}+\|\varphi ^{b,\varepsilon}\|_{L _{T}^{\infty}L _{\xi}^{\infty}}\right)+C  \|\mathcal{E}_{1}^{\varepsilon}\|_{L _{T}^{\infty}L ^{\infty}}
 \nonumber \\
  &\displaystyle\leq C \varepsilon ^{\frac{1}{2}}+C \varepsilon ^{\frac{3 \iota _{0}}{2}-\frac{3}{8}}\leq C \varepsilon ^{\frac{3 \iota _{0}}{2}-\frac{3}{8}},\nonumber
\end{align}
which gives \eqref{vfi-VI-0}. Furthermore,
\begin{align}
\displaystyle  \|\mathcal{E}_{2}^{\varepsilon}\|_{L _{T}^{\infty}L ^{\infty}} & \leq C \varepsilon ^{\frac{1}{2}}\left( \|v ^{ I,1}\|_{L _{T}^{\infty}L ^{\infty}} +\|v ^{ B,1}\|_{L _{T}^{\infty}L _{z}^{\infty}}+\|v ^{ b,1}\|_{L _{T}^{\infty}L _{\xi}^{\infty}}\right)
 \nonumber \\
 & \displaystyle  \quad+C \|b _{v}^{\varepsilon}\|_{L _{T}^{\infty}L _{z}^{\infty}} +\varepsilon ^{\frac{1}{2}}\|V ^{\varepsilon}\|_{L _{T}^{\infty}L ^{\infty}} \leq C \varepsilon ^{\frac{1}{2}}+C \varepsilon ^{\frac{\alpha}{2}}+C  \varepsilon ^{\iota _{0}-\frac{1}{4}}
  \nonumber \\
  & \displaystyle  \leq C  \varepsilon ^{\iota _{0}-\frac{1}{4}}.
\end{align}
This yields \eqref{vfi-VI-01}. To prove \eqref{vfi-VI-02},
which is associated with the estimate on $ \partial _{x}\mathcal{E}_{1}^{\varepsilon}$, we first utilize \eqref{Sobolev-modified}, Lemmas \ref{lem-V-x-per} and \ref{lem-FIDA-V-XX} to derive that
\begin{align}\label{fida-x-infty-final}
\displaystyle \displaystyle \|t ^{\frac{5}{4}}\Phi _{x}^{\varepsilon}\| _{L _{T}^{\infty}L ^{\infty}} \leq C\left(\|\Phi _{x}^{\varepsilon}\|_{L _{T}^{\infty}L ^{2}} +\|\Phi _{x}^{\varepsilon}\|_{L _{T}^{\infty}L ^{2}}^{\frac{1}{2}}\|t ^{\frac{5}{2}}\Phi _{xx}^{\varepsilon}\|_{L _{T} ^{\infty}L ^{2}}^{\frac{1}{2}} \right)  \leq C \varepsilon ^{2 \iota _{0}- \frac{3}{2}}.
\end{align}
Then by \eqref{cal-E-iden}, \eqref{infty-profiles-for-conver-time}, \eqref{time-pa-x-bvfi-infty-last}, \eqref{fida-x-infty-final}, we have
\begin{align*}
&\displaystyle  \|t ^{\frac{5}{4}}\partial _{x}\mathcal{E}_{1}^{\varepsilon}\|_{L _{T}^{\infty}L ^{\infty}}
 \nonumber \\
 &~\displaystyle \leq C\varepsilon ^{\frac{1}{2}} \left(\|t ^{\frac{1}{2}}\varphi _{z}^{ B,2}\|_{L _{T}^{\infty}L _{z}^{\infty}}+\|t ^{\frac{1}{2}}\varphi _{\xi}^{ b,2}\|_{L _{T}^{\infty}L _{\xi}^{\infty}}\right) + C\|t ^{\frac{1}{2}}\partial _{x} b _{\varphi}^{\varepsilon}\|_{L _{T}^{\infty}L ^{\infty}}+\varepsilon ^{\frac{1}{2}}\|t ^{\frac{5}{4}}\Phi _{x}^{\varepsilon}\|_{L _{T}^{\infty}L ^{\infty}}
  \nonumber \\
  &~\displaystyle \leq C \varepsilon ^{2\iota _{0}-1}
\end{align*}
for $ \frac{1}{2}< \iota _{0}<\frac{7}{12} $. This gives \eqref{vfi-VI-02} and completes the proof of Theorem \ref{thm-stabi-refor}.\hfill $ \square $

\appendix

\section{Some basic facts}\label{Appen-Analysis}
To make our work self-contained, below we will list some frequently used Sobolev-type inequalities and an embedding theorem on space-time Sobolev spaces (see also the Appendix of \cite{Corrillo-Hong-Wang-vanishing}).

\begin{lemma}[{\cite[Page 236]{brezis-ham-book}}]
Let $ p>1$. Then for any $ \epsilon>0 $, there exists a positive constant $ C=C(\epsilon,p) $ such that
\begin{gather}\label{l-infty-Sobolev}
\displaystyle \|h\|_{L ^{\infty}} \leq \epsilon \|h _{x}\|_{L ^{p}(\mathcal{I})}+ C\|h\|_{L ^{1}(\mathcal{I})}
\end{gather}
for any $ h \in W ^{1,p}(\mathcal{I}) $.
\end{lemma}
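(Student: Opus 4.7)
The plan is to first establish the inequality without the small parameter and then upgrade it to the stated $\epsilon$-form by a subdivision argument. Throughout, write $q = p/(p-1)$ for the Hölder conjugate of $p$; since $p > 1$ we have $q < \infty$ and $1/q > 0$, a fact that will be essential.

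\emph{Step 1 (non-sharp inequality).} Because $h \in W^{1,p}(\mathcal{I})$ with $\mathcal{I} = (0,1)$ bounded, Sobolev embedding in one dimension gives $h \in C(\overline{\mathcal{I}})$ (after modification on a null set). By the mean-value theorem for integrals, there exists $x_{0} \in \mathcal{I}$ with $h(x_{0}) = \bar{h} := \int_{0}^{1} h$, so $|h(x_{0})| \leq \|h\|_{L^{1}(\mathcal{I})}$. For any $x \in \mathcal{I}$, the fundamental theorem of calculus and Hölder's inequality yield
\begin{equation*}
|h(x) - h(x_{0})| = \Big|\int_{x_{0}}^{x} h'(y)\, dy\Big| \leq |x - x_{0}|^{1/q}\, \|h_{x}\|_{L^{p}(\mathcal{I})} \leq \|h_{x}\|_{L^{p}(\mathcal{I})}.
\end{equation*}
Taking the supremum over $x$ gives the auxiliary estimate $\|h\|_{L^{\infty}} \leq \|h\|_{L^{1}(\mathcal{I})} + \|h_{x}\|_{L^{p}(\mathcal{I})}$.

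\emph{Step 2 (localization to recover the $\epsilon$-form).} Given $\epsilon > 0$, partition $\mathcal{I}$ into $N$ disjoint subintervals $I_{j}$, each of length $1/N$, where $N \in \mathbb{N}$ is to be chosen. Repeating the argument of Step~1 on each $I_{j}$: there is a point $x_{j} \in I_{j}$ where $h(x_{j})$ equals the mean of $h$ on $I_{j}$, so $|h(x_{j})| \leq N \|h\|_{L^{1}(I_{j})} \leq N \|h\|_{L^{1}(\mathcal{I})}$, and for every $x \in I_{j}$,
\begin{equation*}
|h(x) - h(x_{j})| \leq |x - x_{j}|^{1/q}\, \|h_{x}\|_{L^{p}(I_{j})} \leq N^{-1/q}\, \|h_{x}\|_{L^{p}(\mathcal{I})}.
\end{equation*}
Taking the supremum over $x \in I_{j}$ and then over $j$,
\begin{equation*}
\|h\|_{L^{\infty}} \leq N^{-1/q}\, \|h_{x}\|_{L^{p}(\mathcal{I})} + N\, \|h\|_{L^{1}(\mathcal{I})}.
\end{equation*}
Since $1/q > 0$, we may choose $N$ so large that $N^{-1/q} \leq \epsilon$, and then set $C(\epsilon, p) := N$, which gives \eqref{l-infty-Sobolev}.

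\emph{Main obstacle.} There is no serious obstacle in this one-dimensional setting; the only delicate point is that the refinement $N^{-1/q} \to 0$ as $N \to \infty$ requires strictly $p > 1$. If $p = 1$, then $1/q = 0$, the coefficient of $\|h_{x}\|$ cannot be made arbitrarily small, and the inequality with arbitrary $\epsilon$ in fact fails (consider a narrow bump whose $W^{1,1}$-norm stays bounded while the $L^{\infty}$-norm blows up). Thus the hypothesis $p > 1$ is used essentially, and the whole argument is otherwise elementary.
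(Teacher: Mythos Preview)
Your proof is correct. The paper does not supply its own proof of this lemma; it simply cites \cite[p.~236]{brezis-ham-book} and moves on. Your two-step argument (mean-value plus Hölder to get the non-sharp bound, then a uniform partition into $N$ subintervals to push the gradient coefficient below $\epsilon$) is a standard and clean route to the stated inequality, and your observation that the step $N^{-1/q}\to 0$ genuinely needs $p>1$ is exactly the reason for that hypothesis.
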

\begin{lemma}\label{lem-}
It holds that
\begin{gather}\label{Sobolev-infty}
\displaystyle \|h\|_{ L ^{\infty}}\leq C \left( \|h\|_{L ^{2}}+\|h\|_{L ^{2}}^{1/2}\|h _{x}\|_{L ^{2}}^{1/2} \right)
\end{gather}
for any $ h \in H ^{1}(\mathcal{I}) $, where $ C>0 $ is a positive constant independent of $ h $.
\end{lemma}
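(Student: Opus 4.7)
The plan is to prove this via a standard one-dimensional embedding argument on the bounded interval $\mathcal{I}=(0,1)$, exploiting the fact that $H^{1}(\mathcal{I})\hookrightarrow C(\bar{\mathcal{I}})$ so pointwise evaluation of the absolutely continuous representative is justified. I would fix any $x\in[0,1]$ and, for an arbitrary $y\in[0,1]$, write by the fundamental theorem of calculus applied to $h^{2}$,
\begin{equation*}
h(x)^{2}=h(y)^{2}+\int_{y}^{x}2h(s)h'(s)\,ds.
\end{equation*}
Integrating this identity in $y$ over $[0,1]$ and using $\big|\int_{y}^{x}\big|\leq \int_{0}^{1}$ for the second term on the right, I would obtain
\begin{equation*}
h(x)^{2}\leq \|h\|_{L^{2}}^{2}+2\int_{0}^{1}|h(s)||h'(s)|\,ds.
\end{equation*}

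Next I would apply the Cauchy--Schwarz inequality to the remaining integral to deduce
\begin{equation*}
h(x)^{2}\leq \|h\|_{L^{2}}^{2}+2\|h\|_{L^{2}}\|h'\|_{L^{2}},
\end{equation*}
which holds uniformly in $x\in[0,1]$. Taking the supremum in $x$, square-rooting, and using the elementary inequality $\sqrt{a+b}\leq \sqrt{a}+\sqrt{b}$ for nonnegative $a,b$ then yields
\begin{equation*}
\|h\|_{L^{\infty}}\leq \|h\|_{L^{2}}+\sqrt{2}\,\|h\|_{L^{2}}^{1/2}\|h'\|_{L^{2}}^{1/2},
\end{equation*}
which is the desired estimate with $C=\sqrt{2}$.

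There is no serious obstacle here; the argument is purely a calculation once one identifies the interpolation identity $h(x)^{2}=\text{(mean)}+\text{(integral of }hh'\text{)}$. The only point worth highlighting is that the identity and subsequent pointwise bounds apply to the continuous representative of $h$, which exists because $H^{1}$ functions on a bounded interval in $\mathbb{R}$ are absolutely continuous; this justifies both the use of the FTC and the interpretation of $\|h\|_{L^\infty}$ as $\sup_{x\in[0,1]}|h(x)|$.
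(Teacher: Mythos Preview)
Your proof is correct and complete; this is the standard one-dimensional Gagliardo--Nirenberg argument on a bounded interval. The paper itself does not prove this lemma but merely records it as a basic fact in the appendix, so there is nothing to compare against.
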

We remark that if $ h \in H _{0}^{1}(\mathcal{I})$, then
\begin{align}\label{Sobolev-modified}
\displaystyle \|h\|_{L ^{\infty}} \leq \sqrt{2}\|h\|_{L ^{2}}^{1/2}\|h _{x}\|_{L ^{2}}^{1/2}\ \  \mbox{and}\ \  \|h\|_{L ^{\infty}} \leq C \|h _{x}(\cdot,t)\|_{L ^{2}},
\end{align}
and that if $ h \in H _{z}^{1}\,(\text{resp}. H _{\xi}^{1} ) $, then
\begin{align}\label{Sobolev-z-xi}
\displaystyle \|h\|_{L _{z}^{\infty}} \leq C \|h \|_{L _{z}^{2}}^{1/2}\|h _{z}\|_{L _{z}^{2}}^{1/2} \leq C \|h\|_{H _{z}^{1}} \,(\text{resp}.~ \|h\|_{L _{\xi}^{\infty}} \leq C \|h \|_{L _{z}^{2}}^{1/2}\|h _{\xi}\|_{L _{\xi}^{2}}^{1/2} \leq C \|h\|_{H _{\xi}^{1}}),
\end{align}
where $ C >0$ is a constant.
\vspace{0.2cm}

The following embedding theorem is also frequently used in our analysis.
\begin{proposition}[\mbox{\cite[Lemma 1.2]{temam-book-2001}}]\label{prop-embeding-spacetime}
Let $ V $, $ H $ and $ V' $ be three Hilbert spaces satisfying $ V \subset H \subset V ' $ with $ V' $ being the dual of $ V$. If a function $ u $ belongs to $ L ^{2} (0,T;V)$ and its time derivatives $ u _{t} $ belongs to $ L ^{2}(0,T;V') $, then
\begin{gather}
\displaystyle u \in C([0,T];H) \ \mbox{ and }\ \|u\|_{L ^{\infty}(0,T;H)} \leq C \left( \|u\|_{L ^{2}(0,T;V)}+\|u _{t}\|_{L ^{2}(0,T;V')} \right),
\end{gather}
where the constant $ C>0 $ depends on $ T $ but independent of $ u $. In particular, we have $ u \in C([0,T];H) $ if $ u \in L ^{2}(0,T;H) $ and $ u _{t} \in L ^{2}(0,T;H) $.
\end{proposition}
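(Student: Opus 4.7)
This is the classical Lions--Magenes embedding theorem, so the plan follows the standard route through the identity $\frac{d}{dt}\|u\|_H^2 = 2\langle u_t, u\rangle_{V',V}$. The main step is to justify this identity rigorously for $u \in L^2(0,T;V)$ with $u_t \in L^2(0,T;V')$, after which the continuity and the quantitative bound fall out by integration.

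First, I would fix the Gelfand triple structure: identify $H$ with its dual via the Riesz map so that $V \hookrightarrow H \cong H' \hookrightarrow V'$ with continuous and dense embeddings, and denote by $\langle \cdot,\cdot\rangle_{V',V}$ the duality pairing, which coincides with the $H$-inner product when both arguments lie in $H$. The plan is to show (a) that $u$ (after modification on a null set) is continuous from $[0,T]$ into $H$, and (b) that the quantitative estimate
\[
\|u\|_{L^\infty(0,T;H)}^2 \leq C\bigl(\|u\|_{L^2(0,T;V)}^2+\|u_t\|_{L^2(0,T;V')}^2\bigr)
\]
holds. The core identity is: for a.e. $s,t \in (0,T)$,
\[
\|u(t)\|_H^2-\|u(s)\|_H^2 = 2\int_s^t \langle u_t(\tau), u(\tau)\rangle_{V',V}\,\mathrm{d}\tau.
\]

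To obtain this identity, I would regularize in time. Extend $u$ by zero outside $[0,T]$ (or use a reflection) and convolve with a standard mollifier $\rho_\varepsilon$ to obtain $u_\varepsilon = u * \rho_\varepsilon \in C^\infty(\mathbb{R};V)$, with $(u_\varepsilon)_t = u_t * \rho_\varepsilon \in C^\infty(\mathbb{R};V')$, and $u_\varepsilon \to u$ in $L^2_{\mathrm{loc}}(\mathbb{R};V)$ and $(u_\varepsilon)_t \to u_t$ in $L^2_{\mathrm{loc}}(\mathbb{R};V')$. For the smooth approximations the chain rule is trivially valid:
\[
\tfrac{d}{dt}\|u_\varepsilon(t)\|_H^2 = 2\langle (u_\varepsilon)_t(t), u_\varepsilon(t)\rangle_{V',V}.
\]
Integrating over $[s,t]$ and passing to the limit $\varepsilon \to 0^+$ (the right-hand side converges since the pairing $V'\times V \to \mathbb{R}$ is continuous bilinear and both factors converge in their respective $L^2$ spaces) gives the identity above for a.e.\ $s,t$. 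From this identity, the map $t \mapsto \|u(t)\|_H^2$ agrees a.e.\ with an absolutely continuous function, hence $u$ has a representative that is weakly continuous from $[0,T]$ into $H$; combined with continuity of $t\mapsto \|u(t)\|_H$, this upgrades to strong continuity $u \in C([0,T];H)$.

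For the quantitative bound, apply Young's inequality to the identity to get
\[
\|u(t)\|_H^2 \leq \|u(s)\|_H^2 + \int_0^T \bigl(\|u_t(\tau)\|_{V'}^2 + \|u(\tau)\|_V^2\bigr)\mathrm{d}\tau.
\]
Averaging over $s \in (0,T)$ and using $\|u(s)\|_H^2 \leq C\|u(s)\|_V^2$ (continuous embedding $V\hookrightarrow H$), the $s$-average is bounded by $C\|u\|_{L^2(0,T;V)}^2/T$, and taking the supremum over $t \in [0,T]$ yields the claimed inequality. The constant $C$ depends on $T$ only through the averaging step and the embedding constant. The main (and only nontrivial) obstacle is the justification of the chain rule for $\|u\|_H^2$ under the low regularity hypothesis $u_t \in L^2(0,T;V')$; the mollification argument above handles it, with the essential use of density of smooth functions in both $L^2(0,T;V)$ and $L^2(0,T;V')$.
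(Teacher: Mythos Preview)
Your proof is correct and follows the standard Lions--Magenes argument. Note, however, that the paper does not give its own proof of this proposition: it is quoted directly from Temam \cite[Lemma 1.2]{temam-book-2001} as a preparatory tool, so there is no paper proof to compare against. Your mollification-and-chain-rule approach is precisely the argument in the cited reference.
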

\begin{remark}
Proposition \ref{prop-embeding-spacetime} entails the fact that for any $ m \in \mathbb{N} $,
\begin{gather*}
\displaystyle   \left\{ u \vert\, u \in L ^{2}(0,T;X ^{m+2}), u _{t}\in L ^{2}(0,T;X ^{m}) \right\}\hookrightarrow C([0,T];X ^{m+1})
\end{gather*}
continuously, where $ X ^{m}:=H ^{m}\,(resp.\,H _{z}^{m} \, or\, H _{\xi}^{m} )  $.
\end{remark}
Finally, in view of the change of variables in \eqref{bd-layer-variable}, we have for any $ G _{1}(z,t) \in H _{z}^{m} $ and $ G _{2}(\xi,t) \in H _{\xi}^{m} $ with $ m \in \mathbb{N} $ that
\begin{subequations}\label{inte-transfer}
\begin{gather}
\displaystyle \left\|\partial _{x}^{m}G _{1}\left(\frac{x}{\varepsilon ^{1/2}},t \right)\right\|_{L ^{2}}= \varepsilon ^{\frac{1-2m}{4}}\|\partial _{z}^{m}G _{1}(z,t)\|_{L _{z}^{2}},\ \ \  \left\|\partial _{x}^{m}G _{1}\left(z,t \right)\right\|_{L ^{\infty}}= \varepsilon ^{-\frac{m}{2}}\|\partial _{z}^{m}G _{1}(z,t)\|_{L _{z}^{\infty}}, \label{z-transfer}\\
\displaystyle
\displaystyle \left\|\partial _{x}^{m}G _{2}\left(\frac{x-1}{\varepsilon ^{1/2}},t \right)\right\|_{L ^{2}}= \varepsilon ^{\frac{1-2m}{4}}\|\partial _{\xi}^{m}G _{2}(\xi,t)\|_{L _{\xi}^{2}},\ \ \ \left\|\partial _{x}^{m}G _{2}\left(\xi,t \right)\right\|_{L ^{\infty}}= \varepsilon ^{-\frac{m}{2}}\|\partial _{\xi}^{m}G _{2}(\xi,t)\|_{L _{\xi}^{\infty}}. \label{xi-transfer}
\end{gather}
\end{subequations}

\section{Approximation of the leading-order boundary-layer profiles}\label{appen-approxi}
In this section, we shall show that $ (v ^{B,\varepsilon},\varphi ^{B,\varepsilon})\,(\mbox{resp. } (v ^{b,\varepsilon}, \varphi ^{b,\varepsilon})) $ is a good approximation of $ (v ^{ B,0},\varphi ^{ B,1})$ (resp. $ (v ^{ b,0}, \varphi ^{ b,1}) $). More precisely, we will prove \eqref{aprr-esti-v-B}, \eqref{apr-esti-vfi-B} and \eqref{appro-v-b-property}, that is,
\begin{align}\label{con-V-B-0-appendix}
\displaystyle   \Big\|(v ^{B,\varepsilon}-v ^{B,0})\Big(\frac{x}{\varepsilon ^{1/2}},t \Big)\Big\|_{L _{T}^{\infty}L ^{\infty}}+\Big\|\varepsilon ^{k/2}\partial _{x}^{k} (\varphi ^{B,\varepsilon}-\varphi  ^{B,1})\Big(\frac{x}{\varepsilon ^{1/2}},t\Big)\Big\|_{L _{T}^{\infty}L ^{\infty}} \leq C\varepsilon ^{\frac{\alpha}{2}} ,
\end{align}
and
\begin{align}\label{con-V-b-0-appendix}
\displaystyle   \Big\|(v ^{b,\varepsilon}-v ^{b,0})\Big(\frac{x}{\varepsilon ^{1/2}},t \Big)\Big\|_{L _{T}^{\infty}L ^{\infty}}+\Big\|\varepsilon ^{k/2}\partial _{x}^{k} (\varphi ^{b,\varepsilon}-\varphi  ^{b,1})\Big(\frac{x}{\varepsilon ^{1/2}},t\Big)\Big\|_{L _{T}^{\infty}L ^{\infty}} \leq C\varepsilon ^{\frac{\alpha}{2}} .
\end{align}
In what follows, we will focus only on \eqref{con-V-B-0-appendix}. One can prove \eqref{con-V-b-0-appendix} similarly. Firstly, we notice from Lemma \ref{lem-v-B-0} that the problem \eqref{first-bd-layer-pro-appro} admits a unique global solution $ v ^{B,\varepsilon}   $ such that for any $ l \in \mathbb{N} $ and $ T>0 $, $ \langle z \rangle ^{l}\partial _{t}^{k} v ^{B,\varepsilon} \in L _{T}^{2}H _{z}^{6-2k} $ for $ k=0,1,2,3 $ and
\begin{subequations}\label{v-B-0-regularity-appendix}
\begin{gather}
\displaystyle 0 \leq v ^{B,\varepsilon} \leq v _{\ast},\ \ \ \|\langle z \rangle ^{l}v _{t}^{ B,\varepsilon}\|_{L _{T}^{\infty}L _{z}^{2}}+ \|\langle z \rangle ^{l}\partial _{t}^{k}v ^{B,\varepsilon}\|_{L _{T}^{2} H _{z}^{3-2k}} \leq C \ \ \mbox{for  } k=0,1,\label{v-B-0-regularity-appendix-a}\\
\displaystyle  \displaystyle  \|\langle z \rangle ^{l}t ^{\frac{1}{2}}\partial _{t}^{i}\partial _{z}^{4-2i}v ^{B,\varepsilon}\|_{L _{T}^{2}L _{z}^{2}}+\|\langle z \rangle ^{l}t ^{\frac{3}{2}}\partial _{t}^{i}\partial _{z}^{5-2i}v ^{B,\varepsilon}\|_{L _{T}^{2} L _{z}^{2}} \leq C \ \ \mbox{for  }  i=0,1,2,\\
\displaystyle  \|\langle z \rangle ^{l}t ^{2}\partial _{t}^{k}\partial _{z}^{6-2k}v ^{B,\varepsilon}\|_{L _{T}^{2} L _{z}^{2}}\leq C \ \ \mbox{for  } k=0,1,2,3.\label{v-B-0-regularity-c-appendix}
\end{gather}
\end{subequations}
This along with the weak convergence argument and the Aubin-Lions lemma (cf. Corollary 4 of \cite{Simon-1987}) yields that the problem \eqref{first-bd-layer-pro} admits a unique global solution $ v ^{B,0} $ such that
\begin{subequations}\label{v-B-0-regularity-ori-appdix}
\begin{gather}
\displaystyle 0 \leq v ^{ B,0} \leq v _{\ast},\ \ \ \|\langle z \rangle ^{l}v _{t}^{ B,0}\|_{L _{T}^{\infty}L _{z}^{2}}+ \|\langle z \rangle ^{l}\partial _{t}^{k}v ^{ B,0}\|_{L _{T}^{2} H _{z}^{3-2k}} \leq C \ \ \mbox{for  } k=0,1,\label{v-B-0-regularity-ori-appdix-a}\\
\displaystyle  \|\langle z \rangle ^{l}t ^{\frac{1}{2}}\partial _{t}^{i}\partial _{z}^{k}v ^{ B,0}\|_{L _{T}^{2}L _{z}^{2}} \leq C\ \ \mbox{for  } 2i+k =4,\ \ \|\langle z \rangle ^{l}t ^{\frac{3}{2}}\partial _{t}^{i}\partial _{z}^{k}v ^{ B,0}\|_{L _{T}^{2} L _{z}^{2}} \leq C \ \ \mbox{for  }  2i+k =5,\label{v-B-0-regularity-c-ori-appdix} \\
\displaystyle \|\langle z \rangle ^{l}t ^{2}\partial _{t}^{i}\partial _{z}^{k}v ^{ B,0}\|_{L _{T}^{2} L _{z}^{2}} \leq C \ \ \mbox{for  }  2i+k =6.
\end{gather}
\end{subequations}
With \eqref{v-B-0-regularity-appendix} and \eqref{v-B-0-regularity-ori-appdix}, we are now ready to prove \eqref{con-V-B-0-appendix}. Before the start, we underline that the second estimate in \eqref{con-V-B-0-appendix} follows directly from the first one along with \eqref{vfi-bd-1ord-lt}, \eqref{vfi-bd-1ord-lt-approxi}. Therefore it remains to prove the first estimate in \eqref{con-V-B-0-appendix}. To this end, we denote
\begin{align*}
\displaystyle  V= v ^{B,\varepsilon}-v ^{B,0}- \eta (z)\cala_{1}^{\varepsilon}(t),
\end{align*}
with $ \eta (z) $ as in \eqref{eta-defi}. Then, in view of \eqref{Corre-property}, to prove the first estimate in \eqref{con-V-B-0-appendix}, it suffices to show that
\begin{gather}\label{V-infty-con-apdix}
\displaystyle   \|V\|_{L _{T}^{\infty}L ^{\infty}} \leq C \varepsilon ^{\frac{\alpha}{2}}.
\end{gather}
From $ \eqref{first-bd-layer-pro} $ and $ \eqref{first-bd-layer-pro-appro} $, we see that $ V $ solves
\begin{align}\label{eq-per-in-appdix}
\displaystyle \begin{cases}
	\displaystyle V _{t}=V _{zz}-(\varphi _{x} ^{I,0}(1,t)+M)v ^{I,0}(1,t)(		{\mathop{\mathrm{e}}}^{  v ^{B,\varepsilon}}-		{\mathop{\mathrm{e}}}^{v ^{B,0}})-(\varphi _{x} ^{I,0}(0,t)+M) \mathrm{e} ^{v ^{B,\varepsilon}}(V+\eta (z)\cala_{1}^{\varepsilon}(t))
	  \\
	 \displaystyle \quad \quad-(\varphi _{x} ^{I,0}(0,t)+M) v ^{B,0}\left(		{\mathop{\mathrm{e}}}^{v ^{B,\varepsilon}}- {\mathop{\mathrm{e}}}^{v ^{ B,0}}\right)+\eta''(z)\cala_{1}^{\varepsilon}(t)- \eta(z)\partial _{t}\cala_{1}^{\varepsilon}(t),\\
	 \displaystyle V(0,t)=0,\ \ V(+\infty,t)=0,\\
	 \displaystyle V(z,0)=0.
\end{cases}
\end{align}
Testing the equation \eqref{eq-per-in-appdix} against $ V $, by the integration by parts, we get
\begin{align}\label{appr-CON-diff}
&\displaystyle \frac{1}{2}\frac{\mathrm{d}}{\mathrm{d}t}\int _{\mathbb{R}_{+}}V ^{2} \mathrm{d}z+ \int _{\mathbb{R}_{+}} (\varphi _{x} ^{I,0}(0,t)+M) \mathrm{e} ^{v ^{B,\varepsilon}} V ^{2}\mathrm{d}z+\int _{\mathbb{R}_{+}}V _{z}^{2} \mathrm{d}z
 \nonumber \\
 &~\displaystyle = -\int _{\mathbb{R}_{+}}(\varphi _{x} ^{I,0}(0,t)+M)(v ^{I,0}(0,t)+v ^{B,0})(		{\mathop{\mathrm{e}}}^{  v ^{B,\varepsilon}}-		{\mathop{\mathrm{e}}}^{v ^{B,0}}) V \mathrm{d}z
  \nonumber \\
  &~\displaystyle \quad + \int _{\mathbb{R}_{+}} \eta''(z)\cala_{1}^{\varepsilon}(t) V\mathrm{d}z - \int _{\mathbb{R}_{+}}\eta(z)\partial _{t}\cala_{1}^{\varepsilon}(t) V \mathrm{d}z ,
   \nonumber \\
   & ~\displaystyle \leq C \int _{\mathbb{R}_{+}}\left( \vert V\vert +\vert  \eta (z)\cala_{1}^{\varepsilon}(t)\vert\right)\vert V\vert  \mathrm{d}z+\int _{\mathbb{R}_{+}} \eta''(z)\cala_{1}^{\varepsilon}(t) V\mathrm{d}z + \int _{\mathbb{R}_{+}}\eta(z)\partial _{t}\cala_{1}^{\varepsilon}(t) V \mathrm{d}z
    \nonumber \\
    & ~ \displaystyle \leq C \int _{\mathbb{R}_{+}}V ^{2} \mathrm{d}z+ C \int _{\mathbb{R}_{+}}\left(\vert  \eta (z)\cala_{1}^{\varepsilon}(t)\vert ^{2}+\vert \eta''(z)\cala_{1}^{\varepsilon}(t)\vert ^{2}+  \vert \eta(z)\partial _{t}\cala_{1}^{\varepsilon}(t)\vert ^{2}\right)  \mathrm{d}z,
\end{align}
where we have used \eqref{l-INFTY-VFI-I-0}, \eqref{v-B-0-regularity-ori-appdix}, \eqref{Sobolev-infty} and the Cauchy-Schwarz inequality. Integrating \eqref{appr-CON-diff} over $ (0,t) $ for any $ t \in (0,T] $, by \eqref{con-vfi-I-0-only}, \eqref{Corre-property}, \eqref{v-B-0-regularity-appendix} and the fact that $ \eta $ is smooth and compactly supported, we have
\begin{align*}
\displaystyle  \int _{\mathbb{R}_{+}}V ^{2}(\cdot,t) \mathrm{d}z+\int _{0}^{t}\int _{\mathbb{R}_{+}}\left( V _{z}^{2} +V ^{2} \right) \mathrm{d}z \mathrm{d}\tau \leq C \int _{0}^{t}\int _{\mathbb{R}_{+}}V ^{2} \mathrm{d}z \mathrm{d}\tau+C \varepsilon ^{\alpha}.
\end{align*}
This along with the Gronwall inequality gives
\begin{align}\label{V-L2-CON-apdix}
\displaystyle  \int _{\mathbb{R}_{+}}V ^{2}(\cdot,t) \mathrm{d}z+\int _{0}^{t}\int _{\mathbb{R}_{+}}\left( V _{z}^{2} +V ^{2} \right) \mathrm{d}z \mathrm{d}\tau \leq C \varepsilon ^{\alpha}.
\end{align}
Furthermore, testing the equation \eqref{eq-per-in-appdix} against $ V _{t} $, by similar arguments as deriving \eqref{appr-CON-diff}, we get
\begin{align*}
\displaystyle  \displaystyle \frac{\mathrm{d}}{\mathrm{d}t}\int _{\mathbb{R}_{+}}V _{z} ^{2} \mathrm{d}z+\int _{\mathbb{R}_{+}}V _{t}^{2} \mathrm{d}z \leq  C \int _{\mathbb{R}_{+}}\left(\vert  \eta (z)\cala_{1}^{\varepsilon}(t)\vert ^{2}+\vert \eta''(z)\cala_{1}^{\varepsilon}(t)\vert ^{2}+  \vert \eta(z)\partial _{t}\cala_{1}^{\varepsilon}(t)\vert ^{2}\right)  \mathrm{d}z+ C \int _{\mathbb{R}_{+}}V ^{2} \mathrm{d}z.
\end{align*}
This along with \eqref{con-vfi-I-0-only}, \eqref{Corre-property}, \eqref{v-B-0-regularity-appendix}, \eqref{V-L2-CON-apdix} and the fact $ \eta $ is smooth and compactly supported yields that
\begin{align}\label{V-z-esti-apdix}
 \displaystyle  \int _{\mathbb{R}_{+}}V _{z} ^{2}(\cdot,t) \mathrm{d}z+\int _{0}^{t}\int _{\mathbb{R}_{+}}V _{\tau}^{2} \mathrm{d}z  \mathrm{d}\tau \leq C \varepsilon ^{\alpha}
 \end{align}
 for any $ t \in [0,T] $. Combining \eqref{V-L2-CON-apdix}, \eqref{V-z-esti-apdix} and \eqref{Sobolev-z-xi}, we get
 \begin{align*}
 \displaystyle \|V \|_{L _{T}^{\infty}L ^{\infty}} \leq C \varepsilon ^{\frac{\alpha}{2}}.
 \end{align*}
 This gives \eqref{V-infty-con-apdix}. The proof is complete.

 \vspace{4mm}

 \section*{Acknowledgement} 
The research of G.-Y. Hong was partially supported by the National Natural Science Foundation (grant Nos. 12201221 and 12371203), the Guangdong Basic and Applied Basic Research Foundation (grant Nos. 2021A1515111038 and 2024A1515012306), the Guangzhou Municipal Science and Technology Project (grant No. 2024A04J3788), and the CAS AMSS-POLYU Joint Laboratory of Applied Mathematics postdoctoral fellowship scheme. The research of Z.A. Wang was supported in part by the Hong Kong RGC GRF grant No. PolyU 15306121 and internal grant No. 4-ZZPY from HKPU.

\end{document}